\documentclass[11pt]{amsart}
\usepackage{amsthm,amssymb,mathrsfs,mathtools,empheq,enumitem}

\usepackage[T1]{fontenc}

\usepackage[notref,notcite,final]{showkeys}
\mathtoolsset{showonlyrefs}
\usepackage{fullpage,color}

\newtheorem{mainthm}{Theorem}
\newtheorem{theorem}{Theorem}[section]
\newtheorem*{theorem*}{Theorem}
\newtheorem{corollary}[theorem]{Corollary}

\newtheorem{lemma}[theorem]{Lemma}
\newtheorem{proposition}[theorem]{Proposition}
\newtheorem*{proposition*}{Proposition}

\newtheorem*{conjecture*}{Conjecture}

\theoremstyle{definition}

\newtheorem{remark}[theorem]{Remark}

\numberwithin{equation}{section}


\def\bN {\mathbb{N}}

\def\bR {\mathbb{R}}
\def\bS {\mathbb{S}}

\def\bZ {\mathbb{Z}}

\def\cA {\mathcal{A}}

\def\cE {\mathcal{E}}

\def\cH {\mathcal{H}}

\def\cY {\mathcal{Y}}
\def\cZ {\mathcal{Z}}

\def\scrL{\mathscr{L}}

\def\grad {{\nabla}}

\def\rstr {{\big |}}

\def\la {\langle}
\def\ra {\rangle}


\newcommand{\tx}[1]{\mathrm{#1}}
\newcommand{\wto}{\rightharpoonup}
\newcommand{\sto}[1]{\xrightarrow[#1]{}}

\newcommand{\wt}[1]{\widetilde{#1}}
\newcommand{\bs}[1]{\boldsymbol{#1}}

\newcommand{\supp}{\operatorname{supp}}

\newcommand{\dist}{\operatorname{dist}}

\newcommand{\Id}{\operatorname{Id}}

\newcommand{\eee}{\mathrm e}

\DeclareMathOperator{\sech}{sech}

\newcommand{\ud}{\mathrm{\,d}}
\newcommand{\vd}{\mathrm{d}}
\newcommand{\udr}{\,r\vd r}
\newcommand{\vD}{\mathrm{D}}
\newcommand{\dd}[1]{{\frac{\vd}{\vd{#1}}}}
\newcommand{\pd}[1]{{\frac{\partial}{\partial{#1}}}}



\newcommand{\uln}[1]{{\underline{ #1 }}}
\newcommand{\lin}{_{\textsc{l}}}

\newcommand{\yp}{{\cY}^+}
\newcommand{\ym}{{\cY}^-}
\newcommand{\ypl}{{\cY}^+_{\lambda}}
\newcommand{\yml}{{\cY}^-_{\lambda}}


\title{Construction of two-bubble solutions \\ for energy-critical wave equations}
\author{Jacek Jendrej}
\address{CMLS, {\'E}cole polytechnique, CNRS, Universit{\'e} Paris-Saclay, 91128 Palaiseau Cedex, France}
\email{jacek.jendrej@polytechnique.edu}

\begin{document}

\begin{abstract}
  We construct pure two-bubbles for some energy-critical wave equations, that is solutions which in one time direction
  approach a superposition of two stationary states both centered at the origin, but asymptotically decoupled in scale.
  Our solution exists globally, with one bubble at a fixed scale and the other concentrating in infinite time,
  with an error tending to $0$ in the energy space. We treat the cases of the power nonlinearity in space dimension $6$,
  the radial Yang-Mills equation and the equivariant wave map equation with equivariance class $k \geq 3$.
  The concentration speed of the second bubble is exponential for the first two models and a power function in the last case.
\end{abstract}

\maketitle

\section{Introduction}
\label{sec:intro}
\subsection{Energy critical NLW}
We consider the energy critical wave equation in space dimension $N = 6$:
\begin{equation}
\label{eq:nlw0}
  \left\{
    \begin{aligned}
      &(\partial_t^2 -\Delta)u(t, x) = |u(t, x)|\cdot u(t, x), \qquad (t, x) \in \bR\times \bR^6, \\
      &(u(t_0, x), \partial_t u(t_0, x)) = (u_0(x), \dot u_0(x)).
    \end{aligned}\right.
\end{equation}
The \emph{energy functional} associated with this equation is defined for $\bs u_0 = (u_0, \dot u_0) \in \cE := \dot H^1(\bR^6) \times L^2(\bR^6)$ by the formula
\begin{equation*}
  E(\bs u_0) := \int\frac 12|\dot u_0|^2 + \frac 12|\grad u_0|^2 - F(u_0)\ud x,
\end{equation*}
where $F(u_0) := \frac 13 |u_0|^3$. Note that $E(\bs u_0)$ is well-defined due to the Sobolev Embedding Theorem.
The differential of $E$ is $\vD E(\bs u_0) = (-\Delta u_0 - f(u_0), \dot u_0)$, where $f(u_0) = |u_0|\cdot u_0$,
hence we can rewrite equation \eqref{eq:nlw0} as
\begin{equation}
  \label{eq:nlw}
  \bigg\{
    \begin{aligned}
      \partial_t \bs u(t) &= J\circ \vD E(\bs u(t)), \\
      \bs u(t_0) &= \bs u_0 \in \cE.
    \end{aligned}
\end{equation}
  Here, $J := \begin{pmatrix}0 & \Id \\ -\Id & 0 \end{pmatrix}$ is the natural symplectic structure.

  Equation \eqref{eq:nlw0} is locally well-posed in the space $\cE$, see for example Ginibre, Soffer and Velo \cite{GSV92}, Shatah and Struwe \cite{ShSt94} (the defocusing case),
  as well as a complete review of the Cauchy theory in Kenig and Merle \cite{KeMe08} (for $N \in \{3, 4, 5\}$) and Bulut, Czubak, Li, Pavlovi\'c and Zhang~\cite{BCLPZ13} (for $N \geq 6$).
  By ``well-posed'' we mean that for any initial data $\bs u_0 \in \cE$ there exists $\tau > 0$ and a unique solution in some subspace of $C([t_0 - \tau, t_0 + \tau]; \cE)$,
  and that this solution is continuous with respect to the inital data. By standard arguments, there exists a maximal time of existence $(T_-, T_+)$, $-\infty \leq T_- < t_0 < T_+ \leq +\infty$,
  and a unique solution $\bs u \in C((T_-, T_+); \cE)$. If $T_+ < +\infty$, then $\bs u(t)$ leaves every compact subset of $\cE$ as $t$ approaches $T_+$.
  A crucial property of the solutions of \eqref{eq:nlw0} is that the energy $E$ is a conservation law.

  In this paper we always assume that the initial data are radially symmetric. This symmetry is preserved by the flow.

  For functions $v \in \dot H^1$, $\dot v \in L^2$, $\bs v = (v, \dot v)\in \cE$ and $\lambda > 0$, we denote
  \begin{equation*}
    v_\lambda(x) := \frac{1}{\lambda^2} v\big(\frac{x}{\lambda}\big), \qquad \dot v_\uln\lambda(x) := \frac{1}{\lambda^3} \dot v\big(\frac{x}{\lambda}\big),\qquad\bs v_\lambda(x) := \big(v_\lambda, \dot v_\uln\lambda\big).
  \end{equation*}

  A change of variables shows that
  \begin{equation*}
    E\big((\bs u_0)_\lambda\big) = E(\bs u_0).
  \end{equation*}
  Equation~\eqref{eq:nlw0} is invariant under the same scaling: if $\bs u(t) = (u(t), \dot u(t))$ is a solution of \eqref{eq:nlw0} and $\lambda > 0$, then
  $
  t \mapsto \bs u\big(t_0 + t/\lambda\big)_\lambda
  $ is also a solution
  with initial data $(\bs u_0)_\lambda$ at time $t = 0$.
  This is why equation~\eqref{eq:nlw0} is called \emph{energy-critical}.

  A fundamental object in the study of \eqref{eq:nlw0} is the family of stationary solutions $\bs u(t) \equiv \pm\bs W_\lambda = (\pm W_\lambda, 0)$, where
  \begin{equation*}
    W(x) = \Big(1 + \frac{|x|^2}{24}\Big)^{-2}.
  \end{equation*}
  The functions $\pm W_\lambda$ are called \emph{ground states} or \emph{bubbles} (of energy). They are the only radially symmetric solutions
  of the critical elliptic problem
  \begin{equation}
    \label{eq:elliptic}
    -\Delta u - f(u) = 0.
  \end{equation}
  The functions $W_\lambda$ are, up to translation, the only positive solutions of \eqref{eq:elliptic}.
  The ground states achieve the optimal constant in the critical Sobolev inequality, which was proved by Aubin \cite{Aubin76} and Talenti \cite{Talenti76}.
  They are the ``mountain passes'' for the potential energy.

  Kenig and Merle \cite{KeMe08} exhibited the special role of the ground states $\bs W_\lambda$ as the \emph{threshold elements} for nonlinear dynamics of the solutions of \eqref{eq:nlw0}
  in space dimensions $N = 3, 4, 5$, which is believed to be a general feature of dispersive equations (the so-called \emph{Threshold Conjecture}).
  Another major problem in the field is the \emph{Soliton Resolution Conjecture}, which predicts that a bounded (in an appropriate sense) solution
  decomposes asymptotically into a sum of energy bubbles at different scales and a radiation term (a solution of the linear wave equation).
  This was proved for the radial energy-critical wave equation in dimension $N = 3$
  by Duyckaerts, Kenig and Merle \cite{DKM4}, following the earlier work of the same authors \cite{DKM3}, where such a decomposition was proved only for a sequence of times
  (this last result was generalized to any odd dimension by Rodriguez \cite{Rod14}).
  
  It is natural to examine the dynamics of the solutions of \eqref{eq:nlw0} in a neighborhood (in the energy space) of the family of the ground states.
  In dimension $N = 3$ the was done by Krieger, Schlag and Tataru \cite{KrScTa09}, who showed that such solutions can blow up in finite time (by concentration of the bubble),
  see also \cite{DonKr13}, \cite{KrSc14}, \cite{DHKS14}, \cite{HiRa12}, \cite{moi15p-1} for related results.

  In view of the rich dynamics in a neighborhood of one bubble, it was expected that solutions behaving asymptotically as a superposition of many (at least two) bubbles exist,
  in other words that the result of \cite{DKM4} is essentially optimal. We prove that it is the case when $N = 6$:

\begin{mainthm}
  \label{thm:deux-bulles}
There exists a solution $\bs u: (-\infty, T_0] \to \cE$ of \eqref{eq:nlw0} such that
  \begin{equation}
    \label{eq:mainthm}
    \lim_{t\to -\infty}\|\bs u(t) - (\bs W + \bs W_{\frac{1}{\kappa}\eee^{-\kappa|t|}})\|_\cE = 0, \qquad \text{with }\kappa := \sqrt{\frac 54}.
  \end{equation}
\end{mainthm}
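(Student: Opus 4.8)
The plan is to construct the solution as a limit of solutions with prescribed behaviour at an increasing sequence of times, a strategy that goes back to Merle's construction of multi-solitons and has been used in the energy-critical setting by Martel--Merle and others. Concretely, fix a decreasing sequence $T_n \to -\infty$, and for each $n$ solve \eqref{eq:nlw} backwards from $t = T_n$ with data $\bs u(T_n) = \bs W + \bs W_{\mu(T_n)}$, where $\mu(t) := \frac 1\kappa \eee^{-\kappa|t|}$. The goal is to obtain uniform (in $n$) control of each solution $\bs u_n$ on a common time interval $(-\infty, T_0]$ of the form
\begin{equation*}
  \|\bs u_n(t) - (\bs W + \bs W_{\mu(t)})\|_\cE \leq C \eee^{-\delta \kappa |t|}
\end{equation*}
for some $\delta > 0$ independent of $n$, and then extract a weak limit $\bs u$; the local well-posedness and continuous dependence in $\cE$, quoted from \cite{KeMe08,BCLPZ13}, upgrade weak convergence on compact time intervals to an actual solution satisfying \eqref{eq:mainthm}.

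The core of the argument is the uniform bootstrap estimate, which I would set up via a modulated ansatz and an energy method. Write $\bs u_n(t) = \bs W_{1} + \bs W_{\lambda(t)} + \bs g(t)$, where the scale $\lambda(t)$ of the concentrating bubble is a modulation parameter fixed by an orthogonality condition (choosing $\bs g(t)$ orthogonal to the scaling direction $\Lambda \bs W_\lambda$ of the small bubble, so that the would-be unstable/zero mode is removed and $\lambda$ absorbs it). Linearizing the energy around the two-bubble profile, the leading interaction between the two bubbles produces an effective finite-dimensional ODE for $\lambda$, of the form $\ddot \lambda / \lambda \sim c\, (\lambda)^{2}$ (in the $N=6$ normalization the interaction term between $W_1$ and $W_\lambda$ is of size $\lambda^{(N-2)/2} = \lambda^2$), whose solution with the correct (minimal-growth / no-incoming-radiation) boundary behaviour is exactly $\lambda(t) \approx \frac 1\kappa \eee^{-\kappa|t|}$, the value $\kappa = \sqrt{5/4}$ being forced by matching the exponential rate to the coefficient of this interaction term. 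For the infinite-dimensional part $\bs g$ one runs a virial/localized energy estimate: the quadratic form $\langle \vD^2 E(\bs W_1 + \bs W_\lambda)\bs g, \bs g\rangle$ is coercive on the orthogonal complement of the (finitely many) bad directions associated to each bubble — here one uses that the only bad modes of the linearized operator around a single $W$ are the scaling mode (no genuine exponentially unstable mode exists for this equation since $W$ is a mountain pass, which is what makes $N=6$, Yang--Mills and $k\geq 3$ wave maps tractable without an extra spectral gap argument) — so modulo the modulation equations one gets $\frac{\vd}{\vd t}(\text{energy of }\bs g) \lesssim (\text{interaction error}) \lesssim \eee^{-2\delta\kappa|t|}$, and a Gronwall/continuity argument closes the bootstrap on $(-\infty, T_0]$ for $|T_0|$ large.

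Several technical points need care: (i) the choice of orthogonality conditions and the proof, via the implicit function theorem, that the modulation decomposition is valid as long as $\bs g$ is small and $\lambda$ is small — standard but must be done uniformly in $n$; (ii) computing the interaction term $\langle f(W_1 + W_\lambda) - f(W_1) - f(W_\lambda), \cdot\rangle$ and its leading asymptotics as $\lambda \to 0$, which for $N = 6$ needs the exact decay rate of $W$ and gives the constant pinning down $\kappa$; (iii) handling the fact that the natural energy for $\bs g$ is only coercive after subtracting the negative directions, so one works with a corrected functional (energy plus localized virial correction plus a small multiple of the scaling-mode pairing), a now-classical but delicate construction. The main obstacle I expect is precisely step (iii) combined with the modulation ODE analysis: one must simultaneously control the modulation parameter $\lambda$ (second-order ODE, so two boundary conditions to kill at $-\infty$, one of which corresponds to the unstable direction of the interaction ODE and requires either a topological/shooting argument over the $T_n$ or a careful a priori choice) and the error $\bs g$, with the coupling between them threatening to lose the $\eee^{-\delta\kappa|t|}$ rate unless the bootstrap constants are chosen in the right order. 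Establishing that this coupled system can be closed uniformly — in particular that the ``ejection'' of $\lambda$ away from the target trajectory can be controlled by a one-parameter shooting argument in the initial scale at time $T_n$ — is the heart of the proof.
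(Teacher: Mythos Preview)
Your overall architecture is right and matches the paper: construct approximate solutions on $[T_n,T_0]$ with $T_n\to-\infty$, close a bootstrap via a modulated decomposition and a mixed energy--virial functional, then pass to a weak limit. But there is a genuine error at the heart of your spectral picture.

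You write that ``no genuine exponentially unstable mode exists for this equation since $W$ is a mountain pass.'' This is false for \eqref{eq:nlw0}: the linearized operator $L=-\Delta-f'(W)$ in dimension $6$ has exactly one negative eigenvalue $-\nu^2$ with eigenfunction $\cY$, so the linearized flow around each bubble has a true exponentially unstable direction $\cY^+_\lambda$ (and a stable one $\cY^-_\lambda$). Being a mountain pass is precisely what produces Morse index one, not zero. Your assertion is correct for the Yang--Mills and $k\ge3$ wave map cases treated later in the paper, but not for the power nonlinearity. As a consequence, two things in your outline break down: first, the coercivity of $\langle \vD^2 E(\bs W+\bs W_\lambda)\bs g,\bs g\rangle$ fails unless you also control the projections $\langle\alpha^\pm_\lambda,\bs g\rangle$ and $\langle\alpha^\pm_\mu,\bs g\rangle$ onto the unstable/stable eigendirections of \emph{both} bubbles (Lemma~\ref{lem:bulles-coer}); second, and more importantly, the shooting has to be done not on the initial scale $\lambda(T_n)$ but on the coefficient $a_0$ of the unstable eigenfunction $\cY^+_{\lambda(T_n)}$ added to the initial data (see \eqref{eq:data-at-T} and Lemma~\ref{lem:initial}). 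The instability rate $\nu/\lambda$ is much faster than the modulation dynamics, so shooting in $\lambda$ alone cannot kill this mode.

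A smaller point: your interaction computation is off. In $N=6$ the effective ODE is $\lambda''=\kappa^2\lambda$, i.e.\ linear in $\lambda$ (the interaction term in the equation scales like $\lambda f'(W)_{\underline\lambda}$, not $\lambda^2$), which is why the concentration is exponential rather than polynomial; your $\ddot\lambda/\lambda\sim c\lambda^2$ would give a power law. Also, the paper modulates \emph{both} scales $\lambda(t)$ and $\mu(t)$ and builds a genuine refined approximate solution $\bs\varphi(t)$ with correction $S(t)=\chi\cdot(\lambda^2\mu^{-2}P_\lambda+b^2 Q_\lambda)$ solving the linearized equation to next order; without this correction the error $\bs\psi$ is only $O(\eee^{-\kappa|t|})$, which is not enough to close the bootstrap at rate $\eee^{-\frac32\kappa|t|}$.
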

\begin{remark}
  More precisely, we will prove that
  \begin{equation*}
    \big\|\bs u(t) - \big(\bs W + \bs W_{\frac{1}{\kappa}\eee^{-\kappa|t|}}\big)\big\|_\cE \leq C_1 \cdot \eee^{-\frac 12 \kappa|t|}
  \end{equation*}
  for some constant $C_1 > 0$.
\end{remark}
\begin{remark}
We construct here \emph{pure} two-bubbles, that is the solution approaches a superposition of two stationary states, with no energy transformed into radiation.
By the conservation of energy and the decoupling of the two bubbles, we necessarily have $E(\bs u(t)) = 2E(\bs W)$.
Pure one-bubbles cannot concentrate and are completely classified, see \cite{DM08}.
\end{remark}
\begin{remark}
  \label{rem:nlw-signs}
  It was proved in \cite{moi15p-3}, in any dimension $N \geq 3$, that there exist no solutions $\bs u(t): [t_0, T_+) \to \cE$
    of \eqref{eq:nlw0} such that $\|\bs u(t) - (\bs W_{\mu(t)} - \bs W_{\lambda(t)})\|_\cE \to 0$ with $\lambda(t) \ll \mu(t)$
  as $t \to T_+ \leq +\infty$.
\end{remark}
\begin{remark}
  \label{rem:dim7}
  In any dimension $N > 6$ one can expect an analogous result with concentration rate $\lambda(t) \sim |t|^{-\frac{4}{N-6}}$.
\end{remark}
\begin{remark}
  In the context of the harmonic map heat flow, Topping \cite{Topping99} proved the existence of towers of bubbles
  for a well chosen target manifold, see also a non-existence result of van der Hout \cite{vdHout03}.
\end{remark}

Let us resume the overall strategy of the proof, which is based on the previous paper of the author \cite{moi15p-1}.

In Section~\ref{sec:approx} we construct an appropriate approximate solution $\varphi(t)$. We present first a formal computation
which allows to predict the concentration rate and explains why the proof fails in dimension $N \in \{3, 4, 5\}$.
It highlights also the role of the \emph{strong interaction} between the two bubbles
(by ``strong'' we mean ``significantly altering the dynamics''; \cite{MaRa15p} provides an example
of this phenomenon in a different context).
Then we give a precise definition of the approximate solution and prove bounds on its error.

In Section~\ref{sec:bootstrap} we build a sequence $\bs u_n: [T_n, T_0] \to \cE$ of solutions of \eqref{eq:nlw0}
with $T_n \to -\infty$ and $\bs u_n(t)$ close to a two-bubble solution for $t \in [T_n, T_0]$.
Taking a weak limit finishes the proof. This type of argument goes back to the works of Merle~\cite{Merle90} and Martel~\cite{Martel05}.
The heart of the analysis is to obtain uniform energy bounds for the sequence $\bs u_n$.
To this end we follow the approach of Rapha\"el and Szeftel \cite{RaSz11},
that is we prove bootstrap estimates involving an energy functional with a virial-type correction term.
This correction is designed to cancel some terms related to the concentration of the bubble $\bs W_{\lambda(t)}$.
It has to be localized in an appropriate way, so that it does not ``see'' the other bubble.
Finally, in order to deal with the linear instabilities of the flow, we use a classical topological (``shooting'') argument.

\subsection{Critical wave maps}
We consider the wave map equation from the $2+1$-dimensional Minkowski space (the energy-critical case) to $\bS^2$.
We will consider solutions with $k$-equivariant symmetry, in which case the problem is reduced to the following scalar equation:
\begin{equation}
\label{eq:wmap0}
  \left\{
    \begin{aligned}
      &\partial_t^2 u(t, r) = \partial_r^2 u(t, r) +\frac{1}{r}\partial_r u(t, r) - \frac{k^2}{2r^2}\sin(2u(t, r)), \\
      &(u(t_0, r), \partial_t u(t_0, r)) = (u_0(r), \dot u_0(r)), \qquad t, t_0 \in \bR,\ r\in (0, +\infty).
    \end{aligned}\right.
\end{equation}
For a presentation of the geometric content of this equation, one can consult \cite{ShSt00}.
Here we will regard \eqref{eq:wmap0} as a scalar semilinear problem.

We define the space $\cH$ as the completion of $C_0^\infty((0, +\infty))$ for the norm
\begin{equation}
  \label{eq:H-norm}
  \|v\|_{\cH}^2 := 2\pi \int_0^{+\infty}\big(|\partial_r v(r)|^2 + |\frac kr v(r)|^2 \big)\udr.
\end{equation}
We will work in the energy space $\cE := \cH \times L^2$.
Equation \eqref{eq:wmap0} can be written in the form \eqref{eq:nlw} with the energy functional $E$ defined for $\bs u_0 = (u_0, \dot u_0) \in \cE$ by the formula
\begin{equation}
  \label{eq:energy-wmap}
  E(\bs u_0) := \pi \int_0^{+\infty}\big((\dot u_0)^2 + (\partial_r u_0)^2 + \frac{k^2}{r^2}(\sin(u_0))^2\big)\udr.
\end{equation}
The Cauchy theory in the energy space has been established by Shatah and Tahvildar-Zadeh \cite{ST94}.
Note that $u_0 \in \cH$ forces $\lim_{r\to +\infty}u_0(r) = 0$, but we could just as well consider states of finite energy such that $\lim_{r\to +\infty}u_0(r) \in \pi\bZ$,
see \cite{CKM08, CKLS15} for details.

The stationary solutions $W_\lambda(r) := 2\arctan\big(\big(\frac{r}{\lambda}\big)^k\big)$ play a fundamental role in the study of \eqref{eq:wmap0}.
They are the harmonic maps of topological degree $k$.
We will write $W(r) := W_1(r) = 2\arctan(r^k)$ and $\Lambda W(r) := -\pd \lambda W_\lambda\rstr_{\lambda = 1} = \frac{2k}{r^k + r^{-k}}$.
Note that $W \notin \cH$ precisely because of the fact that $W(r) \to \pi$ as $r \to +\infty$.

The possibility of concentration of a harmonic map at the origin was first observed numerically by Bizo{\'n}, Chmaj and Tabor \cite{BCT01}. Struwe \cite{Struwe03} proved that if the blow-up occurs, then $\bs W$ is the blow-up profile (for a sequence of times).
The dynamics in a neighborhood of a harmonic map was studied by Krieger, Schlag and Tataru \cite{KrScTa08}, who constructed blow-up solutions in the energy space with the concentration rate $\lambda(t) \sim (T_+ - t)^{1+\nu}$ for all $\nu > \frac 12$. This behavior is expected to be highly unstable.
Rodnianski and Sterbenz \cite{RoSt10} constructed stable blow-up solutions, giving the first (partial) rigorous explanation of
the surprising numerical results mentioned above.
In the case $k = 1$, C\^ote \cite{Cote15} proved that any solution decomposes, for a sequence of times tending to the final (finite or inifinite) time of existence,
as a sum of a finite number of harmonic maps at different scales and a radiation term. A generalization of this result, including all the cases considered in this paper,
was recently obtained by Jia and Kenig~\cite{JiKe15p}. Motivated by these works, we prove the following result.
\begin{mainthm}
  \label{thm:deux-bulles-wmap}
Fix $k > 2$. There exists a solution $\bs u: (-\infty, T_0] \to \cE$ of \eqref{eq:wmap0} such that
  \begin{equation}
    \label{eq:mainthm-wmap}
    \lim_{t\to -\infty}\|\bs u(t) - (-\bs W + \bs W_{\frac{k-2}{2\kappa}(\kappa|t|)^{-\frac{2}{k-2}}})\|_\cE = 0, \qquad \text{with }\kappa := \frac{k-2}{2}\big(\frac{8k}{\pi}\sin\big(\frac{\pi}{k}\big)\big)^\frac 1k.
  \end{equation}
\end{mainthm}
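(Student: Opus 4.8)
The plan is to reproduce, for \eqref{eq:wmap0}, the four steps used for Main Theorem~\ref{thm:deux-bulles}: build an approximate two-bubble $\bs\varphi(t)$ together with the reduced ODE for its scale; solve \eqref{eq:wmap0} on a truncated interval $[T_n,T_0]$ with well-prepared data; control the error uniformly in $n$ via a coercive energy functional with a localized virial correction; and handle the single linear instability by a topological argument before passing to the limit $n\to-\infty$. The only structural difference from the NLW case is that the error of $\bs\varphi$, and every bad term in the energy estimates, decays like a \emph{power} of the small scale $\lambda(t)$ instead of exponentially, which is precisely what forces $k>2$.

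\smallskip
\emph{Step 1: approximate solution and reduced dynamics.} Adapting Section~\ref{sec:approx}, I would take $\bs\varphi(t)=-\bs W+\bs W_{\lambda(t)}+\bs\psi(t)$, where $\lambda(t)\to0^+$ as $t\to-\infty$, $b(t):=\lambda'(t)/\lambda(t)$, and $\bs\psi(t)$ is a correction concentrated at scale $\lambda(t)$, of smaller $\cE$-size than $\bs W_{\lambda(t)}$, obtained by solving at the inner scale the linear elliptic equation that kills the leading error created by the two-bubble interaction. (The opposite signs $-W$ and $+W_\lambda$ make the interaction attractive, in line with the NLW sign restriction.) Since $\pm\bs W$ and $\bs W_\lambda$ are static, the nonlinear remainder of the leading ansatz equals
\[ \frac{k^2}{2r^2}\Big(\sin(2W)\big(\cos(2W_\lambda)-1\big)+\sin(2W_\lambda)\big(1-\cos(2W)\big)\Big); \]
its projection onto the scaling direction $\Lambda W_\lambda$ is governed by the region $r\sim\lambda$, uses $W(r)\sim2r^k$ near $0$, and is of size $\sim\lambda^{k}$. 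Balancing it against the projection of $\partial_t^2\bs W_{\lambda(t)}$, namely $-(b'(t)+b(t)^2)\,\lambda(t)^2\|\Lambda W\|_{L^2(r\,\vd r)}^2$ plus lower order, produces the reduced system
\[ \lambda'(t)=\lambda(t)\,b(t),\qquad b'(t)+b(t)^2=c_k\,\lambda(t)^{k-2}+(\text{lower order}), \]
with $c_k>0$ a constant determined by the interaction together with the correction $\bs\psi$ (the factor $\sin(\pi/k)$ in \eqref{eq:mainthm-wmap} originates in Beta-type integrals such as $\|\Lambda W\|_{L^2(r\,\vd r)}^2=2\pi/\sin(\pi/k)$). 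The system is solved explicitly by $\bar\lambda(t)=\frac{k-2}{2\kappa}(\kappa|t|)^{-\frac{2}{k-2}}$, $\bar b(t)=\frac{2}{(k-2)|t|}$, which fixes $\kappa$. This is where $k>2$ enters: both the exponent $-\frac{2}{k-2}$ and the size $\lambda(t)^{\frac{k-2}{2}}$ of the residual error degenerate as $k\to2^+$. One then sets $\lambda=\bar\lambda$, constructs $\bs\psi$ precisely, and proves that $\|\partial_t\bs\varphi(t)-J\circ\vD E(\bs\varphi(t))\|_\cE$ is smaller than $\lambda(t)^{\frac{k-2}{2}}$ by a positive power of $\lambda(t)$.

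\smallskip
\emph{Step 2: truncation, modulation, and the energy--virial functional.} As in Section~\ref{sec:bootstrap}, in the spirit of \cite{Merle90,Martel05}, fix $T_n\to-\infty$ and let $\bs u_n$ solve \eqref{eq:wmap0} on $[T_n,T_0]$ with $\bs u_n(T_n)$ a small, carefully chosen perturbation of $\bs\varphi(T_n)$ (see Step 3). Writing $\bs u_n=\bs\varphi+\bs g_n$ and decomposing $\bs g_n$ by modulation — choosing parameters $\lambda_n(t),b_n(t)$ so that $\bs g_n(t)$ is orthogonal to $\Lambda W_{\lambda_n(t)}$ and to a conjugate direction — the modulation equations give $|\lambda_n'/\lambda_n-b_n|+|b_n'+b_n^2-c_k\lambda_n^{k-2}|\lesssim\|\bs g_n\|_\cE+\|\bs g_n\|_\cE^2+(\text{error of }\bs\varphi)$. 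The heart of the matter is the uniform bound $\|\bs g_n(t)\|_\cE\lesssim\lambda_n(t)^{\frac{k-2}{2}}$ on $[T_n,T_0]$, proved by bootstrap. Following Rapha\"el--Szeftel \cite{RaSz11}, introduce $\cF_n(t)$ equal to the quadratic energy of $\bs g_n$ — coercive, $\gtrsim\|\bs g_n(t)\|_\cE^2$ modulo the modulation directions, because the linearized wave-map operator around $W$ is non-negative with $\Lambda W$ as its only zero mode — plus a virial-type correction of the form $b_n(t)\langle A_{\lambda_n(t)}\bs g_n,\bs g_n\rangle$, where $A_{\lambda_n(t)}$ is a first-order operator of the order of the generator of dilations, cut off at the \emph{intermediate} scale $\sqrt{\lambda_n(t)}$ so that it captures the inner bubble while remaining essentially blind to the static bubble $-\bs W$. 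The correction is designed so that the large term $\sim b_n\langle\cdots\rangle$ created by the concentration $\lambda_n'\neq0$ cancels, leaving $\frac{\vd}{\vd t}\cF_n(t)\lesssim b_n(t)\big(\cF_n(t)+\lambda_n(t)^{k-2}\big)$; comparison with $\frac{\vd}{\vd t}\big(\lambda_n^{-(k-2)}\big)\sim b_n\lambda_n^{-(k-2)}$ then closes the bootstrap on $[T_n,T_0]$, provided the unstable direction of Step 3 stays small.

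\smallskip
\emph{Step 3: shooting, and passage to the limit.} Linearizing the reduced system around $(\bar\lambda,\bar b)$ in the self-similar variable $\sigma$, $\vd\sigma/\vd t=\bar b(t)$, gives a constant-coefficient $2\times2$ system with one positive and one negative eigenvalue (namely $\frac{k-2}{2}$ and $-k$), hence exactly one direction growing forward in time; since the wave-map linearization around $W$ carries no unstable eigenvalue, this is the only obstruction to the bootstrap. I would therefore let $\bs u_n(T_n)$ depend on a one-dimensional parameter adjusting the unstable component and use a Bolzano/Brouwer argument — as in \cite{moi15p-1} and in the proof of Main Theorem~\ref{thm:deux-bulles} — to pick, for each $n$, a value for which the bootstrap estimates hold on all of $[T_n,T_0]$. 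With $\|\bs u_n(t)-(-\bs W+\bs W_{\lambda_n(t)})\|_\cE\lesssim\lambda_n(t)^{\frac{k-2}{2}}$ and $\lambda_n\to\bar\lambda$ uniformly on compact subsets, extract a weak limit $\bs u$; local well-posedness of \eqref{eq:wmap0} in $\cE$ promotes it to a genuine solution on $(-\infty,T_0]$ satisfying \eqref{eq:mainthm-wmap}.

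\smallskip
I expect the main obstacle to be the construction and the estimates for the localized virial correction: it must be simultaneously coercive, almost monotone along the flow, and transparent to the unscaled bubble, and one must absorb the errors produced by the spatial cutoff, which now decay only \emph{algebraically} in $\lambda(t)$ (in contrast with the exponential decay available for \eqref{eq:nlw0} in dimension $6$). This is exactly the mechanism forcing $k\ge3$: when $k=2$ the driving term $\lambda^{k-2}$ degenerates to $\lambda^0$, the reduced dynamics and the error gains collapse, and the scheme breaks down — just as it does for \eqref{eq:nlw0} in dimensions $N\le5$.
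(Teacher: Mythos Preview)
Your approach is the paper's: approximate two-bubble with an inner-scale correction, reduced ODE yielding the power-law scale, energy--virial bootstrap, and a one-dimensional shooting. You correctly identify that the wave-map linearization around $W$ has no negative eigenvalue, so the only instability lives in the reduced system; your eigenvalues $\tfrac{k-2}{2}$ and $-k$ are right, and the paper indeed shoots on the initial scale $\lambda_0$.

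Two technical points need adjustment. First, the paper modulates the two scales $(\lambda,\mu)$, not $(\lambda,b)$: an orthogonality condition at the \emph{outer} scale is required for the two-bubble coercivity lemma, and the virial is localized at scale $R\lambda$ with $R$ large but fixed (not $\sqrt{\lambda}$), so that the operator bounds remain uniform in $t$. Second --- and this is a genuine gap as written --- your bootstrap target $\|\bs g\|_\cE\lesssim\lambda^{(k-2)/2}\sim|t|^{-1}$ is too weak to close the shooting on $\lambda_0$. The barrier argument compares the ODE drift against the barrier slope, both of order $|t|^{-(2k+1)/(2(k-2))}$, so one needs $|\lambda'-b|\lesssim\|\bs g\|_\cE=o\big(|t|^{-(2k+1)/(2(k-2))}\big)$; for $k=3$ this is $o(|t|^{-7/2})$, which $|t|^{-1}$ badly fails. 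The paper obtains $\|\bs g\|_\cE\lesssim|t|^{-(k+1)/(k-2)}$, which suffices, and this sharper bound is in fact what your own Step~1 claim on the ansatz error (``smaller than $\lambda^{(k-2)/2}$ by a positive power'') delivers once it is fed honestly into the energy inequality rather than the coarser $\cF'\lesssim b(\cF+\lambda^{k-2})$.
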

\begin{remark}
  More precisely, we will prove that
  \begin{equation*}
    \big\|\bs u(t) - \big({-}\bs W + \bs W_{\frac{k-2}{2\kappa}(\kappa|t|)^{-\frac{2}{k-2}}}\big)\big\|_\cE \leq C_1 \cdot |t|^{-\frac{1}{2(k-2)}}.
  \end{equation*}
\end{remark}
\begin{remark}
  The constructed solution is a \emph{pure} two-bubble, hence by the conservation of energy $E(\bs u(t)) = 2E(\bs W)$,
  and it is clear that it has the homotopy degree $0$.
  In the case of equivariant class $k = 1$, C\^ote, Kenig, Lawrie and Schlag \cite{CKLS15} showed that any degree $0$ initial data of energy $<2E(\bs W)$ leads to dispersion
  (the proof is expected to generalize to all equivariance classes).
  Theorem~\ref{thm:deux-bulles-ym} gives the first example of a non-dispersive solution at the threshold energy.

  Note that pure two-bubbles of homotopy degree $2k$ (hence of type bubble-bubble and not bubble-antibubble)
  do not exist because the energy of such a map has to be $> 2E(\bs W)$.
  This is similar to the case of opposite signs for \eqref{eq:nlw0}, see Remark~\ref{rem:nlw-signs}.
\end{remark}
\begin{remark}
  \label{rem:general-wmap}
  I believe that the proof can be adapted to deal with a more general equation $\partial_t^2 u = \partial_r^2 u + \frac 1r \partial_r u - \frac{1}{r^2}(g g')(u)$ with
  $g$ satisfying the assumptions of \cite{CKM08} and $g'(0) \in\{3, 4, 5, \ldots\}$.
\end{remark}

\subsection{Critical Yang-Mills}
Finally, we consider the radial Yang-Mills equation in dimension 4 (which is the energy-critical case):
\begin{equation}
\label{eq:ym0}
  \left\{
    \begin{aligned}
      &\partial_t^2 u(t, r) = \partial_r^2 u(t, r) +\frac{1}{r}\partial_r u(t, r) - \frac{4}{r^2}u(t, r)(1-u(t, r))\big(1-\frac 12 u(t, r)\big), \\
      &(u(t_0, r), \partial_t u(t_0, r)) = (u_0(r), \dot u_0(r)), \qquad t, t_0 \in \bR,\ r\in (0, +\infty).
    \end{aligned}\right.
\end{equation}
For a derivation of this equation and further comments, see for instance \cite{CST98}.
Equation \eqref{eq:ym0} can be written in the form \eqref{eq:nlw} with the energy functional $E$ defined for $\bs u_0 = (u_0, \dot u_0) \in \cE$ by the formula
\begin{equation}
  \label{eq:energy-ym}
  E(\bs u_0) := \pi \int_0^{+\infty}\big((\dot u_0)^2 + (\partial_r u_0)^2 + \frac{1}{r^2}(u_0(2-u_0))^2\big)\udr.
\end{equation}

The stationary solutions of \eqref{eq:ym0} are $W_\lambda(r) := \frac{2 r^2}{\lambda^2 + r^2}$. We denote $W(r) := W_1(r) = \frac{2r^2}{1+r^2}$
and $\Lambda W(r) := -\pd \lambda W_\lambda\rstr_{\lambda = 1} = \frac{4}{(r+r^{-1})^2}$.
\begin{mainthm}
  \label{thm:deux-bulles-ym}
There exists a solution $\bs u: (-\infty, T_0] \to \cE$ of \eqref{eq:ym0} such that
  \begin{equation}
    \label{eq:mainthm-ym}
    \lim_{t\to -\infty}\|\bs u(t) - (-\bs W + \bs W_{\frac{1}{\kappa}\eee^{-\kappa|t|}})\|_\cE = 0, \qquad \text{with }\kappa := 2\sqrt{3}.
  \end{equation}
\end{mainthm}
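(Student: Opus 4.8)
The plan is to adapt the proof of Theorem~\ref{thm:deux-bulles} to the present model.

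\emph{Linear preliminaries and approximate solution.} Write the nonlinearity of \eqref{eq:ym0} as $g(u):=4u(1-u)(1-\tfrac12 u)=2u(u-1)(u-2)$, so the equation reads $\partial_t^2u=\partial_r^2u+\tfrac1r\partial_ru-\tfrac1{r^2}g(u)$. First I record the spectral picture of the linearized operator $L_W:=-\partial_r^2-\tfrac1r\partial_r+\tfrac1{r^2}g'(W)$, $g'(W)=4-12W+6W^2$: since $g'(W(r))\to4$ both as $r\to0^+$ and as $r\to+\infty$, $L_W$ is a relatively compact perturbation of $-\partial_r^2-\tfrac1r\partial_r+\tfrac4{r^2}$; it has $\Lambda W$ in its kernel (scale invariance), exactly one negative eigenvalue $-\nu^2$ with a smooth, exponentially decaying eigenfunction $\cY$, and its quadratic form is coercive on the $L^2$-orthogonal complement of $\mathrm{span}\{\Lambda W,\cY\}$. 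Hence the flow linearized around $\bs W$ has one exponentially unstable direction $\yp$, one stable direction $\ym$, and only polynomial growth along $(\Lambda W,0)$, and likewise around each $\bs W_\lambda$ (with rates $\pm\nu/\lambda$). One should note that $-W\notin\cE$ and $W_\lambda\notin\cE$ separately, while $-W+W_\lambda\in\cE$, which shapes the function-space set-up. Next I build the approximate solution $\bs\varphi(t)=-\bs W+\bs W_{\lambda(t)}+\bs a(t)$. Inserting this into the equation and using $g(a+b)-g(a)-g(b)=6ab(a+b-2)$, $g(W)+g(-W)=-12W^2$, the expansion $W(r)=2r^2+O(r^4)$ at the origin, and the identity $W(\rho)\big(W(\rho)-2\big)=-\Lambda W(\rho)$, a formal computation shows that the bubble interaction produces, in the self-similar region $r\sim\lambda$, a resonant error along the kernel direction $(\Lambda W)_\lambda$ proportional to $12\lambda-\lambda''$ (this uses $\langle\Lambda^2 W,\Lambda W\rangle_{L^2}=-\|\Lambda W\|_{L^2}^2$ in the reduction of $\partial_t^2 W_\lambda$). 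Demanding that this resonant term vanish forces the reduced equation $\lambda''=12\lambda$, whose solution decaying as $t\to-\infty$ is $\lambda(t)=\tfrac1\kappa\eee^{-\kappa|t|}$ with $\kappa=2\sqrt3$; the constant $12$ traces back to the cubic nonlinearity and the leading coefficient of $W$ at the origin. One then chooses $\bs a(t)$, of size $O(\lambda^2)$ and localized at $r\sim\lambda$, to absorb the remaining non-resonant error, leaving $\|\partial_t\bs\varphi-J\circ\vD E(\bs\varphi)\|_{\edual}\lesssim\lambda^{3/2}$, which is the accuracy needed to close the estimates with relative error $\lambda^{1/2}$.

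\emph{Bootstrap and passage to the limit.} As in \cite{Merle90,Martel05}, fix $T_n\to-\infty$ and let $\bs u_n$ solve \eqref{eq:ym0} on $[T_n,T_0]$ with $\bs u_n(T_n)$ equal to $\bs\varphi(T_n)$ up to a tiny correction in the finite-dimensional span of the unstable states $\yp$, $(\yp)_\lambda$ of the two bubbles. Decompose $\bs u_n(t)=-\bs W+\bs W_{\lambda_n(t)}+\bs a_n(t)+\bs h_n(t)$, with $\lambda_n$ and its velocity fixed by orthogonality conditions, and run a bootstrap for: the modulation parameters, in particular $\lambda_n(t)=\tfrac1\kappa\eee^{-\kappa|t|}\big(1+O(\lambda_n(t)^{1/2})\big)$; the coordinates of $\bs h_n$ along the contracting modes $\ym,(\ym)_{\lambda_n}$ and along $\yp,(\yp)_{\lambda_n}$ (the latter reserved for the topological step); and a mixed functional $\mathcal I_n(t)=\tfrac12\langle\mathbf A_n\bs h_n,\bs h_n\rangle+\mathcal V_n(t)$, where $\mathbf A_n$ is a mild deformation of the Hessian $\vD^2 E(\bs\varphi)$ (coercive on the complement of those directions) and $\mathcal V_n\sim\tfrac{\lambda_n'}{\lambda_n}\int_0^\infty\chi_{\lambda_n}(r\partial_r h_n+h_n)\dot h_n\udr$ is a virial correction cut off at a scale between $\lambda_n$ and $1$, so that it ``sees'' $\bs W_{\lambda_n}$ but not $-\bs W$. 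Following Rapha\"el--Szeftel \cite{RaSz11}, $\mathcal V_n$ is engineered so that $\tfrac{\vd}{\vd t}\mathcal I_n$ no longer carries the term $\sim\tfrac{\lambda_n'}{\lambda_n}\|\bs h_n\|_\cE^2$ that a bare energy estimate cannot absorb, leaving $\tfrac{\vd}{\vd t}\mathcal I_n\lesssim\lambda_n\mathcal I_n+\lambda_n^{3/2}\sqrt{\mathcal I_n}$; since $\int\lambda_n$ is bounded and $\int_{T_n}^t\lambda_n^{3/2}\lesssim\lambda_n(t)^{3/2}$, integrating from $T_n$ (where $\mathcal I_n$ is negligible) gives $\|\bs h_n(t)\|_\cE\lesssim\lambda_n(t)^{3/2}$ uniformly in $n$, hence $\|\bs u_n(t)-(-\bs W+\bs W_{\lambda(t)})\|_\cE\lesssim\lambda_n(t)^{1/2}$. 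A Brouwer / no-retraction argument in the span of the unstable coordinates then provides, for each $n$, an initial correction for which the bootstrap holds on all of $[T_n,T_0]$. Passing to a weak-$*$ limit $\bs u_n\wto\bs u$ (uniform on compact subintervals, by the uniform bounds and local well-posedness of \eqref{eq:ym0}) yields a solution $\bs u$ on $(-\infty,T_0]$ with $\|\bs u(t)-(-\bs W+\bs W_{\lambda(t)})\|_\cE\lesssim\eee^{-\kappa|t|/2}$, i.e.\ \eqref{eq:mainthm-ym}.

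\emph{Main obstacle.} As for Theorem~\ref{thm:deux-bulles}, the heart of the matter is the uniform energy bound: because the two bubbles decouple only at the borderline exponential rate $\eee^{-\kappa|t|}$, and because $W$ is itself linearly unstable, a plain energy estimate loses a power, and everything depends on choosing and correctly localizing the virial correction $\mathcal V_n$ so that the resulting differential inequality is integrable against $\lambda_n$. The spectral analysis of $L_W$, the modulation bookkeeping and the topological argument are, by contrast, routine adaptations of \cite{moi15p-1} and of the dimension-$6$ case; the only inputs specific to Yang--Mills are the value $\kappa=2\sqrt3$ of the interaction constant and the extra care required by $-W\notin\cE$.
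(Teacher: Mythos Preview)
Your overall architecture---approximate solution determined by $\lambda''=12\lambda$, mixed energy--virial bootstrap \`a la \cite{RaSz11}, compactness limit---matches the paper, and you correctly identify $\kappa=2\sqrt3$. But there is a genuine factual error in your spectral analysis that propagates through the rest of the sketch.

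You assert that $L_W$ has ``exactly one negative eigenvalue $-\nu^2$'' and hence that each bubble carries an exponentially unstable direction $\cY^+$ to be handled by a Brouwer/shooting argument. This is false for the Yang--Mills model. After the change of variable $\wt g(x)=g(\eee^x)$, the operator $L_W$ becomes $-\tfrac{\vd^2}{\vd x^2}+4-6\sech^2(x)$; the P\"oschl--Teller spectrum of $-\tfrac{\vd^2}{\vd x^2}-6\sech^2$ is $\{-4,-1\}\cup[0,\infty)$, so after the shift by $+4$ the bottom of the spectrum is exactly $0$, attained on $\operatorname{span}\{\Lambda W\}$. There is no negative eigenvalue and no exponential instability. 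The paper states this explicitly (``there is no linear instability in the case of the Yang--Mills equation''): the initial data are simply $\bs u(T)=\bs\varphi(T)$ with no correction $h_0$, the coercivity lemma (Lemma~\ref{lem:bulles-coer-ym}) requires only the two $\cZ$-orthogonalities, and no shooting is needed. Your proposed topological step is therefore vacuous, and the ``unstable coordinates'' you plan to track do not exist.

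A smaller point: because the interaction term is \emph{exactly} a multiple of $\Lambda W$ (not merely resonant with it), the paper needs only one correction profile $Q$ solving $LQ=-\Lambda_0\Lambda W$; there is no analogue of the profile $P$ from the $N=6$ power case. This is a genuine simplification specific to Yang--Mills that your write-up does not reflect.
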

\begin{remark}
  More precisely, we will prove that
  \begin{equation*}
    \big\|\bs u(t) - \big({-}\bs W + \bs W_{\frac{1}{\kappa}\eee^{-\kappa|t|}}\big)\big\|_\cE \leq C_1 \cdot \eee^{-\frac 12 \kappa|t|},
  \end{equation*}
  where $\Lambda W := - \pd \lambda W_\lambda\rstr_{\lambda = 1}$ and $C_1 > 0$ is a constant.
\end{remark}
\begin{remark}
  The case of wave maps in the equivariance class $k = 2$ should be very similar.
\end{remark}
\begin{remark}
The energy $2E(\bs W)$ is the threshold energy for a non-dispersive behavior for solutions with topological degree $0$, see \cite{LaOh16}.
\end{remark}
\subsection{Structure of the paper}
In Sections~\ref{sec:approx} and \ref{sec:bootstrap} we give a detailed proof of Theorem~\ref{thm:deux-bulles}.
In Section~\ref{sec:ym} we treat the case of the Yang-Mills equation. We skip these parts of the proof where the arguments of Sections~\ref{sec:approx} and \ref{sec:bootstrap}
are directly applicable. Section~\ref{sec:wmap} is devoted to the wave maps equation. The main difference with respect to Section~\ref{sec:ym}
is that the characteristic length of the concentrating bubble is now a power of $|t|$ and not an exponential. Nevertheless, large parts of the previous proofs extend to this case and are skipped.
It is conceivable that one could propose a unified, more general framework of the proof, encompassing all the cases under consideration.
Appendix~\ref{sec:cauchy} is devoted to some elements of the local Cauchy theory needed in the proofs.



\subsection{Notation}
\label{ssec:notation}
The bracket $\la\cdot, \cdot\ra$ denotes the distributional pairing and the scalar product in the spaces $L^2$ and $L^2 \times L^2$.

For positive quantities $m_1$ and $m_2$ we write $m_1 \lesssim m_2$ if $m_1 \leq Cm_2$ for some constant $C > 0$ and $m_1 \sim m_2$ if $m_1 \lesssim m_2 \lesssim m_1$.

We denote $\chi$ a standard $C^\infty$ cut-off function, that is $\chi(x) = 1$ for $|x| \leq 1$, $\chi(x) = 0$ for $|x| \geq 2$ and $0 \leq \chi(x) \leq 1$ for $1\leq |x|\leq 2$.

\section{Construction of an approximate solution -- the NLW case}
\label{sec:approx}
\subsection{Inverting the linearized operator}
\label{ssec:linearise}
Linearizing \eqref{eq:nlw0} around $\bs W$, $\bs u = \bs W + \bs h$, one obtains
\begin{equation*}
  \partial_t \bs h = J\circ\vD^2 E(\bs W)\bs h = \begin{pmatrix} 0 & \Id \\ -L & 0\end{pmatrix} \bs h,
\end{equation*}
where $L$ is the Schr\"odinger operator
$$Lh := (-\Delta - f'(W))h = (-\Delta - 2W)h.$$
We introduce the following notation for the generators of the $\dot H^1$-critical and the $L^2$-critical scale change:
\begin{equation}
\Lambda := 2+x\cdot \grad,\qquad \Lambda_0 := 3 + x\cdot \grad.
\end{equation}
This is coherent with the definition of $\Lambda W$.
Notice that $L(\Lambda W) = \pd\lambda\rstr_{\lambda = 1}\big({-}\Delta W_\lambda - f(W_\lambda)\big) = 0$.

We fix $\cZ\in C_0^\infty$ such that
\begin{equation}
  \label{eq:Z}
  \la \cZ, \Lambda W\ra > 0, \qquad \la \cZ, \cY\ra = 0.
\end{equation}
We will use this function to define appropriate orthogonality conditions.

We denote also
\begin{equation}
  \label{eq:kappa}
  \kappa := \Big(-\frac{\la \Lambda W, f'(W)\ra}{\la \Lambda W, \Lambda W\ra}\Big)^{\frac 12} = \sqrt{\frac 54}.
\end{equation}
\begin{lemma}
  \label{lem:antecedent}
  There exist radial rational functions $P(x), Q(x) \in C^\infty(\bR^6)$ such that
  \begin{gather}
    LP = \kappa^2 \Lambda W + f'(W), \qquad LQ = -\Lambda_0 \Lambda W, \label{eq:profil-eq} \\
    \la \cZ, P\ra = \la \cZ, Q\ra = 0, \label{eq:profil-orth} \\
    P(x) \sim |x|^{-2}, \qquad Q(x)\sim |x|^{-2}\qquad\text{as }|x| \to +\infty. \label{eq:profil-asympt}
  \end{gather}
  \end{lemma}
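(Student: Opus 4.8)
The plan is to construct $P$ and $Q$ by solving the ordinary differential equations $LP = \kappa^2\Lambda W + f'(W)$ and $LQ = -\Lambda_0\Lambda W$ explicitly, exploiting the fact that in the radial variable $r = |x|$ the operator $L$ is an ODE with a known solution in its kernel. Indeed, $\Lambda W$ solves $L(\Lambda W) = 0$, so by reduction of order a second, linearly independent solution $\Gamma$ of $L\Gamma = 0$ is given by $\Gamma(r) = \Lambda W(r)\int^r \frac{s^{-5}}{(\Lambda W(s))^2}\,\vd s$ (the weight $s^{-5}$ coming from the radial density in dimension $6$, namely $r^{N-1}$ with $N=6$). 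Since $W$ and hence $\Lambda W$ are explicit rational functions of $r^2$, the integral is computed in closed form and $\Gamma$ is again rational (possibly with a logarithmic term that one must check cancels — this is where the specific dimension $N = 6$ matters; in $N = 6$ the bubble decays like $|x|^{-4}$ and the arithmetic works out, whereas in $N \in \{3,4,5\}$ it does not, matching the remark in the introduction). The Wronskian identity then gives a variation-of-parameters formula for a particular solution of $L v = g$ with explicit rational right-hand sides $g = \kappa^2\Lambda W + f'(W)$ and $g = -\Lambda_0\Lambda W$, and one checks the resulting $v$ is rational and smooth on all of $\bR^6$.

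The second step is to fix the behaviour at infinity, \eqref{eq:profil-asympt}. The homogeneous solutions behave like $\Lambda W(r)\sim r^{-4}$ and $\Gamma(r)\sim r^{0}$ (constant) as $r\to\infty$; the inhomogeneous piece will generically grow. The key point is the choice of $\kappa^2$ in \eqref{eq:kappa}: the coefficient of the slowest-decaying problematic term in the particular solution of $LP = \kappa^2\Lambda W + f'(W)$ vanishes precisely when $\kappa^2 = -\la\Lambda W, f'(W)\ra/\la\Lambda W,\Lambda W\ra$, by a solvability (Fredholm-type) computation: pairing $LP$ against $\Lambda W$ and integrating by parts, the obstruction to finding a decaying solution is $\la \kappa^2\Lambda W + f'(W), \Lambda W\ra = 0$, which is exactly \eqref{eq:kappa}. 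With that value of $\kappa$, one selects the unique particular solution whose constant mode at infinity is killed, leaving decay $\sim r^{-2}$; the $r^{-2}$ rate (rather than faster) comes from the size of the source terms and is the best one can do. For $Q$ the right-hand side $-\Lambda_0\Lambda W$ has enough decay that a decaying solution exists directly; one again picks the representative with no constant mode, getting $Q\sim r^{-2}$.

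The final step handles the orthogonality \eqref{eq:profil-orth}. Having fixed $P$ and $Q$ up to their decay behaviour, there remains the freedom to add a multiple of $\Lambda W$ (the decaying kernel element) to each: replacing $P$ by $P + c\,\Lambda W$ changes $LP$ not at all, preserves the asymptotics (since $\Lambda W$ decays faster), and shifts $\la\cZ,P\ra$ by $c\la\cZ,\Lambda W\ra$. By \eqref{eq:Z} we have $\la\cZ,\Lambda W\ra > 0$, so there is a unique $c$ making $\la\cZ,P\ra = 0$, and similarly for $Q$. This fixes $P$ and $Q$ uniquely and completes the construction.

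I expect the main obstacle to be the first step: verifying that the explicit integration producing $\Gamma$ and the particular solutions yields genuinely \emph{rational} functions with no surviving logarithms, and checking smoothness at the origin $r = 0$ (radial ODE solutions can have a singular mode $\sim r^{-(N-2)} = r^{-4}$ at the origin, which must be excluded by choosing the correct particular solution). This is a concrete but somewhat delicate computation that is special to $N = 6$; everything after it — the solvability condition pinning down $\kappa$, the asymptotic matching, and the orthogonality adjustment — is comparatively routine linear algebra and integration by parts.
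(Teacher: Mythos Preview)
Your strategy is correct and would work, but the paper takes a much shorter route: it simply writes down explicit formulas
\[
\wt P(x) = \Big(1+\tfrac{|x|^2}{24}\Big)^{-3}\Big(1-10\cdot\tfrac{|x|^2}{24}-3\cdot\big(\tfrac{|x|^2}{24}\big)^2\Big),\quad
\wt Q(x) = \Big(1+\tfrac{|x|^2}{24}\Big)^{-3}\Big(1+11\cdot\tfrac{|x|^2}{24}-12\cdot\big(\tfrac{|x|^2}{24}\big)^2\Big),
\]
checks by direct computation that they solve \eqref{eq:profil-eq}, reads off the $|x|^{-2}$ asymptotics from the formulas, and then adds multiples of $\Lambda W$ to enforce \eqref{eq:profil-orth} exactly as you describe. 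All of your concerns about logarithms, smoothness at the origin, and the zero of $\Lambda W$ (note $\Lambda W(r)$ vanishes at $r=\sqrt{24}$, which makes the reduction-of-order integral delicate) are bypassed entirely: one just verifies the answer. Your variation-of-parameters approach is the principled way to \emph{derive} these formulas, and it explains \emph{why} rational solutions with the stated decay exist, whereas the paper's proof is a one-line verification that offers no insight into how the formulas were found.

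One small correction to your reasoning about $Q$: the right-hand side $-\Lambda_0\Lambda W$ decays like $|x|^{-4}$, the same as $\kappa^2\Lambda W + f'(W)$, so it is not ``enough decay'' that gives a decaying solution. Rather, the Fredholm condition $\la\Lambda W,\Lambda_0\Lambda W\ra=0$ holds automatically because $\Lambda_0$ is antisymmetric on $L^2(\bR^6)$ (it generates the $L^2$-critical scaling). The paper notes both Fredholm conditions in the remark following the proof.
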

  \begin{proof}
  By a direct computation one checks that the functions
  \begin{align}
    \wt P(x) &:= \big(1+\frac{|x|^2}{24}\big)^{-3}\cdot\big(1-10\cdot\frac{|x|^2}{24}-3\cdot\big(\frac{|x|^2}{24}\big)^2\big),  \label{eq:A} \\
    \wt Q(x) &:= \big(1+\frac{|x|^2}{24}\big)^{-3}\cdot\big(1+11\cdot\frac{|x|^2}{24}-12\cdot\big(\frac{|x|^2}{24}\big)^2\big)  \label{eq:B}
  \end{align}
  satisfy \eqref{eq:profil-eq}. Adding suitable multiples of $\Lambda W$ to both functions we obtain $P$ and $Q$ satisfying \eqref{eq:profil-orth}.
  The formulas defining $\wt P$ and $\wt Q$ directly imply \eqref{eq:profil-asympt}.
\end{proof}
\begin{remark}
  Note that \eqref{eq:profil-asympt} is closely related to the Fredholm conditions $\la \Lambda W, \kappa^2 \Lambda W + f'(W)\ra = 0$ and $\la \Lambda W, -\Lambda_0 \Lambda W\ra = 0$,
  see Lemma~\ref{lem:fredholm-wmap} or \cite[Proposition 2.1]{moi15p-1} for a more systematic presentation.
\end{remark}

\subsection{Formal computation}
\label{ssec:formal}

The usual method of performing a formal analysis of blow-up solutions is to search a series expansion with respect to a small scalar parameter
depending on time and converging to $0$ at blow-up. In our case the blow-up time is $-\infty$.
If $u(t) \simeq W + W_{\lambda(t)}$, then $\partial_t u(t) \simeq -\lambda'(t) \Lambda W_\uln{\lambda(t)}$, hence
\begin{equation}
  \bs u(t) \simeq (W + W_{\lambda(t)}, 0) - \lambda'(t)\cdot(0, \Lambda W_\uln{\lambda(t)}) = \bs W + \bs U^{(0)}_{\lambda(t)} + b(t)\cdot \bs U^{(1)}_{\lambda(t)},
\end{equation}
with $b(t) := \lambda'(t)$, $\bs U^{(0)} := (W, 0)$ and $\bs U^{(1)} := (0, -\Lambda W)$.
This suggests considering $b(t) = \lambda'(t)$ as the small parameter with respect to which
the formal expansion should be sought. Hence, we make the ansatz
\begin{equation}
  \bs u(t) = \bs W + \bs U^{(0)}_{\lambda(t)} + b(t)\cdot \bs U^{(1)}_{\lambda(t)} + b(t)^2 \cdot\bs U^{(2)}_{\lambda(t)},
\end{equation}
and try to find the conditions under which a satisfactory candidate for $\bs U^{(2)} = (U^{(2)}, \dot U^{(2)})$ can be proposed.
Neglecting irrelevant terms and replacing $\lambda'(t)$ by $b(t)$, we compute
\begin{equation*}
  \partial_{t}^2u(t) = -b'(t)(\Lambda W)_{\uln{\lambda(t)}} + \frac{b(t)^2}{\lambda(t)}(\Lambda_0\Lambda W)_{\uln{\lambda(t)}} + \text{lot}.
\end{equation*}
  On the other hand, using the fact that $f(W + W_\lambda) = f(W) + f(W_\lambda) + f'(W_\lambda)W \simeq f(W) + f(W_\lambda) + f'(W_\lambda)$ for $\lambda \ll 1$
  and $f'(W_\lambda) = \lambda f'(W)_\uln\lambda$, we get
  \begin{equation*}
    \Delta u(t) + f(u(t)) = -\frac{b(t)^2}{\lambda(t)}(LU^{(2)})_{\uln{\lambda(t)}} + \lambda(t)f'(W)_\uln{\lambda(t)} + \text{lot}.
  \end{equation*}
  We discover that, formally at least, we should have
  \begin{equation}
    \label{eq:profilT}
    LU^{(2)} = -\Lambda_0\Lambda W + \frac{\lambda(t)}{b(t)^2}\big(b'(t)\cdot \Lambda W + \lambda(t)\cdot f'(W)\big).
  \end{equation}
  Lemma~\ref{lem:antecedent} shows that if $b'(t) = \kappa^2 \lambda(t)$,
  then equation \eqref{eq:profilT} has a decaying regular solution $U^{(2)} = Q + \frac{\lambda(t)^{2}}{b(t)^2}P$.
  The \emph{formal parameter equations}
  \begin{equation}
    \label{eq:formal-param}
    \lambda'(t) = b(t), \qquad b'(t) = \kappa^2 \lambda(t)
  \end{equation}
    have a solution
  \begin{equation}
    \label{eq:param-sol}
    (\lambda_\tx{app}(t), b_\tx{app}(t)) = \big(\frac{1}{\kappa}\eee^{-\kappa|t|}, \eee^{-\kappa|t|}\big),\qquad t \leq T_0 < 0.
  \end{equation}

  In any space dimension $N$, ignoring the problems related to slow decay of $W$, a similar analysis would yield $b'(t) = \kappa^2 \lambda(t)^\frac{N-4}{2}$.
  For $N < 6$ this leads to a finite time blow-up, which was studied in \cite{moi15p-1} for $N = 5$.
  For $N > 6$, we obtain a global solution $\lambda(t) \sim |t|^{-\frac{4}{N-6}}$, see Remark~\ref{rem:dim7}.
  
\subsection{Bounds on the error of the ansatz}
\label{ssec:error}
Let $I = [T, T_0]$ be a time interval, with $T \leq T_0 < 0$ and $|T_0|$ large.
Let $\lambda(t)$ and $\mu(t)$ be  $C^1$ functions on $[T, T_0]$ such that
\begin{gather}
  \lambda(T) = \frac{1}{\kappa} \eee^{-\kappa|T|},\quad \mu(T) = 1, \label{eq:lambda-init0} \\
  \frac{8}{9\kappa}\eee^{-\kappa|t|}\leq \lambda(t) \leq \frac{9}{8\kappa}\eee^{-\kappa|t|},\qquad \frac 89 \leq \mu(t)\leq \frac 98. \label{eq:lambda-bound0}
\end{gather}
We define the \emph{approximate solution} $\bs \varphi(t) = (\varphi(t), \dot \varphi(t)): [T, T_0] \to \cE$ by the formula
\begin{equation}
  \label{eq:phi-def}
  \begin{aligned}
    \varphi(t) &:= W_{\mu(t)} + W_{\lambda(t)} + S(t), \\
    \dot \varphi(t) &:= -b(t)\Lambda W_\uln{\lambda(t)},
\end{aligned}
\end{equation}
where
\begin{align}
  \label{eq:b-def}
  b(t) &:= \eee^{-\kappa|T|} + \kappa^2 \int_T^t \frac{\lambda(\tau)}{\mu(\tau)^2}\ud \tau,\qquad&\text{for }t \in [T, T_0], \\
  S(t) &:= \chi\cdot \Big(\frac{\lambda(t)^2}{\mu(t)^2} P_{\lambda(t)} + b(t)^2 Q_{\lambda(t)}\Big),\qquad&\text{for }t\in [T, T_0]. \label{eq:S-def}
\end{align}
From \eqref{eq:lambda-bound0} we get $\big(\frac 89\big)^3 \frac{1}{\kappa}\eee^{-\kappa|t|} \leq \frac{\lambda(t)}{\mu(t)^2} \leq \big(\frac 98\big)^3 \frac{1}{\kappa}\eee^{-\kappa|t|}$.
Integrating we get the following bound for $b(t)$, $t\in[T, T_0]$:
\begin{equation}
  \label{eq:b-bound0}
  \begin{aligned}
  \Big(\frac 89\Big)^3 \eee^{-\kappa|t|} &< \eee^{-\kappa|T|} +\Big(\frac 89\Big)^3 (\eee^{-\kappa|t|}-\eee^{-\kappa|T|}) \\ &
  \leq b(t) \leq \eee^{-\kappa|T|}+ \Big(\frac 98\Big)^3(\eee^{-\kappa|t|}-\eee^{-\kappa|T|}) < \Big(\frac 98\Big)^3\eee^{-\kappa|t|}.
\end{aligned}
\end{equation}
From \eqref{eq:profil-asympt} we obtain
\begin{equation}
  \label{eq:taille-P}
  \begin{aligned}
    &\|\chi\cdot P_\uln\lambda\|_{\dot H^1} \simeq \|\partial_r(\chi\cdot P_\uln\lambda)\|_{L^2(r^5\ud r)} \lesssim \|P_\uln\lambda\|_{L^2(0\leq r\leq 2)} + \|\frac{1}{\lambda}(\partial_r P)_\uln\lambda\|_{L^2(0\leq r\leq 2)} \\
    &= \|P\|_{L^2(0 \leq r\leq \frac{2}{\lambda})} + \frac{1}{\lambda}\|\partial_r P\|_{L^2(0\leq r\leq \frac{2}{\lambda})} \\
    &\lesssim \Big(\int_0^{2/\lambda} (1+r^2)^{-2} r^5\ud r\Big)^\frac 12+ \frac{1}{\lambda}\cdot \Big(\int_0^{2/\lambda} (1+r^3)^{-2} r^5\ud r\Big)^\frac 12
    \lesssim \frac{1}{\lambda}|\log \lambda|^\frac 12,
\end{aligned}
\end{equation}
and analogously $\|\chi\cdot Q_\uln\lambda\|_{\dot H^1} \lesssim \frac{1}{\lambda}|\log \lambda|^\frac 12$, hence
$$
\|S(t)\|_{\dot H^1} \leq \frac{\lambda^3}{\mu^2}\|\chi P_\uln\lambda\|_{\dot H^1} + \lambda b^2 \|\chi Q_\uln\lambda\|_{\dot H^1} \lesssim \eee^{-3\kappa|t|}\cdot \frac{1}{\lambda}|\log \lambda|^\frac 12\lesssim \sqrt{|t|}\cdot \eee^{-2\kappa|t|}.
$$
Thus for any $c > 0$ there exists $T_0$ such that if $T < T_0$ then
\begin{equation}
  \label{eq:taille-correct}
  \|S(t)\|_{\dot H^1} \leq c\cdot \eee^{-\frac 32 \kappa|t|}, \qquad \text{for }t \in [T, T_0].
\end{equation}
Note also that $\|P_\lambda\|_{L^\infty} + \|Q_\lambda\|_{L^\infty} \lesssim \lambda^{-2}$, hence $S(t)$ is bounded in $L^\infty$.

Since $\cZ$ has compact support, for sufficiently small $\lambda$, \eqref{eq:profil-orth} implies
\begin{equation}
  \label{eq:S-orth}
  \la \cZ_\uln{\lambda(t)}, S(t)\ra = 0.
\end{equation}

We denote
\begin{equation}
  \label{eq:psi}
\begin{aligned}
  \bs \psi(t) &= (\psi(t), \dot \psi(t)) := \partial_t  \bs \varphi(t) - \vD E(\bs \varphi(t)) \\ &= \big(\partial_t \varphi(t) - \dot \varphi(t), \partial_t \dot \varphi(t) - (\Delta \varphi(t) + f(\varphi(t)))\big).
\end{aligned}
\end{equation}
This function describes how much $\bs \varphi(t)$ fails to be an exact solution of \eqref{eq:nlw0}. Before we prove bounds on $\bs \psi(t)$,
we gather in the next elementary lemma pointwise inequalities used in various places in the text.
\begin{lemma}
  \label{lem:pointwise}
  Let $k, l, m \in \bR$. Then
  \begin{gather}
    |f'(k+l) - f'(k)| \leq f'(l), \label{eq:point-fp2} \\
    |f(k+l) - f(k) - f'(k)l| \leq 5|f(l)|, \label{eq:point-f2} \\
    |F(k+l) - F(k) - f(k)l - \frac 12 f'(k)l^2| \leq 5 F(l). \label{eq:point-F2}
  \end{gather}
\end{lemma}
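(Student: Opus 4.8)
The plan is to use the explicit formulae $f(t) = t|t|$, $f'(t) = 2|t|$ and $F(t) = \frac 13|t|^3$, together with the single structural fact that $f'$ is globally Lipschitz — more precisely, that $|f'(a) - f'(b)| \leq f'(a-b)$ for all $a, b \in \bR$. This last inequality is nothing but the reverse triangle inequality ($2\big||a|-|b|\big| \leq 2|a-b|$), and taking $a = k+l$, $b = k$ it is exactly \eqref{eq:point-fp2}. So \eqref{eq:point-fp2} itself requires no argument beyond this observation, and it is the only input needed for the other two bounds.

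For \eqref{eq:point-f2} I would fix $k, l \in \bR$, set $g(s) := f(k+sl)$ on $[0,1]$, note that $g \in C^1$ with $g'(s) = f'(k+sl)\cdot l$, and write $f(k+l) - f(k) - f'(k)l = \int_0^1\big(f'(k+sl) - f'(k)\big)l\ud s$ by the fundamental theorem of calculus. The integrand is bounded by $f'(sl)\cdot|l| = 2s|l|^2$ using \eqref{eq:point-fp2} with $(k, sl)$ in place of $(k, l)$, and integrating in $s$ over $[0,1]$ produces $|l|^2 = |f(l)|$, which is in fact stronger than the claimed bound $5|f(l)|$.

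For \eqref{eq:point-F2} the same idea works with one extra integration: set $h(s) := F(k+sl)$, note that $h \in C^2$ with $h'(s) = f(k+sl)\cdot l$ and $h''(s) = f'(k+sl)\cdot l^2$, and apply Taylor's formula with integral remainder to get $F(k+l) - F(k) - f(k)l - \frac 12 f'(k)l^2 = \int_0^1 (1-s)\big(f'(k+sl) - f'(k)\big)l^2\ud s$. Bounding the integrand by \eqref{eq:point-fp2} exactly as before and integrating ($\int_0^1 s(1-s)\ud s = \frac 16$) yields $\frac 13|l|^3 = F(l)$, again better than $5F(l)$.

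There is no serious obstacle here — the lemma is elementary. The one point worth flagging is regularity: $f$ is only $C^1$ and $F$ only $C^2$, so one cannot Taylor-expand to the ``next'' order (the relevant derivatives are bounded but not continuous); instead everything is reduced to the Lipschitz estimate \eqref{eq:point-fp2} on $f'$, applied inside the integral remainder. The constants $5$ on the right-hand sides are deliberately non-sharp (the computation above delivers $1$); they are stated in this wasteful form because a fixed, uniform constant is all that is used in the error estimates of Section~\ref{sec:approx}.
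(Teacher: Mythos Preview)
Your proof is correct and takes a genuinely different route from the paper. The paper dismisses \eqref{eq:point-fp2} as well known, then proves \eqref{eq:point-f2} by homogeneity (reducing to $k=1$) followed by a case split on $|l|\leq 1$ versus $|l|\geq 1$, using explicit algebra in each case; \eqref{eq:point-F2} is then obtained by integrating \eqref{eq:point-f2}. Your argument instead writes the left-hand sides as integral Taylor remainders and feeds \eqref{eq:point-fp2} directly into the integrand. This is cleaner: it avoids any case analysis, treats \eqref{eq:point-f2} and \eqref{eq:point-F2} in parallel, and yields the sharp constant $1$ in both places (explaining why the paper's constant $5$ is wasteful). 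The paper's approach, on the other hand, makes the role of homogeneity explicit and would adapt more readily to nonlinearities $f$ that are homogeneous but for which $f'$ is not globally Lipschitz (e.g.\ the power nonlinearity in dimensions $3\leq N\leq 5$), where your integral-remainder bound via \eqref{eq:point-fp2} would not be available as stated.
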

\begin{proof}
  Inequality \eqref{eq:point-fp2} is well-known.
  Bounds \eqref{eq:point-f2} holds for $k = 0$, hence (by homogeneity) we may assume that $k = 1$. For $|l| \leq 1$ we have
  $|f(1+l) - f(1) - f'(1)l| = |(1+l)^2 - 1 - 2l| = 2l^2 \leq 5|f(l)|$ and for $|l|\geq 1$ we find
  $|f(1+l) - f(1) - f'(1)l| \leq (1+l)^2 + 1 + 2|l| \leq 5|f(l)|$.
  Bound \eqref{eq:point-F2} follows by integrating \eqref{eq:point-f2}.
\end{proof}
\begin{lemma}
  \label{lem:psi}
  Suppose that for $t \in [T, T_0]$ there holds $|\lambda'(t)| \lesssim \eee^{-\kappa |t|}$ and $|\mu'(t)| \lesssim \eee^{-\kappa|t|}$. Then
  \begin{align}
    \|\psi(t) + \mu'(t)\frac{1}{\mu(t)}\Lambda W_{\mu(t)}+ (\lambda'(t) - b(t))\frac{1}{\lambda(t)}\Lambda W_{\lambda(t)} \|_{\dot H^1} &\lesssim \eee^{-\frac 32 \kappa |t|}
    , \label{eq:psi0} \\
    \|\dot \psi(t) - \frac{b(t)}{\lambda(t)}(\lambda'(t) - b(t))\Lambda_0 \Lambda W_\uln{\lambda(t)}\|_{L^2} &\lesssim \eee^{-\frac 32 \kappa|t|}, \label{eq:psi1} \\
    \|(-\Delta - f'(\varphi(t)))\psi(t)\|_{\dot H^{-1}} &\lesssim \eee^{-\frac 32 \kappa|t|}. \label{eq:psi-lin}
  \end{align}
\end{lemma}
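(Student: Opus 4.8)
\emph{Proof plan.} All three bounds will follow by writing $\psi$ and $\dot\psi$ out explicitly and exhibiting the cancellations designed into the ansatz; none of the implied constants depend on $T\le T_0$, only on the bounds \eqref{eq:lambda-bound0}, \eqref{eq:b-bound0} and on the assumed bounds $|\lambda'|,|\mu'|\lesssim\eee^{-\kappa|t|}$. Since $\Lambda$ (resp. $\Lambda_0$) generates the $\dot H^1$-critical (resp. $L^2$-critical) rescaling, one has $\partial_t W_{\nu}=-\frac{\nu'}{\nu}\Lambda W_\nu$ for $\nu\in\{\mu,\lambda\}$, $\partial_t \Lambda W_{\uln\lambda}=-\frac{\lambda'}{\lambda}\Lambda_0\Lambda W_{\uln\lambda}$, $\partial_t P_{\uln\lambda}=-\frac{\lambda'}{\lambda}\Lambda_0 P_{\uln\lambda}$, and $\frac1\lambda\Lambda W_\lambda=\Lambda W_{\uln\lambda}$, whence
\[
  \psi(t)=-\tfrac{\mu'}{\mu}\Lambda W_\mu-(\lambda'-b)\Lambda W_{\uln\lambda}+\partial_t S .
\]
Thus \eqref{eq:psi0} is \emph{equivalent} to $\|\partial_t S\|_{\dot H^1}\lesssim\eee^{-\frac32\kappa|t|}$. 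Differentiating \eqref{eq:S-def} and using $b'=\kappa^2\lambda/\mu^2$ together with \eqref{eq:lambda-bound0}--\eqref{eq:b-bound0}, each coefficient produced --- $\partial_t\frac{\lambda^2}{\mu^2}$, $\frac{\lambda^2}{\mu^2}\lambda'$, $\partial_t b^2$, $b^2\lambda'$ --- is $O(\eee^{-3\kappa|t|})$, while $\|\chi P_{\uln\lambda}\|_{\dot H^1}$, $\|\chi\Lambda_0 P_{\uln\lambda}\|_{\dot H^1}$ and the $Q$-analogues are $\lesssim\frac1\lambda|\log\lambda|^{1/2}$ by the computation of \eqref{eq:taille-P} (using $\Lambda_0 P,\Lambda_0 Q\sim|x|^{-2}$, which follows from \eqref{eq:profil-asympt}). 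Hence $\|\partial_t S\|_{\dot H^1}\lesssim\eee^{-3\kappa|t|}\cdot\frac1\lambda|\log\lambda|^{1/2}\lesssim\sqrt{|t|}\,\eee^{-2\kappa|t|}\le\eee^{-\frac32\kappa|t|}$ once $|T_0|$ is large, which is \eqref{eq:psi0}.

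For \eqref{eq:psi1}, write $\dot\psi=\partial_t\dot\varphi-(\Delta\varphi+f(\varphi))$ with $\partial_t\dot\varphi=-b'\Lambda W_{\uln\lambda}+\frac{b\lambda'}{\lambda}\Lambda_0\Lambda W_{\uln\lambda}$, and, since $W_\mu,W_\lambda$ solve \eqref{eq:elliptic}, $\Delta\varphi+f(\varphi)=\Delta S+\bigl(f(\varphi)-f(W_\mu)-f(W_\lambda)\bigr)$. On the support of $\varphi$ the functions $W_\mu,W_\lambda$ are positive (for $|T_0|$ large), so $f(W_\mu+W_\lambda)=f(W_\mu)+f(W_\lambda)+f'(W_\lambda)W_\mu$ and $f'(W_\mu+W_\lambda)=f'(W_\mu)+f'(W_\lambda)$; with \eqref{eq:point-f2} and $\|S\|_{L^4}^2\lesssim\eee^{-3\kappa|t|}$ (same scaling estimate as above) this gives $f(\varphi)=f(W_\mu)+f(W_\lambda)+f'(W_\lambda)W_\mu+(f'(W_\mu)+f'(W_\lambda))S+r_1$ with $\|r_1\|_{L^2}\lesssim\eee^{-3\kappa|t|}$. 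Dropping the cut-off (the commutator $[\Delta,\chi]$ and the tail $(1-\chi)(\cdots)$ are supported in $|x|\ge1$ and cost an $L^2$-error $\lesssim\eee^{-2\kappa|t|}$) and using Lemma~\ref{lem:antecedent} with the scaling identities,
\[
  -\Delta\bigl(\tfrac{\lambda^2}{\mu^2}P_\lambda\bigr)=\tfrac{\lambda}{\mu^2}\bigl(\kappa^2\Lambda W+f'(W)+f'(W)P\bigr)_{\uln\lambda}+r_2,\qquad
  -\Delta\bigl(b^2Q_\lambda\bigr)=-\tfrac{b^2}{\lambda}\bigl(\Lambda_0\Lambda W-f'(W)Q\bigr)_{\uln\lambda}+r_3,
\]
while $f'(W_\lambda)S=\tfrac{\lambda}{\mu^2}(f'(W)P)_{\uln\lambda}+\tfrac{b^2}{\lambda}(f'(W)Q)_{\uln\lambda}+r_4$, all $r_j$ of $L^2$-norm $\lesssim\eee^{-2\kappa|t|}$. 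Substituting into $\dot\psi-\frac b\lambda(\lambda'-b)\Lambda_0\Lambda W_{\uln\lambda}$ and using $b'=\kappa^2\lambda/\mu^2$, the terms carrying $(\kappa^2\Lambda W)_{\uln\lambda}$, $(\Lambda_0\Lambda W)_{\uln\lambda}$, $(f'(W)P)_{\uln\lambda}$, $(f'(W)Q)_{\uln\lambda}$ all cancel in pairs, and since $W(0)=1$ what remains is
\[
  \dot\psi-\tfrac b\lambda(\lambda'-b)\Lambda_0\Lambda W_{\uln\lambda}=2W_\lambda\bigl(W_\mu(0)-W_\mu\bigr)-f'(W_\mu)S+r_5,\qquad\|r_5\|_{L^2}\lesssim\eee^{-2\kappa|t|}.
\]
By $|W_\mu(x)-W_\mu(0)|\lesssim\min(|x|^2/\mu^4,\mu^{-2})$ and the decay of $W_\lambda$, the first term has $L^2$-norm $\lesssim\eee^{-2\kappa|t|}$, and the second is $\le\|W_\mu\|_{L^\infty}\|S\|_{L^2}\lesssim\eee^{-2\kappa|t|}$; since $\eee^{-2\kappa|t|}\le\eee^{-\frac32\kappa|t|}$, this is \eqref{eq:psi1}.

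For \eqref{eq:psi-lin}, use the formula for $\psi$ above. From $L\Lambda W=0$ and rescaling, $(-\Delta-f'(W_\mu))\Lambda W_\mu=0$ and $(-\Delta-f'(W_\lambda))\Lambda W_{\uln\lambda}=0$, so
\[
  (-\Delta-f'(\varphi))\psi=-\tfrac{\mu'}{\mu}\bigl(f'(W_\mu)-f'(\varphi)\bigr)\Lambda W_\mu-(\lambda'-b)\bigl(f'(W_\lambda)-f'(\varphi)\bigr)\Lambda W_{\uln\lambda}+(-\Delta-f'(\varphi))\partial_t S .
\]
Using $L^{3/2}(\bR^6)\hookrightarrow\dot H^{-1}(\bR^6)$, the pointwise bound \eqref{eq:point-fp2} together with $f'(a+b)\le f'(a)+f'(b)$, and splitting $\bR^6$ into $\{|x|\lesssim\lambda\}$, $\{\lambda\lesssim|x|\lesssim\mu\}$, $\{|x|\gtrsim\mu\}$, one obtains $\|(f'(W_\mu)-f'(\varphi))\Lambda W_\mu\|_{\dot H^{-1}}\lesssim\|(W_\lambda+|S|)\Lambda W_\mu\|_{L^{3/2}}\lesssim|t|^{2/3}\eee^{-2\kappa|t|}$ and likewise $\|(f'(W_\lambda)-f'(\varphi))\Lambda W_{\uln\lambda}\|_{\dot H^{-1}}\lesssim|t|^{2/3}\eee^{-\kappa|t|}$ (the intermediate region costs only the factor $\log(\mu/\lambda)\sim|t|$). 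Multiplying by $|\mu'|,|\lambda'-b|\lesssim\eee^{-\kappa|t|}$, and bounding $\|(-\Delta-f'(\varphi))\partial_t S\|_{\dot H^{-1}}\le\|\partial_t S\|_{\dot H^1}+\|f'(\varphi)\partial_t S\|_{L^{3/2}}\lesssim\sqrt{|t|}\,\eee^{-2\kappa|t|}$ (the last term treated as above, using that $\partial_t S$ concentrates at scale $\lambda$), gives \eqref{eq:psi-lin}.

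The main obstacle is the second step. The whole design of $\varphi$ --- the functions $P,Q$ from Lemma~\ref{lem:antecedent} and the parameter relation $b'=\kappa^2\lambda/\mu^2$ --- exists precisely to force the ``resonant'' contributions to cancel in pairs, so one must track all the scaling factors and the many $O(\eee^{-2\kappa|t|})$ errors (from $\chi$, from $f(S)$, from the slow decay $P,Q\sim|x|^{-2}$) to be sure the only genuine residue is the strong-interaction term $2W_\lambda(W_\mu(0)-W_\mu)$, which is $O(\eee^{-2\kappa|t|})$ rather than merely $O(\eee^{-\kappa|t|})$. A secondary point is that the $\dot H^{-1}$ estimate needs the slow decay of $W$, $\Lambda W$, $P$, $Q$ handled carefully enough that the borderline region $\lambda\lesssim|x|\lesssim\mu$ produces only a polynomial-in-$|t|$ loss.
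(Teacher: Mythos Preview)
Your proof is essentially correct, and for \eqref{eq:psi0} and \eqref{eq:psi-lin} it follows the paper's argument almost verbatim (the paper reduces \eqref{eq:psi-lin} to the two terms $(\lambda'-b)\frac{1}{\lambda}\Lambda W_\lambda$ and $\mu'\frac{1}{\mu}\Lambda W_\mu$ via \eqref{eq:psi0}, then bounds the potential differences in $L^{3/2}$ exactly as you do).

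For \eqref{eq:psi1} your route differs genuinely from the paper's. You exploit the exact quadratic identity $f(\varphi)=W_\mu^2+W_\lambda^2+2W_\mu W_\lambda+2(W_\mu+W_\lambda)S+S^2$ (valid because $\varphi>0$ when $|T_0|$ is large), which lets you carry out a single global expansion and isolate the residue $2W_\lambda\bigl(W_\mu(0)-W_\mu\bigr)-f'(W_\mu)S$ directly. The paper instead splits $\bR^6$ into $\{|x|\le\sqrt\lambda\}$ and $\{|x|\ge\sqrt\lambda\}$, uses the Taylor-type bounds of Lemma~\ref{lem:pointwise} in each region, and checks term by term that the contributions are $\lesssim\eee^{-\frac32\kappa|t|}$. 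Your approach is cleaner and makes the cancellation mechanism more transparent, but it is specific to the quadratic nonlinearity in dimension~$6$; the paper's region-splitting is more robust and is what is reused (with minor changes) for the Yang--Mills and wave-map models later in the paper. One caveat: your handling of the cutoff errors (``dropping the cut-off\dots cost an $L^2$-error $\lesssim\eee^{-2\kappa|t|}$'') is correct but compressed --- a full proof has to check separately that $[\Delta,\chi]\bigl(\tfrac{\lambda^2}{\mu^2}P_\lambda+b^2Q_\lambda\bigr)$, the tail $(1-\chi)\Delta\bigl(\tfrac{\lambda^2}{\mu^2}P_\lambda+b^2Q_\lambda\bigr)$, and the cutoff error in $f'(W_\lambda)S$ are each $O(\eee^{-2\kappa|t|})$ in $L^2$, using $|\Delta P(y)|\lesssim|y|^{-4}$ at infinity (which follows from $LP=\kappa^2\Lambda W+f'(W)$).
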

\begin{proof}
  Using the definitions of $\bs \varphi$ and $\bs \psi$ we find
\begin{equation*}
  \begin{aligned}
    &\psi + \mu'\Lambda W_\uln{\mu} +(\lambda' - b)\Lambda W_\uln{\lambda} = 
    \partial_t \varphi - \dot \varphi + \mu'\Lambda W_\uln\mu + (\lambda' - b)\Lambda W_\uln\lambda \\
    &= -\mu' \Lambda W_\uln\mu - \lambda'\Lambda W_\uln\lambda + \partial_t S + b\Lambda W_\uln\lambda + (\lambda' - b)\Lambda W_\uln\lambda \\
    &= \chi\cdot \big(-2\mu'\frac{\lambda^3}{\mu^3}P_\uln\lambda + 2\lambda'\frac{\lambda^2}{\mu^2} P_\uln\lambda -\lambda'\frac{\lambda^2}{\mu^2}\Lambda P_\uln\lambda + 2b'b\lambda Q_\uln\lambda -\lambda'b^2 \Lambda Q_\uln\lambda\big).
  \end{aligned}
\end{equation*}
Since $\Lambda P$ and $\Lambda Q$ are rational functions decaying like $r^{-2}$, we have $\|\chi\cdot \Lambda P_\uln\lambda\|_{\dot H^1} \lesssim \sqrt{|t|}\cdot \eee^{\kappa|t|}$ and
$\|\chi\cdot \Lambda Q_\uln\lambda\|_{\dot H^1} \lesssim \sqrt{|t|}\cdot \eee^{\kappa|t|}$, see \eqref{eq:taille-P}.
This implies \eqref{eq:psi0} because $|\lambda|, |b|, |\lambda'|, |b'|, |\mu'| \lesssim \eee^{-\kappa|t|}$.

In order to prove \eqref{eq:psi1}, we consider separately the regions $|x| \leq \sqrt\lambda$ and $|x| \geq \sqrt\lambda$.
The first step is to treat the nonlinearity, that is to show that
\begin{equation}
  \label{eq:psi1-nonlin}
\begin{aligned}
  \|f(\varphi) &- f(W_\mu) - f(W_\lambda) - f'(W_\lambda)W_\mu \\&-\frac{\lambda^2}{\mu^2}f'(W_\lambda)P_\lambda - b^2 f'(W_\lambda)Q_\lambda\|_{L^2(|x|\leq \sqrt\lambda)} \lesssim \eee^{-\frac 32 \kappa|t|}.
\end{aligned}
\end{equation}
Applying \eqref{eq:point-f2} with $k= W_\lambda$ and $l = W_\mu + S$ we get
\begin{equation*}
  \big|f(\varphi) - f(W_\lambda) - f'(W_\lambda)(W_\mu + \lambda^2\mu^{-2} \cdot P_\lambda + b^2 \cdot Q_\lambda)\big| \lesssim |f(W_\mu)| + |f(\lambda^2 \mu^{-2}P_\lambda + b^2 Q_\lambda)|,
\end{equation*}
which is bounded in $L^\infty$, hence bounded by $\lambda^\frac 32 \sim \eee^{-\frac 32 \kappa|t|}$ in $L^2(|x| \leq \sqrt\lambda)$. This proves \eqref{eq:psi1-nonlin}.
Now we check that
\begin{equation}
  \label{eq:WmWl}
  \big\|f'(W_\lambda)W_\mu - \frac{1}{\mu^2}f'(W_\lambda)\big\|_{L^2} \lesssim \eee^{-\frac 32 \kappa|t|}.
\end{equation}
Indeed, for $|x| \leq \sqrt\lambda$ we have $|W_\mu(x) - \frac{1}{\mu^2}| = |W_\mu(x) - W_\mu(0)| \lesssim |x|^2 \leq \lambda \sim \eee^{-\kappa|t|}$, hence
\begin{equation*}
  \big\|f'(W_\lambda)W_\mu - \frac{1}{\mu^2}f'(W_\lambda)\big\|_{L^2(|x|\leq \sqrt\lambda)} \lesssim \big\|W_\mu - \frac{1}{\mu^2}\big\|_{L^\infty(|x| \leq \sqrt\lambda)}\cdot \|W_\lambda\|_{L^2} \lesssim \eee^{-\kappa|t|}\cdot \lambda \ll \eee^{-\frac 32 \kappa|t|}.
\end{equation*}
From \eqref{eq:psi1-nonlin} and \eqref{eq:WmWl} we obtain
\begin{equation}
  \label{eq:psi1-nonlin-fin}
  \|f(\varphi) - f(W_\mu) - f(W_\lambda) - \mu^{-2}f'(W_\lambda)-\lambda^2\mu^{-2}f'(W_\lambda)P_\lambda - b^2 f'(W_\lambda)Q_\lambda\|_{L^2} \lesssim \eee^{-\frac 32 \kappa|t|}.
\end{equation}
Since $\chi = 1$ in the region $|x| \leq \sqrt\lambda$, we have $\Delta \varphi = \Delta(W_\mu) + \Delta(W_\lambda) + \lambda^2\mu^{-2} \Delta(P_\lambda) + b^2 \Delta(Q_\lambda)$.
From this and \eqref{eq:psi1-nonlin-fin}, using the fact that $\Delta(W_\mu) + f(W_\mu) = \Delta(W_\lambda) + f(W_\lambda) = 0$, we get
\begin{equation}
  \label{eq:psi1-total-1}
  \begin{aligned}
    \big\|\Delta\varphi + f(\varphi) &- \mu^{-2}\big(\lambda^2 \Delta(P_\lambda) + \lambda^2 f'(W_\lambda)P_\lambda + f'(W_\lambda)\big) \\
    &- \big(b^2 \Delta(Q_\lambda) + b^2 f'(W_\lambda)Q_\lambda\big)\big\|_{L^2(|x| \leq \sqrt\lambda)} \lesssim \eee^{-\frac 32 \kappa|t|}.
  \end{aligned}
\end{equation}
But formula \eqref{eq:profil-eq} gives
\begin{equation}
  \begin{gathered}
  \lambda^2 \Delta(P_\lambda) + \lambda^2 f'(W_\lambda)P_\lambda = (-LP)_\lambda = -\kappa^2 \Lambda W_\lambda - f'(W_\lambda) = -\kappa^2 \lambda \Lambda W_\uln\lambda - f'(W_\lambda), \\
  b^2 \Delta(Q_\lambda) + b^2 f'(W_\lambda)Q_\lambda = \frac{b^2}{\lambda^2}(-LQ)_\lambda = \frac{b^2}{\lambda}\Lambda_0 \Lambda W_\uln\lambda,
\end{gathered}
\end{equation}
hence we can rewrite \eqref{eq:psi1-total-1} as
\begin{equation}
  \label{eq:psi1-total-2}
  \big\|\Delta \varphi + f(\varphi) + \frac{\kappa^2\lambda}{\mu^2}\Lambda W_\uln\lambda - \frac{b^2}{\lambda}\Lambda_0\Lambda W_\uln\lambda\big\|_{L^2(|x|\leq \sqrt\lambda)} \lesssim \eee^{-\frac 32 \kappa|t|}.
\end{equation}
We have $\partial_t \dot \varphi = -b'\Lambda W_\uln\lambda + \frac{\lambda'b}{\lambda}\Lambda_0 \Lambda W_\uln\lambda$, thus
\begin{equation}
  \label{eq:dot-psi}
  \begin{aligned}
  -\dot \psi + \frac{b}{\lambda}(\lambda'-b)\Lambda_0 \Lambda W_\uln\lambda &= \Delta \varphi + f(\varphi) + b'\Lambda W_\uln\lambda - \frac{b\lambda'}{\lambda}\Lambda_0 \Lambda W_\uln\lambda
  +\frac{b}{\lambda}(\lambda'-b)\Lambda_0 \Lambda W_\uln\lambda \\
  &= \Delta \varphi + f(\varphi) + \frac{\kappa^2 \lambda}{\mu^2}\Lambda W_\uln\lambda - \frac{b^2}{\lambda}\Lambda_0 \Lambda W_\uln\lambda,
\end{aligned}
\end{equation}
so \eqref{eq:psi1-total-2} yields \eqref{eq:psi1} in the region $|x| \leq \sqrt\lambda$.

Consider now the region $|x| \geq \sqrt\lambda$.
First we show that
\begin{equation}
  \label{eq:psi1-lapl}
  \|\Delta\varphi - \Delta(W_\mu)\|_{L^2(|x|\geq \sqrt\lambda)} \lesssim \eee^{-\frac 32 \kappa|t|}.
\end{equation}
To this end, we compute
\begin{equation}
  \label{eq:Wl-ext}
  \begin{aligned}
  \|\Delta(W_\lambda)\|_{L^2(|x| \geq \sqrt\lambda)} &= \|f(W_\lambda)\|_{L^2(|x| \geq \sqrt\lambda)} = \frac{1}{\lambda}\|f(W)\|_{L^2(|x| \geq 1/\sqrt\lambda)} \\
  &\lesssim \frac{1}{\lambda}\Big(\int_{1/\sqrt\lambda}^{+\infty}r^{-16}r^5\ud r\Big)^\frac 12 \lesssim \frac{1}{\lambda}\lambda^\frac 52 \lesssim \eee^{-\frac 32 \kappa|t|}.
\end{aligned}
\end{equation}
We need to show that $\|\Delta(\chi\cdot P_\lambda)\|_{L^2(|x|\geq \sqrt\lambda)} \lesssim \eee^{\frac 12 \kappa|t|}$
and $\|\Delta(\chi\cdot Q_\lambda)\|_{L^2(|x|\geq \sqrt\lambda)} \lesssim \eee^{\frac 12 \kappa|t|}$.
We will prove the first bound (the second is exactly the same). Notice that $|P_\lambda(x)| \lesssim \frac{1}{\lambda^2}\cdot\frac{\lambda^2}{|x|^2} = |x|^{-2}$
and similarly $|\grad(P_\lambda)(x)| \lesssim |x|^{-3}$, $|\grad ^2(P_\lambda)(x)| \lesssim |x|^{-4}$, hence we have a pointwise bound
\begin{equation}
  |\Delta(\chi P_\lambda)| \lesssim |\grad^2 \chi|\cdot|P_\lambda| + |\grad \chi|\cdot|\grad(P_\lambda)| + |\chi|\cdot |\grad^2 (P_\lambda)| \lesssim |\grad^2 \chi|\cdot|x|^{-2} + |\grad \chi|\cdot|x|^{-3} + |\chi|\cdot|x|^{-4}.
\end{equation}
Of course $\big\||\grad^2 \chi|\cdot|x|^{-2}\big\|_{L^2(|x|\geq \sqrt\lambda)} + \big\||\grad \chi|\cdot|x|^{-3}\big\|_{L^2(|x|\geq \sqrt\lambda)} \lesssim 1 \ll \eee^{\frac 12 \kappa|t|}$ and we are left with the last term. We compute
\begin{equation}
  \big\||\chi|\cdot |x|^{-4}\|_{L^2(|x|\geq \sqrt\lambda)} \lesssim \Big(\int_{\sqrt\lambda}^2 r^{-8}r^5\ud r\Big)^\frac 12 \lesssim \lambda^{-\frac 12} \lesssim \eee^{\frac 12 \kappa|t|}.
\end{equation}
This finishes the proof of \eqref{eq:psi1-lapl}.

Applying \eqref{eq:point-f2} with $k = W_\mu$ and $l = W_\lambda + S$ we get
\begin{equation}
|f(\varphi) - f(W_\mu)| \lesssim f'(W_\mu)\cdot (|W_\lambda| + |S|) + |f(W_\lambda)| + |f(S)| \lesssim |W_\lambda| + |S|,
\end{equation}
where the last estimate follows from the fact that $\|W_\lambda\|_{L^\infty} + \|S\|_{L^\infty} \lesssim 1$ for $|x| \geq \sqrt\lambda$.

We have $\|\chi\cdot P_\uln\lambda\|_{L^2} \lesssim \Big(\int_0^\frac{2}{\lambda}(r^{-2})^2r^5\ud r\Big)^\frac 12 \sim \lambda^{-1}$,
and similarly $\|\chi\cdot Q_\uln\lambda\|_{L^2} \lesssim \lambda^{-1}$, which implies $\|S\|_{L^2} \ll \eee^{-\frac 32 \kappa|t|}$.
There holds also
\begin{equation}
  \label{eq:W-ext}
\|W_\lambda\|_{L^2(|x|\geq \sqrt\lambda)} = \lambda\|W\|_{L^2(|x|\geq 1/\sqrt\lambda)} \lesssim \lambda\Big(\int_{1/\sqrt\lambda}^\infty r^{-8}r^5\ud r\Big)^\frac 12 \sim \lambda^\frac 32 \sim \eee^{-\frac 32 \kappa|t|},
\end{equation}
hence $\|f(\varphi) - f(\mu)\|_{L^2(|x| \geq \sqrt\lambda)} \lesssim \eee^{-\frac 32 \kappa|t|}$. Together with \eqref{eq:psi1-lapl} this yields
$$\|\Delta \varphi + f(\varphi)\|_{L^2(|x| \geq \sqrt\lambda)} \lesssim \eee^{-\frac 32 \kappa|t|}.$$
The same computation as in \eqref{eq:W-ext} gives
$$\|\Lambda W_\uln\lambda\|_{L^2(|x| \geq \sqrt\lambda)} + \|\Lambda_0 \Lambda W_\uln\lambda\|_{L^2(|x|\geq \sqrt\lambda)} \lesssim \eee^{-\frac 12 \kappa|t|},$$
hence \eqref{eq:dot-psi} implies that \eqref{eq:psi1} holds also in the region $|x| \geq \sqrt\lambda$.

We are left with \eqref{eq:psi-lin}. From \eqref{eq:psi0} it follows that it suffices to check that
\begin{equation}
  \label{eq:psi-lin-2}
  \Big\|(-\Delta - f'(\varphi))(\lambda' - b)\frac{1}{\lambda}\Lambda W_\lambda\Big\|_{\dot H^{-1}} \lesssim \eee^{-\frac 32 \kappa|t|}
\end{equation}
and
\begin{equation}
  \label{eq:psi-lin-3}
  \Big\|(-\Delta - f'(\varphi))\mu'\frac{1}{\mu}\Lambda W_\mu\Big\|_{\dot H^{-1}} \lesssim \eee^{-\frac 32 \kappa|t|}.
\end{equation}
We start with \eqref{eq:psi-lin-2}. Since $|\lambda' - b| \lesssim \eee^{-\kappa|t|} \lesssim \lambda$, we need to show that
\begin{equation}
  \label{eq:psi-lin-4}
  \|(-\Delta - f'(\varphi))\Lambda W_\lambda\|_{\dot H^{-1}} \lesssim \eee^{-\frac 32 \kappa|t|}.
\end{equation}
By H\"older inequality
\begin{equation}
  \label{eq:potential-holder}
\|f'(W_\mu)\Lambda W_\lambda\|_{L^\frac 32} \leq \|f'(W_\mu)\|_{L^{12}}\cdot \|\Lambda W_\lambda\|_{L^\frac{12}{7}} \lesssim \lambda^\frac 32 \lesssim \eee^{-\frac 32\kappa|t|}.
\end{equation}
Since $|f'(W_\lambda + W_\mu) - f'(W_\lambda)| \leq f'(W_\mu)$, we obtain
$$
\|(f'(W_\lambda + W_\mu) - f'(W_\lambda))\Lambda W_\lambda\|_{\dot H^{-1}} \lesssim \|(f'(W_\lambda + W_\mu) - f'(W_\lambda))\Lambda W_\lambda\|_{L^{\frac 32}} \lesssim \eee^{-\frac 32\kappa|t|}.
$$
As noted earlier $(-\Delta - f'(W_\lambda))\Lambda W_\lambda = 0$, hence
$$
\|(-\Delta - f'(W_\lambda + W_\mu))\Lambda W_\lambda\|_{\dot H^{-1}} \lesssim \eee^{-\frac 32 \kappa|t|}.
$$
From \eqref{eq:taille-correct} we have $\|f'(\varphi) - f'(W_\lambda + W_\mu)\|_{L^3} \lesssim \eee^{-\frac 32\kappa|t|}$. This implies \eqref{eq:psi-lin-4}.

The proof of \eqref{eq:psi-lin-3} is similar. It suffices to check that $\|f'(W_\lambda)\Lambda W_\mu\|_{\dot H^{-1}} \lesssim \eee^{-\frac 12 \kappa|t|}$,
and in fact we even have the bound $\lesssim \eee^{-\frac 32 \kappa|t|}$, with the same proof as in \eqref{eq:potential-holder}.
\end{proof}
\section{Bootstrap control of the error term -- the NLW case}
\label{sec:bootstrap}
In the preceding section we defined \emph{approximate} solutions of \eqref{eq:nlw0}.
In the present section we consider \emph{exact} solutions of \eqref{eq:nlw0}, with some specific initial data prescribed at $t = T$, with $T \to -\infty$.
Our goal is to control the evolution of this solution up to a time $T_0$ independent of $T$.

For technical reasons we will require the initial data to belong to the space $X^1\times H^1$, where $X^1 := \dot H^2 \cap \dot H^1$.
This regularity is preserved by the flow, see Proposition~\ref{prop:persistence}.
\subsection{Set-up of the bootstrap}
\label{eq:bootstrap-setup}
It is known that $L = -\Delta - f'(W)$ has exactly one strictly negative simple eigenvalue which we denote $-\nu^2$ (we take $\nu > 0$).
We denote the corresponding positive eigenfunction $\cY$, normalized so that $\|\cY\|_{L^2} = 1$.
By elliptic regularity $\cY$ is smooth and by Agmon estimates it decays exponentially.
Self-adjointness of $L$ implies that
\begin{equation}
  \label{eq:YLW}
  \la \cY, \Lambda W\ra = 0.
\end{equation}
Note that
\begin{equation}
  \label{eq:nu-l-1}
  \nu < 1.
\end{equation}
Indeed, it is well-known that $-\Delta - W \geq 0$, with a one-dimensional kernel generated by $W$.
Since $1 - W(x) > 0$ almost everywhere, for any $h \neq 0$ we have
\begin{equation}
  \la h, Lh\ra + \la h, h\ra = \la (-\Delta - 2W + 1)h, h\ra > \la (-\Delta - W)h, h\ra \geq 0.
\end{equation}

We define
\begin{equation}
  \label{eq:Y}
  \ym := \big(\frac 1\nu\cY, -\cY\big),\qquad \yp := \big(\frac 1\nu\cY, \cY\big),
\end{equation}
\begin{equation}
  \label{eq:a}
  \alpha^- := \frac{\nu}{2}J\cY^+ = \frac 12(\nu\cY, -\cY),\qquad \alpha^+ := -\frac{\nu}{2}J\cY^- =\frac 12(\nu\cY, \cY).
\end{equation}

We have $J\circ\vD^2 E(\bs W) = \begin{pmatrix} 0 & \Id \\ -L & 0\end{pmatrix}$. A short computation shows that
\begin{equation}
  \label{eq:eigenvect}
  J\circ\vD^2 E(\bs W)\ym = -\nu \ym,\qquad J\circ\vD^2 E(\bs W)\yp = \nu \yp
\end{equation}
and
\begin{equation}
  \label{eq:eigencovect}
  \la\alpha^-, J\circ\vD^2 E(\bs W)\bs h\ra = -\nu\la\alpha^-, \bs h\ra,\qquad \la\alpha^+, J\circ\vD^2 E(\bs W)\bs h\ra = \nu\la\alpha^+, \bs h\ra,\qquad \forall \bs h\in\cE.
\end{equation}
We will think of $\alpha^-$ and $\alpha^+$ as linear forms on $\cE$.
Notice that $\la \alpha^-, \ym\ra = \la \alpha^+, \yp\ra = 1$ and $\la \alpha^-, \yp\ra = \la \alpha^+, \ym\ra = 0$.

The rescaled versions of these objects are
\begin{equation}
  \label{eq:Yl}
  \yml := \big(\frac 1\nu\cY_\lambda, -\cY_\uln\lambda\big),\qquad \ypl := \big(\frac 1\nu\cY_\lambda, \cY_\uln\lambda\big),
\end{equation}
\begin{equation}
  \label{eq:al}
  \alpha^-_\lambda := \frac{\nu}{2\lambda}J\cY_\lambda^+ = \frac 12\big(\frac{\nu}{\lambda}\cY_\uln\lambda, -\cY_\uln\lambda\big),\qquad \alpha^+_\lambda := -\frac{\nu}{2\lambda}J\cY_\lambda^- = \frac 12\big(\frac{\nu}{\lambda}\cY_\uln\lambda, \cY_\uln\lambda\big).
\end{equation}
The scaling is chosen so that $\la\alpha_\lambda^-, \ym_\lambda\ra = \la\alpha_\lambda^+, \yp_\lambda\ra = 1$.
We have
\begin{equation}
  \label{eq:eigenvectl}
  J\circ\vD^2 E(\bs W_\lambda)\ym_\lambda = -\frac{\nu}{\lambda} \ym_\lambda,\qquad J\circ\vD^2 E(\bs W_\lambda)\yp_\lambda = \frac{\nu}{\lambda} \yp_\lambda
\end{equation}
and
\begin{equation}
  \label{eq:eigencovectl}
  \la\alpha_\lambda^-, J\circ\vD^2 E(\bs W_\lambda)\bs h\ra = -\frac{\nu}{\lambda}\la\alpha_\lambda^-, \bs h\ra,
  \qquad \la\alpha_\lambda^+, J\circ\vD^2 E(\bs W_\lambda)\bs h\ra = \frac{\nu}{\lambda}\la\alpha_\lambda^+, \bs h\ra,\qquad \forall \bs h\in\cE.
\end{equation}

We will need the following simple lemma in order to properly choose the initial data.
\begin{lemma}
  \label{lem:initial}
  There exist universal constants $\eta, C > 0$ such that if $0 < \lambda < \eta \cdot \mu$,
  then for all $a_0 \in \bR$ there exists $\bs h_0 \in X^1\times H^1$ satisfying the orthogonality conditions $\la \cZ_\mu, h_0\ra = \la \cZ_\lambda, h_0\ra = 0$
  and such that $\la \alpha_\mu^+, \bs h_0\ra = 0$, $\la \alpha_\mu^-, \bs h_0\ra = 0$, $\la \alpha_\lambda^+, \bs h_0\ra = a_0$,
  $\la \alpha_\lambda^-, \bs h_0\ra = 0$, $\|\bs h_0\|_\cE \leq C|a_0|$.
\end{lemma}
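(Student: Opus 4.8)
The plan is to construct $\bs h_0$ explicitly as a linear combination $\bs h_0=\sum_{j=1}^{6}c_j\bs\Phi_j$ of six fixed template states — three attached to the scale $\mu$ and three to the scale $\lambda$ — and to determine the coefficients $c_j$ by inverting a $6\times6$ matrix which is block-diagonal up to errors that are small because $\lambda/\mu$ is small.

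First I would reduce to $\mu=1$: since $\|\cdot\|_\cE$ is scale invariant and the functionals are covariant (the scalings in \eqref{eq:al} are normalized precisely so that $\la\alpha^+_\mu,\bs g_\mu\ra=\la\alpha^+_1,\bs g\ra$ and likewise for $\alpha^-$, while the $\la\cZ_\mu,\cdot\ra$'s change only by a nonzero power of $\mu$), the general case follows from the case $\mu=1$ by the rescaling $\bs g\mapsto\bs g_{1/\mu}$, which maps $X^1\times H^1$ to itself. With $\mu=1$, set $\ell_1:=\la\cZ,\cdot\ra$, $\ell_2:=\la\alpha^+,\cdot\ra$, $\ell_3:=\la\alpha^-,\cdot\ra$ (the scale-$1$ forms), $\ell_4:=\la\cZ_\lambda,\cdot\ra$, $\ell_5:=\la\alpha_\lambda^+,\cdot\ra$, $\ell_6:=\la\alpha_\lambda^-,\cdot\ra$ (the scale-$\lambda$ forms; $\ell_1,\ell_4$ pair with the first component), and take
\[
  \bs\Phi_1:=(\cZ,0),\quad\bs\Phi_2:=\yp,\quad\bs\Phi_3:=\ym,\quad\bs\Phi_4:=(\cZ_\lambda,0),\quad\bs\Phi_5:=\ypl,\quad\bs\Phi_6:=\yml.
\]
Each $\bs\Phi_j$ lies in $X^1\times H^1$ ($\cZ\in C_0^\infty$; $\cY$ is smooth with exponential decay, by elliptic regularity and Agmon estimates) and has $\|\bs\Phi_j\|_\cE\lesssim1$ uniformly in $\lambda$. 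The six conditions in the lemma are then equivalent to the linear system $M\bs c=(0,0,0,0,a_0,0)^{\mathsf T}$, where $M_{ij}:=\la\ell_i,\bs\Phi_j\ra$.

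The heart of the argument is the structure of $M=\bigl(\begin{smallmatrix}A&B\\ C&D\end{smallmatrix}\bigr)$ in the splitting $\{1,2,3\}\sqcup\{4,5,6\}$. Using $\la\cZ,\cY\ra=0$ from \eqref{eq:Z}, the bi-orthonormality $\la\alpha^\pm,\yp\ra,\la\alpha^\pm,\ym\ra\in\{0,1\}$ (and its rescaled version), and a change of variables which keeps $\la\cZ_\lambda,\cY_\lambda\ra=\la\cY_\uln\lambda,\cZ_\lambda\ra=0$ because these are pairings at a common scale, one checks that the diagonal blocks are actually diagonal matrices: $A=\mathrm{diag}(\|\cZ\|_{L^2}^2,1,1)$ and $D=\mathrm{diag}(d_\lambda,1,1)$ with $d_\lambda>0$ and $d_\lambda\gtrsim\lambda$. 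For the off-diagonal blocks I would use the separation of scales: expanding the integrals $\la\cY,\cY_\lambda\ra$, $\la\cY,\cY_\uln\lambda\ra$, $\la\cZ,\cY_\lambda\ra$, $\la\cZ_\lambda,\cZ\ra$, etc.\ by changes of variables and invoking the compact support of $\cZ$ and the exponential decay of $\cY$, every entry of $B$ and $C$ is $\lesssim\lambda^2$; hence $\|A^{-1}\|\lesssim1$, $\|D^{-1}\|\lesssim\lambda^{-1}$ and $\|B\|,\|C\|\lesssim\lambda^2$.

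To finish, eliminate $\bs c^{(1)}=-A^{-1}B\bs c^{(\lambda)}$ from $M\bs c=(0,0,0,0,a_0,0)^{\mathsf T}$, which leaves the Schur-complement equation $(D-CA^{-1}B)\bs c^{(\lambda)}=(0,a_0,0)^{\mathsf T}$. Since $\|D^{-1}CA^{-1}B\|\lesssim\lambda^{-1}\cdot\lambda^2\cdot\lambda^2\to0$, for $\lambda$ below a universal $\eta$ the operator $I-D^{-1}CA^{-1}B$ is invertible with norm $\le2$; and because the right-hand side $(0,a_0,0)^{\mathsf T}$ sits in the coordinate on which $D^{-1}$ acts as the identity, this yields $\|\bs c^{(\lambda)}\|\le2|a_0|$ and then $\|\bs c^{(1)}\|\lesssim\lambda^2|a_0|$. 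Therefore $\bs h_0:=\sum_{j}c_j\bs\Phi_j$ belongs to $X^1\times H^1$, satisfies the six prescribed identities by construction, and obeys $\|\bs h_0\|_\cE\le\sum_j|c_j|\,\|\bs\Phi_j\|_\cE\le C|a_0|$ with a universal $C$; undoing the $\mu=1$ reduction gives the statement. The only point that genuinely needs care is the decoupling estimate $\|B\|,\|C\|\lesssim\lambda^2$, which must dominate the degeneracy $d_\lambda\to0$ of the entry $D_{11}$ — this is exactly what the exponential decay of $\cY$ and the compact support of $\cZ$ provide.
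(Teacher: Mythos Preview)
Your argument is correct, with one small numerical slip: in dimension six with the $\dot H^1$-critical scaling one has $d_\lambda=\la\cZ_\lambda,\cZ_\lambda\ra=\lambda^2\|\cZ\|_{L^2}^2$, not $\gtrsim\lambda$, so $\|D^{-1}\|\sim\lambda^{-2}$ rather than $\lambda^{-1}$. This does not break the proof: you still get $\|D^{-1}CA^{-1}B\|\lesssim\lambda^{-2}\cdot\lambda^2\cdot\lambda^2\to0$, and since the right-hand side $(0,a_0,0)^{\mathsf T}$ lies in the coordinates on which $D^{-1}$ acts as the identity, the Schur-complement inversion gives $\|\bs c^{(\lambda)}\|\le 2|a_0|$ exactly as you say.

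The paper takes a slightly different and shorter route. Instead of $(\cZ,0)$ and $(\cZ_\lambda,0)$ it uses $\Lambda\bs W_\mu$ and $\Lambda\bs W_\lambda$ as the two remaining templates, and pairs against the normalized functionals $\la\tfrac1\mu\cZ_{\uln\mu},\cdot\ra$, $\la\tfrac1\lambda\cZ_{\uln\lambda},\cdot\ra$. The corresponding diagonal entry is then $\la\tfrac1\lambda\cZ_{\uln\lambda},\Lambda W_\lambda\ra=\la\cZ,\Lambda W\ra>0$, a fixed positive constant independent of $\lambda$; all six diagonal entries are thus uniformly bounded below, and together with the $O((\lambda/\mu)^2)$ off-diagonal entries one concludes by strict diagonal dominance in a single line. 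Your choice of $(\cZ_\lambda,0)$ creates the degenerate entry $d_\lambda\sim\lambda^2$ and forces you to exploit that the prescribed nonzero value $a_0$ sits in a non-degenerate coordinate --- a perfectly valid trade, just a little more delicate.
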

\begin{proof}
  We consider functions of the form:
  \begin{equation}
    \bs h_0 := a_2^+ \cY_\mu^+ + a_2^- \cY_\mu^- + b_2\Lambda \bs W_\mu + a_1^+ \cY_\lambda^+ + a_1^- \cY_\lambda^- + b_1\Lambda \bs W_\lambda, \qquad a_2^+, a_2^-, b_2, a_1^+, a_1^-, b_1 \in \bR.
  \end{equation}
Consider the linear map $\Phi: \bR^6 \to \bR^6$ defined as follows:
\begin{equation}
  \Phi(a_2^+, a_2^-, b_2, a_1^+, a_1^-, b_1) := \big(\la \alpha_\mu^+, \bs h_0\ra, \la \alpha_\mu^-, \bs h_0\ra, \la \frac{1}{\mu}\cZ_{\uln\mu}, h_0\ra,
  \la \alpha_\lambda^+, \bs h_0\ra, \la \alpha_\lambda^-, \bs h_0\ra, \la \frac{1}{\lambda}\cZ_\uln\lambda, h_0\ra\big)
\end{equation}
It is easy to check that the matrix of $\Phi$ is strictly diagonally dominant if $\eta$ is small enough.
\end{proof}
We consider the solution $\bs u(t) = \bs u(a_0; t): [T, T_+) \to \cE$ of \eqref{eq:nlw0}
  with the initial data
  \begin{equation}
    \label{eq:data-at-T}
    \bs u(T) = \big(W_{\frac{1}{\kappa}\eee^{-\kappa|T|}} + W + h_0, -\Lambda W_\uln{\frac{1}{\kappa}\eee^{-\kappa|T|}}\big),
\end{equation}
where $h_0$ is the function given by Lemma~\ref{lem:initial} with $\lambda = \frac{1}{\kappa}\eee^{-\kappa|T|}$, $\mu = 1$ and some $a_0$
chosen later, satisfying
\begin{equation}
  \label{eq:borne-instable-in}
  |a_0| \leq \eee^{-\frac 32 \kappa|T|}.
\end{equation}
Note that the initial data depend continuously on $a_0$.

For $t \geq T$ we define the functions $\lambda(t)$ and $\mu(t)$ as the solutions of the following system of ordinary differential equations
with the initial data $\mu(T) = 1$ and $\lambda(T) = \frac{1}{\kappa}\eee^{-\kappa|T|}$:
\begin{equation}
  \label{eq:lambda-mu-diff}
  \begin{pmatrix}
    \la \cZ_\uln{\lambda}, \Lambda W_\uln{\lambda}\ra - \la \frac{1}{\lambda}\Lambda_0 \cZ_\uln{\lambda}, h\ra & \la \cZ_\uln{\lambda}, \Lambda W_\uln{\mu}\ra \\
    \la \cZ_\uln{\mu}, \Lambda W_\uln{\lambda}\ra & \la \cZ_\uln{\mu}, \Lambda W_\uln{\mu}\ra - \la \frac{1}{\mu}\Lambda_0\cZ_\uln{\mu}, h\ra
  \end{pmatrix}
  \cdot \begin{pmatrix}
    \lambda' \\ \mu'
  \end{pmatrix} =
  \begin{pmatrix}
    -\la \cZ_\uln{\lambda}, \partial_t u\ra \\
    -\la \cZ_\uln{\mu}, \partial_t u\ra
  \end{pmatrix},
\end{equation}
where
\begin{equation}
  \label{eq:h-def}
h = h(t) := \begin{cases}
  u(t) - W_{\mu(t)} - W_{\lambda(t)},\ &\text{if }\|u(t) - W_{\mu(t)} - W_{\lambda(t)}\|_{\dot H^1} \leq \eta, \\
  \frac{\eta}{\|u(t) - W_{\mu(t)} - W_{\lambda(t)}\|_{\dot H^1}}\big(u(t) - W_{\mu(t)} - W_{\lambda(t)}\big),\ &\text{if }\|u(t) - W_{\mu(t)} - W_{\lambda(t)}\|_{\dot H^1} \geq \eta
\end{cases}
\end{equation}
with a small constant $\eta > 0$.
Notice that $\la \cZ_\uln\lambda, \Lambda W_\uln\lambda\ra = \la \cZ_\uln\mu, \Lambda W_\uln\mu\ra = \la \cZ, \Lambda W\ra > 0$,
$|\la \frac{1}{\lambda}\Lambda_0 \cZ_\uln{\lambda}, h\ra| + |\la \frac{1}{\mu}\Lambda_0\cZ_\uln{\mu}, h\ra| \lesssim \|h\|_{\dot H^1}$
and $|\la \cZ_\uln\lambda, \Lambda W_\uln\mu\ra| + |\la \cZ_\uln\mu, \Lambda W_\uln\lambda\ra| \lesssim \lambda/\mu$.
For $t < T_0$ bounds \eqref{eq:lambda-bound0} imply that $\lambda/\mu$ is small, hence equation \eqref{eq:lambda-mu-diff} defines a unique solution as long as \eqref{eq:lambda-bound0} holds.
\begin{remark}
  Actually the second case in the definition of $h(t)$ will never occur in our analysis, since the bootstrap assumptions imply that $\|h(t)\|_{\dot H^1}$ is small.
\end{remark}

Suppose that $\lambda(t)$ and $\mu(t)$ are well defined and satisfy \eqref{eq:lambda-bound0} for $t \in [T, T_1]$, where $T < T_1 \leq T_0$.
Suppose also that $\|h(t)\|_{\dot H^1} < \eta$ for $t \in [T, T_1]$, which implies that $h(t) = u(t)$.
Using \eqref{eq:lambda-mu-diff} we find $\dd t\la \cZ_\uln{\mu(t)}, h(t)\ra = 0$ and $\dd t \la \cZ_\uln{\lambda(t)}, h(t)\ra = 0$.
Since $\la \cZ_\uln{\mu(T)}, h(T)\ra = 0$ and $\la \cZ_\uln{\lambda(T)}, h(T)\ra = 0$, we obtain
\begin{equation}
  \label{eq:h-orth}
  \la \cZ_\uln{\mu(t)}, h(t)\ra = \la \cZ_\uln{\lambda(t)}, h(t)\ra = 0,\qquad \text{for }t \in [T, T_1].
\end{equation}
We denote $\bs h(t) := (h(t), \partial_t u(t))$, so that
\begin{equation}
  \label{eq:dth}
  \bigg\{
    \begin{aligned}
      \partial_t h &= \dot h + \mu' \Lambda W_\uln\mu + \lambda' \Lambda W_\uln\lambda, \\
      \partial_t \dot h &= \Delta h + f(W_\mu + W_\lambda + h) - f(W_\mu) - f(W_\lambda).
    \end{aligned}
  \end{equation}
We define the function $b(t): [T, T_1] \to \bR$ by formula \eqref{eq:b-def} and decompose
\begin{equation}
  \label{eq:g-def}
  \bs u(t) = \bs \varphi(t) + \bs g(t),\qquad t\in [T, T_1].
\end{equation}
By the definitions of $\bs g(t)$ and $\bs \psi(t)$, $\bs g(t)$ satisfies the differential equation
  \begin{equation}
    \label{eq:g-diff}
    \partial_t \bs g(t) = J\circ\vD E(\bs \varphi(t)+\bs g(t))-J\circ\vD E(\bs \varphi(t)) - \bs \psi(t).
  \end{equation}
Finally, we denote
\begin{equation}
    \begin{aligned}
    &a_1^+(t) := \la \alpha_{\lambda(t)}^+, \bs g(t)\ra, \qquad &a_1^-(t) := \la \alpha_{\lambda(t)}^-, \bs g(t)\ra, \\
    &a_2^+(t) := \la \alpha_{\mu(t)}^+, \bs g(t)\ra, \qquad &a_2^-(t) := \la \alpha_{\mu(t)}^-, \bs g(t)\ra.
  \end{aligned}
\end{equation}

The rest of this section is devoted to the proof of the following bootstrap estimate, which is the~heart of the whole construction.
\begin{proposition}
  \label{prop:bootstrap}
  There exist constants $C_0 > 0$ and $T_0 <0$ ($C_0$ and $|T_0|$ large) with the following property.
  Let $T < T_1 < T_0$ and suppose that $\bs u(t) = \bs \varphi(t) + \bs g(t) \in C([T, T_1]; X^1\times H^1)$
  is a solution of \eqref{eq:nlw0} with initial data \eqref{eq:data-at-T}
  such that for $t \in [T, T_1]$ condition \eqref{eq:lambda-bound0} is satisfied and
      \begin{align}
        \|\bs g(t)\|_\cE &\leq C_0 \cdot\eee^{-\frac 32 \kappa|t|}, \label{eq:assumption-g} \\
        |a_1^+(t)| &\leq \eee^{-\frac 32 \kappa|t|}. \label{eq:assumption-a}
      \end{align}
  Then for $t \in [T, T_1]$ there holds
  \begin{align}
    \|\bs g(t)\|_\cE &\leq \frac 12 C_0\eee^{-\frac 32 \kappa|t|}, \label{eq:bootstrap-g} \\
  \big|\lambda(t) - \frac{1}{\kappa}\eee^{-\kappa|t|}\big| + |\mu(t) - 1| &\lesssim C_0\eee^{-\frac 32 \kappa|t|}. \label{eq:bootstrap-l}
  \end{align}
\end{proposition}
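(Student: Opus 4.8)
The strategy is the one outlined in the introduction: control the evolution of $\bs g(t)$ by means of a mixed energy--virial functional, and control the unstable mode $a_1^+(t)$ separately by a differential inequality, while the modulation equations govern $\lambda(t)$ and $\mu(t)$. First I would record the modulation estimates. Differentiating the orthogonality conditions $\la \cZ_{\uln{\lambda}}, h\ra = \la \cZ_{\uln{\mu}}, h\ra = 0$ and using the system \eqref{eq:lambda-mu-diff}, together with the equation \eqref{eq:g-diff} for $\bs g$, one obtains $|\lambda'(t) - b(t)| + |\mu'(t)|\lesssim \lambda(t)\|\bs g(t)\|_\cE + \|\bs g(t)\|_\cE^2 + \eee^{-\frac32\kappa|t|}$, which under \eqref{eq:assumption-g} gives $|\lambda'-b| + |\mu'| \lesssim C_0 \eee^{-\frac 52 \kappa|t|}$. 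Combined with the bound \eqref{eq:b-bound0} on $b(t)$, this already verifies the hypotheses $|\lambda'|, |\mu'|\lesssim\eee^{-\kappa|t|}$ of Lemma~\ref{lem:psi}, so the error bounds \eqref{eq:psi0}--\eqref{eq:psi-lin} are available; integrating $\lambda' - b$ and $\mu'$ from $T$ to $t$, using the initial conditions \eqref{eq:lambda-init0} and $b(T) = \eee^{-\kappa|T|}$, then yields \eqref{eq:bootstrap-l}.

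Next, the coercive energy control. I would introduce a functional of the form
\begin{equation*}
  H(t) := \frac 12 \la \vD^2 E(\bs\varphi(t))\bs g(t), \bs g(t)\ra + \frac{b(t)}{\lambda(t)}\la \cdot\,, \cdot\ra_{\text{virial}},
\end{equation*}
where the virial correction is a localized multiplier of the type $q_{\lambda}\cdot \grad g$ (truncated at a scale between $\lambda$ and $1$, so that it does not interact with $\bs W_\mu$), designed exactly so that its time derivative cancels the dangerous term $\frac{b}{\lambda}\la \Lambda_0\Lambda W_{\uln\lambda}, \dot\varphi\text{-part}\ra$ coming from the concentration of the inner bubble. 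After subtracting off the projections onto the four directions $\cY^\pm_\lambda$, $\cY^\pm_\mu$ (equivalently, onto $\Lambda\bs W_\lambda$, $\Lambda\bs W_\mu$ and the unstable/stable modes), the quadratic form is coercive: $H(t)\gtrsim \|\bs g(t)\|_\cE^2 - (a_1^+)^2 - (a_1^-)^2 - (a_2^+)^2 - (a_2^-)^2$, using $\nu<1$ and the standard coercivity of $L$ on the orthogonal complement of $\{W,\cY\}$ together with the orthogonality conditions \eqref{eq:h-orth} and the $\alpha^\pm$ conditions. Differentiating $H$ along the flow \eqref{eq:g-diff}, every term is either: a cubic term $\lesssim \|\bs g\|_\cE^3$; a term involving $\bs\psi$, bounded by $\eee^{-\frac 32\kappa|t|}\|\bs g\|_\cE$ via \eqref{eq:psi0}--\eqref{eq:psi-lin}; a term from $\lambda', \mu'$ differentiating the modulated objects, controlled by the modulation estimates; or the main quadratic virial term, which by construction has a favorable sign (of size $\sim \frac{b}{\lambda}\|\bs g\|_\cE^2 \sim \kappa\|\bs g\|_\cE^2$, the same mechanism as in \cite{moi15p-1}). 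One then arrives at a differential inequality of the shape $\frac{\vd}{\vd t}H(t) \geq -C\kappa H(t) - C\eee^{-3\kappa|t|}$ on the region where the modes are small, which after multiplying by an integrating factor and integrating from $T$ (where $H(T)\lesssim a_0^2 \lesssim \eee^{-3\kappa|T|}$ by Lemma~\ref{lem:initial} and the choice of data) gives $H(t)\lesssim \eee^{-3\kappa|t|}$, hence $\|\bs g(t)\|_\cE^2 \lesssim \eee^{-3\kappa|t|} + \sum (a_i^\pm)^2$.

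Finally I would close the loop on the mode functions. From \eqref{eq:eigencovectl} and \eqref{eq:g-diff} one gets $|\frac{\vd}{\vd t}a_1^\pm \mp \frac{\nu}{\lambda}a_1^\pm| \lesssim \frac{|\lambda'|+|\mu'|}{\lambda}\|\bs g\|_\cE + \|\bs g\|_\cE^2 + \eee^{-\frac 32\kappa|t|}\lesssim \eee^{-\frac 32\kappa|t|}$ (the gain in $\lambda$ in the denominator being beaten by the $\frac 52\kappa$ decay of $\lambda'-b$, $\mu'$ and by the bootstrap on $\|\bs g\|_\cE$), and similarly for $a_2^\pm$ with $\nu/\lambda$ replaced by $\nu/\mu\sim\nu$. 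For the stable directions $a_1^-, a_2^-$ and $a_2^+$ these are contracting (or neutral after accounting for the decay forcing) and, together with the vanishing initial values $a_1^-(T)=a_2^\pm(T)=0$ from Lemma~\ref{lem:initial}, integration yields $|a_1^-(t)| + |a_2^+(t)| + |a_2^-(t)| \lesssim \eee^{-\frac 32\kappa|t|}$, in fact with a small constant. Plugging these into the energy estimate and feeding back the a~priori bound \eqref{eq:assumption-a} on the one remaining unstable mode, we get $\|\bs g(t)\|_\cE \lesssim \eee^{-\frac 32\kappa|t|}$ with an implicit constant independent of $C_0$; choosing $C_0$ large and $|T_0|$ large then upgrades this to \eqref{eq:bootstrap-g}. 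The unstable mode $a_1^+$ itself is \emph{not} controlled by this argument — it is handled afterwards by a topological (shooting) argument in the choice of $a_0$, and within the present proposition it only enters through the assumption \eqref{eq:assumption-a}.

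\textbf{Main obstacle.} The delicate point is the construction and the localization of the virial correction term in $H(t)$: it must be truncated at an intermediate scale so that (i) it produces the exact cancellation of the leading concentration term in $\frac{\vd}{\vd t}H$, (ii) the truncation errors are of lower order than $\eee^{-3\kappa|t|}$, and (iii) it does not destroy the coercivity, i.e. it is negligible compared to the energy part. Getting the powers of $\lambda$ to work out — in particular that the logarithmic losses $|\log\lambda|^{1/2}$ appearing in \eqref{eq:taille-P}-type estimates are absorbed by the exponential gains — and checking that the outer bubble $\bs W_\mu$ genuinely does not interfere, is where the real work lies; dimension $N=6$ is exactly the borderline case that makes this balance possible.
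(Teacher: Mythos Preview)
Your overall strategy is the paper's, but several steps are wrong as written.

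First, the differential inequality for $H$: you write $H'(t) \geq -C\kappa H(t) - C\eee^{-3\kappa|t|}$, but a lower bound on $H'$ yields no upper bound on $H$. The actual mechanism is sharper. After the cancellation between $I'$ and $(bJ)'$, what remains is
\[
H'(t) \leq \frac{b}{\lambda}\Big({-}\int_{|x|\leq R\lambda}|\grad g|^2\ud x + \int f'(W_\lambda)g^2\ud x\Big) + cC_0^2\,\eee^{-3\kappa|t|},
\]
and the \emph{localized} coercivity (Lemma~\ref{lem:loc-coer}, inequality~\eqref{eq:coer-lin-coer-2}), together with \eqref{eq:g-orth} and the bounds on $a_1^\pm$, shows the bracketed form is itself $\leq cC_0^2\eee^{-3\kappa|t|}$. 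One obtains $H'(t) \leq c_1 C_0^2\eee^{-3\kappa|t|}$ with no $H$ on the right, and simply integrates from $T$. The virial does not produce a damping term $-\kappa H$; it cancels the bad scaling terms from $I'$ and leaves a sign-definite (up to error) remainder.

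Second, your derivation of \eqref{eq:bootstrap-l} is incomplete: integrating $\lambda'-b$ does not give $|\lambda(t)-\frac1\kappa\eee^{-\kappa|t|}|$, because $b$ is \emph{defined} through $\int_T^t\lambda/\mu^2$ and is not a priori equal to $\eee^{-\kappa|t|}$. The paper multiplies $|\lambda'-b|$ by $b' = \kappa^2\lambda/\mu^2$ and integrates from $T$ (where $b(T)=\kappa\lambda(T)$) to obtain $|b^2 - \kappa^2\lambda^2/\mu^2|\lesssim C_0\eee^{-\frac52\kappa|t|}$, hence $|\lambda' - \kappa\lambda|\lesssim C_0\eee^{-\frac32\kappa|t|}$, and only then integrates this linear ODE. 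Relatedly, the modulation estimate \eqref{eq:mod} gives only $|\lambda'-b|+|\mu'|\lesssim \|\bs g\|_\cE + c\,\eee^{-\frac32\kappa|t|}\lesssim C_0\eee^{-\frac32\kappa|t|}$: there is no extra factor of $\lambda$, and your $\eee^{-\frac52\kappa|t|}$ is not available. Finally, $a_2^+$ is the \emph{unstable} mode of the outer bubble, not a stable one; it stays controlled only because its growth rate $\nu/\mu\sim\nu<1<\frac32\kappa$ (see~\eqref{eq:nu-l-1}) is strictly slower than the target decay, so a comparison argument as in the proof of~\eqref{eq:a-reste}, not contraction, closes the estimate.
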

\begin{remark}
  Notice that \eqref{eq:bootstrap-g} and \eqref{eq:bootstrap-l} are strictly stronger
  than \eqref{eq:assumption-g} and \eqref{eq:lambda-bound0} respectively,
  which will be crucial for closing the bootstrap in Subsection~\ref{ssec:limit}.
%
\end{remark}
\begin{remark}
  The same conclusion should be true without the assumption of $X^1\times H^1$
  regularity, by means of a standard approximation procedure
  (both the assumptions and the conclusion are continuous for the topology $\|\cdot\|_\cE$).
\end{remark}

\subsection{Modulation}
\label{ssec:mod}

\begin{lemma}
  \label{lem:mod}
  Under assumptions \eqref{eq:lambda-bound0} and \eqref{eq:assumption-g}, for $t \in [T, T_1]$ there holds
  \begin{gather}
    \label{eq:g-orth}
    \big\la \frac{1}{\lambda(t)}\cZ_\uln{\lambda(t)}, g(t)\big\ra = 0, \quad \big|\big\la \frac{1}{\mu(t)}\cZ_\uln{\mu(t)}, g(t)\big\ra\big| \lesssim c\cdot\eee^{-\frac 32\kappa|t|}, \\
    \label{eq:mod}
    |\lambda'(t)-b(t)| + |\mu'(t)| \lesssim \|\bs g(t)\|_\cE + c\cdot \eee^{-\frac 32 \kappa|t|},
  \end{gather}
  with a constant $c$ arbitrarily small.
\end{lemma}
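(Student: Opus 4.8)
The plan is to run the standard modulation argument. The first ingredient is the orthogonality of $\bs g$, i.e.\ \eqref{eq:g-orth}. Under \eqref{eq:assumption-g} and \eqref{eq:taille-correct} we have $\|u(t)-W_{\mu(t)}-W_{\lambda(t)}\|_{\dot H^1}=\|g(t)+S(t)\|_{\dot H^1}<\eta$ for $|T_0|$ large, so $h(t)$ is given by the first branch of \eqref{eq:h-def} and the relations \eqref{eq:h-orth} hold. Since $g=h-S$ and $\la\cZ_{\uln\lambda},S\ra=0$ by \eqref{eq:S-orth}, the first identity in \eqref{eq:g-orth} is exact. For the second one, $\la\frac1\mu\cZ_{\uln\mu},g\ra=-\la\frac1\mu\cZ_{\uln\mu},S\ra$ because $\la\cZ_{\uln\mu},h\ra=0$, and since $\mu\sim1$ this is bounded by $\|S\|_{L^2}\ll\eee^{-\frac32\kappa|t|}$ (this bound on $\|S\|_{L^2}$ is already available from the proof of Lemma~\ref{lem:psi}), hence $\leq c\,\eee^{-\frac32\kappa|t|}$ for $|T_0|$ large.

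For the modulation estimate \eqref{eq:mod} I would view $(\lambda',\mu')$ as the solution of the linear system \eqref{eq:lambda-mu-diff} and invert it. Its matrix equals $\la\cZ,\Lambda W\ra\cdot\Id$ plus a perturbation: the diagonal corrections $\la\frac1\lambda\Lambda_0\cZ_{\uln\lambda},h\ra$ and $\la\frac1\mu\Lambda_0\cZ_{\uln\mu},h\ra$ are $\lesssim\|h\|_{\dot H^1}$ (using the scaling bound $|\la\frac1\lambda\phi_{\uln\lambda},h\ra|\lesssim\|\phi\|_{\dot H^{-1}}\|h\|_{\dot H^1}$, valid for $\phi\in\dot H^{-1}(\bR^6)$, applied with $\phi=\Lambda_0\cZ\in C_0^\infty$), while the off-diagonal entries $\la\cZ_{\uln\lambda},\Lambda W_{\uln\mu}\ra$, $\la\cZ_{\uln\mu},\Lambda W_{\uln\lambda}\ra$ are $\lesssim\lambda/\mu$ since the two bumps sit on well-separated scales. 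By \eqref{eq:assumption-g}, \eqref{eq:taille-correct} and \eqref{eq:lambda-bound0} this perturbation is small, so the matrix is invertible, with inverse equal to $\la\cZ,\Lambda W\ra^{-1}\Id$ up to an operator-norm error $\lesssim\|h\|_{\dot H^1}+\lambda/\mu$.

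Next I would compute the right-hand side of \eqref{eq:lambda-mu-diff} by substituting $\partial_t u=\dot\varphi+\dot g=-b\,\Lambda W_{\uln\lambda}+\dot g$ and using $\la\cZ_{\uln\lambda},\Lambda W_{\uln\lambda}\ra=\la\cZ,\Lambda W\ra$; it becomes the vector with entries $b\,\la\cZ,\Lambda W\ra$ and $0$, modulo the terms $-\la\cZ_{\uln\lambda},\dot g\ra$, $-\la\cZ_{\uln\mu},\dot g\ra$ (each $\lesssim\|\dot g\|_{L^2}\leq\|\bs g\|_\cE$ since $\|\cZ_{\uln\lambda}\|_{L^2}\sim1$) and $b\,\la\cZ_{\uln\mu},\Lambda W_{\uln\lambda}\ra$ ($\lesssim b\lambda\lesssim\eee^{-2\kappa|t|}$). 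Applying the inverse matrix then gives $|\lambda'-b|+|\mu'|\lesssim\|\bs g\|_\cE+\eee^{-2\kappa|t|}+(\|h\|_{\dot H^1}+\lambda/\mu)(b+\|\bs g\|_\cE)$; since $\|h\|_{\dot H^1}\lesssim\|\bs g\|_\cE+c\,\eee^{-\frac32\kappa|t|}$ by \eqref{eq:taille-correct}, $\lambda,b\lesssim\eee^{-\kappa|t|}$ by \eqref{eq:lambda-bound0} and \eqref{eq:b-bound0}, and $\|\bs g\|_\cE\lesssim\eee^{-\frac32\kappa|t|}$ by \eqref{eq:assumption-g}, every term on the right is $\leq\|\bs g\|_\cE+c\,\eee^{-\frac32\kappa|t|}$ once $|T_0|$ is large relative to $c$ and $C_0$, which is \eqref{eq:mod}. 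As a by-product one also obtains $|\lambda'|,|\mu'|\lesssim\eee^{-\kappa|t|}$, which is exactly the hypothesis under which Lemma~\ref{lem:psi} was stated.

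I expect the only genuine difficulty to be the bookkeeping of scaling weights — keeping straight which pairings land in $L^2$ and which in $\dot H^{-1}$ (with the accompanying gain of a power of $\lambda$) and matching these consistently against the $\dot H^1$- versus $L^2$-bounds on $g$, $\dot g$, $h$ and $S$ — together with arranging the order of quantifiers so that the implicit constant in \eqref{eq:mod} is genuinely independent of $C_0$, which forces $|T_0|$ to be chosen after both $c$ and $C_0$.
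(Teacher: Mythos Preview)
Your proof is correct and follows essentially the same route as the paper: you first deduce \eqref{eq:g-orth} from $g=h-S$ together with \eqref{eq:h-orth} and \eqref{eq:S-orth}, then substitute $\partial_t u=-b\,\Lambda W_{\uln\lambda}+\dot g$ into the modulation system \eqref{eq:lambda-mu-diff} and invert the nearly-diagonal matrix. The only cosmetic difference is that for the $\mu$-orthogonality you pair $\frac{1}{\mu}\cZ_{\uln\mu}$ with $S$ in $L^2\times L^2$, whereas the paper uses $L^\infty\times L^1$ (obtaining the slightly sharper $\eee^{-2\kappa|t|}$); both are adequate here.
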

\begin{proof}
  We have $g(t) = h(t) - S(t)$. Since $\cZ$ has compact support, \eqref{eq:S-orth} and \eqref{eq:h-orth} yield $\la \cZ_\uln{\lambda}, g\ra = 0$.
  From $\|\frac{1}{\mu}\cZ_\uln{\mu}\|_{L^\infty} \lesssim 1$ and $\|\chi\cdot \lambda^2 P_\lambda\|_{L^1} \leq \|\lambda^2 P_\lambda\|_{L^1(|x|\leq 2)} \lesssim \lambda^2 \sim \eee^{-2\kappa|t|}$
  (analogously $\|\chi\cdot b^2 Q_\lambda\|_{L^1} \lesssim \eee^{-2\kappa|t|}$) we obtain $|\la \frac{1}{\mu}\cZ_\uln\mu, g\ra| \lesssim \eee^{-2\kappa|t|}$.

  From \eqref{eq:taille-correct} we have
\begin{equation}
  \label{eq:taille-g-h}
  \|g(t) - h(t)\|_{\dot H^1} \leq c\cdot \eee^{-\frac 32 \kappa|t|},\qquad \text{with a small constant }c,
\end{equation}
Using this, \eqref{eq:lambda-mu-diff} yields
\begin{equation}
  \begin{aligned}
  &\begin{pmatrix}
    \la \cZ_\uln{\lambda}, \Lambda W_\uln{\lambda}\ra - \la \frac{1}{\lambda}\Lambda_0 \cZ_\uln{\lambda}, g\ra & \la \cZ_\uln{\lambda}, \Lambda W_\uln{\mu}\ra \\
    \la \cZ_\uln{\mu}, \Lambda W_\uln{\lambda}\ra & \la \cZ_\uln{\mu}, \Lambda W_\uln{\mu}\ra - \la \frac{1}{\mu}\Lambda_0\cZ_\uln{\mu}, g\ra
  \end{pmatrix}
  \cdot \begin{pmatrix}
    \lambda' - b \\ \mu'
  \end{pmatrix} \\
  &=
  \begin{pmatrix}
    -\la \cZ_\uln{\lambda}, \partial_t u + b\Lambda W_\uln\lambda\ra \\
    -\la \cZ_\uln{\mu}, \partial_t u + b\Lambda W_\uln\lambda\ra
  \end{pmatrix} + o_{\bR^2}(\eee^{-\frac 32 \kappa|t|}).
\end{aligned}
\end{equation}
Since by definition $\partial_t u + b\Lambda W_\uln\lambda = \dot g$, inverting the matrix we get \eqref{eq:mod}.
\end{proof}

\begin{lemma}
  \label{lem:eigen}
  Under assumptions \eqref{eq:lambda-bound0} and \eqref{eq:assumption-g}, for $t \in [T, T_1]$ the functions $a_1^\pm(t)$ and $a_2^\pm(t)$ satisfy
\begin{gather}
  \big|\dd t a_1^+(t) - \frac{\nu}{\lambda(t)}a_1^+(t)\big| \leq c\cdot\eee^{-\frac 12\kappa|t|},\qquad \text{with a small constant }c \label{eq:a-1p} \\
  |a_1^-(t)|\leq \eee^{-\frac 32 \kappa|t|}, \label{eq:a-1m} \\
  |a_2^\pm(t)| \leq \eee^{-\frac 32 \kappa|t|}. \label{eq:a-reste}
\end{gather}
\end{lemma}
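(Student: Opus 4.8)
\emph{Plan.} The plan is to differentiate the four scalar quantities, substitute the evolution equation \eqref{eq:g-diff} for $\bs g$, isolate the contribution of the linearised generator $J\circ\vD^2E(\bs W_\lambda)$ (resp.\ $\bs W_\mu$), which produces the ``exponential'' terms $\tfrac\nu\lambda a_1^\pm$ (resp.\ $\tfrac\nu\mu a_2^\pm$) via the eigencovector identity \eqref{eq:eigencovectl}, estimate the remainder, and finally integrate the three equations that are either stable or have a subcritical growth rate. Concretely: differentiating $a_1^+(t)=\la\alpha_{\lambda(t)}^+,\bs g(t)\ra$, using $\partial_t\alpha_{\lambda(t)}^+=\lambda'(t)\,\pd\lambda\alpha_\lambda^+\rstr_{\lambda(t)}$, the algebraic identity $J\circ\vD E(\bs\varphi+\bs g)-J\circ\vD E(\bs\varphi)=J\circ\vD^2E(\bs W_\lambda)\bs g+(0,(f'(\varphi)-f'(W_\lambda))g+N(\bs g))$ with $|N(\bs g)|\lesssim g^2$ (cf.\ \eqref{eq:point-f2}), and $\la\alpha_\lambda^+,J\circ\vD^2E(\bs W_\lambda)\bs g\ra=\tfrac\nu\lambda a_1^+$, one gets (recall $\alpha_\lambda^+=\tfrac12(\tfrac\nu\lambda\cY_\uln\lambda,\cY_\uln\lambda)$)
\[
  \dd t a_1^+-\frac\nu\lambda a_1^+=\lambda'\la\pd\lambda\alpha_\lambda^+,\bs g\ra+\tfrac12\la\cY_\uln\lambda,(f'(\varphi)-f'(W_\lambda))g+N(\bs g)\ra-\la\alpha_\lambda^+,\bs\psi\ra,
\]
and analogously $\dd t a_1^-+\tfrac\nu\lambda a_1^-$ and $\dd t a_2^\pm\mp\tfrac\nu\mu a_2^\pm$ equal the corresponding error expressions (with $\mu$ in place of $\lambda$ for the last two).

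\emph{Error bounds and \eqref{eq:a-1p}.} All errors are estimated using \eqref{eq:lambda-bound0}, \eqref{eq:assumption-g}, Lemma~\ref{lem:psi}, the modulation bound \eqref{eq:mod}, the embedding $\dot H^1(\bR^6)\hookrightarrow L^3$, and the exponential decay of $\cY$. Since $\pd\lambda\alpha_\lambda^+$ is $\tfrac1\lambda$ times rescaled copies of $\cY$ and $\Lambda_0\cY$, the frame term is $\lesssim\tfrac{|\lambda'|}\lambda(\|g\|_{L^3}+\|\dot g\|_{L^2})\lesssim\|\bs g\|_\cE\lesssim C_0\eee^{-\frac32\kappa|t|}$; the quadratic term is $\lesssim\|\cY_\uln\lambda\|_{L^3}\|g\|_{L^3}^2\lesssim\lambda^{-1}\eee^{-3\kappa|t|}$; and, since $|f'(\varphi)-f'(W_\lambda)|\lesssim|W_\mu|+|S|\lesssim1$, the potential-difference term is $\lesssim\|\cY_\uln\lambda\|_{L^{3/2}}\|g\|_{L^3}\lesssim\lambda\,\eee^{-\frac32\kappa|t|}$. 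In $\la\alpha_\lambda^+,\bs\psi\ra=\tfrac\nu{2\lambda}\la\cY_\uln\lambda,\psi\ra+\tfrac12\la\cY_\uln\lambda,\dot\psi\ra$ one inserts \eqref{eq:psi0}--\eqref{eq:psi1}: the only genuinely critical contribution, $\tfrac1{\lambda^2}(\lambda'-b)\la\cY_\uln\lambda,\Lambda W_\uln\lambda\ra$ (which would be of size $\tfrac{|\lambda'-b|}\lambda\sim C_0\eee^{-\frac12\kappa|t|}$), \emph{vanishes identically} because $\la\cY_\uln\lambda,\Lambda W_\uln\lambda\ra=\la\cY,\Lambda W\ra=0$ by \eqref{eq:YLW}; the remaining pieces of $\tfrac1\lambda\la\cY_\uln\lambda,\psi\ra$ are $\lesssim\eee^{-\frac32\kappa|t|}$ (using $\|\cY_\uln\lambda\|_{\dot H^{-1}}\sim\lambda$), and $\tfrac12\la\cY_\uln\lambda,\dot\psi\ra\lesssim C_0\eee^{-\frac32\kappa|t|}$ since $|\tfrac b\lambda(\lambda'-b)|\lesssim C_0\eee^{-\frac32\kappa|t|}$ by \eqref{eq:mod} and \eqref{eq:assumption-g}. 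Collecting, the right side of the $a_1^+$ equation is $\lesssim C_0\eee^{-\frac32\kappa|t|}\le c\,\eee^{-\frac12\kappa|t|}$ once $|T_0|$ is large, which is \eqref{eq:a-1p}. For $a_2^\pm$ the scale is $\mu\sim1$, so the factor $\tfrac1\mu\sim1$ produces no amplification and all error terms are $\lesssim\eee^{-\frac32\kappa|t|}$ (the singularity $f'(W_\lambda)\sim\lambda^{-2}$ paired against $\cY_\uln\mu$ being harmless since $\|W_\lambda\|_{L^2}\sim\lambda$).

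\emph{Integration for $a_1^-$ and $a_2^\pm$.} By the construction of $\bs h_0$ (Lemma~\ref{lem:initial}) and $\|\bs g(T)-\bs h_0\|_\cE\lesssim\eee^{-\frac32\kappa|T|}$, the initial values $a_1^-(T),a_2^\pm(T)$ are $O(\eee^{-\frac32\kappa|T|})$. For $a_1^-$, multiply by the integrating factor $m(t):=\exp(\int_T^t\nu/\lambda(s)\,\ud s)$, which grows super-exponentially in $|T|$ since $1/\lambda\sim\kappa\eee^{\kappa|\cdot|}$; from $\dd t(m a_1^-)=m\,O(c\eee^{-\frac12\kappa|t|})$ together with the substitution $w=\eee^{\kappa|s|}$ (so that $m(s)\eee^{-\frac12\kappa|s|}\,\ud s$ is a constant times $\eee^{\int_T^s\nu/\lambda}w^{-3/2}\eee^{-\nu w}\,\ud w$) one obtains $|a_1^-(t)|\le m(t)^{-1}|a_1^-(T)|+c\,\eee^{-\frac32\kappa|t|}\le\eee^{-\frac32\kappa|t|}$ for $c$ small and $|T_0|$ large, i.e.\ \eqref{eq:a-1m}. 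For $a_2^-$ the integrating factor $\exp(\int_T^t\nu/\mu)\sim\eee^{\nu(|T|-|t|)}$ gives likewise $|a_2^-(t)|\lesssim\eee^{-\frac32\kappa|t|}$. The delicate point is the \emph{unstable} mode $a_2^+$ of the non-concentrating bubble: with $\exp(-\int_T^t\nu/\mu)\sim\eee^{-\nu(|T|-|t|)}$ one gets
\[
  |a_2^+(t)|\lesssim\eee^{-\nu(|T|-|t|)}|a_2^+(T)|+\eee^{(\nu-\frac32\kappa)|T|-\nu|t|}+\eee^{-\frac32\kappa|t|},
\]
and since $\nu<1<\tfrac32\kappa$ (here \eqref{eq:nu-l-1} is essential) the first two terms are $\le\eee^{-\frac32\kappa|t|}$ and tend to $0$ as $T\to-\infty$; this yields \eqref{eq:a-reste}. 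No analogous integration is available for $a_1^+$, whose growth rate $\nu/\lambda$ is not integrable over $[T,T_0]$ — which is exactly why the topological (shooting) argument on $a_0$ is needed later and why we record only the differential inequality \eqref{eq:a-1p}.

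\emph{Main obstacle.} The crux is the bookkeeping of the $\tfrac1\lambda$ amplification produced by the first component of $\alpha_\lambda^\pm$: one must verify that in every error term this $\tfrac1\lambda$ is compensated either by the orthogonality $\la\cY,\Lambda W\ra=0$ — which kills the unique genuinely critical term — or by an extra power of $\lambda$ coming from a localised factor ($W_\mu$, $S$, $f'(W_\mu)$, or $\cY_\uln\lambda$ measured in $L^{3/2}$), or else is absorbed into $c\,\eee^{-\frac12\kappa|t|}$ by taking $|T_0|$ large, at the cost of only a $C_0\eee^{-\frac32\kappa|t|}$-type bound in the $a_1^+$ equation. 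The secondary subtlety is closing the integration of the unstable coefficient $a_2^+$, which rests on the strict inequality $\nu<\tfrac32\kappa$.
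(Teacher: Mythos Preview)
Your treatment of $a_1^\pm$ is correct and essentially identical to the paper's: the key observation that the only potentially dangerous $\tfrac{1}{\lambda}$-term vanishes by $\la\cY,\Lambda W\ra=0$ is exactly what the paper uses (it phrases it as $\la\alpha_\lambda^+,(\Lambda W_\lambda,0)\ra=0$). For $a_1^-$ the paper uses a continuity (barrier) argument in place of your integrating factor, but the two are equivalent; your substitution $w=\eee^{\kappa|s|}$ works.

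The genuine divergence is in $a_2^\pm$. You work directly with $\bs g$, while the paper switches to $\bs h=(u-W_\mu-W_\lambda,\partial_t u)$ and the auxiliary quantity $\wt a:=\la\alpha_\mu^+,\bs h\ra$, proving $|a_2^+-\wt a|\leq\tfrac12\eee^{-\frac32\kappa|t|}$ and then a differential inequality for $\wt a$ with error $\lesssim\eee^{-2\kappa|t|}$. This switch is not cosmetic: by differentiating $\wt a$ one avoids the term $\la\alpha_\mu^+,\bs\psi\ra$ entirely, and the remaining interaction errors (all of the form $\la\cY_\uln\mu,\,\cdot\,\ra$ against objects concentrated at scale $\lambda$) are genuinely $O(\eee^{-2\kappa|t|})$. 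In your route, the remainder $\la\alpha_\mu^+,\bs\psi\ra$ contributes, via the $L^2$ and $\dot H^1$ residuals in \eqref{eq:psi0}--\eqref{eq:psi1}, terms of size $C\,\eee^{-\frac32\kappa|t|}$ with a universal but \emph{not small} constant $C$. Your integrating-factor computation then yields only $|a_2^+(t)|\leq C'\eee^{-\frac32\kappa|t|}$ with $C'\sim C/(\tfrac32\kappa-\nu)$, which is the right order but not the sharp constant~$1$ claimed in \eqref{eq:a-reste}. (For the downstream application in Proposition~\ref{prop:bootstrap} a universal constant independent of $C_0$ would in fact suffice, so your approach is morally fine; but to prove the lemma as stated you would need to go back into the proof of Lemma~\ref{lem:psi} and argue that the $\bs\psi$-remainders, paired against $\cY_\uln\mu$, gain an extra power of $\lambda$ from scale separation --- which is plausible but not what you have written.)

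A minor point: your sketch of the $a_2^\pm$ error bound is too terse --- you must also invoke $\la\cY_\uln\mu,\Lambda W_\uln\mu\ra=0$ (to kill the $\mu'$-term in $\psi$) and the scale-separation estimate $|\la\cY_\uln\mu,\Lambda W_\uln\lambda\ra|\lesssim\lambda$ (to handle the $(\lambda'-b)$-term), exactly as you did for $a_1^\pm$; the parenthetical about $\|W_\lambda\|_{L^2}\sim\lambda$ addresses only the potential-difference piece.
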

\begin{proof}
  Using the definition of $a_1^+(t)$ we compute
  \begin{equation}
    \label{eq:a1p-0}
\begin{aligned}
    &\dd t a_1^+(t) = \dd t\la \alpha_{\lambda(t)}^+, \bs g(t)\ra  \\ &= \big\la {-}\frac{\lambda'}{\lambda}(\Lambda_{\cE^*}\alpha^+)_\lambda, \bs g\big\ra
    + \la \alpha_\lambda^+, J\circ \vD E(\bs \varphi + \bs g) - J\circ \vD E(\bs \varphi) - \bs \psi\ra,
\end{aligned}
  \end{equation}
  where $\Lambda_{\cE^*}\alpha^+ := -\pd \lambda\alpha_\lambda^+\vert_{\lambda=1}$.
  We have
  \begin{equation}
    \label{eq:a1p-1}
    |\la (\Lambda_{\cE^*}\alpha^+)_\lambda, \bs g\ra| \lesssim \|\bs g\|_\cE \ll \eee^{-\frac 12 \kappa|t|}.
  \end{equation}
  Since $\la\alpha_\lambda^+, (\Lambda W_\lambda, 0)\ra = 0$, using \eqref{eq:psi1} we obtain
  \begin{equation}
    \label{eq:a1p-2}
    |\la \alpha_\lambda^+, \bs \psi\ra| \lesssim \big\|\bs \psi - \frac{\lambda' - b}{\lambda}(\Lambda W_\lambda, 0)\big\|_\cE \ll \eee^{-\frac 12 \kappa|t|}.
  \end{equation}
  From \eqref{eq:point-f2} we obtain $\|f(\varphi + g) - f(\varphi) - f'(\varphi)g\|_{L^\frac 32} \lesssim \|g\|_{\dot H^1}^2$.
  From \eqref{eq:point-fp2} and \eqref{eq:taille-correct} we have $\|(f'(\varphi) - f'(W_\mu) - f'(W_\lambda))g\|_{L^\frac 32} \lesssim \|\varphi - W_\mu - W_\lambda\|_{L^3}\cdot\|g\|_{L^3}
  \lesssim \eee^{-\frac 32 \kappa|t|}\|g\|_{\dot H^1}$.
  Taking the sum we obtain
  \begin{equation}
    \label{eq:f-phi-g}
    \|f(\varphi + g) - f(\varphi) - (f'(W_\mu) + f'(W_\lambda))g\|_{L^\frac 32} \lesssim \|g\|_{\dot H^1}^2 + \eee^{-\frac 32 \kappa|t|}\|g\|_{\dot H^1}.
  \end{equation}
  But $ |\la \cY_\uln\lambda, f'(W_\mu)g\ra| \lesssim \|\cY_\uln\lambda\|_{L^\frac 32} \cdot \|f'(W_\mu)\|_{L^\infty}\cdot \|g\|_{L^3} \lesssim \lambda \|g\|_{\dot H^1}$, hence
  \begin{equation}
    \label{eq:a1p-3}
    |\la \cY_\uln\lambda, f(\varphi + g) - f(\varphi) - f'(W_\lambda)g\ra| \lesssim \frac{1}{\lambda}(\|g\|_{\dot H^1}^2 + \eee^{-\frac 32 \kappa|t|}\|g\|_{\dot H^1})\ll \eee^{-\frac 12 \kappa|t|}.
  \end{equation}
  Combining \eqref{eq:a1p-0} with \eqref{eq:a1p-1}, \eqref{eq:a1p-2} and \eqref{eq:a1p-3} we obtain
  \begin{equation}
    \dd t a_1^+(t) = \la \alpha_\lambda^+, J\circ \vD^2 E(\bs W_\lambda)\bs g\ra + o(\eee^{-\frac 12 \kappa|t|}) = \frac{\nu}{\lambda}a_1^+(t) + o(\eee^{-\frac 12 \kappa|t|}),
  \end{equation}
  where in the last step we use \eqref{eq:eigencovectl}. This proves \eqref{eq:a-1p}.
  
  Similarly, we have
  \begin{equation}
    \label{eq:a-1m-eq}
    \big|\dd t a_1^-(t) + \frac{\nu}{\lambda(t)}a_1^-(t)\big| \leq c\cdot\eee^{-\frac 12\kappa|t|}.
  \end{equation}
  Inequality \eqref{eq:taille-g-h} implies that \eqref{eq:a-1m} holds for $t$ in a neighborhood of $T$.
  Suppose that $T_2 \in (T, T_1)$ is the last time such that \eqref{eq:a-1m} holds for $t \in [T, T_2]$.
  But \eqref{eq:a-1m-eq} implies that $\dd t a_1^-(T_2)$ and $a_1^-(T_2)$ have opposite signs. Hence \eqref{eq:a-1m} cannot break down at $t = T_2$.
  The contradiction shows that \eqref{eq:a-1m} holds for $t \in [T, T_1]$.

  In order to prove \eqref{eq:a-reste}, it is more convenient to work with $\bs h(t)$, which was defined right before \eqref{eq:dth}, than with $\bs g(t)$.
  We will prove the bound for $a_2^+(t)$. The proof for $a_2^-(t)$ is exactly the same. Let $\wt a(t) := \la \alpha_{\mu(t)}^+, \bs h(t)\ra$.
  We have
  \begin{equation}
    \label{eq:a2-1}
\begin{aligned}
    &|\la \cY_\uln\mu, \Lambda W_\uln\lambda\ra| \lesssim \|\Lambda W_\uln\lambda\|_{L^1(|x|\leq 1)} + \|\Lambda W_\uln\lambda\|_{L^\infty(|x| \geq 1)} \\
    &\lesssim \lambda^3 \int_0^{\frac{1}{\lambda}}r^{-4}r^5\ud r + \frac{1}{\lambda^3}\cdot\big(\frac{1}{\lambda}\big)^{-4} \lesssim \lambda.
\end{aligned}
  \end{equation}
  Together with \eqref{eq:taille-correct} this yields
  $$
  |a_2^+(t) - \wt a(t)| = |\la \alpha_\mu^+, \bs g(t) - \bs h(t)\ra| \lesssim b|\la \cY\uln\mu, \Lambda W_\uln\lambda\ra| + \|S\|_{\dot H^1} \leq \frac 12 \eee^{-\frac 32 \kappa|t|},
  $$
  hence it suffices to show that
  \begin{equation}
    \label{eq:a2-3}
    |\wt a(t)|\leq \frac 12 \eee^{-\frac 32 \kappa|t|}.
  \end{equation}
  As in the case of $a_1^+(t)$, using \eqref{eq:dth} we obtain
  \begin{equation}
    \label{eq:a2-4}
    \begin{aligned}
    \dd t \wt a(t) &= \big\la {-}\frac{\mu'}{\mu}(\Lambda_{\cE^*}\alpha^+)_\mu, \bs h\big\ra + \la \alpha_\mu^+, J\circ\vD^2 E(\bs W_\mu)h\ra \\
    & +\la \alpha_\mu^+, (\mu'\Lambda W_\uln\mu + \lambda'\Lambda W_\uln\lambda, f(W_\mu + W_\lambda + h) - f(W_\mu) - f(W_\lambda) - f'(W_\mu)h)\ra  \end{aligned}
  \end{equation}
  But
  \begin{equation}
    \begin{gathered}
      \la \cY_\uln\mu, \Lambda W_\uln\mu\ra = 0, \\
      |\la \cY_\uln\mu, \Lambda W_\uln\lambda\ra| \lesssim \eee^{-\kappa|t|},\qquad \text{see \eqref{eq:a2-1},} \\
      |\la \cY_\uln\mu, f(W_\mu + W_\lambda + h) - f(W_\mu + W_\lambda) - f'(W_\mu + W_\lambda)h\ra| \lesssim \|h\|_{\dot H^1}^2 \lesssim \eee^{-2 \kappa|t|}, \\
\begin{aligned}
      |\la \cY_\uln\mu, f(W_\mu + W_\lambda) - f(W_\mu) - f(W_\lambda)\ra| &= 2|\la \cY_\uln\mu, W_\mu \cdot W_\lambda\ra| \\ &= 2|\la \cY_\uln\mu\cdot W_\mu, W_\lambda\ra| \lesssim \eee^{-2\kappa|t|},\qquad \text{see \eqref{eq:a2-1}},
\end{aligned} \\
      |\la \cY_\uln\mu, (f'(W_\mu + W_\lambda) - f'(W_\mu))h\ra| \lesssim \|\cY_\uln\mu\|_{L^6}\cdot \|W_\lambda\|_{L^2}\cdot \|h\|_{L^3} \lesssim \lambda\|h\|_{\dot H^1} \lesssim \eee^{-2\kappa|t|},
    \end{gathered}
  \end{equation}
  hence \eqref{eq:a2-4} yields
  \begin{equation}
    \big|\dd t \wt a(t) - \frac{\nu}{\mu(t)}\wt a(t)\big| \lesssim \eee^{-2 \kappa|t|} \leq c\cdot\eee^{-\frac 32\kappa|t|},
  \end{equation}
  with a constant $c$ arbitrarily small.
  Using \eqref{eq:lambda-bound0}, we get
  \begin{equation}
    \label{eq:a-2pm-eq}
    |\dd t \wt a(t)| \leq \frac{9\nu}{8}|\wt a(t)| + c\cdot\eee^{-\frac 32\kappa|t|}.
  \end{equation}
  As in the proof of \eqref{eq:a-1m-eq}, suppose that $T_2 \in (T, T_1)$ is the last time such that \eqref{eq:a2-3} holds for $t \in [T, T_2]$.
  This implies that $|\wt a(T_2)| = \frac 12 \eee^{-\frac 32 \kappa|T_2|}$ and $|\dd t \wt a(T_2)| \geq \frac 34\kappa\cdot \eee^{-\frac 32 \kappa|T_2|}$,
  thus \eqref{eq:a-2pm-eq} yields
  \begin{equation*}
    \frac 34 \kappa\cdot\eee^{-\frac 32 \kappa|T_2|} \leq \frac{9\nu}{16}\eee^{-\frac 32 \kappa|T_2|} + c\cdot \eee^{-\frac 32 \kappa|T_2|}.
  \end{equation*}
  But \eqref{eq:kappa} and \eqref{eq:nu-l-1} give $\frac 34 \kappa > \frac{9\nu}{16}$. Since $c$ is arbitrarily small, we obtain a contradiction.
\end{proof}
\subsection{Coercivity}
\label{ssec:prelim-coer}
Recall that we denote $L := -\Delta - f'(W)$.
\begin{lemma}
  \label{lem:loc-coer}
  There exist constants $c, C > 0$ such that
  \begin{itemize}[leftmargin=0.5cm]
    \item for all $g \in \dot H^1$ radially symmetric there holds
      \begin{equation}
        \label{eq:coer-lin-coer-1}
        \la g, Lg\ra = \int_{\bR^6}|\grad g|^2 \ud x - \int_{\bR^N}f'(W)|g|^2\ud x \geq c\int_{\bR^6}|\grad g|^2 \ud x -C\big(\la \cZ, g\ra^2 + \la \cY, g\ra^2\big),
      \end{equation}
    \item if $r_1 > 0$ is large enough, then for all $g \in \dot H^1_\tx{rad}$ there holds
      \begin{equation}
        \label{eq:coer-lin-coer-2}
        (1-2c)\int_{|x|\leq r_1}|\grad g|^2 \ud x + c\int_{|x|\geq r_1}|\grad g|^2\ud x - \int_{\bR^6}f'(W)|g|^2\ud x \geq -C\big(\la \cZ, g\ra^2 + \la \cY, g\ra^2\big),
      \end{equation}
    \item if $r_2 > 0$ is small enough, then for all $g \in \dot H^1_\tx{rad}$ there holds
      \begin{equation}
        \label{eq:coer-lin-coer-3}
        (1-2c)\int_{|x|\geq r_2}|\grad g|^2 \ud x + c\int_{|x|\leq r_2}|\grad g|^2\ud x - \int_{\bR^6}f'(W)|g|^2\ud x \geq -C\big(\la \cZ, g\ra^2 + \la \cY, g\ra^2\big).
      \end{equation}
  \end{itemize}
\end{lemma}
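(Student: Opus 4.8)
The plan is to deduce \eqref{eq:coer-lin-coer-1} from the standard spectral properties of $L$, and then obtain the two localized bounds \eqref{eq:coer-lin-coer-2}--\eqref{eq:coer-lin-coer-3} from \eqref{eq:coer-lin-coer-1} by a compactness argument; the constant $c$ in the statement will be taken small (smaller than half of the coercivity constant produced below).

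\emph{Step 1: proof of \eqref{eq:coer-lin-coer-1}.} Recall the spectral picture of $L = -\Delta - f'(W)$ on $L^2_\tx{rad}(\bR^6)$: a single negative eigenvalue $-\nu^2$ with eigenfunction $\cY$, kernel equal to $\bR\Lambda W$ (non-degeneracy of the ground state in the radial class), and nonnegative spectrum otherwise; in particular $\la\cY,\Lambda W\ra = 0$, and $\la g, Lg\ra \geq 0$ for all radial $g\in\dot H^1$ with $\la g,\cY\ra = 0$, with equality only if $g\in\bR\Lambda W$. The key intermediate claim is that there is $c_0 > 0$ with $\la g, Lg\ra \geq c_0\|\grad g\|_{L^2}^2$ for every radial $g\in\dot H^1$ with $\la\cZ,g\ra = \la\cY,g\ra = 0$. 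I would prove this by contradiction: take $g_n$ with $\|\grad g_n\|_{L^2} = 1$, the two orthogonality conditions, and $\la g_n, Lg_n\ra\to 0$, and pass to a weak limit $g_\infty$ in $\dot H^1$. The quadratic form $g\mapsto\int f'(W)g^2 = 2\int W g^2$ is weakly continuous on $\dot H^1$ (the tail satisfies $\int_{|x|\geq R}W g^2 \lesssim R^{-2}\|\grad g\|_{L^2}^2$ by Hardy's inequality, and $\dot H^1 \hookrightarrow L^2(\{|x|\leq R\})$ is compact), so $\int f'(W)g_\infty^2 = 1$, hence $g_\infty\neq 0$; by weak lower semicontinuity $\la g_\infty, Lg_\infty\ra\leq 0$, which together with $g_\infty\perp\cY$ gives $g_\infty\in\bR\Lambda W$, and then $\la\cZ,g_\infty\ra = 0$ together with $\la\cZ,\Lambda W\ra > 0$ forces $g_\infty = 0$ --- a contradiction. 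Finally I would remove the orthogonality: writing $g = \wt g + \la g,\cY\ra\cY + \la\cZ,\Lambda W\ra^{-1}\la\cZ,g\ra\Lambda W$ with $\wt g\perp\cY,\cZ$ (consistent because $\la\cY,\cZ\ra = \la\cY,\Lambda W\ra = 0$), one has $L\Lambda W = 0$ and $\la\wt g, L\cY\ra = -\nu^2\la\wt g,\cY\ra = 0$, so $\la g, Lg\ra = \la\wt g, L\wt g\ra - \nu^2\la g,\cY\ra^2 \geq c_0\|\grad\wt g\|_{L^2}^2 - \nu^2\la g,\cY\ra^2$; combining with $\|\grad\wt g\|_{L^2}^2 \geq \tfrac12\|\grad g\|_{L^2}^2 - C(\la\cZ,g\ra^2 + \la\cY,g\ra^2)$ (since $\|\grad(\la g,\cY\ra\cY + \la\cZ,\Lambda W\ra^{-1}\la\cZ,g\ra\Lambda W)\|_{L^2}^2 \lesssim \la\cZ,g\ra^2 + \la\cY,g\ra^2$) gives \eqref{eq:coer-lin-coer-1}, for any $c \leq c_0/2$.

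\emph{Step 2: proof of \eqref{eq:coer-lin-coer-2} and \eqref{eq:coer-lin-coer-3}.} Write $Q_{r_1}(g)$ for the left-hand side of \eqref{eq:coer-lin-coer-2}; rearranging, $Q_{r_1}(g) = (1-2c)\|\grad g\|_{L^2}^2 - (1-3c)\int_{|x|\geq r_1}|\grad g|^2 - \int f'(W)g^2$. I claim that if $c < c_0/2$, then for $r_1$ large $Q_{r_1}(g)\geq\delta_0\|\grad g\|_{L^2}^2$ for all radial $g$ with $\la\cZ,g\ra = \la\cY,g\ra = 0$, with a universal $\delta_0 > 0$. Granting this, the orthogonality conditions are removed as in Step~1; the localized bilinear form is not diagonal on the splitting $g = \wt g + v$, but the off-diagonal term is bounded by $C\|\grad\wt g\|_{L^2}(|\la\cZ,g\ra| + |\la\cY,g\ra|)$ and the $v$-term by $C(\la\cZ,g\ra^2 + \la\cY,g\ra^2)$, so a Young's inequality using $\delta_0 > 0$ to absorb $\|\grad\wt g\|_{L^2}^2$ yields \eqref{eq:coer-lin-coer-2}. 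To prove the claim, suppose it fails: then there are $r_1^{(n)}\to\infty$ and radial $g_n$ with the orthogonality conditions, $\|\grad g_n\|_{L^2} = 1$, and $\limsup_n Q_{r_1^{(n)}}(g_n)\leq 0$. Pass to a weak limit $g_\infty$ in $\dot H^1$ and (along a subsequence) set $\beta := \lim_n\int_{|x|\geq r_1^{(n)}}|\grad g_n|^2 \in[0,1]$. Since $r_1^{(n)}\to\infty$, weak lower semicontinuity on each ball $\{|x|\leq R\}$ gives $\|\grad g_\infty\|_{L^2}^2 \leq 1-\beta$, while the weak continuity of the potential and $\limsup_n Q_{r_1^{(n)}}(g_n)\leq 0$ give $\int f'(W)g_\infty^2 \geq (1-2c) - (1-3c)\beta$. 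As $g_\infty\perp\cZ,\cY$, Step~1 gives $\int f'(W)g_\infty^2 \leq (1-c_0)\|\grad g_\infty\|_{L^2}^2 \leq (1-c_0)(1-\beta)$, hence $(1-c_0)(1-\beta)\geq (1-2c) - (1-3c)\beta$; but this affine inequality in $\beta$ fails at $\beta = 0$ (it reads $c_0\leq 2c$) and at $\beta = 1$ (it reads $c\leq 0$), hence fails on all of $[0,1]$ when $c < c_0/2$ --- a contradiction. The bound \eqref{eq:coer-lin-coer-3} is obtained in the same way with $r_2^{(n)}\to 0$ and $\gamma := \lim_n\int_{|x|\leq r_2^{(n)}}|\grad g_n|^2$ replacing $\beta$ (the possible concentration of $\grad g_n$ at the origin being invisible in the weak limit, so again $\|\grad g_\infty\|_{L^2}^2\leq 1-\gamma$).

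The step I expect to be the main obstacle is making Step~2 rigorous: carefully tracking the weakly convergent sequence across the moving threshold $|x|\sim r^{(n)}$ so that both $\|\grad g_\infty\|_{L^2}^2\leq 1-\beta$ (resp.\ $\leq 1-\gamma$) and the limit of $\int f'(W)g_n^2$ come out right, and, in the removal of orthogonality, checking that the non-diagonal localized bilinear form produces only errors of order $\la\cZ,g\ra^2 + \la\cY,g\ra^2$. The spectral input used in Step~1 (Morse index one, radial kernel exactly $\bR\Lambda W$) is classical and I would simply quote it.
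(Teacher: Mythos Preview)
Your argument is correct. The paper itself does not prove this lemma; it simply cites Lemma~2.1 of \cite{moi15p-3} (see also \cite{MaMe15p}), so there is no in-paper proof to compare against directly.

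It is worth noting, however, that the paper does give a full proof of the analogous coercivity lemma in the Yang--Mills case (Lemma~\ref{lem:loc-coer-ym}), and the method there is different from yours. After a change of variable reducing to a one-dimensional Schr\"odinger operator with an explicit P\"oschl--Teller potential, the basic coercivity is read off from the spectrum, and the localized versions \eqref{eq:coer-lin-coer-2-ym}--\eqref{eq:coer-lin-coer-3-ym} are obtained constructively: one multiplies $g$ by a smooth cut-off $\wt\chi$ supported on the inner region, applies the basic coercivity to $\wt h = \wt\chi\cdot g$, and controls the commutator terms (of size $O(R^{-2})$) and the tail of the potential directly. This is more hands-on and yields explicit constants, while your compactness/contradiction approach is softer but equally valid --- it requires only the qualitative spectral input (Morse index one, radial kernel $= \bR\Lambda W$) together with the weak continuity of $g\mapsto\int f'(W)g^2$ on $\dot H^1$. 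The ``main obstacle'' you flag in Step~2 is not a genuine one: since $r_1^{(n)}\to\infty$, for each fixed $R$ eventually $\{|x|\leq R\}\subset\{|x|\leq r_1^{(n)}\}$, and weak lower semicontinuity on balls followed by $R\to\infty$ gives $\|\grad g_\infty\|_{L^2}^2\leq 1-\beta$ exactly as you wrote; the removal of orthogonality via Young's inequality (using the gap $\delta_0>0$ to absorb the cross term) is likewise routine.
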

\begin{proof}
  This is exactly Lemma 2.1 in \cite{moi15p-3}, see also \cite[Lemma 2.1]{MaMe15p}.
\end{proof}

\begin{lemma}
  \label{lem:bulles-coer}
  There exists a constant $\eta > 0$ such that if $\frac{\lambda}{\mu} < \eta$ and $\|\bs U - (\bs W_{\mu} + \bs W_{\lambda})\|_\cE < \eta$,
  then for all $\bs g \in \cE$ there holds
  \begin{equation*}
    \frac 12 \la \vD^2 E(\bs U)\bs g, \bs g\ra +2\big(\la \alpha^-_{\lambda}, \bs g\ra^2 + \la \alpha^+_{\lambda}, \bs g\ra^2 + \la \frac{1}{\lambda}\cZ_\uln\lambda, g\ra^2
    + \la \alpha^-_{\mu}, \bs g\ra^2 + \la \alpha^+_{\mu}, \bs g\ra^2 + \la \frac{1}{\mu}\cZ_\uln\mu, g\ra^2\big) \gtrsim \|\bs g\|_\cE^2.
  \end{equation*}
\end{lemma}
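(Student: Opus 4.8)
The plan is to reduce the two-bubble coercivity to the single-bubble coercivity of Lemma~\ref{lem:loc-coer} by a cutoff/localization argument. First I would note that since $\|\bs U - (\bs W_\mu + \bs W_\lambda)\|_\cE < \eta$, it suffices (after absorbing the error, using that $\vD^2 E$ depends continuously on the base point in the relevant operator norm and that $\|\bs W_\mu + \bs W_\lambda - \bs U\|_\cE$ is small) to prove the estimate with $\bs U$ replaced by $\bs W_\mu + \bs W_\lambda$. Writing out $\tfrac 12\la \vD^2 E(\bs W_\mu + \bs W_\lambda)\bs g, \bs g\ra$ for $\bs g = (g,\dot g)$, the $\dot g$ part contributes $\tfrac 12\|\dot g\|_{L^2}^2$ directly, so the whole problem is the spatial quadratic form
\begin{equation*}
  Q(g) := \int_{\bR^6}|\grad g|^2\ud x - \int_{\bR^6} f'(W_\mu + W_\lambda)\,|g|^2\ud x,
\end{equation*}
and I must show $Q(g) + (\text{the six correction terms}) \gtrsim \|g\|_{\dot H^1}^2$.

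Next I would introduce a scale $\rho$ with $\lambda \ll \rho \ll \mu$ (e.g.\ $\rho = \sqrt{\lambda\mu}$) and a cutoff $\zeta(x) = \chi(x/\rho)$, splitting $g = \zeta g + (1-\zeta)g =: g^\flat + g^\sharp$, and correspondingly splitting the gradient term up to a cross term controlled by $\|\grad\zeta\|$, which is $O(\rho^{-1})$ on an annulus of volume $O(\rho^6)$ — harmless in $\dot H^1$ by a Hardy-type bound once $\lambda/\mu$ is small. For the potential: on the support of $g^\flat$ (i.e.\ $|x|\lesssim\rho$), the potential $f'(W_\mu)$ is $O(\mu^{-2})$ hence negligible against $\dot H^1$, so $f'(W_\mu + W_\lambda) \approx f'(W_\lambda)$ there by \eqref{eq:point-fp2}; on the support of $g^\sharp$ (i.e.\ $|x|\gtrsim\rho$) the potential $f'(W_\lambda) = \lambda f'(W)_\uln\lambda$ is small in the appropriate $L^3$ norm, so $f'(W_\mu + W_\lambda)\approx f'(W_\mu)$ there. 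Thus
\begin{equation*}
  Q(g) \gtrsim \Big(\int|\grad g^\flat|^2 - \int f'(W_\lambda)|g^\flat|^2\Big) + \Big(\int|\grad g^\sharp|^2 - \int f'(W_\mu)|g^\sharp|^2\Big) - o(1)\|g\|_{\dot H^1}^2.
\end{equation*}
To each bracket I apply the rescaled version of Lemma~\ref{lem:loc-coer} — the first at scale $\lambda$, the second at scale $\mu$ — in the sharp-localized forms \eqref{eq:coer-lin-coer-2} (for the inner piece, whose support is cut off at radius $\sim\rho$, i.e.\ $\sim\rho/\lambda \to \infty$ in rescaled variables) and \eqref{eq:coer-lin-coer-3} (for the outer piece, cut off below $\rho$, i.e.\ $\sim\rho/\mu\to 0$ in rescaled variables). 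This produces the lower bound $c\|g^\flat\|_{\dot H^1}^2 + c\|g^\sharp\|_{\dot H^1}^2 - C(\la\tfrac1\lambda\cZ_\uln\lambda, g^\flat\ra^2 + \la\tfrac1\lambda\cY_\uln\lambda, g^\flat\ra^2 + \la\tfrac1\mu\cZ_\uln\mu, g^\sharp\ra^2 + \la\tfrac1\mu\cY_\uln\mu, g^\sharp\ra^2)$, and I would then replace $g^\flat, g^\sharp$ by $g$ inside the four inner products at the cost of errors $o(1)\|g\|_{\dot H^1}^2$ (again because $\cZ, \cY$ decay and the removed parts of $g$ live on scales well separated from $\lambda$ resp.\ $\mu$), recognizing that $\la\tfrac1\lambda\cY_\uln\lambda, g\ra$ is (a constant times) $\nu(\la\alpha_\lambda^+,\bs g\ra - \la\alpha_\lambda^-,\bs g\ra)$ plus a $\dot g$ contribution already dominated by the $\|\dot g\|_{L^2}^2$ term, so these four inner products are all absorbed into the six correction terms in the statement.

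The main obstacle is the bookkeeping of the cross terms and the interface region $|x|\sim\rho$: one must verify that the gradient cross term $\int \grad\zeta\cdot\grad(\zeta g)\,(1-\zeta)g$ and the reassembly errors $\|g^\flat\|_{\dot H^1}^2 + \|g^\sharp\|_{\dot H^1}^2 \geq (1-o(1))\|g\|_{\dot H^1}^2$ really are lower-order, which forces the choice $\lambda/\mu\to 0$ and an intermediate scale $\rho$ chosen so that both $\rho/\lambda\to\infty$ and $\mu/\rho\to\infty$; with $\rho = \sqrt{\lambda\mu}$ this holds. A secondary technical point is that Lemma~\ref{lem:loc-coer} is stated on all of $\dot H^1$ with the two ``mixed'' sharp forms \eqref{eq:coer-lin-coer-2}--\eqref{eq:coer-lin-coer-3}, so strictly speaking I do not even need to cut $g$ in two — I can apply \eqref{eq:coer-lin-coer-2} rescaled at scale $\lambda$ and \eqref{eq:coer-lin-coer-3} rescaled at scale $\mu$ to $g$ itself, add them with weights summing the exterior/interior pieces, and only then control the potential differences $f'(W_\mu+W_\lambda) - f'(W_\lambda)$ and $f'(W_\mu+W_\lambda) - f'(W_\mu)$ region-by-region as above; this is cleaner and avoids cross terms altogether, at the cost of being slightly more careful about which weighted combination of the two mixed inequalities reconstructs $\int|\grad g|^2$. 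I would present the argument in this second, cutoff-free form.
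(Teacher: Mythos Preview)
Your plan is correct and the core idea---summing the two localized coercivity inequalities \eqref{eq:coer-lin-coer-2} and \eqref{eq:coer-lin-coer-3} at scales $\lambda$ and $\mu$ with interface radius $\sqrt{\lambda\mu}$---is exactly what the paper does. Your second, cutoff-free version is the right one and matches the paper's Step~2.

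There are two minor differences worth noting. First, the paper does not apply the localized coercivity directly to $g$; instead it subtracts off the six explicit bad directions, writing $\bs g = a_1^-\cY_\lambda^- + a_1^+\cY_\lambda^+ + b_1\Lambda\bs W_\lambda + a_2^-\cY^- + a_2^+\cY^+ + b_2\Lambda\bs W + \bs k$, computes the quadratic form on the explicit part (obtaining the cross terms $-2a_j^-a_j^+$), and then applies the summed localized coercivity to $\bs k$, whose projections onto $\cZ$ and $\cY$ at both scales are $O(\lambda^2)\|\bs g\|_\cE$. This bookkeeping is what produces the precise constant $2$ in the statement; your direct approach yields the same inequality with $2$ replaced by some larger absolute constant, which is harmless for every application in the paper. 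Second, you can simplify your potential step: since here $f'(u)=2|u|$ and both bubbles are positive, one has $f'(W_\mu+W_\lambda)=f'(W_\mu)+f'(W_\lambda)$ exactly, so after the initial replacement of $\bs U$ by $\bs W_\mu+\bs W_\lambda$ there is no ``potential difference'' left to control region by region---the two potentials subtracted in the two inequalities sum to the full potential globally.
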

\begin{proof}
  We will repeat with minor changes the proof of \cite[Lemma 3.5]{moi15p-3}.
  \paragraph{\textbf{Step 1}}
  Without loss of generality we can assume that $\mu = 1$.
  Consider the operator $\bs L_\lambda$ defined by the following formula:
  \begin{equation*}
    \bs L_\lambda := \begin{pmatrix} -\Delta - f'(W_{\lambda}) - f'(W) & 0 \\ 0 & \Id\end{pmatrix}.
  \end{equation*}
  From the fact that $\|f'(U) - f'(W) - f'(W_\lambda)\|_{L^3} \lesssim \|U - (W + W_\lambda)\|_{L^3}$ we obtain
  \begin{equation}
    \label{eq:bulles-coer-approx}
    |\la \vD^2 E(\bs U)\bs g, \bs g\ra - \la \bs L_{\lambda}\bs g, \bs g\ra| \leq c\|\bs g\|_\cE^2,\qquad \forall \bs g\in\cE,
  \end{equation}
  with $c > 0$ small when $\eta$ and $\lambda_0$ are small.
  \paragraph{\textbf{Step 2.}}
  In view of \eqref{eq:bulles-coer-approx}, it suffices to prove that if $\lambda < \lambda_0$, then
  \begin{equation*}
    \frac 12 \la \bs L_\lambda\bs g, \bs g\ra +2\big(\la \alpha^-_{\lambda_1}, \bs g\ra^2 + \la \alpha^+_{\lambda_1}, \bs g\ra^2 + \la \alpha^-_{\lambda_2}, \bs g\ra^2 + \la \alpha^+_{\lambda_2}, \bs g\ra^2 + \la \frac{1}{\lambda}\cZ_\uln\lambda, g\ra^2 + \la \cZ, g\ra^2\big) \gtrsim \|\bs g\|_\cE^2.
  \end{equation*}
  Let $a_1^- :=\la \alpha_\lambda^-, \bs g\ra$, $a_1^+ := \la \alpha_\lambda^+, \bs g\ra$, $a_2^- := \la\alpha^-, \bs g\ra$, $a_2^+ := \la\alpha^+, \bs g\ra$,
  $b_1 := \la \cZ, \Lambda W\ra^{-1}\cdot \la \frac{1}{\lambda}\cZ_{\uln\lambda}, g\ra$, $b_2 := \la \cZ, \Lambda W\ra^{-1}\cdot \la \cZ, g\ra$
  and decompose
  $$\bs g = a_1^-\cY_\lambda^- + a_1^+\cY_\lambda^+ + a_2^-\ym + a_2^+\yp + b_1 \Lambda \bs W_\lambda + b_2\Lambda \bs W + \bs k.$$
  Using the fact that
  $$
  \begin{aligned}
    |\la\alpha^\pm, \cY^\pm_\lambda\ra| + |\la \alpha_\lambda^\pm, \cY^\pm\ra| + |\la \frac{1}{\lambda}\cZ_\uln\lambda, \cY\ra| + |\la \cZ, \cY_\lambda\ra| &\lesssim \lambda^2, \\
    |a_1^-| + |a_1^+| + |a_2^-| + |a_2^+| + |b_1| + |b_2| &\lesssim \|\bs g\|_\cE, \\
    \la \alpha^-, \cY^+\ra = \la \alpha^+, \cY^-\ra = \la \cZ, \cY \ra = \la \cY, \Lambda W\ra &= 0
  \end{aligned}
  $$ we obtain
  \begin{equation}
    \label{eq:bulles-coer-eigendir}
    \la\alpha^-, \bs k\ra^2 + \la\alpha^+, \bs k\ra^2 + \la \alpha_\lambda^-, \bs k\ra^2 + \la \alpha_\lambda^+, \bs k\ra^2 + \la \cZ, k\ra^2 + \la \frac{1}{\lambda}\cZ_\uln\lambda, k\ra^2 \lesssim \lambda^4\cdot\|\bs g\|_\cE^2.
  \end{equation}

  Since $\bs L_\lambda$ is self-adjoint, we can write
  \begin{equation}
    \label{eq:bulles-coer-expansion}
    \begin{aligned}
      \frac 12 \la \bs L_\lambda \bs g, \bs g\ra &= \frac 12 \la \bs L_\lambda\bs k, \bs k\ra  \\
      &+ \la \bs L_\lambda(a_2^-\cY^- + a_2^+\cY^+ + b_2 \Lambda \bs W), \bs k\ra + \la \bs L_\lambda(a_1^-\cY_\lambda^- + a_1^+\cY_\lambda^+ + b_1 \Lambda \bs W_\lambda), \bs k\ra \\
      &+ \frac 12 \la \bs L_\lambda(a_2^-\cY^- + a_2^+\cY^+ + b_2 \Lambda \bs W), a_2^-\cY^- + a_2^+\cY^+ + b_2 \Lambda \bs W\ra \\
      &+ \frac 12 \la \bs L_\lambda(a_1^-\cY_\lambda^- + a_1^+\cY_\lambda^+ + b_1 \Lambda \bs W_\lambda), a_1^-\cY_\lambda^- + a_1^+\cY_\lambda^+ + b_1 \Lambda \bs W_\lambda\ra \\
      &+ \la \bs L_\lambda(a_2^-\cY^- + a_2^+\cY^+ + b_2 \Lambda \bs W), a_1^-\cY_\lambda^- + a_1^+\cY_\lambda^+ + b_1 \Lambda \bs W_\lambda\ra.
    \end{aligned}
  \end{equation}
  It is easy to see that $\|f'(W)\cY_\lambda\|_{L^\frac 32} \to 0$, $\|f'(W) \Lambda W_\lambda\|_{L^\frac 32} \to 0$, $\|f'(W_\lambda)\cY\|_{L^\frac 32} \to 0$ and $\|f'(W_\lambda)\Lambda W\|_{L^\frac 32} \to 0$ as $\lambda \to 0$. This and \eqref{eq:al}, \eqref{eq:eigenvectl} imply
  $$
  \begin{gathered}
  \|\bs L_\lambda \cY^- + 2\alpha^+\|_{\cE^*} + \|\bs L_\lambda \cY^+ + 2\alpha^-\|_{\cE^*} + \|\bs L_\lambda \Lambda \bs W\|_{\cE^*} \\
  + \|\bs L_\lambda \cY_\lambda^- + 2\alpha_\lambda^+\|_{\cE^*} + \|\bs L_\lambda \cY_\lambda^+ + 2\alpha_\lambda^-\|_{\cE^*} + \|\bs L_\lambda \Lambda \bs W_\lambda\|_{\cE^*} \sto{\lambda \to 0} 0.
\end{gathered}
  $$
  Plugging this into \eqref{eq:bulles-coer-expansion} and using \eqref{eq:bulles-coer-eigendir} we obtain
  \begin{equation}
    \label{eq:bulles-coer-approx-6}
    \frac 12\la \bs L_\lambda \bs g, \bs g\ra \geq -2a_2^-a_2^+ - 2a_1^-a_1^+ + \frac 12\la \bs L_\lambda\bs k, \bs k\ra -\wt c \|\bs g\|_\cE^2,
  \end{equation}
  where $\wt c \to 0$ as $\lambda \to 0$.

  Applying \eqref{eq:coer-lin-coer-2} with $r_1 = \lambda^{-\frac 12}$, rescaling and using \eqref{eq:bulles-coer-eigendir} we get, for $\lambda$ small enough,
  \begin{equation}
    \label{eq:bulles-coer-approx-4}
    (1-2c)\int_{|x|\leq \sqrt\lambda}|\grad k|^2 \ud x + c\int_{|x|\geq \sqrt\lambda}|\grad k|^2\ud x - \int_{\bR^6}f'(W_\lambda)|k|^2\ud x \geq -\wt c \|\bs g\|_\cE^2.
  \end{equation}
  From \eqref{eq:coer-lin-coer-3} with $r_2 = \sqrt\lambda$ we have
  \begin{equation}
    \label{eq:bulles-coer-approx-5}
    (1-2c)\int_{|x|\geq \sqrt\lambda}|\grad k|^2 \ud x + c\int_{|x|\leq \sqrt\lambda}|\grad k|^2\ud x - \int_{\bR^6}f'(W)|k|^2\ud x \geq -\wt c \|\bs g\|_\cE^2.
  \end{equation}
  Taking the sum of \eqref{eq:bulles-coer-approx-4} and \eqref{eq:bulles-coer-approx-5}, and using \eqref{eq:bulles-coer-approx-6}, we obtain
  \begin{equation*}
    \frac 12\la \bs L_\lambda \bs g, \bs g\ra \geq -2a_2^-a_2^+ - 2a_1^-a_1^+ + c\|\bs k\|_\cE^2 - 2\wt c\|\bs g\|_\cE^2.
  \end{equation*}
  The conclusion follows if we take $\wt c$ small enough.
\end{proof}

\subsection{Definition of the mixed energy-virial functional}
\label{ssec:bootstrap}
\begin{lemma}
  \label{lem:fun-a}
  For any $c > 0$ and $R > 0$ there exists a radial function $q(x) = q_{c,R}(x) \in C^{3,1}(\bR^6)$ with the following properties:
  \begin{enumerate}[label=(P\arabic*)]
    \item $q(x) = \frac 12 |x|^2$ for $|x| \leq R$, \label{enum:approx}
    \item there exists $\wt R > 0$ (depending on $c$ and $R$) such that $q(x) \equiv \tx{const}$ for $|x| \geq \wt R$, \label{enum:support}
    \item $|\grad q(x)| \lesssim |x|$ and $|\Delta q(x)| \lesssim 1$ for all $x \in \bR^6$, with constants independent of $c$ and $R$, \label{enum:gradlap}
    \item $\sum_{1\leq i,j\leq 6} \big(\partial_{x_i x_j} q(x)\big) v_i v_j \geq -c\sum_{i=1}^6 v_i^2$, for all $x \in \bR^6, v_i \in \bR$, \label{enum:convex}
    \item $\Delta^2 q(x) \leq c\cdot|x|^{-2}$, for all $x \in \bR^6$. \label{enum:bilapl}
  \end{enumerate}
\end{lemma}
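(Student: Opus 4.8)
The plan is to write down $q$ explicitly as a radial function which interpolates between $\frac12|x|^2$ and a constant over the \emph{logarithmic} window $\log(|x|/R)\in[0,L]$, where $L=L(c)$ is a large parameter fixed at the very end. Fix once and for all a profile $\zeta\in C^\infty(\bR)$ with $\zeta(s)=1$ for $s\le 0$, $\zeta(s)=0$ for $s\ge 1$ and $0\le \zeta\le 1$, and set $M:=\|\zeta'\|_{L^\infty}+\|\zeta''\|_{L^\infty}+\|\zeta'''\|_{L^\infty}$, a quantity depending only on this choice. Given $c,R$, put $\wt R:=R\eee^L$ and define the radial function $q$ through its radial derivative,
\begin{equation*}
  \partial_r q(r):=r\,\zeta\Big(\frac 1L\log\frac rR\Big),\qquad q(x):=\int_0^{|x|}\partial_r q(r)\ud r,
\end{equation*}
so that $q$ is radial and of class $C^\infty(\bR^6)$ (near the origin it coincides with $\frac12|x|^2$). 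By construction $\partial_r q(r)=r$ for $r\le R$, which gives property \ref{enum:approx}, and $\partial_r q(r)=0$ for $r\ge \wt R$, which gives property \ref{enum:support} with $\wt R$ depending on $c$ and $R$.

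The remaining properties follow from elementary radial computations. With $u=u(r):=\frac1L\log\frac rR$, so that $u'(r)=\frac1{rL}$, one finds
\begin{equation*}
  \partial_r^2 q=\zeta(u)+\frac 1L\zeta'(u),\qquad \frac 1r\partial_r q=\zeta(u),\qquad \Delta q=\partial_r^2 q+\frac 5r\partial_r q=6\zeta(u)+\frac 1L\zeta'(u).
\end{equation*}
Since for a radial function the Hessian $\vD^2 q(x)$ has eigenvalues $\partial_r^2 q$ (radial direction) and $\frac1r\partial_r q$ (tangential directions, multiplicity $5$), and since $0\le\zeta(u)\le 1$ while $|\frac1L\zeta'(u)|\le M/L$, property \ref{enum:convex} holds as soon as $L\ge M/c$; similarly $|\grad q|=\partial_r q\le|x|$ and $|\Delta q|\le 6+M/L\le 7$ once $L\ge M$, which is \ref{enum:gradlap} with constants depending only on $\zeta$. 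Differentiating $\Delta q$ once more, again using $u'=\frac1{rL}$, gives
\begin{equation*}
  \Delta^2 q=\partial_r^2(\Delta q)+\frac 5r\partial_r(\Delta q)=\frac1{r^2}\Big[\frac 4L\big(6\zeta'(u)+\frac1L\zeta''(u)\big)+\frac1{L^2}\big(6\zeta''(u)+\frac1L\zeta'''(u)\big)\Big],
\end{equation*}
hence $|\Delta^2 q(x)|\le C\,\frac ML\,|x|^{-2}$ for a universal constant $C$ (using $L\ge 1$); this yields \ref{enum:bilapl}, indeed a two-sided version of it, once $L\ge CM/c$. It therefore suffices to take $L:=M\max\big(1,C/c\big)$ and $\wt R:=R\eee^{L}$, and to note that all properties are monotone in $c$, so there is no loss in assuming $c$ small.

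There is no serious obstacle here; the point that must be got right is the \emph{choice of scale} for the transition. Carrying it out on the logarithmic variable $u=\frac1L\log(r/R)$ is exactly what makes $\partial_r^2 q$, $\partial_r(\Delta q)$ and $\partial_r^2(\Delta q)$ acquire the gains $L^{-1}$, $L^{-1}$, $L^{-2}$, so that the two ``smallness'' requirements \ref{enum:convex} and \ref{enum:bilapl} — which are what force $\wt R$ to be taken large — can be met simultaneously by enlarging $L$, while the scale-invariant bounds \ref{enum:gradlap} are left unaffected and property \ref{enum:approx} is untouched. The only mildly delicate bookkeeping is the computation of $\Delta^2 q$ and checking that it is $O\big((Lr^2)^{-1}\big)$ uniformly.
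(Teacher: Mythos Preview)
Your proof is correct and takes a genuinely different route from the paper. The paper first writes down an explicit function
\[
q_0(r)=\begin{cases}\tfrac12 r^2,& r\le 1,\\ \tfrac85 r-\tfrac32+\tfrac12 r^{-2}-\tfrac{1}{10}r^{-4},& r\ge 1,\end{cases}
\]
checks by hand that it is convex and has $\Delta^2 q_0=-24r^{-3}\le 0$, and only \emph{then} performs a $C^{3}$ Taylor-type cutoff at a large radius $R_0$, verifying that the error incurred by the cutoff is small in all the relevant quantities. Your construction bypasses the need for such an explicit $q_0$: writing $\partial_r q(r)=r\,\zeta\big(\tfrac1L\log\tfrac rR\big)$ and letting the transition happen on the \emph{logarithmic} scale directly forces each radial derivative of $\Delta q$ to pick up a factor $L^{-1}$, so (P4) and (P5) fall out simultaneously from taking $L$ large, while (P3) remains uniform because $0\le\zeta\le 1$ keeps $\partial_r q\le r$ and $|\Delta q|\le 6+M/L\le 7$.

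The paper's approach has the minor conceptual advantage that $q_0$ itself satisfies (P4) and (P5) with $c=0$ (strict convexity, negative bilaplacian), so $c$ enters only through the final truncation; your function is never exactly convex in the transition zone, but this is irrelevant since only (P4) with a small $c$ is needed. Your argument is shorter, avoids the ad hoc choice of $q_0$, and yields the stronger two-sided bound $|\Delta^2 q|\lesssim (Lr^2)^{-1}$ at no extra cost.
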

\begin{remark}
  We require $C^{3, 1}$ regularity in order not to worry about boundary terms in Pohozaev identities, see the proof of \eqref{eq:A-pohozaev}.
\end{remark}
\begin{proof}
  It suffices to prove the result for $R = 1$ since the function $q_R(x) := R^2 q(\frac xR)$ satisfies
  the listed properties if and only if $q(x)$ does.
  
  Let $r$ denote the radial coordinate. Define $q_0(x)$ by the formula
  \begin{equation}
    \label{eq:q0}
    q_0(r) := \bigg\{
      \begin{aligned}
        &{\textstyle \frac 12 \cdot r^2}\qquad & r\leq 1 \\
        &{\textstyle \frac 85 \cdot r - \frac 32+ \frac 12\cdot r^{-2} - \frac{1}{10}\cdot r^{-4}}\qquad & r \geq 1.
      \end{aligned}
    \end{equation}
    A direct computation shows that for $r > 1$ we have $q_0'(r) = \frac 85 - r^{-3} + \frac 25 r^{-5}$,
    $q_0''(r) = 3r^{-4} - 2r^{-6} > 0$ (so $q_0(x)$ is convex), $q_0'''(r) = 12(-r^{-5} + r^{-7})$ and $\Delta^2 q_0(r) = -24 r^{-3}$.
    Hence $q_0$ satisfies all the listed properties except for \ref{enum:support}. We correct it as follows.

    Let $e_j(r) := \frac{1}{j!}r^j\cdot \chi(r)$ for $j \in \{1, 2, 3\}$ and let $R_0 \gg 1$. We define
  \begin{equation}
    \label{eq:q}
    q(r) := \bigg\{
      \begin{aligned}
        &q_0(r)\qquad & r\leq R_0 \\
        &{\textstyle q_0(R_0) + \sum_{j=1}^{3}q_0^{(j)}(R_0)\cdot R_0^j\cdot e_j(-1 + R_0^{-1}r)}\qquad & r \geq R_0.
      \end{aligned}
    \end{equation}
    Note that $q_0'(R_0) \sim 1$, $q_0''(R_0) \sim R_0^{-4}$ and $q_0'''(R_0) \sim R_0^{-5}$.
    It is clear that $q(x) \in C^{3,1}(\bR^6)$.
    Property \ref{enum:approx} holds since $R_0 > 1$. By the definition of the functions $e_j$ we have $q(r) = q_0(R_0) = \tx{const}$ for $r \geq 3R_0$,
    hence \ref{enum:support} holds with $\wt R = 3R_0$. From the definition of $q(r)$ we get $|\partial_r q(r)| \lesssim 1$ and $|\partial_r^2 q(r)| \lesssim R_0^{-4}$ for $r \geq R_0$,
    with a constant independent of $R_0$, which implies \ref{enum:gradlap}. Similarly, $|\partial_{x_i x_j}q(x)| \lesssim R_0^{-1}$ for $|x| \geq R_0$, which implies \ref{enum:convex}
    if $R_0$ is large enough. Finally $|\Delta^2 q(x)| \lesssim R_0^{-3}$ for $|x| \geq R_0$ and $\Delta^2 q(x) = 0$ for $|x| \geq 3R_0$. This proves \ref{enum:bilapl} if $R_0$ is large enough.
\end{proof}
In the sequel $q(x)$ always denotes a function of class $C^{3, 1}(\bR^6)$ verifying \ref{enum:approx}--\ref{enum:bilapl}
with sufficiently small $c$ and sufficiently large $R$.

For $\lambda > 0$ we define the operators $A(\lambda)$ and $A_0(\lambda)$ as follows:
\begin{align}
  [A(\lambda)h](x) &:= \frac{1}{3\lambda}\Delta q(\frac{x}{\lambda})h(x) + \grad q(\frac{x}{\lambda})\cdot \grad h(x), \label{eq:opA} \\
  [A_0(\lambda)h](x) &:= \frac{1}{2\lambda}\Delta q(\frac{x}{\lambda})h(x) + \grad q(\frac{x}{\lambda})\cdot \grad h(x). \label{eq:opA0}
\end{align}
Combining these definitions with the fact that $q(x)$ is an approximation of $\frac 12 |x|^2$
we see that $A(\lambda)$ and $A_0(\lambda)$ are approximations (in a sense not yet precised)
of $\frac{1}{\lambda}\Lambda$ and $\frac{1}{\lambda}\Lambda_0$ respectively.
We will write $A$ and $A_0$ instead of $A(1)$ and $A_0(1)$ respectively. Note the following scale-change formulas, which follow directly from the definitions:
\begin{equation}
  \label{eq:A-rescale}
  \forall h\in \dot H^1:\qquad A(\lambda)(h_\lambda) = (Ah)_\uln\lambda,\quad A_0(\lambda)(h_\lambda) = (A_0 h)_\uln\lambda.
\end{equation}

\begin{lemma}
  \label{lem:op-A}
  The operators $A(\lambda)$ and $A_0(\lambda)$ have the following properties:
  \begin{itemize}
    \item the families $\{A(\lambda): \lambda > 0\}$, $\{A_0(\lambda): \lambda > 0\}$, $\{\lambda\partial_\lambda A(\lambda): \lambda > 0\}$
      and $\{\lambda\partial_\lambda A_0(\lambda): \lambda > 0\}$ are bounded in $\scrL(\dot H^1; L^2)$, with the bound depending on the choice of the function $q(x)$,
    \item for all $h_1, h_2 \in X^1$ and $\lambda > 0$ there holds
      \begin{equation}
        \label{eq:A-by-parts}
        \la A(\lambda)h_1, f(h_1 + h_2) - f(h_1) - f'(h_1)h_2\ra = -\la A(\lambda)h_2, f(h_1+h_2) - f(h_1)\ra,
      \end{equation}
    \item for any $c_0 > 0$, if we choose $c$ in Lemma~\ref{lem:fun-a} small enough, then for all $h \in X^1$ there holds
      \begin{equation}
        \label{eq:A-pohozaev}
        \la A_0(\lambda)h, \Delta h\ra \leq \frac{c_0}{\lambda} \|h\|_{\dot H^1}^2 - \frac{1}{\lambda}\int_{|x| \leq R\lambda}|\grad h(x)|^2 \ud x.
      \end{equation}
    \item assuming \eqref{eq:lambda-bound0} and \eqref{eq:assumption-g}, for any $c_0 > 0$ there holds
    \begin{align}
      \label{eq:L0-A0}
      \|\Lambda_0 \Lambda W_\uln{\lambda(t)} - A_0(\lambda(t))\Lambda W_{\lambda(t)}\|_{L^2} &\leq c_0, \\
      \label{eq:L-A}
      \|\dot \varphi(t) + b(t)\cdot A(\lambda(t))\varphi(t)\|_{L^3} &\leq c_0, \\
      \label{eq:approx-potential}
      \Big|\int\frac 16 \Delta q\big(\frac{x}{\lambda}\big)(f(\varphi + g) - f(\varphi))g\ud x - \int f'(W_\lambda)g^2 \ud x\Big| &\leq c_0C_0^2 \eee^{-3\kappa|t|}.
    \end{align}
    provided that the constant $R$ in the definition of $q(x)$ is chosen large enough.
  \end{itemize}
\end{lemma}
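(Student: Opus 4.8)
\emph{Overall plan and the first two items.} The four assertions are essentially independent; I would treat them one at a time, using throughout the scaling relations \eqref{eq:A-rescale} and the structural properties \ref{enum:approx}--\ref{enum:bilapl} of $q$. By \eqref{eq:A-rescale} it suffices to bound $A:=A(1)$ and $A_0:=A_0(1)$; differentiating \eqref{eq:opA}--\eqref{eq:opA0} one sees that $\lambda\partial_\lambda A(\lambda)$ (and $\lambda\partial_\lambda A_0(\lambda)$) has the same shape — a bounded, compactly supported multiplier times $h$, plus a bounded vector field times $\grad h$ — so all four families are handled together. For such operators, boundedness $\dot H^1\to L^2$ follows from $|\grad q|\lesssim|x|$ and the compact support of $\grad q$ (so $\|\grad q\cdot\grad h\|_{L^2}\lesssim\|\grad h\|_{L^2}$), together with $\|(\Delta q)h\|_{L^2}\le\|\Delta q\|_{L^6}\|h\|_{L^3}\lesssim\|h\|_{\dot H^1}$ via $\dot H^1(\bR^6)\hookrightarrow L^3$; the constants depend on $q$, as stated. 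For \eqref{eq:A-by-parts} the crux is the elementary identity $\la A(\lambda)h,f(h)\ra=0$ for all $h\in X^1$ and all $\lambda$: integrating $\int\grad q(\tfrac x\lambda)\cdot\grad h\,f(h)=\int\grad q(\tfrac x\lambda)\cdot\grad F(h)$ by parts (no boundary term, $\grad q$ compactly supported) gives $-\tfrac1\lambda\int(\Delta q)(\tfrac x\lambda)F(h)$, which cancels $\int\tfrac1{3\lambda}(\Delta q)(\tfrac x\lambda)hf(h)=\tfrac1\lambda\int(\Delta q)(\tfrac x\lambda)F(h)$ since $hf(h)=|h|^3=3F(h)$. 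Granting this and expanding both sides of \eqref{eq:A-by-parts} using linearity of $A(\lambda)$ and $\la A(\lambda)(h_1+h_2),f(h_1+h_2)\ra=\la A(\lambda)h_1,f(h_1)\ra=0$, the claim reduces to $\la A(\lambda)h_2,f(h_1)\ra+\la A(\lambda)h_1,f'(h_1)h_2\ra=0$, which follows by the same integration by parts, now using $hf'(h)=2f(h)$. The regularity $h_i\in X^1$ only serves to justify the manipulations, which extend from $C_0^\infty$ by density.

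\emph{The Pohozaev inequality \eqref{eq:A-pohozaev}.} By the scaling computation it suffices to treat $\lambda=1$ (for general $\lambda$, insert $q(\cdot/\lambda)$ and track the $\lambda$-powers). The Rellich--Pohozaev integration by parts gives
\[
  \la A_0 h,\Delta h\ra=\tfrac14\int\Delta^2 q\,|h|^2-\sum_{i,j}\int(\partial_{x_ix_j}q)\,\partial_{x_i}h\,\partial_{x_j}h .
\]
By \ref{enum:approx} the Hessian is the identity on $\{|x|\le R\}$, so the last integral equals $\int_{|x|\le R}|\grad h|^2$ plus a term which by the convexity bound \ref{enum:convex} is $\ge-c\|h\|_{\dot H^1}^2$; and $\tfrac14\int\Delta^2 q\,|h|^2\le\tfrac c4\int|x|^{-2}|h|^2\lesssim c\|h\|_{\dot H^1}^2$ by \ref{enum:bilapl} and Hardy's inequality. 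Choosing $c$ (hence $c_0$) small enough closes the estimate.

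\emph{The approximation estimates.} For \eqref{eq:L0-A0}, \eqref{eq:A-rescale} gives $\Lambda_0\Lambda W_\uln\lambda-A_0(\lambda)\Lambda W_\lambda=(\Lambda_0\Lambda W-A_0\Lambda W)_\uln\lambda$; the function $\Lambda_0\Lambda W-A_0\Lambda W$ vanishes on $\{|x|\le R\}$ (there $\tfrac12\Delta q=3$, $\grad q=x$) and is $\lesssim|x|^{-4}$ for $|x|>R$, so its $L^2$ norm is $\lesssim R^{-1}\le c_0$. For \eqref{eq:L-A}, $\dot\varphi+bA(\lambda)\varphi=b\big(A(\lambda)\varphi-\Lambda W_\uln\lambda\big)$ with $\varphi=W_\mu+W_\lambda+S$: since $AW=\Lambda W$ on $\{|y|\le R\}$, the $W_\lambda$ contribution is $b\,(AW-\Lambda W)_\uln\lambda$, supported in $\{|x|>R\lambda\}$ with $L^3$ norm $\lesssim(b/\lambda)R^{-2}\lesssim R^{-2}$ (using \eqref{eq:b-bound0}, \eqref{eq:lambda-bound0}), while the $W_\mu$ and $S$ contributions are supported in $\{|x|\le\wt R\lambda\}$ and are $O_{\wt R}(\lambda)$ in $L^3$, hence $O_{\wt R}(\lambda^2)$ after multiplying by $b$ — negligible once $|T_0|$ is large. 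For \eqref{eq:approx-potential} I would write $f(\varphi+g)-f(\varphi)=f'(\varphi)g+O(|g|^2)$; the cubic remainder contributes $\lesssim\|g\|_{L^3}^3\lesssim C_0^3\eee^{-\frac92\kappa|t|}$, which is $\le c_0C_0^2\eee^{-3\kappa|t|}$ for $|T_0|$ large. For the main term, decompose $\tfrac16\Delta q(\tfrac x\lambda)f'(\varphi)-f'(W_\lambda)=\tfrac16\Delta q(\tfrac x\lambda)\big(f'(\varphi)-f'(W_\lambda)\big)+\big(\tfrac16\Delta q(\tfrac x\lambda)-1\big)f'(W_\lambda)$: in the first piece $\tfrac16\Delta q(\tfrac x\lambda)$ is bounded and supported in $\{|x|\le\wt R\lambda\}$, and $|f'(\varphi)-f'(W_\lambda)|\le f'(W_\mu+S)\lesssim1$ there (by \eqref{eq:point-fp2} and $|x|\le\wt R\lambda\ll1$), so $\int\lesssim\int_{|x|\le\wt R\lambda}|g|^2\lesssim\wt R^2\lambda^2\|g\|_{\dot H^1}^2$; in the second piece the factor vanishes on $\{|x|\le R\lambda\}$ and $f'(W_\lambda)(x)\lesssim\lambda^2|x|^{-4}\le R^{-2}|x|^{-2}$ for $|x|>R\lambda$, so $\int\lesssim R^{-2}\|g\|_{\dot H^1}^2$ by Hardy. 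Summing and using \eqref{eq:assumption-g}, the total is $\lesssim\big(R^{-2}+\wt R^2\lambda^2+C_0\eee^{-\frac32\kappa|t|}\big)C_0^2\eee^{-3\kappa|t|}\le c_0C_0^2\eee^{-3\kappa|t|}$ provided $R$ is large and then $|T_0|$ is large.

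\emph{Main obstacle.} The purely algebraic identity $\la A(\lambda)h,f(h)\ra=0$ behind \eqref{eq:A-by-parts} must exploit the exact homogeneity of $f$ and $F$ — it fails if one merely estimates — but that is routine once spotted. The genuinely delicate point is the quantifier bookkeeping in \eqref{eq:L-A} and \eqref{eq:approx-potential}: one has to split the error into terms killed by taking $R$ large (the ``tails'' living where $q(x)\neq\tfrac12|x|^2$, and the far field of $W_\lambda$) and terms killed by taking $|T_0|$ large afterwards (the interactions of $W_\lambda$ with $W_\mu$ and $S$, and the cubic term in $\bs g$), since $\wt R$ depends on $R$ and the choices must be made in that order.
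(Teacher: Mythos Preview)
Your proposal is correct and follows essentially the same route as the paper: scale invariance reduces the first item to $\lambda=1$; the algebraic identity $\la A h,f(h)\ra=0$ (via $hf(h)=3F(h)$) plus polarization gives \eqref{eq:A-by-parts}; the Rellich--Pohozaev identity combined with \ref{enum:approx}, \ref{enum:convex}, \ref{enum:bilapl} and Hardy gives \eqref{eq:A-pohozaev}; and the last three estimates are handled by the same ``$R$ large first, then $|T_0|$ large'' splitting you describe. Your sign $+\tfrac14\int\Delta^2 q\,|h|^2$ in the Pohozaev identity is the correct one and is exactly what makes the one-sided bound \ref{enum:bilapl} sufficient (the paper's displayed identity carries the opposite sign, but its stated consequence is consistent with yours).
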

\begin{proof}
  Since $\grad q(x)$ and $\grad^2 q(x)$ are continuous and of compact support, it is clear that $A$ and $A_0$ are bounded operators $\dot H^1 \to L^2$.
  From the invariance \eqref{eq:A-rescale} we see that $A(\lambda)$ and $A_0(\lambda)$ have the same norms as $A$ and $A_0$ respectively.
  For $\lambda \partial_\lambda A(\lambda)$ and $\lambda \partial_\lambda A_0(\lambda)$ the proof is similar. We compute
  $$
  \partial_\lambda A(\lambda) = -\frac{1}{3\lambda^2}\Delta q\big(\frac{x}{\lambda}\big) - \frac{1}{3\lambda^3}x\cdot\grad \Delta q\big(\frac{x}{\lambda}\big)
  - \frac{1}{\lambda^2}x\cdot\grad^2 q\big(\frac{x}{\lambda}\big)\cdot\grad.
  $$
  Since $\grad q(x)$, $\grad^2 q(x)$ and $\grad^3 q(x)$ are continuous and of compact support, boundedness follows.

  In \eqref{eq:A-by-parts} both sides are continuous for the $X^1$ topology, hence we may assume that $h_1, h_2 \in C_0^\infty$.
  We may also assume without loss of generality that $\lambda = 1$.
  Observe that for any $h \in C_0^\infty$ there holds $h\cdot f(h) = 3\cdot F(h)$ and $\grad h\cdot f(h) = \grad F(h)$, hence
  \begin{equation}
    \label{eq:A-by-parts-2}
    \la Ah, f(h)\ra = \int\big(\frac 13 \Delta q\cdot h + \grad q\cdot \grad h\big)f(h)\ud x = \int \Delta q\cdot F(h) + \grad q\cdot \grad F(h)\ud x = 0.
  \end{equation}
  Using this for $h = h_1 + h_2$ and for $h = h_1$, \eqref{eq:A-by-parts} is seen to be equivalent to
  \begin{equation}
    \label{eq:A-by-parts-3}
    \la A h_2, f(h_1)\ra + \la A h_1, f'(h_1)h_2\ra = 0.
  \end{equation}
  Expanding the left side using the definition of $A$ we obtain
  \begin{equation*}
    \begin{aligned}
    \la A h_2, f(h_1)\ra + \la A h_1, f'(h_1)h_2\ra &= \int \frac 13 \Delta q\cdot h_2\cdot f(h_1) + \grad q\cdot \grad h_2\cdot f(h_1)\ud x \\
    &+\int \frac 13 \Delta q\cdot h_1\cdot f'(h_1)\cdot h_2 + \grad q\cdot \grad h_1\cdot f'(h_1)\cdot h_2\ud x
  \end{aligned}
  \end{equation*}
  Integrating by parts the term containing $\grad h_2$ and using the formulas $h_1\cdot f'(h_1) = 2f(h_1)$ and $\grad h_1\cdot f'(h_1) = \grad f(h_1)$, this can be rewritten as
  \begin{equation*}
    \big\la h_2, \frac 13 \Delta q\cdot f(h_1) - \Delta q\cdot f(h_1) - \grad q\cdot\grad f(h_1) + \frac 23 \Delta q\cdot f(h_1) + \grad q\cdot \grad f(h_1)\big\ra = 0,
  \end{equation*}
  which proves \eqref{eq:A-by-parts-3}.

  Inequality \eqref{eq:A-pohozaev} follows easily from \ref{enum:approx}, \ref{enum:convex} and \ref{enum:bilapl},
  once we check the following identity (valid in any dimension $N$, and used here for $N = 6$):
  \begin{equation}
    \label{eq:aux-pohozaev}
    \begin{aligned}
    &\int \Delta h(x)\cdot\big(\frac{1}{2\lambda}\Delta q\big(\frac{x}{\lambda}\big)h(x) + \grad q\big(\frac{x}{\lambda}\big)\cdot \grad h(x)\big)\ud x \\
    &= -\frac{1}{4\lambda}\int(\Delta^2 q)\big(\frac{x}{\lambda}\big)h(x)^2 \ud x - \frac{1}{\lambda}\int\sum_{i, j = 1}^N\partial_{ij}q\big(\frac{x}{\lambda}\big)\partial_i h(x)\partial_j h(x)\ud x.
    \end{aligned}
  \end{equation}
  Without loss of generality we can assume that $\lambda = 1$ (it suffices to replace $q$ by its rescaled version).
  By a density argument, we can also assume that $q, h \in C_0^\infty$ (we use here the fact that $q \in C^{3, 1}$), and \eqref{eq:aux-pohozaev} follows from integration by parts:
  \begin{equation}
    \begin{aligned}
      &\int \frac 12 \Delta h\cdot \Delta q\cdot h + \Delta h\cdot \grad q\cdot \grad h\ud x =
      \int\sum_{i, j = 1}^N \big( \frac 12 \partial_{ii}h\cdot\partial_{jj}q\cdot h + \partial_{ii}h\cdot \partial_j q\cdot \partial_j h \big)\ud x \\
      &= \int-\frac 12 \sum_{i, j}\partial_i h(\partial_{jj}q\partial_i h + \partial_{ijj}q \cdot h)+\sum_i \frac 12 \partial_i((\partial_i h)^2)\partial_i q \\
&+ \sum_{i\neq j}\big(-\frac 12 \partial_j(\partial_i h)^2 \partial_j q - \partial_{ij}q\partial_i h\partial_j h\big)\ud x \\
      &= \int-\frac 12 \sum_{i, j}\big(\partial_{jj}q(\partial_i h)^2 + \frac 12 \partial_{iijj}q\cdot h^2\big) - \frac 12 \sum_i \partial_{ii}q(\partial_i h)^2 \\
&+\frac 12 \sum_{i\neq j}\partial_{jj}q(\partial_i h)^2 - \sum_{i\neq j}\partial_{ij}q\partial_i h\partial_j h\ud x \\
      &= \int -\frac 14 \sum_{i, j}\partial_{iijj}q\cdot h^2 -\sum_{i, j}\partial_{ij}q\partial_i h\partial_j h\ud x.
    \end{aligned}
  \end{equation}
  
  Estimate \eqref{eq:L0-A0} is invariant by rescaling, hence we can assume that $\lambda = 1$. For $|x| \leq R$ we have $A_0\Lambda W(x) = \Lambda_0 \Lambda W(x)$.
  From \ref{enum:gradlap} in Lemma~\ref{lem:fun-a} we get $|A_0\Lambda W(x)| + |\Lambda_0 \Lambda W(x)| \lesssim |x|^{-4}$ for $|x| \geq R$,
  with a constant independent of $R$. Thus $\|\Lambda_0 \Lambda W - A_0 \Lambda W\|_{L^2} \leq c_0$ if $R$ is large enough.

  A similar reasoning yields $\|\Lambda W_\uln\lambda - A(\lambda)W_\lambda\|_{L^3} \ll \lambda^{-1}$ as $R \to +\infty$.
  Since $b(t) \sim \lambda(t)$, this gives
  \begin{equation}
    \label{eq:fidot-1}
    \|\dot \varphi + b A(\lambda)W_\lambda\|_{L^3} \leq \frac{c_0}{3}, \qquad \text{if }R\text{ is large enough.}
  \end{equation}
  From \ref{enum:support} in Lemma~\ref{lem:fun-a} it follows that $\supp (A(\lambda)W_\mu) \subset B(0, \wt R\cdot \lambda)$.
  Since $\|A(\lambda)W_\mu\|_{L^\infty} \lesssim_q \frac{1}{\lambda}$, we have
  \begin{equation}
    \label{eq:fidot-2}
    \|b A(\lambda)W_\mu\|_{L^3} \leq \frac{c_0}{3}, \qquad \text{if }|T_0|\text{ is large enough.}
  \end{equation}
  
  To finish the proof, we have to check that
  \begin{equation}
    \label{eq:fidot-3}
    \big\|b A(\lambda)\big(\chi\cdot \big(\lambda(t)^2 P_{\lambda(t)} + b(t)^2 Q_{\lambda(t)}\big)\big)\big\|_{L^3} \leq \frac{c_0}{3}, \qquad\text{if }|T_0|\text{ is large enough.}
  \end{equation}
  We have the bound $\|(\chi+ |\grad \chi|) P_\lambda\|_{L^3} \lesssim \int_0^{\frac{2}{\lambda}}\frac{1}{r^6}r^5\ud r \lesssim |\log\lambda|$ (similarly with $Q_\lambda$),
  hence $\|(\chi+|\grad \chi|)(\lambda^2 P_\lambda + b^2 Q_\lambda)\|_{L^3} \lesssim (\lambda^2 + b^2)|\log \lambda| \ll 1$ as $|T_0| \to +\infty$.
  We have also $\|\grad(\lambda^2 P_\lambda + b^2 Q_\lambda)\|_{L^3} \lesssim \frac{1}{\lambda}(\lambda^2 + b^2) \ll 1$.
  Since $q$ is smooth and constant at infinity, we have $\big|b\cdot\big(\frac{1}{\lambda}\Delta q\big(\frac{x}{\lambda}\big)+\grad q\big(\frac{x}{\lambda}\big)\big)\big| \lesssim 1$.
  The constant depends on the choice of the function $q$, but this is not a concern here. We obtain
  \begin{equation}
    \begin{aligned}
    &\big\|b A(\lambda)\big(\chi\cdot \big(\lambda(t)^2 P_{\lambda(t)} + b(t)^2 Q_{\lambda(t)}\big)\big)\big\|_{L^3} \lesssim \big|b\cdot\big(\frac{1}{\lambda}\Delta q\big(\frac{x}{\lambda}\big)+\grad q\big(\frac{x}{\lambda}\big)\big)\big|\cdot \\
    &\cdot\big(\|(\chi+|\grad \chi|)(\lambda^2 P_\lambda + b^2 Q_\lambda)\|_{L^3}+ \|\grad(\lambda^2 P_\lambda + b^2 Q_\lambda)\|_{L^3}\big) \ll 1,
  \end{aligned}
  \end{equation}
  hence \eqref{eq:fidot-3}

  Putting together \eqref{eq:fidot-1}, \eqref{eq:fidot-2} and \eqref{eq:fidot-3} we get \eqref{eq:L-A}.

  In order to prove \eqref{eq:approx-potential}, note first that boundedness of $\Delta q$ and \eqref{eq:f-phi-g} yield
  \begin{equation}
    \Big|\int\frac 16 \Delta q\big(\frac{x}{\lambda}\big)(f(\varphi + g) - f(\varphi))g\ud x - \int \frac 16 \Delta q\big(\frac{x}{\lambda}\big)(f'(W_\mu) + f'(W_\lambda))g^2 \ud x\Big| \ll \eee^{-3\kappa|t|}.
  \end{equation}
  Since $\grad q$ is of compact support, we have $\Big| \int\frac 16 \Delta q\big(\frac{x}{\lambda}\big)W_\mu g^2\ud x\Big| \ll \|g\|_{\dot H^1}^2$.
  As in the proof of \eqref{eq:L-A}, on can show that $\big\|\frac 16\Delta q\big(\frac{x}{\lambda}\big)f'(W_\lambda) - f'(W_\lambda)\big\|_{L^3} \to 0$ as $R \to +\infty$.
  This finishes the proof.
\end{proof}
For $t \in [T, T_0]$ we define:
\begin{itemize}
  \item the \emph{nonlinear energy functional}
    $$
    \begin{aligned}
    I(t) &:= \int \frac 12 |\dot g(t)|^2 + \frac 12 |\grad g(t)|^2 -\big(F(\varphi(t) + g(t)) - F(\varphi(t)) - f(\varphi(t))g(t)\big)\ud x \\
      &= E(\bs \varphi(t) + \bs g(t)) - E(\bs \varphi(t)) - \la \vD E(\bs \varphi(t), \bs g(t))\ra,
    \end{aligned}
    $$
  \item the \emph{localized virial functional} $$J(t) := \int \dot g(t)\cdot A_0(\lambda(t))g(t)\ud x,$$
  \item the \emph{mixed energy-virial functional} $$H(t) := I(t) + b(t)J(t).$$
\end{itemize}
From \eqref{eq:point-F2} we have
\begin{equation}
  \label{eq:I-devel}
  \big|I(t) - \frac 12 \la \vD^2 E(\bs \varphi(t))\bs g(t), \bs g(t)\ra\big| \lesssim \|\bs g(t)\|_\cE^3.
\end{equation}
Note that $H(t)$ depends on the function $q(x)$ used in the definition of $A_0(\lambda)$, see \eqref{eq:opA0}.
From the first statement in Lemma~\ref{lem:op-A} we deduce that
\begin{equation}
  \label{eq:J-devel}
  |J(t)| \lesssim_q \|\bs g(t)\|_\cE^2,
\end{equation}
where the constant in the inequality depends on the choice of the function $q(x)$.
Thus \eqref{eq:b-bound0} and \eqref{eq:assumption-g} imply that for $t \leq T_0$ with $|T_0|$ large enough there holds
\begin{equation}
  \label{eq:H-devel}
  \big|H(t) - \frac 12 \la \vD^2 E(\bs \varphi(t))\bs g(t), \bs g(t)\ra\big| \leq c\|\bs g(t)\|_\cE^2,
\end{equation}
with $c > 0$ as small as we wish.

\subsection{Energy estimates via the mixed energy-virial functional}
\label{ssec:energy}
\begin{lemma}
  \label{lem:bootstrap}
  Let $c_1 > 0$. If $C_0$ is sufficiently large, then there exists a function $q(x)$ and $T_0 < 0$ with the following property.
  If $T_1 < T_0$ and \eqref{eq:lambda-bound0}, \eqref{eq:assumption-g}, \eqref{eq:assumption-a} hold for $t \in [T, T_1]$, then for $t \in [T, T_1]$ there holds
  \begin{equation}
    \label{eq:boot-dt}
    H'(t) \leq c_1 \cdot C_0^2 \cdot \eee^{-3\kappa|t|}.
  \end{equation}
\end{lemma}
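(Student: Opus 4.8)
I would compute the time derivative of $H(t)=I(t)+b(t)J(t)$ directly. Set $r:=f(\varphi+g)-f(\varphi)-f'(\varphi)g$; by \eqref{eq:point-f2} with $k=\varphi$, $l=g$ one has $|r|\lesssim g^2$, hence $\|r\|_{L^{3/2}}\lesssim\|g\|_{\dot H^1}^2$. \emph{Derivative of $I$.} Since $\bs u$ solves \eqref{eq:nlw0} exactly, $\dd t E(\bs u)=0$; inserting $\partial_t\bs\varphi=J\circ\vD E(\bs\varphi)+\bs\psi$ (cf.\ \eqref{eq:psi}), using the self-adjointness of $\vD^2E(\bs\varphi)$ and the antisymmetry of $J$, the two copies of $\la\vD^2E(\bs\varphi)J\vD E(\bs\varphi),\bs g\ra$ cancel and one gets the clean identity
$$I'(t)=-\la\vD^2E(\bs\varphi(t))\bs\psi(t),\bs g(t)\ra-\la\dot\varphi(t),r(t)\ra.$$
Splitting $\la\vD^2E(\bs\varphi)\bs\psi,\bs g\ra=\la(-\Delta-f'(\varphi))\psi,g\ra+\la\dot\psi,\dot g\ra$: the first summand is $\lesssim\eee^{-\frac32\kappa|t|}\|g\|_{\dot H^1}\lesssim C_0\eee^{-3\kappa|t|}$ by \eqref{eq:psi-lin} and \eqref{eq:assumption-g}; in the second, \eqref{eq:psi1} isolates the parametric part $\frac{b}{\lambda}(\lambda'-b)\la\Lambda_0\Lambda W_\uln\lambda,\dot g\ra$ (cancelled below), with remainder $\lesssim C_0\eee^{-3\kappa|t|}$.

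\emph{Derivative of $J$.} From \eqref{eq:g-diff}, $\partial_tg=\dot g-\psi$ and $\partial_t\dot g=\Delta g+f(\varphi+g)-f(\varphi)-\dot\psi$. As $A_0(\lambda)$ is antisymmetric on $L^2$, $\la\dot g,A_0(\lambda)\dot g\ra=0$, and $J'(t)$ decomposes into: (i) the Pohozaev term $\la A_0(\lambda)g,\Delta g\ra$, bounded above by \eqref{eq:A-pohozaev}; (ii) the nonlinear term $\la A_0(\lambda)g,f(\varphi+g)-f(\varphi)\ra$, which, on writing $A_0(\lambda)h=A(\lambda)h+\frac{1}{6\lambda}\Delta q(\frac{x}{\lambda})h$, treating the $A(\lambda)$-part with the identity \eqref{eq:A-by-parts} and the $\frac1{6\lambda}\Delta q$-part with \eqref{eq:approx-potential}, equals $-\la A(\lambda)\varphi,r\ra+\frac1\lambda\int f'(W_\lambda)g^2\ud x+O\big(\frac{c_0}{\lambda}C_0^2\eee^{-3\kappa|t|}\big)$; (iii) the coupling terms $-\la A_0(\lambda)g,\dot\psi\ra$ and $-\la A_0(\lambda)\psi,\dot g\ra$, where in the latter, after inserting the decomposition of $\psi$ from \eqref{eq:psi0} and using \eqref{eq:L0-A0}, the only non-negligible part is $(\lambda'-b)\frac1\lambda\la\dot g,\Lambda_0\Lambda W_\uln\lambda\ra$; (iv) the term $\lambda'\la\dot g,(\partial_\lambda A_0)(\lambda)g\ra$, negligible by the boundedness of $\{\lambda\partial_\lambda A_0(\lambda)\}$ in $\scrL(\dot H^1;L^2)$ (Lemma~\ref{lem:op-A}).

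\emph{Assembling $H'=I'+bJ'+b'J$.} Two exact cancellations carry the argument. The term $-\la\dot\varphi,r\ra$ of $I'$ and $-b\la A(\lambda)\varphi,r\ra$ of $bJ'$ combine to $-\la\dot\varphi+bA(\lambda)\varphi,r\ra$, which by \eqref{eq:L-A} is $\lesssim c_0\|g\|_{\dot H^1}^2$; and the parametric pieces $-\frac{b}{\lambda}(\lambda'-b)\la\Lambda_0\Lambda W_\uln\lambda,\dot g\ra$ (from $I'$) and $+\frac{b}{\lambda}(\lambda'-b)\la\dot g,\Lambda_0\Lambda W_\uln\lambda\ra$ (from $bJ'$) cancel identically. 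What survives at leading order is $\frac b\lambda\big(\int f'(W_\lambda)g^2\ud x-\int_{|x|\le R\lambda}|\grad g|^2\ud x\big)$; rescaling \eqref{eq:coer-lin-coer-2} with $r_1=R\lambda$, and noting that the orthogonality terms are harmless because $\la\frac1\lambda\cZ_\uln\lambda,g\ra=0$ by \eqref{eq:g-orth} and $|\la\frac1\lambda\cY_\uln\lambda,g\ra|=\frac1\nu|a_1^++a_1^-|\lesssim\eee^{-\frac32\kappa|t|}$ by \eqref{eq:assumption-a} and \eqref{eq:a-1m}, this quantity is $\le-2c\frac b\lambda\int_{|x|\le R\lambda}|\grad g|^2\ud x+O\big((c+c_0)C_0^2\eee^{-3\kappa|t|}\big)+O(\eee^{-3\kappa|t|})$. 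Every remaining term either carries an extra factor $\eee^{-\kappa|t|}$ together with one more power of $b$ or $\lambda$ (e.g.\ $b'J$, $b\la A_0(\lambda)g,\dot\psi\ra$, $b\lambda'\la\dot g,(\partial_\lambda A_0)(\lambda)g\ra$), or is bounded by a small constant ($c$, $c_0$, $R^{-1}$, $C_0^{-1}$, or a quantity tending to $0$ as $|T_0|\to-\infty$) times $C_0^2\eee^{-3\kappa|t|}$. Fixing $c_1$, then choosing $c,c_0$ small, $R$ large (this determines $q$), $C_0$ large, and $|T_0|$ large, and discarding the nonpositive term, yields $H'(t)\le c_1C_0^2\eee^{-3\kappa|t|}$, i.e.\ \eqref{eq:boot-dt}.

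\emph{Where the difficulty lies.} The substance is not in any single estimate but in the two cancellations: one must check that the two contributions of size $\|g\|_{\dot H^1}^2$ with an \emph{absolute} constant (one does not gain a power of $C_0^{-1}$ from them) — the nonlinear $\la\dot\varphi,r\ra$ and the mismatch $\frac b\lambda(\lambda'-b)\la\Lambda_0\Lambda W_\uln\lambda,\dot g\ra$ — really cancel against, respectively, the $A(\lambda)$-term via \eqref{eq:L-A} and the $A_0(\lambda)\psi$-term via \eqref{eq:L0-A0}, so that the leftover leading-order quadratic form $\frac b\lambda\big(\int f'(W_\lambda)g^2-\int_{|x|\le R\lambda}|\grad g|^2\big)$ is what remains and is then controlled by the localized coercivity \eqref{eq:coer-lin-coer-2}. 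The rest is a long but routine verification resting on Lemmas~\ref{lem:psi}, \ref{lem:op-A}, \ref{lem:mod} and \ref{lem:eigen}.
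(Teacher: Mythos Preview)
Your proof is correct and follows essentially the same approach as the paper's own proof. The paper organizes the computation into three labeled steps (derivative of $I$, derivative of $bJ$, localized coercivity), while you organize it around the two key cancellations; the mathematical content, the lemmas invoked, and the order of choosing parameters are identical.
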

This lemma is the key step in proving Proposition~\ref{prop:bootstrap}. We will postpone its slightly technical proof.

\begin{proof}[Proof of Proposition~\ref{prop:bootstrap} assuming Lemma~\ref{lem:bootstrap}]
  We first show \eqref{eq:bootstrap-l}. From \eqref{eq:mod} and \eqref{eq:assumption-g} we obtain
  \begin{equation}
    \label{eq:bootstrap-l-1}
    |\mu(t) - 1| = |\mu(t) - \mu(T)| \lesssim \int_{-\infty}^t C_0\cdot\eee^{-\frac 32 \kappa_0|\tau|}\ud \tau \lesssim C_0\cdot \eee^{-\frac 32 \kappa_0|t|}.
  \end{equation}

  Again from \eqref{eq:mod} and \eqref{eq:assumption-g} we have $|\lambda'(t) - b(t)| \lesssim C_0 \cdot \eee^{-\frac 32 \kappa|t|}$.
  Multiplying by $b'(t) = \kappa^2 \cdot\frac{\lambda(t)}{\mu(t)^2} \sim \eee^{-\kappa|t|}$, cf. \eqref{eq:b-def} and \eqref{eq:lambda-bound0}, we obtain
  $\big|\dd t\big(b(t)^2 - \kappa^2 \cdot\frac{\lambda(t)^2}{\mu(t)^2}\big)\big| \lesssim C_0\cdot \eee^{-\frac 52 \kappa|t|}$.
  Since $b(T) = \kappa \cdot\lambda(T)$ and $\mu(T) = 1$, this yields $\big|b(t)^2 - \kappa^2 \cdot\frac{\lambda(t)^2}{\mu(t)^2}\big| \lesssim C_0 \cdot\eee^{-\frac 52 \kappa|t|}$.
  But $b(t) + \kappa\cdot\frac{\lambda(t)}{\mu(t)} \sim \eee^{-\kappa|t|}$, see \eqref{eq:lambda-bound0} and \eqref{eq:b-bound0}, hence
  \begin{equation}
    \label{eq:b-lambda}
    \big|b(t) - \kappa \cdot\frac{\lambda(t)}{\mu(t)}\big| \lesssim C_0\cdot \eee^{-\frac 32\kappa|t|}.
  \end{equation}
  Bound \eqref{eq:bootstrap-l-1} implies that $\big|\frac{\lambda(t)}{\mu(t)}-\lambda(t)\big| \ll \eee^{-\frac 32 \kappa|t|}$, thus \eqref{eq:b-lambda} yields
  $|\lambda'(t) - \kappa \cdot \lambda(t)| \lesssim C_0 \cdot\eee^{-\frac 32 \kappa|t|}$.
  Integrating and using $\lambda(T) = \frac{1}{\kappa}\eee^{-\kappa|T|}$ we obtain \eqref{eq:bootstrap-l}.


  We turn to the proof of \eqref{eq:bootstrap-g}.  
  The initial data at $t = T$ satisfy $\|\bs g(T)\|_\cE \lesssim \eee^{-\frac 32 \kappa|T|}$, thus \eqref{eq:H-devel} implies that $H(T) \lesssim \eee^{-3\kappa|T|}$.
  If $C_0$ is large enough, then integrating \eqref{eq:boot-dt} we get $H(t) \leq c\cdot C_0^2\cdot \eee^{-3\kappa|t|}$, with a small constant $c$.
  Now \eqref{eq:H-devel} implies
  \begin{equation}
    \label{eq:D2E-coer}
    \la \vD^2 E(\bs \varphi(t))\bs g(t), \bs g(t)\ra \lesssim c\cdot C_0^2\cdot \eee^{-3 \kappa |t|},\qquad \text{with }c\text{ small.}
  \end{equation}
  Since $\|\bs \varphi(t) - \bs W_{\lambda(t)}\|_\cE$ is small, Lemma~\ref{lem:bulles-coer} together with
  \eqref{eq:g-orth}, \eqref{eq:assumption-a}, \eqref{eq:a-1m} and \eqref{eq:a-reste} yields
  \begin{equation}
    \|\bs g\|_\cE^2 \lesssim (cC_0^2 + 1)\eee^{-3\kappa|t|},
  \end{equation}
  Eventually enlarging $C_0$, we obtain \eqref{eq:bootstrap-g}, if $c$ in \eqref{eq:D2E-coer} is taken sufficiently small.
\end{proof}
\begin{proof}[Proof of Lemma~\ref{lem:bootstrap}]
  In this proof we say that a term is negligible if its contribution is $\leq c\cdot C_0^2 \cdot \eee^{-3\kappa |t|}$.
  We write $\text{Value}_1 \simeq \text{Value}_2$ if $|\text{Value}_1 - \text{Value}_2| \leq c\cdot C_0^2\cdot \eee^{-3\kappa|t|}$.
  The order of choosing the parameters is the following: we will first choose $q(x)$ independently of $C_0$, then $C_0$, which may depend on $q(x)$,
  and finally $|T_0|$.
  \paragraph{\textbf{Step 1} (Derivative of the energy functional)}
  Using the definition of $I(t)$, the conservation of energy, formulas \eqref{eq:psi}, \eqref{eq:g-diff} and self-adjointness of $\vD^2 E(\bs \varphi)$ we compute:
  \begin{equation*}
    \begin{aligned}
      I'(t) &= 0 - \la \vD E(\bs \varphi), \partial_t \bs \varphi\ra-\la \vD^2 E(\bs \varphi)\partial_t \bs \varphi, \bs g\ra - \la \vD E(\bs \varphi), \partial_t \bs g\ra \\
      &= -\la \vD E(\bs \varphi), J\circ\vD E(\bs \varphi) + \bs \psi\ra - \la J\circ\vD E(\bs \varphi) + \bs \psi, \vD^2 E(\bs \varphi)\bs g\ra \\
        &- \la \vD E(\bs \varphi), J\circ(\vD E(\bs \varphi + \bs g) - \vD E(\bs \varphi)) - \bs \psi\ra \\
      &=-\la \vD^2 E(\bs \varphi)\bs \psi, \bs g\ra - \la \vD E(\bs \varphi), J\circ(\vD E(\bs \varphi + \bs g) - \vD E(\bs \varphi) - \vD^2 E(\bs \varphi)\bs g)\ra \\
      &= \la (\Delta + f'(\varphi))\psi, g\ra - \la \dot \psi, \dot g\ra - \la \dot \varphi, f(\varphi + g) - f(\varphi) - f'(\varphi)g\ra.
    \end{aligned}
  \end{equation*}
  The first term is $\lesssim C_0\eee^{-3\kappa|t|}$, due to \eqref{eq:psi-lin} and \eqref{eq:assumption-g}, hence it is negligible (by enlarging $C_0$ if necessary).
  Inequality \eqref{eq:psi1} implies that the second term can be replaced by $-\frac{b}{\lambda}(\lambda'-b)\la \Lambda_0 \Lambda W_\uln\lambda, \dot g\ra$,
  which in turn can be replaced by $-b(\lambda'-b)\la A_0(\lambda) \Lambda W_\uln\lambda, \dot g\ra$, thanks to \eqref{eq:L0-A0}.
  From \eqref{eq:L-A} we infer that the third term can be replaced by $b\cdot \la A(\lambda)\varphi, f(\varphi + g) - f(\varphi) - f'(\varphi)g\ra$.
  Using formula \eqref{eq:A-by-parts} with $h_1 = \varphi$ and $h_2 = g$ we obtain
  \begin{equation}
    \label{eq:dtI}
      I'(t) \simeq -b(\lambda'-b)\cdot \la A_0(\lambda)\Lambda W_\uln\lambda, \dot g\ra - b\cdot\la A(\lambda)g, f(\varphi + g) - f(\varphi)\ra.
  \end{equation}
  \paragraph{\textbf{Step 2} (Derivative of the virial functional)}
  We compute:
  \begin{equation}
    \label{eq:dtJ}
    \begin{aligned}
      (bJ)'(t) &= b'\int\dot g\cdot A_0(\lambda)g\ud x + b\lambda'\int\dot g \cdot \partial_\lambda A_0(\lambda)g\ud x \\
      &+ b\int \dot g\cdot A_0(\lambda)(\dot g - \psi)\ud x + b\int (\Delta g + f(\varphi + g) - f(\varphi) - \dot \psi)\cdot A_0(\lambda) g\ud x.
    \end{aligned}
  \end{equation}
  The first two terms are negligible thanks to Lemma~\ref{lem:op-A}.
  Consider the third term on the right in \eqref{eq:dtJ}. An integration by parts yields $\int \dot g\cdot A_0(\lambda)\dot g\ud x = 0$.
  Since $A_0(\lambda): \dot H^1 \to L^2$ is a bounded operator, from \eqref{eq:psi0} we see that
  $$
  b\int\dot g\cdot A_0(\lambda)\psi\ud x \simeq -b(\lambda'-b)\cdot \la A_0(\lambda)\Lambda W_\uln\lambda, \dot g\ra.
  $$
  Consider the forth term on the right in \eqref{eq:dtJ}. The term $b\int \dot \psi\cdot A_0(\lambda) g\ud x$ is negligible.
  Using \eqref{eq:A-pohozaev} and the fact that $A_0(\lambda) = A(\lambda) + \frac{1}{6\lambda}\Delta q(\frac{\cdot}{\lambda})$ we get
  $$
  \begin{aligned}
  &b\int (\Delta g + f(\varphi + g) - f(\varphi))\cdot A_0(\lambda)g\ud x \leq b\cdot\la A(\lambda)g, f(\varphi + g) - f(\varphi)\ra \\
  &-\frac{b}{\lambda}\int_{|x|\leq R\lambda}|\grad g|^2 \ud x + \frac{b}{\lambda}\int \frac 16\Delta q\big(\frac{\cdot}{\lambda}\big)(f(\varphi + g) - f(\varphi))g\ud x + cC_0^2 \eee^{-3\kappa|t|},
\end{aligned}
  $$
  with a small constant $c$.
  Putting everything back together and using \eqref{eq:approx-potential} we get
  \begin{equation}
    \label{eq:dtJ-2}
    \begin{aligned}
    (bJ)'(t) &\leq b(\lambda'-b)\cdot \la A_0(\lambda)\Lambda W_\uln\lambda, \dot g\ra + b\cdot\la A(\lambda)g, f(\varphi + g) - f(\varphi)\ra \\
    &+ \frac{b}{\lambda}\Big(-\int_{|x|\leq R\lambda}|\grad g|^2 \ud x + \int f'(W_\lambda)g^2\ud x\Big) + cC_0^2\eee^{-3\kappa|t|}.
    \end{aligned}
  \end{equation}

  \paragraph{\textbf{Step 3} (Localized coercivity)}
  
  Taking the sum of \eqref{eq:dtI} and \eqref{eq:dtJ} we obtain
  \begin{equation}
    H'(t) \leq \frac{b}{\lambda}\Big(-\int_{|x|\leq R\lambda}|\grad g|^2 \ud x + \int f'(W_\lambda)g^2\ud x\Big) + cC_0^2\eee^{-3\kappa|t|}.
  \end{equation}
  Recall that $\big|\big\la \frac{1}{\lambda}\cY_\uln\lambda, g\big\ra\big| \lesssim |a_1^+| + |a_1^-|$, hence \eqref{eq:coer-lin-coer-2} (after rescaling)
  together with \eqref{eq:g-orth}, \eqref{eq:assumption-a} and \eqref{eq:a-1m} imply that
  \begin{equation}
    -\int_{|x|\leq R\lambda}|\grad g|^2 \ud x + \int f'(W_\lambda)g^2\ud x \lesssim (cC_0^2 + 1)\eee^{-3\kappa|t|},
  \end{equation}
  with $c > 0$ as small as we wish (by taking $R$ large enough). Enlarging $C_0$ if necessary we arrive at \eqref{eq:boot-dt}.
\end{proof}

\subsection{Shooting argument and passing to a limit}
\label{ssec:limit}
We are ready to give a proof of the main result of the paper, following a well-known scheme introduced in \cite{Merle90} and \cite{Martel05}.
\begin{proof}[Proof of Theorem~\ref{thm:deux-bulles}]
  ~
  \paragraph{\textbf{Step 1}} 
  Let $T_n$ be a decreasing sequence converging to $-\infty$. For $n$ large and
  $$a_0 \in \cA := \big[-\frac 12 \eee^{-\frac 32 \kappa|T_n|}, \frac 12 \eee^{-\frac 32 \kappa|T_n|}\big],$$
    let $\bs u_n^{a_0}(t): [T_n, T_+) \to \cE$ denote the solution of \eqref{eq:nlw0} with initial data \eqref{eq:data-at-T}. We will prove that
      there exists $a_0$ such that $T_+ > T_0$ and for $\bs u = \bs u_n^{a_0}$ inequalities \eqref{eq:bootstrap-g}, \eqref{eq:bootstrap-l} hold for $t \in [T_n, T_0]$.

      Suppose that this is not the case. For each $a_0 \in \cA$, let $T_1 = T_1(a_0)$ be the last time such that \eqref{eq:bootstrap-g} and \eqref{eq:bootstrap-l}
      hold for $t \in [T_n, T_1)$. Since $\{\bs \varphi(t): t \in [T_n, T_1]\}$ is a compact set, Corollary~\ref{cor:quitte-compact} implies that
      $T_+ > T_1$. Suppose that $|a_1^+(T_1)| < \eee^{-\frac 32 \kappa|T_1|}$. Then Proposition~\ref{prop:bootstrap} would imply that \eqref{eq:bootstrap-g}
      and \eqref{eq:bootstrap-l} hold on some neighborhood of $T_1$, contradicting its definition. Thus $a_1^+(T_1) = \eee^{-\frac 32 \kappa|T_1|}$
      or $a_1^+(T_1) = -\eee^{-\frac 32 \kappa|T_1|}$. Let $\cA_+ \subset \cA$ be the set of $a_0 \in \cA$ which lead to $a_1^+(T_1) = \eee^{-\frac 32 \kappa|T_1|}$
      and let $\cA_- \subset \cA$ be the set of $a_0 \in \cA$ which lead to $a_1^+(T_1) = -\eee^{-\frac 32 \kappa|T_1|}$.
      We have proved that $\cA = \cA_+ \cup \cA_-$. We will show that $\cA_+$ and $\cA_-$ are open sets, which will lead to a contradiction since $\cA$ is connected.

      Let $a_0 \in \cA_+$. This implies that there exists the first $T_2$ such that $a_1^+(T_2) > \frac 34 \eee^{-\frac 32 \kappa|T_2|}$.
      Hence for a solution $\wt{\bs u}(t)$ corresponding to $\wt a_0$ close to $a_0$ we will have (by continuity of the flow) $\wt a_1^+(T_2) > \frac 34 \eee^{-\frac 32 \kappa|T_2|}$
      and $|\wt a_1^+(t)| < \eee^{-\frac 32 \kappa|t|}$ for $t \in [T_n, T_2]$. Suppose that $\wt a_0 \in \cA_-$. Hence there exists the first $T_3 > T_2$ such that
      $\wt a_1^+(T_3) = \frac 34 \eee^{-\frac 32 \kappa|T_3|}$. Estimate \eqref{eq:a-1p} yields $\wt a_1^+(T_3) \gtrsim \eee^{-\frac 12 \kappa|T_3|} \gg \eee^{-\frac 32 \kappa|T_3|}$,
      which is a contradiction. Hence $\cA_+$ is open and analogously $\cA_-$ is open.
      
  \paragraph{\textbf{Step 2}}
  Call $\bs u_n$ the solution found in Step 1. From \eqref{eq:bootstrap-g}, \eqref{eq:bootstrap-l} and \eqref{eq:taille-correct} we deduce that there exists
  a constant $C_1 > 0$ independent of $n$ such that
  \begin{equation}
    \label{eq:bound-seq}
    \|\bs u_n(t) - (W_{\frac{1}{\kappa}\eee^{-\kappa|t|}} - W, -\eee^{-\kappa|t|}\Lambda W_{\uln{\frac{1}{\kappa}\eee^{-\kappa|t|}}})\|_\cE \leq C_1 \cdot \eee^{-\frac 32 \kappa|t|},\qquad \text{for }t\in [T_n, T_0].
  \end{equation}
  The sequence $\bs u_n(T_0)$ is bounded in $\cE$, hence its subsequence (which we still denote $\bs u_n$) has a weak limit $\bs u_0$.
  Let $\bs u(t)$ we the solution of \eqref{eq:nlw0} with the initial data $\bs u(T_0) = \bs u_0$. Let $T < T_0$.
  In view of \eqref{eq:bound-seq}, for large $n$ the sequence $\bs u_n$ satisfies the assumptions of Corollary~\ref{cor:faible} on the time interval $[T, T_0]$,
  hence $\bs u_n(T) \wto \bs u(T)$. Passing to the weak limit in \eqref{eq:bound-seq} finishes the proof.  
\end{proof}
\begin{remark}
  Note that only the instability component $a_1^+(t)$ is treated via a topological argument, whereas $a_2^+(t)$ is estimated directly.
  This depends heavily on the (incidental) fact that the bootstrap bound $\eee^{-\frac 32 \kappa|t|}$ is asymptotically smaller than $\eee^{-\nu|t|}$.
  Were it not the case, we would have to use a topological argument based on the Brouwer fixed point theorem, as in the work of C\^ote, Martel and~Merle \cite{CMM11}.
\end{remark}

\section{Bubble-antibubble for the radial critical Yang-Mills equation}
\label{sec:ym}

\subsection{Notation}
In this section we denote $\|v\|_{L^2(r_1 \leq r \leq r_2)}^2 := 2\pi\int_{r_1}^{r_2}|v(r)|^2 \udr$.
If $r_1$ or $r_2$ is not precised, then it should be understood that $r_1 = 0$, resp. $r_2 = +\infty$.
The corresponding scalar product is denoted $\la v, w\ra := 2\pi\int_0^{+\infty}v(r)\cdot w(r)\udr$.

Recall that $\|v\|_{\cH}^2 := 2\pi \int_0^{+\infty}\big(|\partial_r v(r)|^2 + |\frac 2r v(r)|^2 \big)\udr$.
A change of variables shows that $v(r) \in \cH \Leftrightarrow v(\eee^x) \in H^1(\bR)$, in particular $\|v\|_{L^\infty} \lesssim \|v\|_\cH$
(this change of variables, very helpful in proving coercivity lemmas, can be found in \cite{GKT07}).
Another useful way of understanding the space $\cH$ is to consider the transformation $\wt v(\eee^{i\theta}r) := \eee^{2i\theta}v(r)$,
which is an isometric embedding of $\cH$ in $\dot H^1(\bR^2; \bR^2)$, whose image is given by $2$-equivariant functions in $\dot H^1(\bR^2; \bR^2)$.
Let $\cH^*$ be the dual space of $\cH$ for the pairing $\la\cdot, \cdot\ra$.
The embedding just described identifies $\cH^*$ with the $2$-equivariant distributions in $\dot H^{-1}(\bR^2; \bR^2)$.

We denote $X^0 := L^2 \cap \cH$ and $X^1 := \{v \in \cH: \partial_r v \in \cH \text{ and }\frac 1r v \in \cH\}$.
The generators of the $\cH$-critical and the $L^2$-critical scale change will be denoted respectively $\Lambda := r\partial_r$ and $\Lambda_0 := 1 + r\partial_r$.
\subsection{Linearized equation and formal computation}
\label{ssec:linearise-ym}
Linearizing $-\partial_r^2 u - \frac 1r\partial_r u +\frac{4}{r^2}u(1-u)(1-\frac 12 u)$ around $u = W$ we obtain the operator
\begin{equation}
  L := -\partial_r^2 - \frac 1r \partial_r - \frac{1}{r^2}(2-6(W(r)-1)^2) = -\partial_r^2 - \frac 1r \partial_r + \frac{1}{r^2}(4-6\Lambda W).
\end{equation}

We study solutions behaving like $\bs u(t) \simeq -\bs W + \bs W_{\lambda(t)}$ with $\lambda(t) \to 0$ as $t \to -\infty$.
As in Subsection~\ref{ssec:formal}, we expand
\begin{equation}
  \bs u(t) = -\bs W + \bs U_{\lambda(t)}^{(0)} + b(t)\cdot \bs U^{(1)}_{\lambda(t)} + b(t)^2 \cdot\bs U^{(2)}_{\lambda(t)},
\end{equation}
with $b(t)  = \lambda'(t)$, $\bs U^{(0)} := (W, 0)$ and $\bs U^{(1)} := (0, -\Lambda W)$.
This gives
\begin{equation}
  \label{eq:dttu-ym}
  \partial_t^2 u(t) = -b'(t)(\Lambda W)_\uln{\lambda(t)} + \frac{b(t)^2}{\lambda(t)}(\Lambda_0 \Lambda W)_\uln{\lambda(t)} + \text{lot}.
\end{equation}
Let us restrict our attention to the region $r \leq \sqrt{\lambda(t)}$. We will see that the region $r \geq \sqrt{\lambda(t)}$
will not have much influence on the dynamics of our system. For $r \leq \sqrt\lambda$ we have $W \ll W_\lambda$, hence
\begin{equation}
  4u(1-u)(1-\frac 12 u) \simeq 4W_\lambda(1-W_\lambda)(1-\frac 12 W_\lambda) + (-4 + 6\Lambda W_\lambda)W + \text{lot}.
\end{equation}
Since $\partial_r^2 W +\frac 1r\partial_r W \simeq \frac{4}{r^2}W + \text{lot}$ for $r \leq \sqrt\lambda$, we get
\begin{equation}
  \partial_r^2 u + \frac 1r \partial_r u - \frac{4}{r^2}u(1-u)(1-\frac 12 u) = -\frac{b^2}{\lambda}(LU^{(2)})_\uln\lambda - \frac{1}{r^2}6W_\lambda\cdot W + \text{lot}.
\end{equation}
We can further simplify this using the fact that $W(r)\simeq 2r^2$:
\begin{equation}
  \partial_r^2 u + \frac 1r \partial_r u - \frac{4}{r^2}u(1-u)(1-\frac 12 u) = -\frac{b^2}{\lambda}(LU^{(2)})_\uln\lambda - 12 \Lambda W_\lambda + \text{lot},
\end{equation}
thus \eqref{eq:dttu-ym} yields
\begin{equation}
  LU^{(2)} = -\Lambda_0 \Lambda W + \frac{\lambda}{b^2}(b' - 12\lambda)\Lambda W.
\end{equation}
As in Subsection~\ref{ssec:formal}, we find that the best choice of the formal parameter equations is
\begin{equation}
  \label{eq:formal-param-ym}
  \lambda'(t) = b(t), \qquad b'(t) = \kappa^2 \lambda(t), \qquad\text{with }\kappa := 2\sqrt 3.
\end{equation}
\begin{remark}
  The main term of the interaction is \emph{exactly} cancelled by the term $-b'\Lambda W_\uln\lambda$ for our choice of the parameters.
  We have seen that it is not the case for the power nonlinearity and in the next section we will see that it is not the case either
  for the critical equivariant wave map equation.
\end{remark}

\subsection{Bounds on the error of the ansatz}
\label{ssec:error-ym}
Fix $\cZ \in C_0^\infty((0, +\infty))$ such that
\begin{equation}
  \label{eq:Z-pos-ym}
\int_0^{+\infty} \cZ(r)\cdot \Lambda W(r)\frac{\vd r}{r} > 0.
\end{equation}
By a direct computation we find $L\big(\frac{r^4}{(1+r^2)^2}\big) = \frac{r^2(3-r^2)}{(1+r^2)^3} = -\Lambda_0 \Lambda W$.
Adding a suitable multiple of $\Lambda W(r)$, we obtain a rational function $Q(r)$ such that
\begin{equation}
  \label{eq:profil-ym}
  LQ = -\Lambda_0 \Lambda W,\qquad \int \cZ(r)\cdot Q(r)\frac{\vd r}{r} = 0,\qquad Q(r) \sim r^2\text{ as }r\to 0,\ Q(r) \sim 1\text{ as }r\to +\infty.
\end{equation}

For $\lambda, \mu$ satisfying \eqref{eq:lambda-init0} and \eqref{eq:lambda-bound0} (naturally with $\kappa = 2\sqrt 3$) we define
the approximate solution by the formula
\begin{equation}
  \label{eq:phi-def-ym}
  \begin{aligned}
    \varphi(t) &:= -W_{\mu(t)} + W_{\lambda(t)} + S(t), \\
    \dot \varphi(t) &:= -b(t)\Lambda W_\uln{\lambda(t)},
\end{aligned}
\end{equation}
where
\begin{align}
  \label{eq:b-def-ym}
  b(t) &:= \eee^{-\kappa|T|} + \kappa^2 \int_T^t \frac{\lambda(\tau)}{\mu(\tau)^2}\ud \tau,\qquad&\text{for }t \in [T, T_0], \\
  S(t) &:= \chi\cdot b(t)^2 Q_{\lambda(t)},\qquad&\text{for }t\in [T, T_0]. \label{eq:S-def-ym}
\end{align}
From \eqref{eq:profil-ym} we obtain
\begin{equation}
  \label{eq:taille-Q-ym}
  \|\chi\cdot Q_\uln\lambda\|_{\cH} \lesssim \Big(\int_0^{2/\lambda} 1\cdot \udr\Big)^\frac 12 + \frac{1}{\lambda}\cdot \Big(\int_0^{2/\lambda} ((1+r)^{-1})^2 \udr\Big)^\frac 12 \lesssim \sqrt{|t|}\cdot \eee^{\kappa|t|},
\end{equation}
which implies that
\begin{equation}
  \label{eq:taille-correct-ym}
  \|S(t)\|_{\cH} \ll \eee^{-\frac 32 \kappa|t|}.
\end{equation}

Since $\cZ$ has compact support, \eqref{eq:profil-orth} implies, for sufficiently small $\lambda$,
\begin{equation}
  \label{eq:S-orth-ym}
  \int \cZ_\uln{\lambda}\cdot S(t)\frac{\vd r}{r} = 0.
\end{equation}

We denote $f(u) := -4u(1-u)(1-\frac 12 u)$ and
\begin{equation}
  \label{eq:psi-ym}
  \begin{aligned}
  \bs \psi(t) &= (\psi(t), \dot \psi(t)) := \partial_t  \bs \varphi(t) - \vD E(\bs \varphi(t))  \\
  &= \big(\partial_t \varphi(t) - \dot \varphi(t), \partial_t \dot \varphi(t) - (\partial_r^2 \varphi(t) +\frac 1r\partial_r \varphi(t)+ \frac{1}{r^2}f(\varphi(t)))\big).
\end{aligned}
\end{equation}
We have $f'(u) = 2-6(u-1)^2$.
\begin{remark}
  \label{rem:hidden-lin}
By a direct computation, $f'(W) = -4 + 6\Lambda W$. Thus the potential term of the linearized operator
contains a non-localized part $-4$ and a localized part $6\Lambda W$. These terms need to be treated in different ways.
This is a known issue coming from the fact that $f(u)$ is not really the nonlinearity, as it ``hides'' the linear part near the stable equilibria:
$f(u) \simeq -4u$ near $u = 0$ and $f(u) \simeq 4(2-u)$ near $u = 2$. Sometimes it is convenient to subtract the linear part from $f$,
but here we work simultaneously near $u = 0$ and near $u = 2$, so probably it will be simpler to keep $f$ as it is.
A similar remark could be made in the case of the equivariant wave map equation.
\end{remark}
\begin{lemma}
  \label{lem:psi-ym}
  Suppose that for $t \in [T, T_0]$ there holds $|\lambda'(t)| \lesssim \eee^{-\kappa |t|}$ and $|\mu'(t)| \lesssim \eee^{-\kappa|t|}$. Then
  \begin{align}
    \|\psi(t) - \mu'(t)\frac{1}{\mu(t)}\Lambda W_{\mu(t)}+ (\lambda'(t) - b(t))\frac{1}{\lambda(t)}\Lambda W_{\lambda(t)} \|_{\cH} &\lesssim \eee^{-\frac 32 \kappa |t|}
    , \label{eq:psi0-ym} \\
    \|\dot \psi(t) - \frac{b(t)}{\lambda(t)}(\lambda'(t) - b(t))\Lambda_0 \Lambda W_\uln{\lambda(t)}\|_{L^2} &\lesssim \eee^{-\frac 32 \kappa|t|}, \label{eq:psi1-ym} \\
    \|(-\partial_r^2 - \frac 1r \partial_r - \frac{1}{r^2}f'(\varphi(t)))\psi(t)\|_{\cH^*} &\lesssim \eee^{-\frac 32 \kappa|t|}. \label{eq:psi-lin-ym}
  \end{align}
\end{lemma}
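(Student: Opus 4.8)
The plan is to follow the proof of Lemma~\ref{lem:psi} almost verbatim, keeping track of the two structural differences. First, the approximate solution \eqref{eq:phi-def-ym} carries only a $Q$-profile and no $P$-profile: the \emph{main} interaction term is here cancelled exactly by $-b'\Lambda W_\uln\lambda$ for the choice \eqref{eq:formal-param-ym} (cf.\ the remark after \eqref{eq:formal-param-ym}). Second, the equation carries the singular weight $\frac1{r^2}$, which forbids the naive ``$L^\infty$ then $L^2(\{r\le\sqrt\lambda\})$'' estimate of \eqref{eq:psi1-nonlin}, since the latter would produce a divergent $\int_0 r^{-3}\,\vd r$ near the origin; this is the only place where the argument must genuinely change. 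For \eqref{eq:psi0-ym}, insert \eqref{eq:phi-def-ym} and \eqref{eq:psi-ym} and use $\partial_\lambda W_\lambda=-\Lambda W_\uln\lambda$ and $\partial_\lambda Q_\lambda=-\tfrac1\lambda(\Lambda Q)_\lambda$ to get the clean identity
\[
  \psi-\mu'\tfrac1\mu\Lambda W_\mu+(\lambda'-b)\tfrac1\lambda\Lambda W_\lambda=\partial_tS=\chi\Big(2bb'\,Q_\lambda-b^2\tfrac{\lambda'}{\lambda}(\Lambda Q)_\lambda\Big).
\]
Since $\Lambda Q$ is a rational function with $\Lambda Q(r)\sim r^2$ as $r\to0$ and $\Lambda Q(r)\sim r^{-2}$ as $r\to+\infty$, it lies in $\cH$, so $\|\chi(\Lambda Q)_\lambda\|_\cH\lesssim1$; and $\|\chi Q_\lambda\|_\cH=\lambda\|\chi Q_\uln\lambda\|_\cH\lesssim\sqrt{|t|}$ by \eqref{eq:taille-Q-ym}. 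With $|b|,|b'|,|\lambda'|\lesssim\eee^{-\kappa|t|}$ and $\lambda\sim\eee^{-\kappa|t|}$ the right-hand side is $\lesssim\sqrt{|t|}\,\eee^{-2\kappa|t|}\ll\eee^{-\frac32\kappa|t|}$.

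For \eqref{eq:psi1-ym}, compute $\partial_t\dot\varphi=-b'\Lambda W_\uln\lambda+\tfrac{b\lambda'}{\lambda}\Lambda_0\Lambda W_\uln\lambda$; since $b'=\kappa^2\lambda/\mu^2$ by \eqref{eq:b-def-ym}, the estimate reduces, as in \eqref{eq:dot-psi}, to
\[
  \Big\|\partial_r^2\varphi+\tfrac1r\partial_r\varphi+\tfrac1{r^2}f(\varphi)+\tfrac{\kappa^2\lambda}{\mu^2}\Lambda W_\uln\lambda-\tfrac{b^2}{\lambda}\Lambda_0\Lambda W_\uln\lambda\Big\|_{L^2}\lesssim\eee^{-\frac32\kappa|t|},
\]
which I would split into $r\le\sqrt\lambda$ and $r\ge\sqrt\lambda$. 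In the inner region $\chi\equiv1$, so $\varphi=-W_\mu+W_\lambda+b^2Q_\lambda$; I would write $f=f_{\mathrm{lin}}+f_{\mathrm{nl}}$ with $f_{\mathrm{lin}}(u)=-4u$ and $f_{\mathrm{nl}}(u)=6u^2-2u^3$ (cf.\ Remark~\ref{rem:hidden-lin}). The $f_{\mathrm{lin}}$-part recombines with the Laplacian into $-L_0\varphi$, where $L_0:=-\partial_r^2-\tfrac1r\partial_r+\tfrac4{r^2}$ is the free part of $L$ and annihilates $r^2$; using the stationary identities $L_0W_\lambda=\tfrac1{r^2}f_{\mathrm{nl}}(W_\lambda)$, $L_0W_\mu=\tfrac1{r^2}f_{\mathrm{nl}}(W_\mu)$ and the rescaled profile equation \eqref{eq:profil-ym}, which gives $-b^2L_0Q_\lambda=\tfrac{b^2}{\lambda}\Lambda_0\Lambda W_\uln\lambda-\tfrac{b^2}{r^2}f_{\mathrm{nl}}'(W_\lambda)Q_\lambda$, one reduces to bounding
\[
  \Big\|\tfrac{\kappa^2\lambda}{\mu^2}\Lambda W_\uln\lambda+\tfrac1{r^2}\big(f_{\mathrm{nl}}(\varphi)+f_{\mathrm{nl}}(W_\mu)-f_{\mathrm{nl}}(W_\lambda)-b^2f_{\mathrm{nl}}'(W_\lambda)Q_\lambda\big)\Big\|_{L^2(\{r\le\sqrt\lambda\})}.
\]
Expanding $f_{\mathrm{nl}}(\varphi)-f_{\mathrm{nl}}(W_\lambda)$ around $W_\lambda$ via the algebraic identity $f_{\mathrm{nl}}(k+l)-f_{\mathrm{nl}}(k)-f_{\mathrm{nl}}'(k)l=6(1-k)l^2-2l^3$ with $k=W_\lambda$, $l=-W_\mu+b^2Q_\lambda$, the bracket becomes $-f_{\mathrm{nl}}'(W_\lambda)W_\mu+f_{\mathrm{nl}}(W_\mu)+\rho$ with $|\rho|\lesssim W_\mu^2+b^4Q_\lambda^2$; and $-\tfrac1{r^2}f_{\mathrm{nl}}'(W_\lambda)W_\mu=-\tfrac6{r^2}\Lambda W_\lambda\cdot W_\mu$ (using $f'(W)=-4+6\Lambda W$), which for $r\le\sqrt\lambda$ equals $-\tfrac{\kappa^2}{\mu^2}\Lambda W_\lambda$ up to an error $\lesssim\eee^{-\frac32\kappa|t|}$, because $W_\mu(r)=\tfrac{2r^2}{\mu^2}+O(r^4)$ there and $\kappa^2=12$ --- this is the \emph{exact cancellation} with $\tfrac{\kappa^2\lambda}{\mu^2}\Lambda W_\uln\lambda$. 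The remaining pieces $\tfrac1{r^2}f_{\mathrm{nl}}(W_\mu)$ and $\tfrac1{r^2}\rho$ vanish at least like $r^2$ near $r=0$ (because $W_\mu$ and $Q_\lambda$ do there and $f_{\mathrm{nl}}$ vanishes to second order), so their weighted $L^2(\{r\le\sqrt\lambda\})$-norms are $\lesssim\lambda^{3/2}+b^4\lambda^{-1}\ll\eee^{-\frac32\kappa|t|}$. In the outer region $r\ge\sqrt\lambda$ one proceeds as in Lemma~\ref{lem:psi}, the new ingredient being the symmetry $f(2-u)=-f(u)$: there $W_\lambda$ is close to the constant $2$ (with error $\lesssim\lambda^2/r^2$), $2-W_\mu$ is again a stationary solution, and comparing $\varphi$ to it together with the tail bounds $\|\Lambda W_\uln\lambda\|_{L^2(\{r\ge\sqrt\lambda\})}+\|\Lambda_0\Lambda W_\uln\lambda\|_{L^2(\{r\ge\sqrt\lambda\})}\lesssim\eee^{-\frac12\kappa|t|}$ (as in \eqref{eq:W-ext}) gives the claim after multiplication by the coefficients $\sim\eee^{-\kappa|t|}$.

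For \eqref{eq:psi-lin-ym}, by \eqref{eq:psi0-ym} it suffices to bound $\|(-\partial_r^2-\tfrac1r\partial_r-\tfrac1{r^2}f'(\varphi))\Lambda W_\lambda\|_{\cH^*}$ and $\|(-\partial_r^2-\tfrac1r\partial_r-\tfrac1{r^2}f'(\varphi))\Lambda W_\mu\|_{\cH^*}$, multiplied by the small factors $(\lambda'-b)/\lambda$ and $\mu'/\mu$. Since (as in the NLW case) $L(\Lambda W)=0$, rescaling gives $(-\partial_r^2-\tfrac1r\partial_r-\tfrac1{r^2}f'(W_\lambda))\Lambda W_\lambda=0$, so one is left with $\|\tfrac1{r^2}(f'(\varphi)-f'(W_\lambda))\Lambda W_\lambda\|_{\cH^*}$; using $|f'(\varphi)-f'(W_\lambda)|\lesssim|\varphi-W_\lambda|\,(1+|\varphi|+|W_\lambda|)$, the smallness of $\|S\|_\cH$ and of the cross quantities involving $W_\mu$, and the equivariant Hölder and Hardy inequalities --- dualized through the isometric identifications $\cH\cong\dot H^1(\bR^2;\bR^2)$ and $\cH^*\cong\dot H^{-1}(\bR^2;\bR^2)$ restricted to $2$-equivariant functions --- this reduces to a handful of computations mirroring \eqref{eq:potential-holder}--\eqref{eq:psi-lin-4}. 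The bound for $\Lambda W_\mu$ is proved the same way and is cruder, the $-\bs W_\mu$ bubble being at a fixed scale.

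The main obstacle is exactly the singular weight $\tfrac1{r^2}$ in the inner region: the naive bound of \eqref{eq:psi1-nonlin} diverges near the origin, and one must (i) peel off the linear part of $f$, which recombines exactly with the Laplacian via the stationary and profile equations and, crucially, makes the leftover $\tfrac{\kappa^2\lambda}{\mu^2}\Lambda W_\uln\lambda$ be cancelled by the main interaction term $-\tfrac6{r^2}\Lambda W_\lambda W_\mu$ (this is the precise meaning of ``no $P$-profile is needed''), and (ii) exploit that the genuinely nonlinear remainder vanishes to order $r^2$ at $r=0$ because $\varphi$ does. The passage to $\cH^*$ in \eqref{eq:psi-lin-ym} likewise has to be handled through the $2$-equivariant $\dot H^{-1}(\bR^2)$-identification rather than directly; apart from these points, the proof is a faithful if lengthy transcription of the corresponding steps of Lemma~\ref{lem:psi}.
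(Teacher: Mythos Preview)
Your proposal is correct and follows essentially the same route as the paper: split into $r\le\sqrt\lambda$ and $r\ge\sqrt\lambda$, Taylor expand around $W_\lambda$ in the inner region (your explicit $f=f_{\mathrm{lin}}+f_{\mathrm{nl}}$ split is equivalent to the paper's direct cubic Taylor expansion of $f$ at $W_\lambda$), and use the symmetry $f(2-u)=-f(u)$ in the outer region. The only minor difference is that for \eqref{eq:psi-lin-ym} the paper bypasses the $\dot H^{-1}(\bR^2)$ identification and works directly with the embedding $L^1\hookrightarrow\cH^*$ (from $\cH\hookrightarrow L^\infty$), estimating $\|\tfrac1{r^2}W_\mu\cdot\Lambda W_\lambda\|_{L^1}$ and $\|\tfrac1{r^2}(2-W_\lambda)\cdot\Lambda W_\mu\|_{L^1}$ by splitting at $r=1$.
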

\begin{proof}
  The proof of \eqref{eq:psi0-ym} is the same as the proof of \eqref{eq:psi0}.

In order to prove \eqref{eq:psi1-ym}, we treat separately the regions $r \leq \sqrt\lambda$ and $r \geq \sqrt\lambda$.
We will show that
\begin{equation}
  \label{eq:psi1-nonlin-ym}
  \big\|\frac{1}{r^2}\big(f(\varphi) - f(W_\lambda) + 2\frac{r^2}{\mu^2}f'(W_\lambda) - b^2 f'(W_\lambda)Q_\lambda\big)\big\|_{L^2(r \leq \sqrt\lambda)} \lesssim \eee^{-\frac 32 \kappa|t|}.
\end{equation}
Since $f$ is a polynomial of degree $3$, we have
\begin{equation}
  f(\varphi) = f(W_\lambda) + f'(W_\lambda)(-W_\mu + S) + \frac 12 f''(W_\lambda)(-W_\mu + S)^2 + \frac 16 f'''(W_\lambda)(-W_\mu + S)^3.
\end{equation}
We treat all the terms one by one.
We have $\big|W_\mu(r) - 2\frac{r^2}{\mu^2}\big| \lesssim r^4$.
Since $|f'(W_\lambda)| \lesssim 1$, this implies
\begin{equation}
  \|\frac{1}{r^2}f'(W_\lambda)(W_\mu - 2r^2)\|_{L^2} \lesssim \Big(\int_0^{\sqrt\lambda} r^4\cdot \udr\Big)^{\frac 12}\lesssim \eee^{-\frac 32 \kappa|t|}.
\end{equation}
Since $r$ is small, we have $S(r) = b^2 Q_\lambda(r)$, and the corresponding term is subtracted in \eqref{eq:psi1-nonlin-ym}.

Next, notice that $|W_\mu| + |S| \lesssim r^2$. Since $|f''(W_\lambda)| \lesssim 1$, this implies
\begin{equation}
  \|\frac{1}{r^2}f''(W_\lambda)(-W_\mu + S)^2\|_{L^2} \lesssim \Big(\int_0^{\sqrt\lambda}\big(\frac{1}{r^2}\cdot r^4\big)^2 \udr\Big)^\frac 12 \sim \lambda^\frac 32 \lesssim \eee^{-\frac 32 \kappa|t|}.
\end{equation}
The last term is estimated in a similar way. This finishes the proof of \eqref{eq:psi1-nonlin-ym}.

By a direct computation $\big|\big(\partial_r^2 + \frac 1r \partial_r\big)W_\mu - \frac{8}{\mu^2}\big|\lesssim r^2$, hence
\begin{equation}
  \label{eq:psi1-lin-ym}
  \big\|\big(\partial_r^2 + \frac 1r \partial_r\big)W_\mu - \frac{8}{\mu^2}\big\|_{L^2(r\leq \sqrt\lambda)} \lesssim \eee^{-\frac 32 \kappa|t|}.
\end{equation}
From \eqref{eq:psi1-nonlin-ym}, \eqref{eq:psi1-lin-ym} and the definition of $\varphi(t)$ we have
\begin{equation}
  \label{eq:psi1-all-ym}
  \big\|\big((\partial_r^2 + \frac 1r \partial_r)\varphi + \frac{1}{r^2}f(\varphi)\big)-\big(-\frac{2}{\mu^2}f'(W_\lambda) - \frac{b^2}{\lambda}(LQ)_\uln\lambda - \frac{8}{\mu^2} \big)\big\|_{L^2(r\leq \sqrt\lambda)} \lesssim \eee^{-\frac 32 \kappa|t|}.
\end{equation}
Using \eqref{eq:profil-ym} and the relation $4+f'(W) = 6\Lambda W$ we can rewrite this as
\begin{equation}
  \label{eq:psi1-all2-ym}
  \big\|\big((\partial_r^2 + \frac 1r \partial_r)\varphi + \frac{1}{r^2}f(\varphi)\big)-\big( -\frac{12}{\mu^2}\Lambda W_\lambda + \frac{b^2}{\lambda}\Lambda_0 \Lambda W_\uln\lambda \big)\big\|_{L^2(r\leq \sqrt\lambda)} \lesssim \eee^{-\frac 32 \kappa|t|},
\end{equation}
which is equivalent to \eqref{eq:psi1-ym}, restricted to the region $r \leq \sqrt\lambda$.

Consider the region $r \geq \sqrt\lambda$. Developping $f$ at $2-W_\mu$ we get
\begin{equation}
  f(\varphi) = f(2-W_\mu) + f'(2-W_\mu)(W_\lambda - 2 + S) + \frac 12 f''(2- W_\mu)(W_\lambda - 2 + S)^2 + \frac 16 f'''(2 - W_\mu)(W_\lambda - 2 + S)^3.
\end{equation}
From this and the relations $f(2-W_\mu) = -f(W_\mu)$, $f'(2-W_\mu) = f'(W_\mu)$, we obtain a pointwise bound
\begin{equation}
  \label{eq:psi1-2-nonlin-ym}
  |f(\varphi) + f(W_\mu) + f'(W_\mu)(2-W_\lambda)| \lesssim |S| + |2-W_\lambda|^2.
\end{equation}
Since $|S(r)| \lesssim b^2$, we have
\begin{equation}
  \label{eq:psi1-2-nonlin2-ym}
\|\frac{1}{r^2}S\|_{L^2(r\geq \sqrt\lambda)} \lesssim b^2 \Big(\int_{\sqrt\lambda}^{+\infty}r^{-4}\udr\Big)^\frac 12 \lesssim \eee^{-\frac 32 \kappa|t|}.
\end{equation}
There holds $|2-W_\lambda(r)|^2 \lesssim \frac{\lambda^4}{r^4}$, hence
\begin{equation}
  \label{eq:psi1-2-nonlin3-ym}
\|\frac{1}{r^2}|2-W_\lambda(r)|^2\|_{L^2(r\geq \sqrt\lambda)} \lesssim \lambda^4 \Big(\int_{\sqrt\lambda}^{+\infty}r^{-12}\udr\Big)^{\frac 12} \lesssim \eee^{-\frac 32 \kappa|t|}.
\end{equation}
Since $|W_\mu - 2| \lesssim r^2$, there holds $|f'(W_\mu) + 4| \lesssim r^2$. We also have $|2-W_\lambda| \lesssim \frac{\lambda^2}{r^2}$ and $|(2-W_\lambda) - \frac{2\lambda^2}{r^2}| \lesssim \frac{\lambda^4}{r^4}$, hence
\begin{equation}
\label{eq:psi1-2-nonlin4-ym}
\begin{aligned}
&\big\|\frac{1}{r^2}f'(W_\mu)(2-W_\lambda) + \frac{8\lambda^2}{r^4}\big\|_{L^2(r\geq \sqrt\lambda)} \lesssim \big\|\frac{\lambda^2}{r^2} + \frac{\lambda^4}{r^6}\big\|_{L^2(r\geq \sqrt\lambda)} \\
&\lesssim \lambda^2 \Big(\int_{\sqrt\lambda}^{+\infty}r^{-4}\udr\Big)^{\frac 12} + \lambda^4 \Big(\int_{\sqrt\lambda}^{+\infty}r^{-12}\udr\Big)^{\frac 12} \lesssim \eee^{-\frac 32 |\kappa|t}.
\end{aligned}
\end{equation}
Inserting \eqref{eq:psi1-2-nonlin2-ym}, \eqref{eq:psi1-2-nonlin3-ym} and \eqref{eq:psi1-2-nonlin4-ym} into \eqref{eq:psi1-2-nonlin-ym} we obtain
\begin{equation}
  \label{eq:psi1-2-nonlin5-ym}
  \big\|f(\varphi) + f(W_\mu) - \frac{8\lambda^2}{r^4}\big\|_{L^2(r\geq \sqrt\lambda)} \lesssim \eee^{-\frac 32 \kappa|t|}.
\end{equation}
A direct computation shows that $(\partial_r^2 + \frac 1r \partial_r)W_\lambda(r) = -\frac{8\lambda^2}{r^4} + O(\frac{\lambda^4}{r^6} + \frac{\lambda^8}{r^{10}})$, hence
\begin{equation}
  \label{eq:psi1-2-nonlin6-ym}
  \big\|(\partial_r^2 + \frac 1r \partial_r)W_\lambda + \frac{8\lambda^2}{r^4}\big\|_{L^2(r\geq \sqrt\lambda)} \lesssim \eee^{-\frac 32 \kappa|t|}.
\end{equation}
We have $\partial_rS = \chi'b^2\cdot Q_\lambda + \chi b^2\cdot (Q_\lambda)'$ and $\partial_r^2 S = \chi'' b^2\cdot Q_\lambda + 2\chi' b^2\cdot (Q_\lambda)' + \chi b^2\cdot (Q_\lambda)''$.
There holds $|Q| \lesssim 1$, $|Q'| \lesssim \frac 1r$ and $|Q''| \lesssim \frac{1}{r^2}$,
which implies $|Q_\lambda| \lesssim 1$, $|(Q_\lambda)'| \lesssim \frac 1r$ and $|(Q_\lambda)''| \lesssim \frac{1}{r^2}$.
This gives
\begin{equation}
  \label{eq:Delta-S-ym}
  \begin{gathered}
    \|\chi b^2\cdot(Q_\lambda)''\|_{L^2(r\geq \sqrt\lambda)} + \big\|\frac 1r\chi b^2\cdot(Q_\lambda)'\big\|_{L^2(r\geq \sqrt\lambda)} \lesssim b^2\Big(\int_{\sqrt\lambda}^{+\infty} r^{-4}\udr\Big)^\frac 12 \lesssim \frac{b^2}{\sqrt\lambda} \lesssim \eee^{-\frac 32\kappa|t|}, \\
    \|\chi' b^2\cdot(Q_\lambda)'\|_{L^2(r\geq \sqrt\lambda)} +\| \chi''b^2 \cdot Q_\lambda\|_{L^2(r\geq \sqrt\lambda)} + \big\|\frac 1r \chi'b^2\cdot Q_\lambda\big\|_{L^2(r\geq \sqrt\lambda)} \lesssim b^2 \ll \eee^{-\frac 32 \kappa|t|},
  \end{gathered}
\end{equation}
hence $\|(\partial_r^2 + \frac 1r \partial_r)S\|_{L^2(r\geq \sqrt\lambda)} \lesssim \eee^{-\frac 32 \kappa|t|}$.
Together with \eqref{eq:psi1-2-nonlin5-ym} and \eqref{eq:psi1-2-nonlin6-ym} this proves that
\begin{equation}
  \|(\partial_r^2 + \frac 1r \partial_r)\varphi + \frac{1}{r^2}f(\varphi)\|_{L^2(r\geq \sqrt\lambda)} \lesssim \eee^{-\frac 32 \kappa|t|}.
\end{equation}
Since $\|\Lambda W_\uln\lambda\|_{L^2(r\geq \sqrt\lambda)} + \|\Lambda_0 \Lambda W_\uln\lambda\|_{L^2(r\geq \sqrt\lambda)} \lesssim (\int_{1/\sqrt\lambda}^{+\infty}\frac{1}{r^4}\udr)^{\frac 12}
\lesssim \sqrt\lambda$, the other terms appearing in \eqref{eq:psi1-ym} are $\lesssim \eee^{-\frac 32 \kappa|t|}$. This finishes the proof of \eqref{eq:psi1-ym}.

The proof of \eqref{eq:psi-lin-ym} is very similar to the proof of \eqref{eq:psi-lin}, hence we will just indicate the differences.
Since $\|W_\lambda\|_{L^\infty} + \|W_\mu\|_{L^\infty} \lesssim 1$, we have $|f'(-W_\mu + W_\lambda) - f'(W_\lambda)| \lesssim W_\mu$
and $|f'(-W_\mu + W_\lambda) - f'(W_\mu)| = |f'(W_\mu + (2-W_\lambda)| - f'(W_\mu)| \lesssim 2- W_\lambda$.
Next, we check that $\|\frac{1}{r^2}W_\mu\cdot \Lambda W_\lambda\|_{L^1} \lesssim \eee^{-\frac 32 \kappa|t|}$ (recall that $\cH\subset L^\infty$, hence $L^1(\bR^2) \subset \cH^*$).
To do this, we consider separately $r \leq 1$ and $r \geq 1$:
\begin{gather}
  \|\frac{1}{r^2}W_\mu\cdot \Lambda W_\lambda\|_{L^1(r \leq 1)} \lesssim \|\Lambda W_\lambda\|_{L^1(r \leq 1)} = \lambda^2 \|\Lambda W\|_{L^1(r\leq 1/\lambda)} \lesssim \lambda^2 |\log \lambda| \ll \eee^{-\frac 32 \kappa|t|}, \\
  \|\frac{1}{r^2}W_\mu\cdot \Lambda W_\lambda\|_{L^1(r \geq 1)} \lesssim \|\Lambda W_\lambda\|_{L^\infty(r \geq 1)} = \big|\Lambda W\big(\frac{1}{\lambda}\big)\big| \sim \lambda^2 \ll \eee^{-\frac 32 \kappa|t|}.
\end{gather}
Finally, we check that $\|\frac{1}{r^2}(2-W_\lambda) \cdot \Lambda W_\mu\|_{L^1} \lesssim \eee^{-\frac 12 \kappa|t|}$, again dividing into $r \leq 1$ and $r \geq 1$:
\begin{gather}
  \|2-W_\lambda\|_{L^1(r \leq 1)} = \big\|\frac{1}{1 + (r/\lambda)^2}\big\|_{L^1(r \leq 1)} = \lambda^2 \big\|\frac{1}{1+r^2}\big\|_{L^1(r\leq 1/\lambda)} \sim \lambda^2|\log \lambda| \ll \eee^{-\frac 32 \kappa|t|}, \\
  \|2-W_\lambda\|_{L^1(r \geq 1)} = \frac{2\lambda^2}{1+\lambda^2} \ll \eee^{-\frac 32 \kappa|t|}.
\end{gather}
This allows to conclude, since $\|f'(\varphi) - f'(-W_\mu + W_\lambda)\|_{L^\infty} \lesssim \|S\|_{L^\infty} \lesssim \eee^{-\frac 32 \kappa|t|}$.
\end{proof}

\subsection{Modulation}
\label{ssec:mod-ym}
Having defined the approximate solution $\bs \varphi(t)$, we will now analyse exact solutions close to $\bs \varphi(t)$.
The initial data are
\begin{equation}
  \label{eq:data-at-T-ym}
  \bs u(T) = \big({-}W + W_{\frac{1}{\kappa}\eee^{-\kappa|T|}}, -\eee^{-\kappa|T|}\Lambda W_{\uln{\frac{1}{\kappa}\eee^{-\kappa|T|}}}\big)
\end{equation}
(there is no linear instability in the case of the Yang-Mills equation).

Similarly as in Subsection~\ref{ssec:mod}, we choose the modulation parameters $\lambda(t)$ and $\mu(t)$ which verify
\begin{equation}
  \label{eq:orth-ym}
  \int \cZ_\uln\mu\cdot\big(u(t) - (-W_{\mu(t)} + W_{\lambda(t)})\big)\frac{\vd r}{r} = 0, \qquad \int \cZ_\uln\lambda\cdot\big(u(t) - (-W_{\mu(t)} + W_{\lambda(t)})\big)\frac{\vd r}{r} = 0.
\end{equation}
We define $\bs g(t)$ by
\begin{equation}
  \label{eq:g-def-ym}
  \bs u(t) = \bs \varphi(t) + \bs g(t).
\end{equation}
It satisfies, cf. \eqref{eq:g-orth},
\begin{equation}
  \label{eq:g-orth-ym}
  \big\la \frac{1}{\lambda(t)}\cZ_{\uln{\lambda(t)}}, g(t)\big\ra = 0,\qquad \big|\big\la \frac{1}{\mu(t)}\cZ_{\uln{\mu(t)}}, g(t)\big\ra\big| \leq c\cdot \eee^{-\frac 32\kappa|t|}.
\end{equation}

The functions $\lambda(t)$ and $\mu(t)$ are $C^1$ and
\begin{equation}
  \label{eq:mod-ym}
  |\lambda'(t) - b(t)| + |\mu'(t)| \lesssim \|\bs g(t)\|_\cE + c\cdot\eee^{-\frac 32 \kappa|t|}
\end{equation}
with $c > 0$ arbitrarily small, cf. \eqref{eq:mod}.
\subsection{Coercivity}
Recall that $f'(W) = -4+6\Lambda W$. In the next lemma, it is useful to separate these two terms, see Remark~\ref{rem:hidden-lin}.
\label{ssec:coer-ym}
\begin{lemma}
  \label{lem:loc-coer-ym}
  There exist constants $c, C > 0$ such that
  \begin{itemize}[leftmargin=0.5cm]
    \item for all $g \in \cH$ there holds
      \begin{equation}
        \label{eq:coer-lin-coer-1-ym}
        \begin{aligned}
        &\int_0^{+\infty}\big((g')^2+\frac{4}{r^2}g^2\big)\udr - \int_0^{+\infty} \frac{6}{r^2}\Lambda W g^2 \udr \\
      &\geq c\int_0^{+\infty}\big((g')^2 +\frac{4}{r^2}g^2\big) \udr -C\Big(\int_0^{+\infty}\cZ\cdot g\frac{\vd r}{r}\Big)^2,
      \end{aligned}
      \end{equation}
    \item if $r_1 > 0$ is large enough, then for all $g \in \cH$ there holds
      \begin{equation}
        \label{eq:coer-lin-coer-2-ym}
        \begin{aligned}
        &(1-2c)\int_0^{r_1}\big((g')^2+\frac{4}{r^2}g^2\big)\udr + c\int_{r_1}^{+\infty}\big((g')^2+\frac{4}{r^2}g^2\big)\udr \\
        &- \int_0^{+\infty}\frac{6}{r^2}\Lambda W g^2 \udr\geq -C\Big(\int_0^{+\infty}\cZ\cdot g\frac{\vd r}{r}\Big)^2,
      \end{aligned}
      \end{equation}
    \item if $r_2 > 0$ is small enough, then for all $g \in \cH$ there holds
      \begin{equation}
        \label{eq:coer-lin-coer-3-ym}
        \begin{aligned}
        &(1-2c)\int_{r_2}^{+\infty}\big((g')^2+\frac{4}{r^2}g^2\big)\udr + c\int_{0}^{r_2}\big((g')^2+\frac{4}{r^2}g^2\big)\udr \\
        &- \int_0^{+\infty}\frac{6}{r^2}\Lambda W g^2 \udr\geq -C\Big(\int_0^{+\infty}\cZ\cdot g\frac{\vd r}{r}\Big)^2,
      \end{aligned}
      \end{equation}
  \end{itemize}
\end{lemma}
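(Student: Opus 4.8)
The plan is to follow the scheme of Lemma~\ref{lem:loc-coer} (that is, \cite[Lemma 2.1]{moi15p-3}), taking advantage of two features special to the Yang--Mills case. First, the ``non-localized'' part $4/r^2$ of the potential of the linearized operator is already built into $\|\cdot\|_\cH$, so the operative operator is
\begin{equation*}
  L = -\partial_r^2 - \tfrac1r\partial_r + \tfrac{1}{r^2}\big(4 - 6\Lambda W\big),
\end{equation*}
whose remaining potential $\tfrac{6}{r^2}\Lambda W(r) = \tfrac{24}{(1+r^2)^2}$ is bounded, concentrated near $r\sim 1$, and decays as $r\to+\infty$. Second, contrary to the NLW case, $L$ has no negative direction, which is why no $\la\cY, g\ra^2$ term appears. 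I would work in the variable $x=\log r$: as recalled in the text, $v\mapsto\phi:=v(\eee^{\,\cdot})$ identifies $\cH$ with $H^1(\bR)$ via $\|v\|_\cH^2 = 2\pi\int_\bR(|\phi'|^2+4\phi^2)\ud x$, the quadratic form becomes $\tfrac1{2\pi}\la g, Lg\ra = \int_\bR(|\phi'|^2+4\phi^2-6\sech^2(x)\phi^2)\ud x$, and $\int_0^{+\infty}\cZ\cdot g\,\tfrac{\vd r}{r} = \int_\bR\wt\cZ\,\phi\,\ud x$ with $\wt\cZ:=\cZ(\eee^{\,\cdot})\in C_0^\infty(\bR)$.

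First I would record two structural facts. (i) $L\ge 0$ on $\cH$ with $\ker L = \bR\Lambda W$: since $\Lambda W(r)=\tfrac{4}{(r+r^{-1})^2}>0$ solves $L(\Lambda W)=0$ and lies in $\cH$ (in the $x$-picture it equals $\sech^2 x\in H^1(\bR)$), the ground-state substitution $g=(\Lambda W)\,h$ gives $\la g, Lg\ra = 2\pi\int_0^{+\infty}|\Lambda W|^2|\partial_r h|^2\udr\ge 0$, with equality iff $h$ is constant. (ii) The functional $g\mapsto\int_0^{+\infty}\tfrac{6}{r^2}\Lambda W g^2\udr$ is sequentially weakly continuous on $\cH$: in the $x$-picture it is $\phi\mapsto\int_\bR 6\sech^2(x)\phi(x)^2\ud x$, and multiplication by $\sqrt6\,\sech(x)$ is compact $H^1(\bR)\to L^2(\bR)$ (Rellich on a large interval together with the exponential decay of $\sech$ at both ends). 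Finally, $g\mapsto\int\cZ g\,\tfrac{\vd r}{r}$ is weakly continuous, and $\int\cZ\cdot\Lambda W\,\tfrac{\vd r}{r}>0$ by \eqref{eq:Z-pos-ym}.

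Estimate \eqref{eq:coer-lin-coer-1-ym} would then come from the standard contradiction argument: if it failed for all $c,C$, one produces $g_n$ with $\|g_n\|_\cH=1$ and $\la g_n, Lg_n\ra < \tfrac1n - n\big(\int\cZ g_n\tfrac{\vd r}{r}\big)^2$; since $\la g_n, Lg_n\ra$ is bounded below (the potential being bounded), this forces $\int\cZ g_n\tfrac{\vd r}r\to 0$ and $\limsup_n\la g_n, Lg_n\ra\le 0$. Passing to a weak limit $g_n\wto g_\infty$ in $\cH$ and using (i)--(ii) and weak lower semicontinuity, $\la g_\infty, Lg_\infty\ra\le 0$, hence $=0$, hence $g_\infty\in\bR\Lambda W$; but $\int\cZ g_\infty\tfrac{\vd r}r=0$ while $\int\cZ\Lambda W\tfrac{\vd r}r\ne0$, so $g_\infty=0$; then $\int\tfrac6{r^2}\Lambda W g_n^2\udr\to0$, whence $\int_0^{+\infty}(|g_n'|^2+\tfrac4{r^2}g_n^2)\udr = \tfrac1{2\pi}\la g_n, Lg_n\ra + \tfrac1{2\pi}\int\tfrac6{r^2}\Lambda W g_n^2\udr\to 0$, contradicting $\|g_n\|_\cH=1$. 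Let $c_1>0$ be the resulting coercivity constant. For \eqref{eq:coer-lin-coer-2-ym} and \eqref{eq:coer-lin-coer-3-ym} I would fix $c<c_1/3$ and argue by contradiction again, obtaining radii $r_{1,n}\to+\infty$ (resp.\ $r_{2,n}\to0^+$) and $g_n$ with $\|g_n\|_\cH=1$, $\int\cZ g_n\tfrac{\vd r}r\to 0$, and left-hand side $\le0$. Writing, in the first case, $(1-2c)\int_0^{r_{1,n}}+c\int_{r_{1,n}}^{+\infty}$ of $(|g_n'|^2+\tfrac4{r^2}g_n^2)\udr$ as $\tfrac{c}{2\pi}+(1-3c)\int_0^{r_{1,n}}(|g_n'|^2+\tfrac4{r^2}g_n^2)\udr$, one gets $\int_0^{+\infty}\tfrac6{r^2}\Lambda W g_n^2\udr\ge\tfrac c{2\pi}+(1-3c)\int_0^{r_{1,n}}(|g_n'|^2+\tfrac4{r^2}g_n^2)\udr\ge\tfrac c{2\pi}$, so the weak limit $g_\infty$ satisfies $\int\tfrac6{r^2}\Lambda W g_\infty^2\udr\ge\tfrac c{2\pi}>0$ (hence $g_\infty\ne0$) and $\int\cZ g_\infty\tfrac{\vd r}r=0$; since $r_{1,n}\to+\infty$, $\liminf_n\int_0^{r_{1,n}}(|g_n'|^2+\tfrac4{r^2}g_n^2)\udr\ge\int_0^{+\infty}(|g_\infty'|^2+\tfrac4{r^2}g_\infty^2)\udr$, so passing to the limit and then applying \eqref{eq:coer-lin-coer-1-ym} to $g_\infty$ (with $\int\cZ g_\infty\tfrac{\vd r}r=0$) yields $(c_1-3c)\int_0^{+\infty}(|g_\infty'|^2+\tfrac4{r^2}g_\infty^2)\udr\le-\tfrac c{2\pi}<0$, a contradiction. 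Estimate \eqref{eq:coer-lin-coer-3-ym} is identical with $r_{2,n}\to0^+$, the only new point being that although $\tfrac6{r^2}\Lambda W(r)=\tfrac{24}{(1+r^2)^2}$ does not vanish as $r\to0$, on $\{r\le r_2\}$ it is bounded by a constant which is $\ll\tfrac4{r^2}$ once $r_2$ is small, so no mass concentrates at the origin and the argument goes through verbatim.

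The hard part will be the localization steps for \eqref{eq:coer-lin-coer-2-ym}--\eqref{eq:coer-lin-coer-3-ym}: one must exclude that the minimizing sequence escapes to $r=+\infty$ (resp.\ concentrates at $r=0$) — which is why it is essential that the potential term retains a definite amount of mass, forcing $g_\infty\ne0$ — and one must calibrate the kept fraction $c$ against the coercivity constant $c_1$ of \eqref{eq:coer-lin-coer-1-ym}; the non-decay of the potential at $r=0$ is a minor extra nuisance absorbed by the smallness of $r_2$.
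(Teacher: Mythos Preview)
Your proof is correct and complete. Both you and the paper begin with the change of variable $x=\log r$, which turns the problem into a coercivity statement for the P\"oschl--Teller operator $-\partial_x^2 + 4 - 6\sech^2(x)$ on $H^1(\bR)$, with kernel spanned by $\sech^2$. From there, however, the two arguments diverge.

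For \eqref{eq:coer-lin-coer-1-ym} the paper proceeds constructively: decompose $\wt g = a\sech^2 + g_1$ with $\int \sech^2 g_1\,\vd x=0$, invoke Sturm--Liouville to get $\int (g_1')^2+(4-6\sech^2)g_1^2\,\vd x \gtrsim \int g_1^2\,\vd x$, and then estimate $a$ against $\int\wt\cZ\,\wt g\,\vd x$ via the decomposition $\sech^2 = b\wt\cZ + (\sech^2)^\perp$. For the localized inequalities \eqref{eq:coer-lin-coer-2-ym}--\eqref{eq:coer-lin-coer-3-ym} the paper uses a direct cutoff: set $\wt h = \chi(2\,\cdot/R)\,\wt g$, apply \eqref{eq:coer-lin-coer-1-ym} to $\wt h$, and absorb the commutator and tail errors using $|(\wt\chi')^2-\wt\chi\wt\chi''|\lesssim R^{-2}$ together with the exponential decay of $\sech^2$. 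Your route is instead a soft compactness/contradiction argument at every step, exploiting only that $L\ge 0$ with simple kernel and that the potential term is weakly continuous on $\cH$. This is perfectly valid and arguably more robust (it would survive a less explicit potential), whereas the paper's approach is shorter for the localized statements and yields the constants more transparently. Your concern about the potential not vanishing at $r=0$ is moot in either framework: in the $x$-picture the potential is $6\sech^2(x)$, which decays at both ends, so compactness is automatic.
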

\begin{proof}
  Let $\wt g(x) := g(\eee^x)$ and $\wt\cZ(x) := \cZ(\eee^x)$. One computes that $f'(W(\eee^x)) = -4+6\sech^2(x)$, hence \eqref{eq:coer-lin-coer-1-ym} is equivalent to
  \begin{equation}
    \label{eq:coer-L2-version}
    \int_{\bR}(\wt g')^2 +(4-6\sech^2)\wt g^2\ud x \geq c\int_\bR \big((\wt g')^2 + \wt g^2\big)\ud x-C\big(\int_\bR\wt\cZ\cdot \wt g\ud x\big)^2.
  \end{equation}
  This quadratic form corresponds to the classical operator $-\frac{\vd^2}{\vd x^2} + (4 - 6\sech^2)$, for which $0$ is a simple discrete eigenvalue,
  with the eigenspace spanned by $\sech^2$.
  Decompose $\wt g = a\sech^2 + g_1$, with $\int \sech^2\cdot g_1\ud x = 0$. From the Sturm-Liouville theory we obtain
  \begin{equation}
    \int_{\bR}(\wt g')^2 +(4-6\sech^2)\wt g^2\ud x = \int_{\bR}(g_1')^2 +(4-6\sech^2)g_1^2\ud x \gtrsim \int g_1^2\udr.
  \end{equation}
  Let $\sech^2 = b\wt\cZ + (\sech^2)^\perp$ with $\int \wt \cZ\cdot (\sech^2)^\perp\ud x = 0$. Since $\int \wt \cZ\cdot \sech^2 \ud x > 0$, see \eqref{eq:Z-pos-ym},
  we have $\int_\bR \big((\sech^2)^\perp\big)^2\ud x < \int_\bR (\sech^2)^2\ud x$, hence
  \begin{equation}
    \begin{aligned}
    \int g_1^2 \ud x &= a^2\int(\sech^2)^2 \ud x - 2a\int\sech^2\cdot \wt g\ud x + \int \wt g^2 \ud x \\
    &= a^2\int(\sech^2)^2 \ud x - 2ab\int \wt \cZ\cdot \wt g\ud x - 2a\int(\sech^2)^\perp\cdot \wt g\ud x + \int \wt g^2 \ud x \\
    &\geq c\int \wt g^2\ud x - C\big(\int_\bR\wt\cZ\cdot \wt g\ud x\big)^2,
  \end{aligned}
  \end{equation}
  which implies \eqref{eq:coer-L2-version}.

  With the same change of variable, \eqref{eq:coer-lin-coer-2-ym} and \eqref{eq:coer-lin-coer-3-ym} will follow once we prove that
  \begin{equation}
    \label{eq:coer-L2-version-2}
    \begin{aligned}
    &(1-2c)\int_{|x| \leq R}\big((g')^2+4 g^2\big)\ud x + c\int_{|x| \geq R}\big((g')^2+4 g^2\big)\ud x \\
    &- \int_{\bR}6\sech^2g^2\ud x \geq -C\Big(\int_{\bR}\wt \cZ\cdot g\ud x\Big)^2,
  \end{aligned}
  \end{equation}
  provided that $R$ is large enough. To this end, take $\wt \chi(x) := \chi\big(\frac{2x}{R}\big)$ and $\wt h := \wt \chi\cdot \wt g$.
  Since $\wt \cZ$ has compact support and $R$ is large, we have $\int_{\bR}\wt\cZ\cdot \wt h\ud x = \int_{\bR}\wt\cZ\cdot \wt g\ud x$.
  By a standard integration by parts we get $\int_{\bR}(\wt h')^2 \ud x = \int_{\bR}\wt \chi^2(\wt g')^2\ud x + \int_{\bR}\frac 12\big((\wt\chi')^2-\wt\chi\wt\chi''\big)\wt g^2\ud x$.
  We notice that $|(\wt\chi')^2 -\wt\chi\wt\chi''| \lesssim R^{-2}$ is small, in particular for any $c >0$ there holds
  $\int_{\bR}\chi^2(\wt g')^2\ud x \geq \int_{\bR}(\wt h')^2 \ud x - \frac c2\int\big((\wt g')^2 + 4\wt g^2\big)x\ud x$,
  if $R$ is large enough. Applying \eqref{eq:coer-L2-version} with $\wt h$ instead of $\wt g$ and $3c$ instead of $c$ we obtain
  \begin{equation}
    \begin{aligned}
      &(1-3c)\int_{|x| \leq R}\big((\wt g')^2 + 4\wt g^2\big)\ud x - \int_{\bR}6\sech^2\wt \chi^2\wt g^2 \ud x \\
      &\geq (1-3c)\int_{\bR}\chi^2\big((\wt g')^2+4\wt g^2\big)\ud x - \int_{\bR}6\sech^2\wt \chi^2\wt g^2 \ud x \\
      &\geq (1-3c)\int_{\bR}\big((\wt h')^2+4\wt h^2\big)\ud x -\frac c2\int_{\bR}\big((\wt g')^2+4\wt g^2\big) \ud x - \int_{\bR}6\sech^2\wt h^2\ud x \\
      &\geq -\frac c2 \int_{\bR}\big((\wt g')^2+4\wt g^2\big) \ud x -C\Big(\int \wt \cZ\cdot \wt h\ud x\Big)^2 \geq -\frac c2 \int_{\bR}\big((\wt g')^2+4\wt g^2\big) \ud x- C\Big(\int_{\bR}\wt\cZ\cdot \wt g\ud x\Big)^2.
    \end{aligned}
  \end{equation}
  But $6\sech^2 \leq 6\sech^2\wt \chi^2 + 2c$ if $R$ is large enough, and \eqref{eq:coer-L2-version-2} follows.
\end{proof}
\begin{lemma}
  \label{lem:bulles-coer-ym}
  There exists a constant $\eta > 0$ such that if $\frac{\lambda}{\mu} < \eta$ and $\|\bs U - (-\bs W_{\mu} + \bs W_{\lambda})\|_\cE < \eta$,
  then for all $\bs g \in \cE$ there holds
  \begin{equation*}
    \la \vD^2 E(\bs U)\bs g, \bs g\ra +\Big(\int \frac{1}{\lambda}\cZ_\uln\lambda\cdot g\frac{\vd r}{r}\Big)^2 + \Big(\int \frac{1}{\mu}\cZ_\uln\mu\cdot g\frac{\vd r}{r}\Big)^2 \gtrsim \|\bs g\|_\cE^2.
  \end{equation*}\qed
\end{lemma}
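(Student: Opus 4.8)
The plan is to transpose the proof of Lemma~\ref{lem:bulles-coer} (itself a variant of \cite[Lemma~3.5]{moi15p-3}) to the Yang--Mills setting, with two changes dictated by the structure of the problem: since $L$ has no negative eigenvalue, only the two $\cZ$-orthogonalities need to be quotiented out, exactly as in Lemma~\ref{lem:loc-coer-ym}; and, since $f$ is not homogeneous, the non-localized ``hidden linear part'' of the potential (see Remark~\ref{rem:hidden-lin}) must be built into the decoupled model operator.

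\emph{Step 1: a decoupled model operator.} By scale invariance of both sides we may take $\mu = 1$. Recalling $f'(W) = -4 + 6\Lambda W$, set
\begin{equation*}
  \bs L_\lambda := \begin{pmatrix} -\partial_r^2 - \frac1r\partial_r - \frac1{r^2}\big(f'(W) + f'(W_\lambda) + 4\big) & 0 \\ 0 & \Id\end{pmatrix};
\end{equation*}
its first-diagonal entry, which we denote $L_\lambda$, equals $-\partial_r^2 - \frac1r\partial_r + \frac4{r^2} - \frac6{r^2}\Lambda W - \frac6{r^2}\Lambda W_\lambda$, i.e. the $\cH$-quadratic form perturbed by the two localized bubble-potentials. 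The constant $+4$ is exactly what the superposition must provide in order to reproduce $-4$ away from both cores: an elementary pointwise identity gives
\begin{equation*}
  f'({-}W + W_\lambda) = f'(W) + f'(W_\lambda) + 4 - 12\,W(2 - W_\lambda), \qquad \|W(2-W_\lambda)\|_{L^\infty} \lesssim \lambda^2.
\end{equation*}
Together with $\|f'(U) - f'({-}W + W_\lambda)\|_{L^\infty} \lesssim \|U - ({-}W + W_\lambda)\|_{L^\infty} \lesssim \|\bs U - ({-}\bs W + \bs W_\lambda)\|_\cE \lesssim \eta$, the embedding $\cH \hookrightarrow L^\infty$ and the bound $\int\frac1{r^2}g^2\udr \lesssim \|g\|_\cH^2$, this yields $|\la\vD^2 E(\bs U)\bs g, \bs g\ra - \la\bs L_\lambda\bs g, \bs g\ra| \leq c\|\bs g\|_\cE^2$ for all $\bs g \in \cE$, with $c$ as small as we wish once $\eta$ is small and $|T_0|$ large. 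Hence it suffices to prove the coercivity with $\bs L_\lambda$ in place of $\vD^2 E(\bs U)$.

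\emph{Step 2: removing the $\Lambda \bs W$-directions and localized coercivity.} Decompose $\bs g = b_1\,\Lambda \bs W_\lambda + b_2\,\Lambda \bs W + \bs k$, with $b_1, b_2 \in \bR$ chosen so that $k$ is $\frac{\vd r}{r}$-orthogonal to $\frac{1}{\lambda}\cZ_\uln\lambda$ and to $\cZ$; the relevant $2\times2$ matrix is strictly diagonally dominant (the off-scale pairings are $O(\lambda^2)$, the diagonal ones equal $\int\cZ\,\Lambda W\,\frac{\vd r}{r} > 0$), so $|b_1| + |b_2| \lesssim \|\bs g\|_\cE$ and in fact $|b_1| + |b_2| \lesssim \big|\int\frac{1}{\lambda}\cZ_\uln\lambda\, g\,\frac{\vd r}{r}\big| + \big|\int\cZ\, g\,\frac{\vd r}{r}\big| + \lambda^2\|\bs g\|_\cE$. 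Since $L\Lambda W = 0$ (and its rescaled version) and the bubble-interaction norm $\|\frac{1}{r^2}\Lambda W\,\Lambda W_\lambda\|_{\cH^*}$ tends to $0$, we have $\|\bs L_\lambda \Lambda \bs W\|_{\cE^*} + \|\bs L_\lambda \Lambda \bs W_\lambda\|_{\cE^*} \to 0$; expanding $\la\bs L_\lambda\bs g, \bs g\ra$ by self-adjointness and discarding the (arbitrarily small) terms carrying $b_1, b_2$ reduces the problem to $\la\bs L_\lambda\bs k, \bs k\ra = \|\dot g\|_{L^2}^2 + \la L_\lambda k, k\ra \gtrsim \|\bs k\|_\cE^2$. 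For the quadratic form in $k$, split the radial integral at $r = \sqrt\lambda$: \eqref{eq:coer-lin-coer-2-ym}, rescaled to scale $\lambda$ with $r_1 = \lambda^{-1/2}$, controls the kinetic part with weight $1-2c$ on $\{r \leq \sqrt\lambda\}$ and weight $c$ on $\{r\geq\sqrt\lambda\}$, minus $\int\frac{6}{r^2}\Lambda W_\lambda k^2\udr$, the error term disappearing by orthogonality of $k$ to $\frac{1}{\lambda}\cZ_\uln\lambda$; \eqref{eq:coer-lin-coer-3-ym} with $r_2 = \sqrt\lambda$ controls the complementary weights, minus $\int\frac{6}{r^2}\Lambda W k^2\udr$, the error term disappearing by orthogonality to $\cZ$. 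Adding the two, the kinetic weights recombine to $1-c$ on $(0,+\infty)$ and the two subtracted potentials are precisely the potential of $L_\lambda$, so $\la L_\lambda k, k\ra \geq c\|k\|_\cH^2$, hence $\la\bs L_\lambda\bs k, \bs k\ra \gtrsim \|\bs k\|_\cE^2$.

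\emph{Conclusion and main difficulty.} Combining the two steps with $\|\bs g\|_\cE \lesssim \|\bs k\|_\cE + |b_1| + |b_2|$, the small multiples of $\|\bs g\|_\cE^2$ are absorbed and we obtain the asserted inequality (recalling $\mu = 1$, so $\frac{1}{\mu}\cZ_\uln\mu = \cZ$). I expect the only genuinely new point, compared with the power nonlinearity, to be Step~1: one must notice that the decoupled operator has to carry the constant $+4$ in its potential — equivalently, that $f'$ of the bubble--antibubble configuration is close to $f'(W_\mu) + f'(W_\lambda) + f'(0)$ and not to $f'(W_\mu) + f'(W_\lambda)$ — and then check that the leftover $-12\,W_\mu(2 - W_\lambda)$ is a bona fide localized interaction of size $(\lambda/\mu)^2$. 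Everything else is a routine transcription of the NLW argument, lightened by the absence of negative spectrum.
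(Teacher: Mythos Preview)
Your proof is correct and follows precisely the route the paper intends: adapt the NLW argument of Lemma~\ref{lem:bulles-coer}, drop the eigen-directions $\cY^\pm$ (absent for Yang--Mills), and use the localized coercivity of Lemma~\ref{lem:loc-coer-ym} at the two scales. You have also correctly isolated the one new wrinkle, namely that the decoupled model operator must carry the extra constant $+4$ so that $f'(W)+f'(W_\lambda)+4=-4+6\Lambda W+6\Lambda W_\lambda$, and your pointwise identity $f'(-W+W_\lambda)=f'(W)+f'(W_\lambda)+4-12\,W(2-W_\lambda)$ with $\|W(2-W_\lambda)\|_{L^\infty}\lesssim\lambda^2$ is exactly what is needed.
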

The proof is a modification of the proof of Lemma~\ref{lem:bulles-coer} and will be skipped.

\subsection{Definition of the mixed energy-virial functional}
\label{ssec:bootstrap-ym}
\begin{lemma}
  \label{lem:fun-a-ym}
  For any $c > 0$ and $R > 0$ there exists a function $q(r) = q_{c,R}(r) \in C^{3,1}((0, +\infty))$ with the following properties:
  \begin{enumerate}[label=(P\arabic*)]
    \item $q(r) = \frac 12 r^2$ for $r \leq R$, \label{enum:approx-ym}
    \item there exists $\wt R > 0$ (depending on $c$ and $R$) such that $q(r) \equiv \tx{const}$ for $r \geq \wt R$, \label{enum:support-ym}
    \item $|q'(r)| \lesssim r$ and $|q''(r)| \lesssim 1$ for all $r > 0$, with constants independent of $c$ and $R$, \label{enum:gradlap-ym}
    \item $q''(r) \geq -c$ and $\frac 1r q'(r) \geq -c$, for all $r > 0$, \label{enum:convex-ym}
    \item $(\frac{\vd^2}{\vd r^2} + \frac 1r \dd r)^2 q(r) \leq c\cdot r^{-2}$, for all $r > 0$, \label{enum:bilapl-ym}
    \item $\big|r\big(\frac{q'(r)}{r}\big)'\big| \leq c$, for all $r > 0$. \label{enum:multip-petit-ym}
  \end{enumerate}
\end{lemma}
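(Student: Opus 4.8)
The plan is to follow the scheme of Lemma~\ref{lem:fun-a}, adapted to the two‑dimensional Laplacian $\Delta := \partial_r^2 + \frac 1r\partial_r$ and to the new requirement \ref{enum:multip-petit-ym}. First I would reduce to the case $R = 1$: if $q$ satisfies \ref{enum:approx-ym}--\ref{enum:multip-petit-ym} with $R = 1$, then $q_R(r) := R^2 q(r/R)$ satisfies them for general $R$, because $\Delta q_R(r) = (\Delta q)(r/R)$, $q_R''(r) = q''(r/R)$, $\frac 1r q_R'(r) = \frac 1s q'(s)|_{s=r/R}$ and $r\big(\frac{q_R'(r)}{r}\big)' = s\big(\frac{q'(s)}{s}\big)'\big|_{s=r/R}$, so every property is invariant under this rescaling.

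The decisive new point is \ref{enum:multip-petit-ym}, and it dictates the construction. Setting $p(r) := \frac{q'(r)}{r}$ one has the elementary identities
\begin{equation*}
  \frac 1r q'(r) = p(r),\qquad q''(r) = p(r) + rp'(r),\qquad r\Big(\frac{q'(r)}{r}\Big)' = q''(r) - \frac 1r q'(r) = rp'(r),
\end{equation*}
so \ref{enum:multip-petit-ym} is exactly the requirement $|rp'(r)| \leq c$, i.e.\ $p$ must vary slowly on the logarithmic scale. Since \ref{enum:approx-ym} forces $p \equiv 1$ near $r = 0$ and \ref{enum:support-ym} forces $p \equiv 0$ near $r = +\infty$, no algebraic ansatz of the type used in Lemma~\ref{lem:fun-a} (whose $q_0$ has $q_0'' - \frac 1r q_0'$ of size $\sim 1$) can work; instead I would take $p$ to be a logarithmically dilated cut‑off. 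Concretely, fix a non‑increasing $\phi \in C^\infty(\bR)$ with $\phi \equiv 1$ on $(-\infty, 1]$ and $\phi \equiv 0$ on $[2, +\infty)$, pick a large parameter $L > 0$, and set
\begin{equation*}
  p(r) := \phi\big(\tfrac 1L\log r\big),\qquad q(r) := \int_0^r s\,p(s)\ud s,\qquad \wt R := \eee^{2L}.
\end{equation*}
Then $p \in C^\infty((0,+\infty))$, $p \equiv 1$ on $(0,\eee^L]$ and $p \equiv 0$ on $[\eee^{2L}, +\infty)$; and since $r\partial_r$ applied to $p$ produces $L^{-1}$ times derivatives of $\phi$ evaluated at $\tfrac 1L\log r$, one gets the key estimates $|(r\partial_r)^j p(r)| \leq C_\phi L^{-1}$ for $j \in \{1,2,3\}$ and all $r > 0$.

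Then I would read off the six properties, each being a one‑line consequence of the identities above and the key estimates, after taking $L$ large depending on $c$. Properties \ref{enum:approx-ym}, \ref{enum:support-ym} are immediate ($q(r) = \frac 12 r^2$ on $(0,\eee^L]\supset(0,1]$, $q$ constant on $[\wt R,+\infty)$), and $q \in C^\infty((0,+\infty)) \subset C^{3,1}((0,+\infty))$, so regularity is automatic; \ref{enum:gradlap-ym} follows from $|q'(r)| = r|p(r)| \leq r$ and $|q''(r)| = |p+rp'| \leq 1 + C_\phi L^{-1}$; \ref{enum:convex-ym} from $q''(r) = p(r)+rp'(r) \geq -C_\phi L^{-1}$ and $\frac 1r q'(r) = p(r) \geq 0$; \ref{enum:multip-petit-ym} from $|r(q'/r)'| = |rp'(r)| \leq C_\phi L^{-1}$. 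For \ref{enum:bilapl-ym} I would compute, from $\Delta q = 2p + rp'$,
\begin{equation*}
  \Big(\partial_r^2 + \frac 1r\partial_r\Big)^2 q = 5p'' + rp''' + \frac 3r p',
\end{equation*}
so that $r^2\big(\partial_r^2 + \frac 1r\partial_r\big)^2 q = 5r^2p'' + r^3p''' + 3rp'$; rewriting $rp'$, $r^2p''$, $r^3p'''$ as linear combinations of $(r\partial_r)^j p$ with $j \leq 3$ and invoking the key estimates, this is $\leq C_\phi L^{-1}$, whence $\big(\partial_r^2 + \frac 1r\partial_r\big)^2 q \leq C_\phi L^{-1} r^{-2} \leq c\,r^{-2}$ once $L$ is chosen large enough that $C_\phi L^{-1} \leq c$ for the finitely many fixed constants $C_\phi$ arising above. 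The only genuine obstacle is the structural one isolated in the second paragraph — recognizing that \ref{enum:multip-petit-ym} rules out a rational profile and forces the transition from $\tfrac 12 r^2$ to a constant to span logarithmically many scales; once the profile $p = \phi(L^{-1}\log r)$ is in place, every estimate is routine.
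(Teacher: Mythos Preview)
Your proof is correct, and it takes a different route from the paper's. The paper also recognizes that the transition must span logarithmically many scales, but it implements this via an explicit $C^{3,1}$ profile: for $r \geq 1$ it sets
\[
q_0(r) = \tfrac 12 r^2 + c_1\Big(\tfrac 12 (r-1)^2 - \tfrac{r^2-1}{4}\log r\Big),
\]
with $c_1$ small, so that $q_0'(r)/r \approx 1 - \tfrac{c_1}{2}\log r$ decreases slowly and the two-dimensional bi-Laplacian is exactly $-c_1/r^3$; it then matches this to $\tfrac 12 r^2$ at $r=1$ and cuts it off at $R_0 = \eee^{2/c_1}$, checking $C^{3,1}$ regularity and all the estimates at each junction. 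Your construction is more direct: by working with $p = q'/r$ and taking $p(r) = \phi(L^{-1}\log r)$ you get a globally $C^\infty$ function with no matching to perform, and all six properties reduce to the single bound $|(r\partial_r)^j p| \lesssim L^{-1}$. The paper's explicit formula buys an exact expression for $\Delta^2 q_0$ (useful if one wanted sharp constants), while your approach is shorter and makes the structural point --- that \ref{enum:multip-petit-ym} is precisely a slow-variation condition on $p$ in the logarithmic variable --- completely transparent.
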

\begin{proof}
  It suffices to prove the result for $R = 1$ since the function $q_R(r) := R^2 q(\frac rR)$ satisfies
  the listed properties if and only if $q(r)$ does.
  
  First we define $q_0(r)$ by the formula
  \begin{equation}
    \label{eq:q0-ym}
    q_0(r) := \bigg\{
      \begin{aligned}
        &{\textstyle \frac 12 \cdot r^2}\qquad & r\leq 1 \\
        &{\textstyle \frac 12 r^2 + c_1\big(\frac 12 (r-1)^2 -\log(r)\frac{r^2-1}{4}\big)}\qquad & r \geq 1,
      \end{aligned}
    \end{equation}
    with $c_1$ small. A direct computation shows that for $r > 1$ we have
    \begin{equation}
      \label{eq:deriv-q0-ym}
      \begin{gathered}
        q_0'(r) = r\big(1-\frac{c_1\log r}{2}\big) + c_1\big(\frac 34 r - 1 +\frac{1}{4r}\big), \\
        q_0''(r) = \big(1-\frac{c_1 \log r}{2}\big) + c_1\big(\frac 14-\frac{1}{4r^2}\big), \\
        q_0'''(r) = -c_1\frac{r^2-1}{2r^3}, \\
        \Big(\frac{\vd^2}{\vd r^2} + \frac 1r \dd r\Big)^2 q_0(r) = -\frac{c_1}{r^3}.
      \end{gathered}
    \end{equation}
    In particular $q_0(1) = \frac 12$, $q_0'(1) = 1$, $q_0''(1) = 1$ and $q_0'''(1) = 0$, hence $q_0 \in C^{3, 1}$.
    
    Let $R_0 := \eee^{2/{c_1}}$. From \eqref{eq:deriv-q0-ym} it follows that $q_0(r)$ verifies all the listed properties except for \ref{enum:support-ym} for $r \leq R_0$.
    Let $e_j(r) := \frac{1}{j!}r^j\cdot \chi(r)$ for $j \in \{1, 2, 3\}$, where $\chi$ is a standard cut-off function. We define
  \begin{equation}
    \label{eq:q-ym}
    q(r) := \bigg\{
      \begin{aligned}
        &q_0(r)\qquad & r\leq R_0 \\
        &{\textstyle q_0(R_0) + \sum_{j=1}^{3}q_0^{(j)}(R_0)\cdot R_0^j\cdot e_j(-1 + R_0^{-1}r)}\qquad & r \geq R_0.
      \end{aligned}
    \end{equation}
    We will show that $q(r)$ has all the required properties if $c_1$ is small enough.
    Indeed, it is clear that $q(r) \in C^{3,1}((0, +\infty))$.
    It follows from \eqref{eq:deriv-q0-ym} that $|q_0^{(j)}(R_0)| \lesssim c_1 R_0^{2-j}$ for $j \in \{1, 2, 3\}$. For $r > R_0$ and $k \in \{1, 2, 3, 4\}$ we have
    \begin{equation}
      \label{eq:deriv-q-ym}
      q^{(k)}(r) = \sum_{j=1}^3 q_0^{(j)}(R_0)\cdot R_0^{j-k}e_j^{(k)}(-1 + R_0^{-1}r)\quad \Rightarrow\quad |q^{(k)}(r)| \lesssim c_1 R_0^{2-k}.
    \end{equation}
    Since $q(r) \equiv \tx{const}$ for $r \geq 3 R_0$, we obtain \ref{enum:approx-ym}--\ref{enum:multip-petit-ym}.
\end{proof}

We define the operators $A(\lambda)$ and $A_0(\lambda)$ as follows:
\begin{align}
  [A(\lambda)h](r) &:= q'\big(\frac{r}{\lambda}\big)\cdot h'(r), \label{eq:opA-ym} \\
  [A_0(\lambda)h](r) &:= \big(\frac{1}{2\lambda}q''\big(\frac{r}{\lambda}\big) + \frac{1}{2r}q'\big(\frac{r}{\lambda}\big)\big)h(r) + q'\big(\frac{r}{\lambda}\big)\cdot h'(r). \label{eq:opA0-ym}
\end{align}

\begin{lemma}
  \label{lem:op-A-ym}
  The operators $A(\lambda)$ and $A_0(\lambda)$ have the following properties:
  \begin{itemize}[leftmargin=0.5cm]
    \item the families $\{A(\lambda): \lambda > 0\}$, $\{A_0(\lambda): \lambda > 0\}$, $\{\lambda\partial_\lambda A(\lambda): \lambda > 0\}$
      and $\{\lambda\partial_\lambda A_0(\lambda): \lambda > 0\}$ are bounded in $\scrL(\cH; L^2)$, with the bound depending on the choice of the function $q(r)$,
    \item for all $\lambda > 0$ and $h_1, h_2 \in X^1$ there holds
      \begin{equation}
        \label{eq:A-by-parts-ym}
        \begin{gathered}
        \big|\big\la A(\lambda)h_1, \frac{1}{r^2}\big(f(h_1 + h_2) - f(h_1) - f'(h_1)h_2\big)\big\ra +\big\la A(\lambda)h_2, \frac{1}{r^2}\big(f(h_1+h_2) - f(h_1) +4 h_2\big)\big\ra\big| \\
        \leq \frac{c_0}{\lambda}\big((\|h_1\|_\cH^2+1)\|h_2\|_\cH^2 + \|h_2\|_\cH^4\big),
      \end{gathered}
      \end{equation}
      with a constant $c_0$ arbitrarily small,
    \item for all $h \in X^1$ there holds
      \begin{equation}
        \label{eq:A-pohozaev-ym}
        \big\la A_0(\lambda)h, \big(\partial_r^2 + \frac 1r\partial_r - \frac{4}{r^2}\big)h\big\ra \leq \frac{c_0}{\lambda}\|h\|_{\cH}^2 - \frac{2\pi}{\lambda}\int_0^{R\lambda}\big((\partial_r h)^2 + \frac{4}{r^2}h^2\big) \udr,
      \end{equation}
    \item assuming \eqref{eq:lambda-bound0}, for any $c_0 > 0$ there holds
    \begin{gather}
      \label{eq:L0-A0-ym}
      \|\Lambda_0 \Lambda W_\uln{\lambda(t)} - A_0(\lambda(t))\Lambda W_{\lambda(t)}\|_{L^2} \leq c_0, \\
      \label{eq:L-A-ym}
      \|\dot \varphi(t) + b(t)\cdot A(\lambda(t))\varphi(t)\|_{L^\infty} \leq c_0, \\
      \label{eq:approx-potential-ym}
      \begin{aligned}
        \Big|\int_0^{+\infty}\frac 12 \big(q''\big(\frac{r}{\lambda}\big) &+ \frac{\lambda}{r}q'\big(\frac{r}{\lambda}\big)\big)\frac{1}{r^2}\big(f(\varphi + g) - f(\varphi)+4g\big)g \udr \\
        &- \int_0^{+\infty} \frac{1}{r^2}\big(f'(W_\lambda)+4\big)g^2 \udr\Big| \leq c_0C_0^2 \eee^{-3\kappa|t|},
    \end{aligned}
    \end{gather}
    provided that the constant $R$ in the definition of $q(r)$ is chosen large enough.
  \end{itemize}
\end{lemma}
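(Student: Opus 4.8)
The plan is to transport the proof of Lemma~\ref{lem:op-A} to the two-dimensional equivariant setting. For the boundedness statement, the scale-change formulas $A(\lambda)(h_\lambda)=(Ah)_\uln\lambda$ and $A_0(\lambda)(h_\lambda)=(A_0 h)_\uln\lambda$ (immediate from \eqref{eq:opA-ym}--\eqref{eq:opA0-ym}) reduce all four bounds to $\lambda=1$, since $h\mapsto h_\lambda$ and $v\mapsto v_\uln\lambda$ are isometries of $\cH$ and of $L^2$ respectively. Then $Ah=q'(r)\partial_r h$, $A_0 h=\big(\tfrac12 q''(r)+\tfrac1{2r}q'(r)\big)h+q'(r)\partial_r h$, and the $\partial_\lambda$-versions bring in $q'''$, $q''''$; since all of $q',q'',q''',q''''$ are continuous with compact support and $|q'(r)|\lesssim r$ (so $\tfrac1r q'\in L^\infty$), the $\scrL(\cH;L^2)$-bounds follow from $\|q'\partial_r h\|_{L^2}\le\|q'\|_{L^\infty}\|h\|_\cH$, from the bound $\|\tfrac1r h\|_{L^2}\lesssim\|h\|_\cH$ built into the definition of $\|\cdot\|_\cH$, and from $\cH\hookrightarrow L^\infty$ applied to the localized terms $q''h$, $q'''h$.

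For the integration-by-parts identity \eqref{eq:A-by-parts-ym}, by density (both sides are continuous for the $X^1$-topology) I may assume $h_1,h_2\in C_0^\infty((0,+\infty))$ and, by rescaling, $\lambda=1$. Set $u:=h_1+h_2$, $R_1:=f(u)-f(h_1)+4h_2$, $R_2:=f(u)-f(h_1)-f'(h_1)h_2$, and let $F$ be the primitive of $f$ with $F(0)=0$, so $F(u)=-\tfrac12 u^2(u-2)^2$ and, $f$ being cubic with $f'''\equiv-12$, $F(u)-F(h_1)-f(h_1)h_2=\tfrac12 f'(h_1)h_2^2+\tfrac16 f''(h_1)h_2^3+\tfrac1{24}f'''h_2^4$ exactly. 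Using $\langle Ah,\tfrac1{r^2}w\rangle=2\pi\int_0^{+\infty}\tfrac{q'(r)}r h'w\,\ud r$ together with $v'f(v)=\partial_r F(v)$, $h_2'f(h_1)+h_1'f'(h_1)h_2=\partial_r(f(h_1)h_2)$ and $2h_2'h_2=\partial_r(h_2^2)$, one expands $\langle Ah_1,\tfrac{R_2}{r^2}\rangle+\langle Ah_2,\tfrac{R_1}{r^2}\rangle$, integrates by parts (no boundary terms, everything being supported in a compact subset of $(0,+\infty)$), and observes that the linear-in-$h_2$ contributions $f(h_1)h_2$ cancel, leaving
\[
\big\langle Ah_1,\tfrac{R_2}{r^2}\big\rangle+\big\langle Ah_2,\tfrac{R_1}{r^2}\big\rangle
=-2\pi\int_0^{+\infty}\Big(3h_1(2-h_1)h_2^2+\tfrac16 f''(h_1)h_2^3+\tfrac1{24}f'''h_2^4\Big)\,\partial_r\!\Big(\tfrac{q'(r)}{r}\Big)\ud r ,
\]
where I used $\tfrac12 f'(h_1)+2=3\big(1-(h_1-1)^2\big)=3h_1(2-h_1)$; the crucial point is that this coefficient vanishes at the two equilibria $h_1\in\{0,2\}$ — precisely what the artificial linear term $+4h_2$ in $R_1$ is designed to produce. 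Property \ref{enum:multip-petit-ym} gives $|r\partial_r(q'(r)/r)|=|q''(r)-q'(r)/r|\le c$, hence $|\partial_r(q'(r)/r)|\le c/r$ with $c$ as small as we wish; bounding $|h_1(2-h_1)|\lesssim|h_1|+h_1^2$, $|f''(h_1)|\lesssim1+|h_1|$, $|f'''|=12$, and using $\|h_i\|_{L^\infty}\lesssim\|h_i\|_\cH$ and $\int_0^{+\infty}\tfrac{h_2^2}r\,\ud r\lesssim\|h_2\|_\cH^2$, every integral on the right is $\lesssim c\big((\|h_1\|_\cH^2+1)\|h_2\|_\cH^2+\|h_2\|_\cH^4\big)$; undoing the rescaling restores the factor $1/\lambda$. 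I expect this to be the main obstacle, though in the end it is just organized bookkeeping.

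For \eqref{eq:A-pohozaev-ym} I would use the isometric embedding of $\cH$ into the $2$-equivariant fields of $\dot H^1(\bR^2;\bR^2)$ recalled at the start of this section: with $\wt h(\eee^{\iii\theta}r):=\eee^{2\iii\theta}h(r)$ one has $|\grad\wt h|^2=(\partial_r h)^2+\tfrac4{r^2}h^2$, the operator $\partial_r^2+\tfrac1r\partial_r-\tfrac4{r^2}$ lifts to $\Delta$ on $\bR^2$, and $A_0(\lambda)$ corresponds to $\wt h\mapsto\tfrac1{2\lambda}(\Delta q)(\tfrac\cdot\lambda)\,\wt h+(\grad q)(\tfrac\cdot\lambda)\cdot\grad\wt h$. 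The two-dimensional instance of \eqref{eq:aux-pohozaev} then expresses $\langle A_0(\lambda)h,(\partial_r^2+\tfrac1r\partial_r-\tfrac4{r^2})h\rangle$ as $-\tfrac1{4\lambda}\int(\Delta^2 q)(\tfrac x\lambda)\wt h^2\ud x-\tfrac1\lambda\int\sum_{i,j}\partial_{ij}q(\tfrac x\lambda)\,\partial_i\wt h\,\partial_j\wt h\,\ud x$, and the properties \ref{enum:approx-ym} (so $\partial_{ij}q=\delta_{ij}$ on $|x|\le R\lambda$, producing the term $-\tfrac{2\pi}\lambda\int_0^{R\lambda}((\partial_r h)^2+\tfrac4{r^2}h^2)\udr$), \ref{enum:convex-ym}, \ref{enum:bilapl-ym}, together with $\int\tfrac{\wt h^2}{|x|^2}\ud x\lesssim\|h\|_\cH^2$ and $\lambda\lesssim1$ (from \eqref{eq:lambda-bound0}), give \eqref{eq:A-pohozaev-ym} exactly as \eqref{eq:A-pohozaev} was obtained.

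Finally, \eqref{eq:L0-A0-ym}, \eqref{eq:L-A-ym} and \eqref{eq:approx-potential-ym} are proved verbatim as \eqref{eq:L0-A0}, \eqref{eq:L-A} and \eqref{eq:approx-potential}. As $q(r)=\tfrac12 r^2$ for $r\le R$, one has $A_0 h=\Lambda_0 h$ and $Ah=\Lambda h$ on $r\le R$, so $\Lambda_0\Lambda W-A_0\Lambda W$ and $\Lambda W-AW$ are supported in $r\ge R$, where $\Lambda W$, $\Lambda_0\Lambda W$ decay like $r^{-2}$ and $|q'|\lesssim r$, $|q''|\lesssim1$ with constants independent of $R$ (by \ref{enum:gradlap-ym}); hence, after rescaling, $\|\Lambda_0\Lambda W_\uln\lambda-A_0(\lambda)\Lambda W_\lambda\|_{L^2}=O(R^{-1})$ and $\|b(AW-\Lambda W)_\uln\lambda\|_{L^\infty}=O(R^{-2})$, while the contributions of $-W_\mu$ and of $S=\chi b^2 Q_\lambda$ to $bA(\lambda)\varphi$ are supported in $r\lesssim\lambda$ and are $O(\lambda)$ in $L^\infty$, hence negligible for $|T_0|$ large. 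For \eqref{eq:approx-potential-ym} one replaces $f(\varphi+g)-f(\varphi)+4g$ by $(f'(W_\lambda)+4)g$ up to an admissible error (the analogue of \eqref{eq:f-phi-g}), drops the resulting $W_\mu$-term since $\grad q$ has compact support, and replaces $\tfrac12\big(q''(\tfrac r\lambda)+\tfrac\lambda r q'(\tfrac r\lambda)\big)$ by $1$ on the support of $f'(W_\lambda)$, the error tending to $0$ as $R\to+\infty$.
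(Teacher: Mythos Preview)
Your proof is correct and follows the paper's approach for the boundedness, the integration-by-parts estimate \eqref{eq:A-by-parts-ym}, and the last three bounds \eqref{eq:L0-A0-ym}--\eqref{eq:approx-potential-ym}. For \eqref{eq:A-pohozaev-ym} you take a genuinely different (and slightly cleaner) route: you lift $h$ to the $2$-equivariant field $\wt h$ on $\bR^2$ so that the full operator $\partial_r^2+\tfrac1r\partial_r-\tfrac{4}{r^2}$ becomes $\Delta$, and then apply the identity \eqref{eq:aux-pohozaev} once; the Hessian term $\sum_{i,j}\partial_{ij}q\,\partial_i\wt h\,\partial_j\wt h$ then automatically produces both $q''(\partial_r h)^2$ and $\tfrac{q'}{r}\cdot\tfrac{4}{r^2}h^2$, and \ref{enum:convex-ym} controls each eigenvalue of the radial Hessian. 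The paper instead applies \eqref{eq:aux-pohozaev} to the scalar radial $h$ (so only $\partial_r^2+\tfrac1r\partial_r$ is identified with $\Delta$) and treats the $-\tfrac{4}{r^2}$ piece separately, splitting $A_0=A+\text{multiplier}$, using \ref{enum:convex-ym} for the multiplier part and \ref{enum:multip-petit-ym} for the $A$-part (this is the intermediate estimate \eqref{eq:A-lin-h-ym}). Your approach avoids that second use of \ref{enum:multip-petit-ym}; the paper's has the minor advantage of staying entirely within the one-dimensional radial framework. One small remark: the appeal to $\lambda\lesssim1$ via \eqref{eq:lambda-bound0} in your Pohozaev argument is unnecessary, since the estimate is scale-invariant and you may set $\lambda=1$ from the start.
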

\begin{remark}
  The condition $\partial_r h_1, \frac 1r h_1, \partial_r h_2, \frac 1r h_2 \in \cH$ is required only to ensure that the left hand side of \eqref{eq:A-by-parts} is well defined,
  but it does not appear on the right hand side of the estimate. Note also that in \eqref{eq:A-by-parts-ym}, \eqref{eq:A-pohozaev-ym} and \eqref{eq:approx-potential-ym}
  we extract the linear part of $f$, see Remark~\ref{rem:hidden-lin}.
\end{remark}
\begin{proof}
  The proof of the first point is the same as in Lemma~\ref{lem:op-A}.

  In \eqref{eq:A-by-parts-ym}, without loss of generality we may assume that $h_1, h_2 \in C_0^\infty((0,+\infty))$ and that $\lambda = 1$.
  From the definition of $A(\lambda)$ we have
  \begin{equation}
    \begin{gathered}
    \big\la A(\lambda)h_1, \frac{1}{r^2}\big(f(h_1 + h_2) - f(h_1) - f'(h_1)h_2\big)\big\ra +\big\la A(\lambda)h_2, \frac{1}{r^2}\big(f(h_1+h_2) - f(h_1) + 4h_2\big)\big\ra \\
    = \int_0^{+\infty}q'h_1'\cdot \frac{1}{r^2}\big(f(h_1 + h_2) - f(h_1) - f'(h_1)\big) + q'h_2'\cdot \frac{1}{r^2}\big(f(h_1+h_2)-f(h_1)+4h_2\big)\udr \\
    = \int_0^{+\infty}\frac{q'}{r}\big(\dd r F(h_1 + h_2) - \dd r F(h_1) - h_2\cdot \dd r f(h_1) - f(h_1)\cdot \dd r h_2 + 2\dd r(h_2^2)\big)\ud r \\
    = \int_0^{+\infty} r\big(\frac{q'}{r}\big)'\cdot\frac{1}{r^2}\big(F(h_1 + h_2) - F(h_1) - f(h_1)\cdot h_2 + 2h_2^2\big)\udr.
  \end{gathered}
  \end{equation}
  Using \ref{enum:multip-petit-ym} in Lemma~\ref{lem:fun-a-ym} and the elementary inequality $|F(h_1 + h_2) - F(h_1) - f(h_1)h_2| \lesssim |h_1|^2|h_2|^2 + |h_1|^4$
  we get \eqref{eq:A-by-parts-ym}.
  
  Note that as a part of this computation, we obtain
  \begin{equation}
    \label{eq:A-lin-h-ym}
    \big|\big\la A(\lambda)h, \frac{1}{r^2}h\big\ra\big| \leq \frac{c_0}{\lambda}\int_0^{+\infty}\frac{1}{r^2}h^2\udr.
  \end{equation}
  For $r\leq R\lambda$ there holds $\frac{1}{2\lambda}q''\big(\frac{r}{\lambda}\big) + \frac{1}{2r}q'\big(\frac{r}{\lambda}\big) = \frac{1}{\lambda}$
  and for all $r$, thanks to \ref{enum:convex-ym}, there holds $\frac{1}{2\lambda}q''\big(\frac{r}{\lambda}\big) + \frac{1}{2r}q'\big(\frac{r}{\lambda}\big) \geq -\frac{c_0}{\lambda}$, hence
  \begin{equation}
    \label{eq:lapq-lin-h-ym}
    \big\la \big(\frac{1}{2\lambda}q''\big(\frac{r}{\lambda}\big) + \frac{1}{2r}q'\big(\frac{r}{\lambda}\big)\big)h, \frac{1}{r^2}h\big\ra \geq \frac{2\pi}{\lambda}\int_0^{R\lambda}\frac{1}{r^2}h^2\udr - \frac{c_0}{\lambda}\int_0^{+\infty}\frac{1}{r^2}h^2\udr.
  \end{equation}
  Taking the sum of \eqref{eq:A-lin-h-ym} and \eqref{eq:lapq-lin-h-ym} we obtain
  \begin{equation}
    \label{eq:A0-lin-h-ym}
    \big\la A_0(\lambda)h, \frac{1}{r^2}h\big\ra \geq -\frac{c_0}{\lambda}\int_0^{+\infty}\frac{1}{r^2}h^2\udr+ \frac{2\pi}{\lambda}\int_0^{R\lambda}\frac{1}{r^2}h^2\udr
  \end{equation}
  ($c_0$ has changed, but is still small).

  Using identity \eqref{eq:aux-pohozaev} with $N = 2$ we obtain, cf. the proof of \eqref{eq:A-pohozaev},
      \begin{equation}
        \label{eq:aux-pohozaev-ym}
        \big\la A_0(\lambda)h, \big(\partial_r^2 + \frac 1r\partial_r\big)h\big\ra \leq \frac{c_0}{\lambda}\int_0^{+\infty}(\partial_r h)^2 \udr - \frac{2\pi}{\lambda}\int_{r\leq R\lambda}(\partial_r h)^2 \udr.
      \end{equation}
      Taking the difference of \eqref{eq:aux-pohozaev-ym} and \eqref{eq:A0-lin-h-ym} we obtain \eqref{eq:A-pohozaev}.

  The proofs of \eqref{eq:L0-A0-ym} and \eqref{eq:L-A-ym} are similar to the proofs of \eqref{eq:L0-A0} and \eqref{eq:L-A} respectively.
  Instead of \eqref{eq:fidot-2} we prove that $\|bA(\lambda)W_\mu\|_{L^\infty} \leq \frac{c_0}{3}$, which follows from \ref{enum:support-ym} and \ref{enum:gradlap-ym}.
  We skip the details.

  The proof of \eqref{eq:approx-potential-ym} is close to the proof of \eqref{eq:approx-potential}. Note that it is crucial that $f'(W_\lambda) + 4$ vanishes at infinity.
\end{proof}
For $t \in [T, T_0]$ we define:
\begin{itemize}
  \item the \emph{nonlinear energy functional}
    $$
    \begin{aligned}
      I(t) &:= \int \frac 12 |\dot g(t)|^2 + \frac 12 |\grad g(t)|^2 - \frac{1}{r^2}\big(F(\varphi(t) + g(t)) - F(\varphi(t)) - f(\varphi(t))g(t)\big)\ud x \\
      &= E(\bs \varphi(t) + \bs g(t)) - E(\bs \varphi(t)) - \la \vD E(\bs \varphi(t), \bs g(t))\ra,
    \end{aligned}
    $$
  \item the \emph{localized virial functional} $$J(t) := \int \dot g(t)\cdot A_0(\lambda(t))g(t)\ud x,$$
  \item the \emph{mixed energy-virial functional} $$H(t) := I(t) + b(t)J(t).$$
\end{itemize}
\subsection{Energy estimates via the mixed energy-virial functional}
  \label{ssec:energy-ym}
  The remaining part of the proof is almost identical to Subsection~\ref{ssec:energy}. We will indicate the few differences.

  Instead of \eqref{eq:dtI}, we obtain now
  \begin{equation}
    \label{eq:dtI-ym}
    I'(t) \simeq -b(\lambda'-b)\cdot\la A_0(\lambda) \Lambda W_{\uln\lambda}, \dot g\ra - b\cdot\big\la A(\lambda)g, \frac{1}{r^2}\big(f(\varphi + g) - f(\varphi) + 4g\big)\big\ra.
  \end{equation}
  As in the proof of Lemma~\ref{lem:bootstrap}, we have
  \begin{equation}
    \label{eq:dtJ-ym}
    \begin{aligned}
    &(bJ)'(t) \simeq b\int\dot g\cdot A_0(\lambda)(\dot g - \psi)\udr + b\int\big(\big(\partial_r^2 + \frac 1r\partial_r\big)g \\
&+ \frac{1}{r^2}\big(f(\varphi+g) - f(\varphi)\big) - \dot \psi\big)\cdot A_0(\lambda)g \udr \\
    &= b\int\dot g\cdot A_0(\lambda)(\dot g - \psi)\udr + b\int\big(\big(\partial_r^2 +\frac 1r\partial_r - \frac{4}{r^2}\big)g \\
&+ \frac{1}{r^2}\big(f(\varphi + g) - f(\varphi) + 4g\big) - \dot \psi\big)\cdot A_0(\lambda)g \udr,
  \end{aligned}
  \end{equation}
  where we recognize the terms appearing in \eqref{eq:A-pohozaev-ym} and \eqref{eq:approx-potential-ym}.
  The rest of the proof applies without change. Theorem~\ref{thm:deux-bulles-ym} follows from the argument given in Subsection~\ref{ssec:limit}.

\section{Bubble-antibubble for the equivariant critical wave map equation}
\label{sec:wmap}

\subsection{Notation}
\label{ssec:setting-wmap}

We use similar notation as in Section~\ref{sec:ym}, with a slight modification in the definition of the norm $\cH$:
\begin{equation}
  \label{eq:H-norm-wmap}
  \|v\|_{\cH}^2 := 2\pi\int_0^{+\infty}\big(|\partial_r v(r)|^2 + |\frac kr v(r)|^2 \big)\udr.
\end{equation}
The transformation $\wt v(\eee^{i\theta}r) := \eee^{ki\theta}v(r)$ is an isometric embedding of $\cH$ in $\dot H^1(\bR^2; \bR^2)$,
whose image is given by $k$-equivariant functions in $\dot H^1(\bR^2; \bR^2)$.

\subsection{Linearized equation and formal computation}
\label{ssec:linearise-wmap}
Linearizing $-\partial_r^2 u - \frac 1r\partial_r u + \frac{k^2}{2r^2}\sin(2u)$ around $u = W$ we obtain the operator
\begin{equation}
  L := -\partial_r^2 - \frac 1r \partial_r + \frac{k^2}{r^2}\cos(4\arctan(r^k)) = -\partial_r^2 - \frac 1r \partial_r + \frac{k^2}{r^2}\Big(1-\frac{8}{(r^k+r^{-k})^2}\Big).
\end{equation}
It has a one-dimensional kernel spanned by $\Lambda W$. We fix $\cZ \in C_0^\infty((0, +\infty))$ such that
\begin{equation}
  \label{eq:Z-pos-wmap}
\int_0^{+\infty} \cZ(r)\cdot \Lambda W(r)\frac{\vd r}{r} > 0.
\end{equation}

\begin{lemma}
  \label{lem:fredholm-wmap}
  For all $V(r) \in C^\infty((0, +\infty))$ such that $\int_0^{+\infty} \Lambda W(r)\cdot V(r) \udr = 0$, $|V(r)| \lesssim r^k$ for small $r$ and $|V(r)| \lesssim r^{-k}$ for large $r$,
  then there exists a function $U(r) \in C^\infty((0, +\infty))$ such that
  \begin{gather}
    LU = V, \label{eq:fredholm-eq-wmap} \\ 
    |U(r)| \lesssim r^k,\quad |\partial_r U(r)| \lesssim r^{k-1},\quad |\partial_r^2 U(r)| \lesssim r^{k-2}\qquad\text{for $r$ small}, \label{eq:fredholm-rpetit-wmap} \\
    |U(r)| \lesssim r^{-k},\quad |\partial_r U(r)| \lesssim r^{-k-1},\quad |\partial_r^2 U(r)| \lesssim r^{-k-2}\qquad\text{for $r$ large}, \label{eq:fredholm-rgrand-wmap} \\
    \int \cZ(r)\cdot U(r) \frac{\vd r}{r} = 0. \label{eq:fredholm-orth-wmap}
  \end{gather}
\end{lemma}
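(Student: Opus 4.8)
The plan is to treat $L$ as a one-dimensional Sturm--Liouville operator and to exploit the fact that its zero mode is known explicitly: $L(\Lambda W)=0$ with $\Lambda W(r)=\frac{2k}{r^k+r^{-k}}>0$, which vanishes like $r^k$ as $r\to0$ and decays like $r^{-k}$ as $r\to+\infty$. First I would rewrite $L$ in divergence form, $Lu=-\frac1r(ru')'+\frac{p(r)}{r^2}u$ with $p(r):=k^2\big(1-\frac{8}{(r^k+r^{-k})^2}\big)$, so that $L$ is symmetric for the measure $r\,\vd r$ appearing in $\la\cdot,\cdot\ra$. A second, linearly independent homogeneous solution is produced by reduction of order, $\Theta(r):=\Lambda W(r)\int_1^r\frac{\vd s}{s\,\Lambda W(s)^2}$; reading off the asymptotics of $\Lambda W$ gives $\Theta(r)\sim c_0 r^{-k}$ as $r\to0$ and $\Theta(r)\sim c_\infty r^{k}$ as $r\to+\infty$, i.e.\ $\Theta$ is singular at the origin and grows at infinity. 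Thus the solution set of $LU=V$ is one affine copy of $\operatorname{span}\{\Lambda W,\Theta\}$, and the content of the lemma is that the orthogonality hypothesis $\int_0^{+\infty}\Lambda W\,V\,\udr=0$ lets us select the representative with no $\Theta$-component.

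Concretely, from $L(\Lambda W\cdot v)=-\frac{1}{r\,\Lambda W}(r\,\Lambda W^2\,v')'$ the equation $LU=V$ with $U=\Lambda W\cdot v$ reduces to $(r\,\Lambda W^2\,v')'=-r\,\Lambda W\,V$, which I integrate twice, choosing the constants dictated by the required behaviour:
\begin{equation*}
  U(r):=\Lambda W(r)\int_0^r\frac{1}{s\,\Lambda W(s)^2}\Big(\int_s^{+\infty}\sigma\,\Lambda W(\sigma)\,V(\sigma)\,\vd\sigma\Big)\vd s.
\end{equation*}
The inner integral converges at $+\infty$ because $|\sigma\Lambda W V|\lesssim\sigma^{1-2k}$ there (using $|V|\lesssim r^{-k}$), and --- this is the crucial use of the hypothesis --- the orthogonality relation gives $\int_s^{+\infty}\sigma\Lambda W V\,\vd\sigma=-\int_0^s\sigma\Lambda W V\,\vd\sigma=O(s^{2k+2})$ as $s\to0$ (using $|V|\lesssim r^k$), which is exactly what makes the outer integral converge at the origin and, equivalently, what kills the singular mode $\Theta$. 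One then has $LU=V$ by construction, and substituting the two-sided asymptotics of $\Lambda W$ into the iterated integral yields the pointwise bounds on $U$, $\partial_r U$, $\partial_r^2 U$ near $0$ and near $+\infty$ recorded in \eqref{eq:fredholm-rpetit-wmap}--\eqref{eq:fredholm-rgrand-wmap}; near $+\infty$ the decay of $U$ is governed by the size of $\int_s^{+\infty}\sigma\Lambda W V\,\vd\sigma$ weighed against the growth of $\frac{1}{s\Lambda W^2}$, and the orthogonality condition is again precisely what prevents $U$ from inheriting the growing solution $\Theta$.

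It remains to arrange \eqref{eq:fredholm-orth-wmap}. Since the coefficients of $L$ are smooth on $(0,+\infty)$, $U\in C^\infty((0,+\infty))$ by standard ODE regularity. As $L(\Lambda W)=0$, the function $\Lambda W$ itself satisfies all the bounds \eqref{eq:fredholm-rpetit-wmap}--\eqref{eq:fredholm-rgrand-wmap}, and $\int_0^{+\infty}\cZ\cdot\Lambda W\,\frac{\vd r}{r}>0$ by \eqref{eq:Z-pos-wmap}; hence replacing $U$ by $U-\big(\int\cZ\Lambda W\frac{\vd r}{r}\big)^{-1}\big(\int\cZ U\frac{\vd r}{r}\big)\Lambda W$ enforces $\int\cZ\cdot U\,\frac{\vd r}{r}=0$ without disturbing $LU=V$ or the decay. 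The main obstacle in carrying this out rigorously is the endpoint bookkeeping: convergence of the iterated integral and the extraction of the precise powers of $r$ require using all three assumptions on $V$ at once, and one must check that the orthogonality condition --- the genuine Fredholm obstruction --- is the only thing keeping the regular-at-$0$ solution from blowing up (relative to $\Lambda W$) at infinity.
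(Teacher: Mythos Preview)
Your proposal is correct and essentially identical to the paper's proof. The paper phrases the same computation via the factorization $L=\big(-\partial_r-\tfrac1r-\tfrac{\Lambda W'}{\Lambda W}\big)\big(\partial_r-\tfrac{\Lambda W'}{\Lambda W}\big)$ and inverts the two first-order factors by variation of constants, defining first $U_1(r)=\frac{1}{r\Lambda W}\int_0^r V\Lambda W\,\rho\,\vd\rho$ and then $U(r)=\Lambda W(r)\int_0^r \frac{U_1}{\Lambda W}\,\vd\rho$; your substitution $U=\Lambda W\cdot v$ with $(r\Lambda W^2 v')'=-r\Lambda W V$ produces exactly the same iterated integral (after using the orthogonality relation to rewrite $\int_s^{+\infty}=-\int_0^s$), and both proofs then adjust by a multiple of $\Lambda W$ to secure \eqref{eq:fredholm-orth-wmap}.

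One small expository remark: your final sentence suggests the orthogonality condition is invoked a second time at $+\infty$, but in fact it is used only once---to make the outer integral converge at $0$ (equivalently, to kill the $\Theta$-component, which is singular there). The decaying behaviour at infinity then comes for free from the choice of integration constants; had orthogonality failed, the regular-at-$0$ solution would necessarily carry a nonzero $\Theta$-component and hence grow like $r^k$ at infinity, which is presumably what you meant. Also note that this construction (yours and the paper's) actually yields the slightly stronger bound $|U(r)|\lesssim r^{-k+2}$ for large $r$, which is more than enough since $k\geq 3$.
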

\begin{proof}
  It is easy to check that the operator $L$ factorizes as follows:
  \begin{equation}
    \label{eq:factor-L-wmap}
    L = -\partial_r^2 - \frac 1r \partial_r + \frac{k^2}{r^2}\Big(1-\frac{8}{(r^k+r^{-k})^2}\Big) = \Big({-}\partial_r - \frac 1r -\frac{\Lambda W'(r)}{\Lambda W(r)}\Big)
    \Big(\partial_r -\frac{\Lambda W'(r)}{\Lambda W(r)}\Big),
  \end{equation}
  hence we can invert it explicitely using twice the variation of constants formula.
  Define $U_1 \in C^\infty((0, +\infty))$ by
  \begin{equation}
    U_1(r) := \frac{1}{r\Lambda W(r)}\int_0^r V(\rho)\Lambda W(\rho)\rho\ud \rho.
  \end{equation}
  It solves the equation $\Big(-\partial_r - \frac 1r -\frac{\Lambda W'(r)}{\Lambda W(r)}\Big)U_1(r) = V(r)$.
  Since $|V(r)| \lesssim r^k$ and $\Lambda W(r) \sim r^k$ for small $r$, we have
  \begin{equation}
    \label{eq:U1-rpetit}
    |U_1(r)| \lesssim r^{-1-k+k+k+1+1} = r^{k+1},\qquad \text{small }r.
  \end{equation}
  From the crucial assumption $\int_0^{+\infty}V(\rho)\Lambda W(\rho)\rho\ud \rho = 0$ we get
  \begin{equation}
    \label{eq:U1-rgrand}
    |U_1(r)| = \Big|\frac{1}{r\Lambda W(r)}\int_r^{+\infty}V(\rho)\Lambda W(\rho)\rho\ud \rho\Big| \lesssim r^{-k+1},\qquad \text{large }r.
  \end{equation}
  From the differential equation we get also $|\partial_r U_1(r)| \lesssim r^k$ for small $r$ and $|\partial_r U_1(r)| \lesssim r^{-k}$ for large $r$.
  Now we define $U \in C^\infty((0, +\infty))$ by the formula
  \begin{equation}
    U(r) := \Lambda W(r)\int_0^r \frac{U_1(\rho)}{\Lambda W(\rho)}\ud \rho.
  \end{equation}
  It solves $\Big(\partial_r -\frac{\Lambda W'(r)}{\Lambda W(r)}\Big)U(r) = U_1(r)$, hence \eqref{eq:factor-L-wmap} yields \eqref{eq:fredholm-eq-wmap}.
  Using \eqref{eq:U1-rpetit} and \eqref{eq:U1-rgrand}, one can check that $|U(r)| \lesssim r^{k+2}$ for small $r$ and $|U(r)| \lesssim r^{-k+2}$ for large $r$.
  The differential equations yield $|\partial_r U(r)| \lesssim r^{k+1}$ and $\partial_r^2 U(r)| \lesssim r^k$ for small $r$, as well as
  $|\partial_r U(r)| \lesssim r^{-k+1}$ and $|\partial_r^2 U(r)| \lesssim r^{-k}$ for large $r$.
  Adding to $U$ a suitable multiple of $\Lambda W$ we obtain \eqref{eq:fredholm-orth-wmap}. Since $|\Lambda W(r)| \lesssim r^k$,
  $|\partial_r \Lambda W(r)| \lesssim r^{k-1}$, $|\partial_r^2 \Lambda W(r)| \lesssim r^{k-2}$ for small $r$ and
  $|\Lambda W(r)| \lesssim r^{-k}$, $|\partial_r \Lambda W(r)| \lesssim r^{-k-1}$, $|\partial_r^2 \Lambda W(r)| \lesssim r^{-k-2}$ for large $r$,
  \eqref{eq:fredholm-rpetit-wmap} and \eqref{eq:fredholm-rgrand-wmap} still hold.
\end{proof}

We study solutions behaving like $\bs u(t) \simeq -\bs W + \bs W_{\lambda(t)}$ with $\lambda(t) \to 0$ as $t \to -\infty$. We expand
\begin{equation}
  \bs u(t) = -\bs W + \bs U_{\lambda(t)}^{(0)} + b(t)\cdot \bs U^{(1)}_{\lambda(t)} + b(t)^2 \cdot\bs U^{(2)}_{\lambda(t)},
\end{equation}
with $b(t)  = \lambda'(t)$, $\bs U^{(0)} := (W, 0)$ and $\bs U^{(1)} := (0, -\Lambda W)$.
As in Subsection~\ref{ssec:linearise-ym}, in the region $r \leq \sqrt\lambda$ we arrive at
\begin{equation}
  \partial_r^2 u + \frac 1r \partial_r u - \frac{k^2}{2r^2}\sin(2u) = -\frac{b^2}{\lambda}(LU^{(2)})_\uln\lambda - \frac{1}{r^2}\cdot\frac{8k^2}{((r/\lambda)^k + (r/\lambda)^{-k})^2}W + \text{lot}.
\end{equation}
Using the fact that $W(r) \sim 2r^k$ for small $r$ we obtain
\begin{equation}
  \partial_r^2 u + \frac 1r \partial_r u - \frac{k^2}{2r^2}\sin(2u) = -\frac{b^2}{\lambda}(LU^{(2)})_\uln\lambda -\frac{16k^2r^{k-2}}{((r/\lambda)^k + (r/\lambda)^{-k})^2}+ \text{lot},
\end{equation}
thus, after rescaling,
\begin{equation}
  LU^{(2)} = -\Lambda_0 \Lambda W + \frac{\lambda}{b^2}\big(b'\Lambda W - \lambda^{k-1}\cdot\frac{16k^2r^{k-2}}{(r^k+r^{-k})^2}\big).
\end{equation}
It is not difficult to check (using for example the residue theorem) that
\begin{equation}
  \int_0^{+\infty} \Lambda W(r)\cdot\frac{16k^2r^{k-2}}{(r^k+r^{-k})^2}\udr =\frac{4k^2}{\pi}\sin\big(\frac{\pi}{k}\big)\cdot\int \Lambda W(r)^2 \udr,
\end{equation}
hence the correct choice (that is, such that Lemma~\ref{lem:fredholm-wmap} allows to invert $L$) of the formal parameter equations is
\begin{equation}
  \label{eq:formal-param-wmap}
  \lambda'(t) = b(t), \qquad b'(t) =  \frac{4k^2}{\pi}\sin\big(\frac{\pi}{k}\big)\lambda(t)^{k-1}= \frac k2\Big(\frac{2\kappa}{k-2}\Big)^k \lambda(t)^{k-1},
\end{equation}
where $\kappa := \frac{k-2}{2}\cdot\big(\frac{8k}{\pi}\sin\big(\frac{\pi}{k}\big)\big)^\frac 1k$.
This system has a solution
\begin{equation}
  \label{eq:param-sol-wmap}
  (\lambda_\tx{app}(t), b_\tx{app}(t)) = \big(\frac{k-2}{2\kappa}(\kappa|t|)^{-\frac{2}{k-2}}, (\kappa|t|)^{-\frac{k}{k-2}}\big),\qquad t \leq T_0 < 0.
\end{equation}

\subsection{Bounds on the error of the ansatz}
\label{ssec:error-wmap}
Let $I = [T, T_0]$ be the time interval, $T \leq T_0 < 0$, $|T_0|$ large. Let $\lambda(t)$ and $\mu(t)$ be $C^1$ functions on $[T, T_0]$ such that
\begin{gather}
  \lambda(T) = \lambda_0,\quad \mu(T) = 1, \label{eq:lambda-init0-wmap} \\
  \frac{k-2}{4\kappa}(\kappa|t|)^{-\frac{2}{k-2}}\leq \lambda(t) \leq \frac{k-2}{\kappa}(\kappa|t|)^{-\frac{2}{k-2}},\qquad \frac 12 \leq \mu(t)\leq 2, \label{eq:lambda-bound0-wmap}
\end{gather}
with $\lambda_0$ satisfying $\big|\lambda_0 - \frac{k-2}{\kappa} (\kappa|T|)^{-\frac{2}{k-2}}\big| \lesssim |T|^{-\frac{5}{2(k-2)}}$.
For technical reasons we cannot fix $\lambda_0 = \frac{k-2}{\kappa} (\kappa|T|)^{-\frac{2}{k-2}}$ as we did in the preceding sections. The value of $\lambda_0$ will be chosen later.

Let $P(r)$ and $Q(r)$ by the functions obtained in Lemma~\ref{lem:fredholm-wmap} for $V(r) = \frac k2\big(\frac{2\kappa}{k-2}\big)^k \Lambda W(r) - \frac{16k^2r^{k-2}}{(r^k+r^{-k})^2}$
and $V(r) = -\Lambda_0 \Lambda W(r)$ respectively.
We define the approximate solution by the formula
\begin{equation}
  \label{eq:phi-def-wmap}
  \begin{aligned}
    \varphi(t) &:= -W_{\mu(t)} + W_{\lambda(t)} + S(t), \\
    \dot \varphi(t) &:= -b(t)\Lambda W_\uln{\lambda(t)}.
\end{aligned}
\end{equation}
where
\begin{align}
  \label{eq:b-def-wmap}
  b(t) &:= \Big(\frac{2\kappa}{k-2}\Big)^\frac k2\lambda_0^\frac k2 + \frac k2\Big(\frac{2\kappa}{k-2}\Big)^k \int_T^t \frac{\lambda(\tau)^{k-1}}{\mu(\tau)^k}\ud \tau,\qquad&\text{for }t \in [T, T_0], \\
  S(t) &:= \frac{\lambda(t)^k}{\mu(t)^k} P_{\lambda(t)} + b(t)^2 Q_{\lambda(t)},\qquad&\text{for }t\in [T, T_0]. \label{eq:S-def-wmap}
\end{align}
The definition of $b(t)$ and \eqref{eq:lambda-bound0-wmap} yield
\begin{equation}
  b(t) \sim |t|^{-\frac{k}{k-2}},
\end{equation}
with a constant depending only on $k$.

Since $P, Q \in \cH$, there holds
\begin{equation}
  \label{eq:taille-correct-wmap}
  \|S(t)\|_\cH \lesssim |t|^{-\frac{2k}{k-2}}.
\end{equation}
Note also that
\begin{equation}
  \label{eq:S-orth-wmap}
  \int \cZ_\uln{\lambda}\cdot S(t)\frac{\vd r}{r} = 0.
\end{equation}

We denote $f(u) := -\frac{k^2}{2} \sin(2u)$ (hence $f'(u) = -k^2 \cos(2u)$) and
\begin{equation}
  \label{eq:psi-wmap}
  \begin{aligned}
  \bs \psi(t) &= (\psi(t), \dot \psi(t)) := \partial_t  \bs \varphi(t) - \vD E(\bs \varphi(t))  \\
  &= \big(\partial_t \varphi(t) - \dot \varphi(t), \partial_t \dot \varphi(t) - (\partial_r^2 \varphi(t) +\frac 1r\partial_r \varphi(t)+ \frac{1}{r^2}f(\varphi(t)))\big).
\end{aligned}
\end{equation}
We point out that usually, in the context of equivariant wave maps, $f(u)$ has the opposite sign. We~chose the sign which is more coherent with the traditional notation for \eqref{eq:nlw0}.
\begin{lemma}
  \label{lem:psi-wmap}
  Suppose that for $t \in [T, T_0]$ there holds $|\lambda'(t)| \lesssim |t|^{-\frac{k}{k-2}}$ and $|\mu'(t)| \lesssim |t|^{-\frac{k}{k-2}}$. Then
  \begin{align}
    \|\psi(t) - \mu'(t)\frac{1}{\mu(t)}\Lambda W_{\mu(t)}+ (\lambda'(t) - b(t))\frac{1}{\lambda(t)}\Lambda W_{\lambda(t)} \|_{\cH} &\lesssim |t|^{-\frac{2k-1}{k-2}}, \label{eq:psi0-wmap} \\
    \|\dot \psi(t) - \frac{b(t)}{\lambda(t)}(\lambda'(t) - b(t))\Lambda_0 \Lambda W_\uln{\lambda(t)}\|_{L^2} &\lesssim |t|^{-\frac{2k-1}{k-2}}, \label{eq:psi1-wmap} \\
    \|(-\partial_r^2 - \frac 1r \partial_r - \frac{1}{r^2}f'(\varphi(t)))\psi(t)\|_{\cH^*} &\lesssim |t|^{-\frac{2k-1}{k-2}}. \label{eq:psi-lin-wmap}
  \end{align}
\end{lemma}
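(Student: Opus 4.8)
The plan is to follow, almost line by line, the proofs of Lemma~\ref{lem:psi} and Lemma~\ref{lem:psi-ym}. From \eqref{eq:b-def-wmap} and \eqref{eq:lambda-bound0-wmap} we read off the size bounds $\lambda(t)\sim|t|^{-\frac{2}{k-2}}$, $b(t)\sim|\lambda'(t)|\sim|t|^{-\frac{k}{k-2}}$, $b'(t)\sim\lambda(t)^{k-1}\sim|t|^{-\frac{2(k-1)}{k-2}}$ and $\mu(t)\sim1$, which replace the exponential bounds of the earlier sections; apart from that the only change is the wave-map nonlinearity $f(u)=-\frac{k^2}{2}\sin(2u)$. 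Note that, since $P,Q\in\cH$ with the decay $r^{\pm k}$ furnished by Lemma~\ref{lem:fredholm-wmap}, no cut-off is needed in \eqref{eq:S-def-wmap}, which slightly simplifies the computation.

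For \eqref{eq:psi0-wmap} I would argue exactly as for \eqref{eq:psi0}: inserting the definitions of $\bs\varphi(t)$ and $\bs\psi(t)$ and using $\partial_t W_{\mu(t)}=-\mu'(t)\Lambda W_{\uln{\mu(t)}}$ (and likewise for $\lambda$) one obtains the identity $\psi(t)-\mu'(t)\frac{1}{\mu(t)}\Lambda W_{\mu(t)}+(\lambda'(t)-b(t))\frac{1}{\lambda(t)}\Lambda W_{\lambda(t)}=\partial_t S(t)$, so it suffices to bound $\|\partial_t S(t)\|_\cH$. Differentiating \eqref{eq:S-def-wmap} termwise, using $\partial_t(P_\lambda)=-\frac{\lambda'}{\lambda}(\Lambda P)_\lambda$ (and the analogue for $Q$), the fact that $P,Q,\Lambda P,\Lambda Q\in\cH$ and the scale-invariance of $\|\cdot\|_\cH$, one gets $\|\partial_t S(t)\|_\cH\lesssim\lambda^{k-1}(|\lambda'|+|\mu'|)+b|b'|+b^2\lambda^{-1}|\lambda'|\lesssim|t|^{-\frac{3k-2}{k-2}}\ll|t|^{-\frac{2k-1}{k-2}}$.

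For \eqref{eq:psi1-wmap} I would split $(0,+\infty)$ at $r=\sqrt{\lambda(t)}$, as in the proofs of \eqref{eq:psi1} and \eqref{eq:psi1-ym}, and use the analogue of the identity \eqref{eq:dot-psi}. On $r\le\sqrt\lambda$, where $|W_\mu|+|S|\lesssim r^k$, Taylor-expand $f(\varphi)$ at $f(W_\lambda)$; the zeroth-order term cancels against $\Delta W_\lambda$ by the harmonic-map equation, and the linear term $\frac{1}{r^2}f'(W_\lambda)(-W_\mu)$ splits (cf.\ Remark~\ref{rem:hidden-lin}) into a part cancelling the leading behaviour of $\Delta W_\mu$ near the origin and the genuine interaction term, which — after rescaling — is exactly the quantity arranged in Subsection~\ref{ssec:linearise-wmap} and is therefore absorbed by $\Delta(\frac{\lambda^k}{\mu^k}P_\lambda)+\frac{1}{r^2}f'(W_\lambda)\frac{\lambda^k}{\mu^k}P_\lambda$ and $\Delta(b^2Q_\lambda)+\frac{1}{r^2}f'(W_\lambda)b^2Q_\lambda$ via the identities $LP=\frac k2\big(\frac{2\kappa}{k-2}\big)^k\Lambda W-\frac{16k^2r^{k-2}}{(r^k+r^{-k})^2}$ and $LQ=-\Lambda_0\Lambda W$; the remaining $-b'\Lambda W_{\uln\lambda}$ and $\frac{b^2}{\lambda}\Lambda_0\Lambda W_{\uln\lambda}$ then match the corresponding pieces of $\partial_t\dot\varphi$. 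The residual terms — the quadratic and cubic Taylor remainders $\frac{1}{r^2}(|W_\mu|+|S|)^2$ and the errors from replacing $W_\mu$ by its leading term near $0$ — are $\lesssim|t|^{-\frac{2k-1}{k-2}}$ in $L^2(r\le\sqrt\lambda)$ once the pointwise bounds are integrated (the first one is in fact the term that saturates the stated estimate). On $r\ge\sqrt\lambda$, where $W_\lambda\simeq\pi$ while $\pi-W_\lambda$ and $S$ are small, Taylor-expand $f(\varphi)$ at the harmonic antibubble $\pi-W_\mu$, using $f(\pi-v)=-f(v)$ and $f'(\pi-v)=f'(v)$; the crucial cancellation, entirely analogous to the $\pm\,8\lambda^2/r^4$ cancellation in the Yang--Mills case, is that the tail $\Delta W_\lambda=\frac{k^2}{r^2}(W_\lambda-\pi)+O\big(\frac{|W_\lambda-\pi|^3}{r^2}\big)$ cancels the non-localized ($-k^2$) part of $\frac{1}{r^2}f'(\pi-W_\mu)(W_\lambda-\pi)$, leaving a term of size $\sim\frac{(\Lambda W_\mu)^2}{r^2}|W_\lambda-\pi|$; this, together with $\Delta S$ and the quadratic remainder $\frac{1}{r^2}|W_\lambda-\pi|^2$, is $\lesssim|t|^{-\frac{2k-1}{k-2}}$ in $L^2(r\ge\sqrt\lambda)$. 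Finally, since $\|\Lambda W_{\uln\lambda}\|_{L^2(r\ge\sqrt\lambda)}+\|\Lambda_0\Lambda W_{\uln\lambda}\|_{L^2(r\ge\sqrt\lambda)}\lesssim\lambda^{(k-1)/2}$ and $|b'|+b^2\lambda^{-1}\lesssim\lambda^{k-1}$, the remaining terms of \eqref{eq:dot-psi} are $\lesssim|t|^{-\frac{3(k-1)}{k-2}}\ll|t|^{-\frac{2k-1}{k-2}}$ — this is where the hypothesis $k>2$ enters.

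For \eqref{eq:psi-lin-wmap} I would copy the proof of \eqref{eq:psi-lin}: by \eqref{eq:psi0-wmap} and the boundedness of $-\partial_r^2-\frac1r\partial_r-\frac{1}{r^2}f'(\varphi)\colon\cH\to\cH^*$ (a Hardy inequality, using $\|f'(\varphi)\|_{L^\infty}\lesssim1$), it suffices to bound $\big\|(-\partial_r^2-\frac1r\partial_r-\frac{1}{r^2}f'(\varphi))\Lambda W_\lambda\big\|_{\cH^*}$ and $\big\|(-\partial_r^2-\frac1r\partial_r-\frac{1}{r^2}f'(\varphi))\Lambda W_\mu\big\|_{\cH^*}$, weighted by $|\lambda'-b|$ and $|\mu'|$ respectively. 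Using $L(\Lambda W)=0$ after rescaling these reduce to $\frac{1}{r^2}(f'(W_\lambda)-f'(\varphi))\Lambda W_\lambda$ and $\frac{1}{r^2}(f'(W_\mu)-f'(\varphi))\Lambda W_\mu$; peeling off the $S$-contribution (bounded by $\|S\|_{L^\infty}\lesssim\|S\|_\cH$ together with $\|\frac{1}{r^2}\Lambda W_\lambda\|_{\cH^*}+\|\frac{1}{r^2}\Lambda W_\mu\|_{\cH^*}\lesssim1$) and using $f'(\pi-v)=f'(v)$ for the slow bubble, the interaction terms $\frac{1}{r^2}W_\mu\,\Lambda W_\lambda$ and $\frac{1}{r^2}(\pi-W_\lambda)\,\Lambda W_\mu$ are estimated in $L^1(\bR^2)\hookrightarrow\cH^*$ by splitting at $r=1$; both are $\lesssim\lambda^k|\log\lambda|$, so after multiplying by $|\lambda'-b|+|\mu'|\lesssim|t|^{-\frac{k}{k-2}}$ one obtains $\lesssim|t|^{-\frac{3k}{k-2}}|\log|t||\ll|t|^{-\frac{2k-1}{k-2}}$. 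The only real work, exactly as in Sections~\ref{sec:approx} and~\ref{sec:ym}, lies in checking the two exact cancellations (the harmonic-map identities combined with Lemma~\ref{lem:fredholm-wmap} on the inner region, and the cancellation of the leading tail of $\Delta W_\lambda$ against the non-localized part of the linearized potential on the outer region) and in verifying that every one of the numerous remainder terms decays at least like $|t|^{-\frac{2k-1}{k-2}}$, which is precisely where $k>2$ is needed.
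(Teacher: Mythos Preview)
Your proposal is correct and follows essentially the same approach as the paper's proof: the identity $\psi-\mu'\frac{1}{\mu}\Lambda W_\mu+(\lambda'-b)\frac{1}{\lambda}\Lambda W_\lambda=\partial_t S$ for \eqref{eq:psi0-wmap}, the split at $r=\sqrt\lambda$ with Taylor expansions at $W_\lambda$ (inner) and $\pi-W_\mu$ (outer) for \eqref{eq:psi1-wmap}, and the reduction to $L^1$-bounds on $\frac{1}{r^2}W_\mu\Lambda W_\lambda$ and $\frac{1}{r^2}(\pi-W_\lambda)\Lambda W_\mu$ split at $r=1$ for \eqref{eq:psi-lin-wmap} are exactly what the paper does. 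One minor imprecision: in the last part the weight attached to $\Lambda W_\lambda$ should be $\frac{|\lambda'-b|}{\lambda}\lesssim|t|^{-1}$ rather than $|\lambda'-b|$, but your final bound $|t|^{-\frac{3k}{k-2}}|\log|t||\ll|t|^{-\frac{2k-1}{k-2}}$ survives this correction with room to spare.
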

\begin{proof}
  From the definition of $\bs \psi$ we get
  \begin{equation}
    \psi + \mu'\Lambda W_\uln\mu + (\lambda' - b)\Lambda W_\uln\lambda = -k\frac{\lambda^k}{\mu^{k+1}}\mu'P_\lambda + k\frac{\lambda^k}{\mu^k}\lambda'P_\uln\lambda
      - \frac{\lambda^k}{\mu^k} \lambda'\Lambda P_\uln\lambda + 2\lambda b b'Q_\uln\lambda - b^2 \lambda'\Lambda Q_\uln\lambda.
  \end{equation}
  The first term has size $\lesssim |t|^{-\frac{3k}{k-2}} \ll |t|^{-\frac{2k-1}{k-2}}$ in $\cH$. The other terms have size $\lesssim |t|^{-\frac{3k-2}{k-2}} \ll |t|^{-\frac{2k-1}{k-2}}$ in $\cH$.

In order to prove \eqref{eq:psi1-wmap}, we treat separately the regions $r \leq \sqrt\lambda$ and $r \geq \sqrt\lambda$.
First we will show that
\begin{equation}
  \label{eq:psi1-nonlin-wmap}
  \big\|\frac{1}{r^2}\big(f(\varphi) - f(W_\lambda) + 2\frac{r^k}{\mu^k}f'(W_\lambda) - \frac{\lambda^k}{\mu^k} f'(W_\lambda)P_\lambda - b^2 f'(W_\lambda)Q_\lambda\big)\big\|_{L^2(r \leq \sqrt\lambda)} \lesssim |t|^{-\frac{2k-1}{k-2}}.
\end{equation}
We have an elementary pointwise inequality
\begin{equation}
  \label{eq:pointwise-1-wmap}
  |f(\varphi) - f(W_\lambda) - f'(W_\lambda)(-W_\mu + S)| \lesssim |{-}W_\mu + S|^2,
\end{equation}
with a constant depending only on $k$. We have $|W_\mu| \lesssim r^k$ and $|S| \lesssim (b^2 + \lambda^k)\cdot\big(\frac{r}{\lambda}\big)^k \lesssim r^k$,
hence
\begin{equation}
  \big\|\frac{1}{r^2}|{-}W_\mu + S|^2\big\|_{L^2(r\leq \sqrt\lambda)} \lesssim \Big(\int_0^{\sqrt\lambda}\big(\frac{1}{r^2}\cdot r^{2k}\big)^2\udr\Big)^\frac 12 \sim \lambda^\frac{2k-1}{2} \lesssim |t|^{-\frac{2k-1}{k-2}},
\end{equation}
which means that the right hand side in \eqref{eq:pointwise-1-wmap} is negligible. From the well-known fact that $|\arctan(z) - z| \lesssim |z|^3$ for small $z$ we get
$|W_\mu(r) - 2(r/\mu)^k|\lesssim r^{3k}$. This implies
\begin{equation}
  \big\|\frac{1}{r^2}f'(W_\lambda)\big(W_\mu - 2\frac{r^k}{\mu^k}\big)\big\|_{L^2}\lesssim \Big(\int_0^{\sqrt\lambda} r^{6k-4} \udr\Big)^{\frac 12}\lesssim \lambda^\frac{3k-1}{2} \lesssim |t|^{-\frac{3k-1}{k-2}} \ll |t|^{-\frac{2k-1}{k-2}}.
\end{equation}
This proves \eqref{eq:psi1-nonlin-wmap} (see the definition of $S$).

We have $\big|W_\mu'(r) - \frac{2kr^{k-1}}{\mu^k}\big| \lesssim r^{3k-1}$ and $\big|W_\mu''(r) - \frac{2(k^2 - k)r^{k-2}}{\mu^k}\big| \lesssim r^{3k-2}$ for small $r$, which implies
$\big|\big(\partial_r^2+\frac 1r \partial_r\big)W_\mu(r) - \frac{2k^2r^{k-2}}{\mu^k}\big| \lesssim r^{3k-2}$, hence
\begin{equation}
  \label{eq:psi1-lin-wmap}
  \big\|\big(\partial_r^2 + \frac 1r \partial_r\big)W_\mu - \frac{2k^2r^{k-2}}{\mu^k}\big\|_{L^2(r\leq \sqrt\lambda)} \lesssim \Big(\int_0^{\sqrt\lambda}r^{6k-4}\udr\Big)^\frac 12 \ll |t|^{-\frac{2k-1}{k-2}}.
\end{equation}
Since $W_\lambda''(r) + \frac 1r W_\lambda'(r) + \frac{1}{r^2}f(W_\lambda(r)) = 0$, from \eqref{eq:psi1-nonlin-wmap}, \eqref{eq:psi1-lin-wmap} and the definition of $\varphi(t)$ we have
\begin{equation}
  \label{eq:psi1-all-wmap}
  \begin{aligned}
  \big\|\big((\partial_r^2 + \frac 1r \partial_r)\varphi + \frac{1}{r^2}f(\varphi)\big)-\big(&{-}\frac{2k^2 r^{k-2}}{\mu^k} - \frac{2r^{k-2}}{\mu^k}f'(W_\lambda) - \frac{\lambda^{k-1}}{\mu^k}(LP)_\uln\lambda\\ &- \frac{b^2}{\lambda}(LQ)_\uln\lambda \big)\big\|_{L^2(r\leq \sqrt\lambda)} \lesssim \eee^{-\frac 32 \kappa|t|}.
\end{aligned}
\end{equation}
By the definition of $P$ there holds $LP = -\frac{16k^2r^{k-2}}{(r^k + r^{-k})^2} + \frac k2\big(\frac{2\kappa}{k-2}\big)^k\Lambda W = -2k^2 r^{k-2} - 2r^{k-2}f'(W)+ \frac k2\big(\frac{2\kappa}{k-2}\big)^k\Lambda W$ and by the definition of $Q$ there holds $LQ = -\Lambda_0 \Lambda W$, hence we can rewrite \eqref{eq:psi1-all-wmap} as
\begin{equation}
  \big\|\big((\partial_r^2 + \frac 1r \partial_r)\varphi + \frac{1}{r^2}f(\varphi)\big)-\big({-}\frac k2\big(\frac{2\kappa}{k-2}\big)^k\frac{\lambda^{k-1}}{\mu^k}\Lambda W_\uln\lambda + \frac{b^2}{\lambda}\Lambda_0 \Lambda W_\uln\lambda \big)\big\|_{L^2(r\leq \sqrt\lambda)} \lesssim \eee^{-\frac 32 \kappa|t|},
\end{equation}
which is precisely \eqref{eq:psi1-wmap} restricted to $r \leq \sqrt\lambda$, cf. \eqref{eq:dot-psi}.

Consider the region $r \geq \sqrt\lambda$. We have $\varphi = (\pi-W_\mu) + (W_\lambda - \pi) + S$, hence elementary pointwise inequalities yield
\begin{equation}
  |f(\varphi) - f(\pi-W_\mu) - f'(\pi-W_\mu)(W_\lambda - \pi + S)| \lesssim |W_\lambda -\pi + S|^2.
\end{equation}
From this and the relations $f(\pi-W_\mu) = -f(W_\mu)$, $f'(\pi-W_\mu) = f'(W_\mu)$, we obtain a pointwise bound
\begin{equation}
  \label{eq:psi1-2-nonlin-wmap}
  |f(\varphi) + f(W_\mu) + f'(W_\mu)(\pi-W_\lambda)| \lesssim |S| + |\pi-W_\lambda|^2.
\end{equation}
Since $|S(r)| \lesssim b^2 + \frac{\lambda^k}{\mu^k} \sim |t|^{-\frac{2k}{k-2}}$, we have
\begin{equation}
  \label{eq:psi1-2-nonlin2-wmap}
  \|\frac{1}{r^2}S\|_{L^2(r\geq \sqrt\lambda)} \lesssim |t|^{-\frac{2k}{k-2}} \Big(\int_{\sqrt\lambda}^{+\infty}r^{-4}\udr\Big)^\frac 12 \lesssim \frac{|t|^{-\frac{2k}{k-2}}}{\sqrt\lambda} \lesssim |t|^{-\frac{2k-1}{k-2}}.
\end{equation}
There holds $|\pi-W_\lambda(r)|^2 =|\pi - 2\arctan((r/\lambda)^k)|^2 = |2\arctan((\lambda/r)^k)|^2 \lesssim \frac{\lambda^{2k}}{r^{2k}}$, hence
\begin{equation}
  \label{eq:psi1-2-nonlin3-wmap}
  \|\frac{1}{r^2}|\pi-W_\lambda(r)|^2\|_{L^2(r\geq \sqrt\lambda)} \lesssim \lambda^{2k} \Big(\int_{\sqrt\lambda}^{+\infty}r^{-4k-4}\udr\Big)^{\frac 12} \sim \lambda^{2k}\lambda^{-\frac{2k+1}{2}} \sim |t|^{-\frac{2k-1}{k-2}}.
\end{equation}
Recall that $f'(W_\mu) = -k^2(1-8((r/\mu)^k + (r/\mu)^{-k})^{-2})$, hence $|f'(W_\mu) + k^2| \lesssim r^k$. We also have (by a standard asymptotic expansion of $\arctan$)
$|\pi-W_\lambda| \lesssim \frac{\lambda^k}{r^k}$ and $|\pi-W_\lambda - 2\frac{\lambda^k}{r^k}| \lesssim \frac{\lambda^{3k}}{r^{3k}}$, hence
\begin{equation}
\label{eq:psi1-2-nonlin4-wmap}
\begin{gathered}
  \big\|\frac{1}{r^2}f'(W_\mu)(\pi-W_\lambda) + \frac{2k^2\lambda^k}{r^{k+2}}\big\|_{L^2(r\geq \sqrt\lambda)} \lesssim \|\frac{\lambda^k}{r^2} + \frac{\lambda^{3k}}{r^{3k+2}}\|_{L^2(r\geq \sqrt\lambda)} \lesssim |t|^{-\frac{2k-1}{k-2}}.
\end{gathered}
\end{equation}
Inserting \eqref{eq:psi1-2-nonlin2-wmap}, \eqref{eq:psi1-2-nonlin3-wmap} and \eqref{eq:psi1-2-nonlin4-wmap} into \eqref{eq:psi1-2-nonlin-wmap} we obtain
\begin{equation}
  \label{eq:psi1-2-nonlin5-wmap}
  \|\frac{1}{r^2}f(\varphi) + \frac{1}{r^2}f(W_\mu) - \frac{2k^2 \lambda^k}{r^{k+2}}\|_{L^2(r\geq \sqrt\lambda)} \lesssim |t|^{-\frac{2k-1}{k-2}}.
\end{equation}
A direct computation shows that for $r \geq \sqrt\lambda$ there holds
$(\partial_r^2 + \frac 1r \partial_r)W_\lambda(r) = -\frac{2k^2\lambda^k}{r^{k+2}} + O\big(\frac{\lambda^{3k}}{r^{3k+2}}\big)$, hence
\begin{equation}
  \label{eq:psi1-2-nonlin6-wmap}
  \|(\partial_r^2 + \frac 1r \partial_r)W_\lambda + \frac{2k^2\lambda^k}{r^{k+2}}\|_{L^2(r\geq \sqrt\lambda)} \lesssim \lambda^{\frac{3k-1}{2}} \ll |t|^{-\frac{2k-1}{k-2}}.
\end{equation}
The same computation as in \eqref{eq:Delta-S-ym} yields $\|(\partial_r^2 + \frac 1r \partial_r)S\|_{L^2(r\geq \sqrt\lambda)} \lesssim \frac{b^2}{\sqrt\lambda} + \frac{\lambda^k}{\sqrt\lambda} \lesssim |t|^{-\frac{2k-1}{k-2}}$.
Together with \eqref{eq:psi1-2-nonlin5-wmap} and \eqref{eq:psi1-2-nonlin6-wmap} this proves that
\begin{equation}
  \|(\partial_r^2 + \frac 1r \partial_r)\varphi + \frac{1}{r^2}f(\varphi)\|_{L^2(r\geq \sqrt\lambda)} \lesssim |t|^{-\frac{2k-1}{k-2}}.
\end{equation}
Since $\|\Lambda W_\uln\lambda\|_{L^2(r\geq \sqrt\lambda)} + \|\Lambda_0 \Lambda W_\uln\lambda\|_{L^2(r\geq \sqrt\lambda)} \lesssim (\int_{1/\sqrt\lambda}^{+\infty}\frac{1}{r^{2k}}\udr)^{\frac 12}
\lesssim \lambda^\frac{k-1}{2}$, the other terms appearing in \eqref{eq:psi1-wmap} are $\lesssim |t|^{-\frac{3k-3}{k-2}} \ll |t|^{-\frac{2k-1}{k-2}}$. This finishes the proof of \eqref{eq:psi1-wmap}.

The proof of \eqref{eq:psi-lin-wmap} is very similar to the proof of \eqref{eq:psi-lin}, so we will not give all the details.
We have $\big|\frac{\lambda'-b}{\lambda}\big| \lesssim |t|^{-1}$ and $\big|\frac{\mu'}{\mu}\big| \lesssim |t|^{-1}$, hence it suffices to check that
$\big\|\frac{1}{r^2}(f'(\varphi) - f'(W_\lambda))\Lambda W_\lambda\big\|_{\cH^*} \lesssim |t|^{-\frac{k+1}{k-2}}$
and $\big\|\frac{1}{r^2}(f'(\varphi) - f'(W_\mu))\Lambda W_\mu\big\|_{\cH^*} \lesssim |t|^{-\frac{k+1}{k-2}}$,
which boils down to $\big\|\frac{1}{r^2}W_\mu\cdot \Lambda W_\lambda\big\|_{L^1} \lesssim |t|^{-\frac{k+1}{k-2}}$
and $\big\|\frac{1}{r^2}(\pi-W_\lambda)\cdot \Lambda W_\mu\big\|_{L^1} \lesssim |t|^{-\frac{k+1}{k-2}}$, see the proof of \eqref{eq:psi-lin-ym}.
In both cases we treat separately $r \leq 1$ and $r \geq 1$:
\begin{gather}
  \big\|\frac{1}{r^2}W_\mu\cdot \Lambda W_\lambda\big\|_{L^1(r \leq 1)} \|r^{k-2}\Lambda W_\lambda\|_{L^1(r\leq 1)} = \lambda^k \|\Lambda W\|_{L^1(r \leq 1/\lambda)} \lesssim \lambda^k|\log \lambda| \ll |t|^{-\frac{k+1}{k-2}}, \\
  \big\|\frac{1}{r^2}W_\mu\cdot \Lambda W_\lambda\big\|_{L^1(r \geq 1)} \|\Lambda W\|_{L^\infty(r \geq 1)} \lesssim \lambda^k \ll |t|^{-\frac{k+1}{k-2}}, \\
  \begin{aligned}
  \big\|\frac{1}{r^2}(\pi-W_\lambda)\cdot \Lambda W_\mu\big\|_{L^1(r \leq 1)} &\lesssim \big\|r^{k-2}\arctan\big((\lambda/r)^k\big)\big\|_{L^1(r\leq 1)} \\ &= \lambda^k\|r^{k-2}\arctan(r^{-k})\|_{L^1(r\leq 1/\lambda)} \lesssim \lambda^k|\log\lambda| \ll |t|^{-\frac{k+1}{k-2}},
  \end{aligned} \\
  \big\|\frac{1}{r^2}(\pi-W_\lambda)\cdot \Lambda W_\mu\big\|_{L^1(r \geq 1)} \lesssim \|\pi - W_\lambda\|_{L^\infty(r \geq 1)} \lesssim \lambda^k \ll |t|^{-\frac{k+1}{k-2}}.
\end{gather}
\end{proof}

\subsection{Modulation}
\label{ssec:mod-wmap}
As in Subsection~\ref{ssec:mod}, we define $\bs g(t) := \bs u(t) - \bs \varphi(t)$ with modulation parameters $\lambda(t)$ and $\mu(t)$ which satisfy
    \begin{gather}
    \label{eq:g-orth-wmap}
    \big\la \frac{1}{\lambda(t)}\cZ_\uln{\lambda(t)}, g(t)\big\ra = 0, \quad \big|\big\la \frac{1}{\mu(t)}\cZ_\uln{\mu(t)}, g(t)\big\ra\big| \lesssim c|t|^{-\frac{k+1}{k-2}}, \\
    \label{eq:mod-wmap}
    |\lambda'(t)-b(t)| + |\mu'(t)| \lesssim \|\bs g(t)\|_\cE + c|t|^{-\frac{k+1}{k-2}},
  \end{gather}
  with a constant $c$ arbitrarily small.
The initial data are
\begin{equation}
  \label{eq:data-at-T-wmap}
  \bs u(T) = \big({-}W + W_{\lambda_0}, -\big(\frac{2\kappa}{k-2}\big)^\frac k2\lambda_0^\frac k2 \Lambda W_{\uln{\lambda_0}}\big),
\end{equation}
where the value of $\lambda_0$ is close to $\frac{k-2}{2\kappa}(\kappa|T|)^{-\frac{2}{k-2}}$.
The initial value of $\lambda(t)$ in the modulation equations is thus $\lambda(T) = \lambda_0$, in accordance with \eqref{eq:lambda-init0-wmap},
and $b(t)$ is defined by \eqref{eq:b-def-wmap}. Note that $b(T) = \big(\frac{2\kappa}{k-2}\big)^\frac k2\lambda_0^\frac k2$ is close to $(\kappa|T|)^{-\frac{k}{k-2}}$.

The equivalent of Proposition~\ref{prop:bootstrap} can by formulated as follows.
\begin{proposition}
  \label{prop:bootstrap-wmap}
There exist constants $C_0 > 0$ and $T_0 < 0$ with the following property
  Let $T < T_1 < T_0$ and suppose that $\bs u(t) = \bs \varphi(t) + \bs g(t) \in C([T, T_1]; X^1 \times X^0)$
  is a solution of \eqref{eq:wmap0} with initial data \eqref{eq:data-at-T-wmap}
  such that for $t \in [T, T_1]$ condition \eqref{eq:lambda-bound0-wmap} is satisfied and
      \begin{align}
        \|\bs g(t)\|_\cE &\leq C_0 |t|^{-\frac{k+1}{k-2}}. \label{eq:assumption-g-wmap}
      \end{align}
  Then for $t \in [T, T_1]$ there holds
  \begin{align}
    \|\bs g(t)\|_\cE &\leq \frac 12 C_0|t|^{-\frac{k+1}{k-2}}, \label{eq:bootstrap-g-wmap} \\
     |\mu(t) - 1| &\lesssim C_0|t|^{-\frac{3}{k-2}}, \label{eq:bootstrap-mu-wmap} \\
     \big|\lambda'(t) - \big(\frac{2\kappa}{k-2}\big)^\frac k2\lambda(t)^\frac k2\big| &\lesssim C_0 |t|^{-\frac{k+1}{k-2}}. \label{eq:bootstrap-eq-l-wmap}
  \end{align}

\end{proposition}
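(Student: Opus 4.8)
The plan is to follow, almost verbatim, the scheme of Section~\ref{sec:bootstrap} as adapted in Section~\ref{sec:ym}, replacing the rate $\eee^{-\kappa|t|}$ by $\lambda(t)^{k/2} \sim |t|^{-\frac{k}{k-2}}$ everywhere and keeping careful track of the powers of $|t|$; since $L \geq 0$ with $\ker L = \bR\Lambda W$, there is no linear instability and, exactly as for Yang--Mills, no topological argument enters the proof of the proposition itself. I first settle the two parameter estimates. Feeding \eqref{eq:assumption-g-wmap} into \eqref{eq:mod-wmap} gives $|\mu'(t)| \lesssim C_0|t|^{-\frac{k+1}{k-2}}$, and since $\frac{k+1}{k-2} > 1$, integrating over $(-\infty, t]$ yields $|\mu(t) - 1| \lesssim C_0|t|^{-\frac{3}{k-2}}$, i.e.\ \eqref{eq:bootstrap-mu-wmap}. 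For \eqref{eq:bootstrap-eq-l-wmap}, write $c_\kappa := \big(\frac{2\kappa}{k-2}\big)^{k/2}$, so that $b(T) = c_\kappa \lambda_0^{k/2}$ and $b' = \frac{k}{2}c_\kappa^2 \frac{\lambda^{k-1}}{\mu^k}$ by \eqref{eq:b-def-wmap}. Then
\begin{equation*}
  \dd t\big(b^2 - c_\kappa^2\lambda^k\big) = k\,c_\kappa^2\,\lambda^{k-1}\Big(\frac{b}{\mu^k} - \lambda'\Big),
\end{equation*}
and using $|\lambda'-b| \lesssim C_0|t|^{-\frac{k+1}{k-2}}$ (from \eqref{eq:mod-wmap} and \eqref{eq:assumption-g-wmap}), $|\mu^{-k}-1| \lesssim |\mu-1| \lesssim C_0|t|^{-\frac{3}{k-2}}$, $b \sim |t|^{-\frac{k}{k-2}}$ and $\lambda^{k-1} \sim |t|^{-\frac{2(k-1)}{k-2}}$ we get $\big|\dd t(b^2 - c_\kappa^2\lambda^k)\big| \lesssim C_0|t|^{-\frac{3k-1}{k-2}}$. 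As the boundary term at $t = T$ vanishes, integration gives $|b^2 - c_\kappa^2\lambda^k| \lesssim C_0|t|^{-\frac{2k+1}{k-2}}$, and dividing by $b + c_\kappa\lambda^{k/2} \sim |t|^{-\frac{k}{k-2}}$ produces $|b - c_\kappa\lambda^{k/2}| \lesssim C_0|t|^{-\frac{k+1}{k-2}}$; combined with $|\lambda'-b| \lesssim C_0|t|^{-\frac{k+1}{k-2}}$ this is \eqref{eq:bootstrap-eq-l-wmap}.

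The heart of the proof is \eqref{eq:bootstrap-g-wmap}, obtained through a mixed energy-virial functional. I would first reproduce, for $N=2$ and with the additional smallness of $r(q'/r)'$ that is needed to handle the $k^2/r^2$ part of the potential, a function $q(r) \in C^{3,1}$ equal to $\tfrac12 r^2$ on $[0,R]$, constant near infinity, with small convexity defect and small $(\partial_r^2 + \tfrac1r\partial_r)^2 q$ — i.e.\ the analogue of Lemma~\ref{lem:fun-a-ym}; the operators $A(\lambda)$, $A_0(\lambda)$ of \eqref{eq:opA-ym}--\eqref{eq:opA0-ym} then approximate $\tfrac1\lambda\Lambda$ and $\tfrac1\lambda\Lambda_0$, and with the profiles $P, Q$ from Lemma~\ref{lem:fredholm-wmap} one has the error bounds of Lemma~\ref{lem:psi-wmap} and the analogues of \eqref{eq:A-by-parts-ym}, \eqref{eq:A-pohozaev-ym}, \eqref{eq:L0-A0-ym}--\eqref{eq:approx-potential-ym} (with the hidden linear part $-k^2$ extracted, cf.\ Remark~\ref{rem:hidden-lin}). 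Setting $I(t) := E(\bs\varphi+\bs g) - E(\bs\varphi) - \la\vD E(\bs\varphi),\bs g\ra$, $J(t) := \la\dot g, A_0(\lambda)g\ra$ and $H(t) := I(t) + b(t)J(t)$, the computation of Lemma~\ref{lem:bootstrap} carries over: the modulation terms cancel between $I'$ and $(bJ)'$, the localized coercivity (wave-map analogue of Lemma~\ref{lem:loc-coer-ym}) together with \eqref{eq:g-orth-wmap} disposes of the remaining quadratic term, and all surviving contributions are bounded by $\tfrac{b}{\lambda}\|\bs g\|_\cE^2 + \|\bs\psi\|\,\|\bs g\|_\cE$. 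Since $\tfrac{b}{\lambda} \sim |t|^{-1}$, $\|\bs g\|_\cE^2 \lesssim C_0^2|t|^{-\frac{2k+2}{k-2}}$ and $\|\bs\psi\| \lesssim |t|^{-\frac{2k-1}{k-2}}$, this gives
\begin{equation*}
  H'(t) \leq c_1 C_0^2 |t|^{-\frac{3k}{k-2}}, \qquad t \in [T, T_1],
\end{equation*}
with $c_1$ as small as we wish once $R$ and $|T_0|$ are large enough.

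To close: the data \eqref{eq:data-at-T-wmap} give $\bs g(T) = (-S(T), 0)$, hence $\|\bs g(T)\|_\cE \lesssim |T|^{-\frac{2k}{k-2}}$ by \eqref{eq:taille-correct-wmap}, so $H(T)$ is negligible compared with $|t|^{-\frac{2k+2}{k-2}}$; integrating the differential inequality and using $\frac{2k+2}{k-2} = \frac{3k}{k-2} - 1$ gives $H(t) \leq c\,C_0^2|t|^{-\frac{2k+2}{k-2}}$ with $c$ small (take $c_1$ small, $C_0$ large). Then $|H(t) - \tfrac12\la\vD^2 E(\bs\varphi)\bs g,\bs g\ra| \lesssim c\|\bs g\|_\cE^2$, the wave-map analogue of Lemma~\ref{lem:bulles-coer-ym}, and \eqref{eq:g-orth-wmap} yield $\|\bs g(t)\|_\cE^2 \lesssim (cC_0^2+1)|t|^{-\frac{2k+2}{k-2}}$; enlarging $C_0$ and shrinking $c$ gives \eqref{eq:bootstrap-g-wmap}.

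The main obstacle is bookkeeping rather than a new idea: because the decay is polynomial, the differential inequality for $H$ does not ``self-improve'' as the exponential one does, so one must check that the exponent $\frac{3k}{k-2}$ controlling $H'$ is exactly one more than the exponent $\frac{2k+2}{k-2}$ controlling $\|\bs g\|_\cE^2$, and that the boundary term $H(T)$ is genuinely negligible — which is what pins down the bootstrap rate $|t|^{-\frac{k+1}{k-2}}$ and the admissible window \eqref{eq:lambda-bound0-wmap}. The only new analytic input beyond Section~\ref{sec:ym} is the explicit inversion of $L$ in Lemma~\ref{lem:fredholm-wmap} (already available), the residue computation fixing $\kappa$ in \eqref{eq:formal-param-wmap}, and the construction of $q$ with the extra property adapted to the $k^2/r^2$ potential; once these are in place every estimate of Sections~\ref{sec:bootstrap} and~\ref{sec:ym} transfers with only the powers of $|t|$ altered, and — there being no unstable direction — the proposition follows without a shooting argument, the freedom in $\lambda_0$ being exploited only afterwards to keep $\lambda(t)$ inside \eqref{eq:lambda-bound0-wmap}.
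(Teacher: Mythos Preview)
Your proposal is correct and follows essentially the same route as the paper: the parameter estimates \eqref{eq:bootstrap-mu-wmap} and \eqref{eq:bootstrap-eq-l-wmap} are obtained exactly as you describe (the paper differentiates $b^2 - c_\kappa^2\lambda^k/\mu^k$ rather than $b^2 - c_\kappa^2\lambda^k$, but the two differ only by a negligible term), and \eqref{eq:bootstrap-g-wmap} is derived by integrating the mixed energy-virial inequality $H'(t) \leq c_1 C_0^2|t|^{-\frac{3k}{k-2}}$ and applying the two-bubble coercivity, precisely as in your closing paragraph. Your observation that the exponent identity $\frac{3k}{k-2} - 1 = \frac{2k+2}{k-2}$ is what makes the polynomial bootstrap close, and that the shooting in $\lambda_0$ is deferred to the proof of Theorem~\ref{thm:deux-bulles-wmap}, matches the paper's logic exactly.
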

Note that, unlike in Proposition~\ref{prop:bootstrap}, we do not obtain an improvement of estimates on $\lambda(t)$.
Indeed, we will need a shooting argument in order to control $\lambda(t)$.
\begin{proof}[Proof of Theorem~\ref{thm:deux-bulles-wmap} assuming Proposition~\ref{prop:bootstrap-wmap}]
  We consider the functions
  \begin{equation*}
    \beta_j(t) := \frac{k-2}{2\kappa}(\kappa|t|)^{-\frac{2}{k-2}} + j|t|^{-\frac{5}{2(k-2)}},
\end{equation*}
  for $j \in \{-1, -\frac 12, \frac 12, 1\}$.
  They will serve as barriers for $\lambda(t)$ in the topological argument.

  Let $T_n$ be a decreasing sequence converging to $-\infty$. For $n$ large and $\lambda_0 \in \cA := [\beta_{-1}(T), \beta_{1}(T)]$,
  let $\bs u_n^{\lambda_0}(t): [T_n, T_+) \to \cE$ denote the solution of \eqref{eq:wmap0} with initial data \eqref{eq:data-at-T-wmap}.
    We will prove that there exists $\lambda_0$ such that $T_+ > T_0$ and $\bs u = \bs u_n^{\lambda_0}$ verifies for $t \in [T_n, T_0]$
    the estimates \eqref{eq:bootstrap-g-wmap}, \eqref{eq:bootstrap-mu-wmap} and
    \begin{equation}
      \label{eq:bootstrap-l-wmap}
      \beta_{-1}(t) \leq \lambda(t) \leq \beta_1(t).
    \end{equation}

  Suppose that this is not the case. For each $\lambda_0\in \cA$, let $T_1 = T_1(\lambda_0)$ be the last time
    such that these three estimates hold for $t \in [T_n, T_1]$. Proposition~\ref{prop:bootstrap-wmap} implies that only \eqref{eq:bootstrap-l-wmap}
    can possibly break down. Let $\cA_+$ be the set of $\lambda_0\in \cA$ such that $\lambda(T_1) = \beta_1(T_1)$
    and $\cA_-$ the set of $\lambda_0$ such that $\lambda(T_1) = \beta_{-1}(T_1)$. We will prove that $\cA_+$ and $\cA_-$ are open sets.
    They are also non-empty since $\beta_{-1}(T) \in \cA_-$ and $\beta_1(T) \in \cA_+$.
    The end of the proof is then the same as in Subsection~\ref{ssec:limit}.

    The key step is to show that if for a certain $T_2 \in [T_n, T_1]$ we have $\lambda(T_2) > \beta_{\frac 12}(T_2)$, then $\lambda_0 \in \cA_+$.
    Suppose this is not the case. Then there exists the smallest $T_3 > T_2$ such that $\lambda(T_3) = \beta_{\frac 12}(T_3)$.
    This implies that
    \begin{equation}
      \label{eq:dlambda-leq}
    \lambda'(T_3) \leq \beta_{\frac 12}'(T_3).
  \end{equation}
  On the other hand, \eqref{eq:bootstrap-eq-l-wmap} yields
  \begin{equation}
    \begin{aligned}
    \lambda'(T_3) &\geq \Big(\frac{2\kappa}{k-2}\Big)^\frac k2\big(\lambda(T_3)\big)^\frac k2 - C_0 |T_3|^{-\frac{k+1}{k-2}} = \Big(\frac{2\kappa}{k-2}\Big)^\frac k2 \big(\beta_{\frac 12}(T_3)\big)^\frac k2 - C_0 |T_3|^{-\frac{k+1}{k-2}} \\
    &= |T_3|^{-\frac{k}{k-2}}\big(\kappa^{-\frac{2}{k-2}} + \frac{\kappa}{k-2}|T_3|^{-\frac{1}{2(k-2)}}\big)^\frac k2 - C_0 |T_3|^{-\frac{k+1}{k-2}} \\
    &\geq |T_3|^{-\frac{k}{k-2}}\Big(\kappa^{-\frac{k}{k-2}} + \frac k2\cdot \kappa^{-1} \frac{\kappa}{k-2}|T_3|^{-\frac{1}{2(k-2)}}\Big) - (C+C_0)|T_3|^{-\frac{k+1}{k-2}} \\
    &= (\kappa|T_3|)^{-\frac{k}{k-2}} + \frac{k}{2(k-2)}|T_3|^{-\frac{2k+1}{2(k-2)}} - (C + C_0)|T_3|^{-\frac{k+1}{k-2}}.
  \end{aligned}
  \end{equation}
  Since $\beta_{\frac 12}'(T_3) = (\kappa|T_3|)^{-\frac{k}{k-2}} + \frac{5}{4(k-2)}|T_3|^{-\frac{2k+1}{2(k-2)}}$ and $\frac{k}{2(k-2)} > \frac{5}{4(k-2)}$ for $k \geq 3$,
  \eqref{eq:dlambda-leq} is impossible if $|T_3| > |T_0|$ is large enough.

  Analogously, if for a certain $T_2 \in [T_n, T_1]$ we have $\lambda(T_2) < \beta_{-\frac 12}(T_2)$, then $\lambda_0 \in \cA_-$.
  Hence, if $\lambda_0 \in \cA_+$, then $\lambda(t) \geq \beta_{-\frac 12}(t)$ for $t \in [T_n, T_1]$ and $\lambda(T_1) = \beta_1(T_1)$.
  By continuity, if $|\wt \lambda_0 - \lambda_0|$ is sufficiently small, then the solution corresponding to $\wt \lambda_0$
  will satisfy $\wt \lambda(t) > \beta_{\frac 12}(t)$ at some point, hence $\wt \lambda_0 \in \cA_+$. This proves that $\cA_+$ is open.
  Analogously, $\cA_-$ is open.
\end{proof}

\subsection{Coercivity}
\label{ssec:coer-wmap}
Recall that $f'(W) = -k^2\Big(1- \frac{8}{(r^k + r^{-k})^2}\Big)$.
\begin{lemma}
  \label{lem:bulles-coer-wmap}
  There exist constants $c, C > 0$ such that
  \begin{itemize}[leftmargin=0.5cm]
    \item for all $g \in \cH$ there holds
      \begin{equation}
        \label{eq:coer-lin-coer-1-wmap}
        \begin{aligned}
          &\int_0^{+\infty}\big((g')^2+\frac{k^2}{r^2}g^2\big)\udr - \int_0^{+\infty} \frac{k^2}{r^2}\cdot\frac{8}{(r^k+r^{-k})^2} g^2 \udr \\
      &\geq c\int_0^{+\infty}\big((g')^2 +\frac{k^2}{r^2}g^2\big) \udr -C\Big(\int_0^{+\infty}\cZ\cdot g\frac{\vd r}{r}\Big)^2,
      \end{aligned}
      \end{equation}
    \item if $r_1 > 0$ is large enough, then for all $g \in \cH$ there holds
      \begin{equation}
        \label{eq:coer-lin-coer-2-wmap}
        \begin{aligned}
        &(1-2c)\int_0^{r_1}\big((g')^2+\frac{k^2}{r^2}g^2\big)\udr + c\int_{r_1}^{+\infty}\big((g')^2+\frac{k^2}{r^2}g^2\big)\udr \\
        &- \int_0^{+\infty}\frac{k^2}{r^2}\cdot\frac{8}{(r^k+r^{-k})^2} g^2 \udr\geq -C\Big(\int_0^{+\infty}\cZ\cdot g\frac{\vd r}{r}\Big)^2,
      \end{aligned}
      \end{equation}
    \item if $r_2 > 0$ is small enough, then for all $g \in \cH$ there holds
      \begin{equation}
        \label{eq:coer-lin-coer-3-wmap}
        \begin{aligned}
        &(1-2c)\int_{r_2}^{+\infty}\big((g')^2+\frac{k^2}{r^2}g^2\big)\udr + c\int_{0}^{r_2}\big((g')^2+\frac{k^2}{r^2}g^2\big)\udr \\
        &- \int_0^{+\infty}\frac{k^2}{r^2}\cdot\frac{8}{(r^k+r^{-k})^2} g^2 \udr\geq -C\Big(\int_0^{+\infty}\cZ\cdot g\frac{\vd r}{r}\Big)^2,
      \end{aligned}
      \end{equation}
  \end{itemize}
\end{lemma}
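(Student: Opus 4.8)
The plan is to mirror the proof of Lemma~\ref{lem:loc-coer-ym}, the only real change being the one-dimensional Schr\"odinger operator that appears after the logarithmic substitution. First I would set $\wt g(x) := g(\eee^x)$ and $\wt\cZ(x) := \cZ(\eee^x)$, so that $\|g\|_\cH^2 = \int_\bR\big((\wt g')^2 + k^2\wt g^2\big)\ud x$ and $\int_0^{+\infty}\cZ\cdot g\,\frac{\vd r}{r} = \int_\bR \wt\cZ\cdot\wt g\ud x$. Using $\cos(4\arctan(r^k)) = 1-\tfrac{8}{(r^k+r^{-k})^2}$ together with $(\eee^{kx}+\eee^{-kx})^2 = 4\cosh^2(kx)$, the quadratic form on the left-hand side of \eqref{eq:coer-lin-coer-1-wmap} becomes
\begin{equation*}
  \int_\bR (\wt g')^2 + k^2\big(1 - 2\sech^2(kx)\big)\wt g^2 \ud x,
\end{equation*}
i.e.\ the quadratic form of $\cL_k := -\tfrac{\vd^2}{\vd x^2} + k^2\big(1-2\sech^2(kx)\big)$, which after rescaling $y = kx$ is $k^2$ times the classical P\"oschl--Teller operator $-\tfrac{\vd^2}{\vd y^2} + 1 - 2\sech^2 y$. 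The facts I would use are that $0$ is a simple eigenvalue of $\cL_k$ at the bottom of its spectrum, with eigenfunction $\sech(kx) = \tfrac1k\Lambda W(\eee^x)$ (consistent with $\ker L = \langle\Lambda W\rangle$), and that $\inf\big(\sigma(\cL_k)\setminus\{0\}\big) = k^2 > 0$; this is standard for the reflectionless $\sech^2$-potential having a single bound state, and can be verified directly if one wishes. This is the one place I would be careful.

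Granting this, \eqref{eq:coer-lin-coer-1-wmap} follows exactly as \eqref{eq:coer-lin-coer-1-ym}. I would decompose $\wt g = a\,\sech(k\cdot) + g_1$ with $\langle\sech(k\cdot), g_1\rangle_{L^2(\bR)} = 0$; since $\sech(k\cdot)$ lies in the kernel of $\cL_k$ the cross terms disappear and the spectral gap gives $\langle\cL_k\wt g,\wt g\rangle = \langle\cL_k g_1, g_1\rangle \geq k^2\|g_1\|_{L^2}^2$, which after absorbing $\int(g_1')^2$ (bounded by $3\langle\cL_k g_1, g_1\rangle$, using the gap) upgrades to $\langle\cL_k g_1, g_1\rangle \gtrsim \int_\bR\big((g_1')^2 + k^2 g_1^2\big)\ud x$. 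To pass from $g_1$ back to $\wt g$, I would write $\sech(k\cdot) = b\wt\cZ + \rho$ with $\langle\wt\cZ,\rho\rangle_{L^2(\bR)} = 0$; by \eqref{eq:Z-pos-wmap} and $\Lambda W(\eee^x) = k\sech(kx)$ one has $\int_\bR\wt\cZ\cdot\sech(k\cdot)\ud x > 0$, hence $\|\rho\|_{L^2(\bR)} < \|\sech(k\cdot)\|_{L^2(\bR)}$, and expanding $\int g_1^2 = \|\wt g\|_{L^2}^2 - \langle\wt g,\sech(k\cdot)\rangle^2/\|\sech(k\cdot)\|_{L^2}^2$ and applying Young's inequality yields $\int g_1^2 \geq c\int\wt g^2 - C\big(\int\wt\cZ\cdot\wt g\big)^2$. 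Combining the two bounds gives \eqref{eq:coer-lin-coer-1-wmap}.

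Finally, the localized statements \eqref{eq:coer-lin-coer-2-wmap} and \eqref{eq:coer-lin-coer-3-wmap} reduce to \eqref{eq:coer-lin-coer-1-wmap} by the cutoff argument of \eqref{eq:coer-lin-coer-2-ym}--\eqref{eq:coer-lin-coer-3-ym}. In the $x$-variable the two are symmetric, $r_1$ large corresponding to $x\to+\infty$ and $r_2$ small to $x\to-\infty$, so it suffices to prove, for $R$ large, the $L^2(\bR)$ inequality with a cutoff $\wt\chi$ equal to $1$ on a region of size $\sim R$ and vanishing outside a slightly larger one: setting $\wt h := \wt\chi\wt g$ and integrating by parts transfers $\wt\chi$ onto $\wt g$ at the cost of a term controlled by $\lesssim R^{-2}\int_\bR\big((\wt g')^2 + k^2\wt g^2\big)\ud x$, the remaining potential mismatch $\big|1-\wt\chi^2\big|\cdot k^2\sech^2(kx)$ is uniformly small for $R$ large because the potential decays exponentially, and $\int\wt\cZ\cdot\wt h = \int\wt\cZ\cdot\wt g$ since $\wt\cZ$ has compact support; applying \eqref{eq:coer-lin-coer-1-wmap} to $\wt h$ and choosing $R$ large then closes the estimates. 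Apart from the spectral input on $\cL_k$, everything here is a transcription of the Yang--Mills computation with $4 - 6\sech^2$ replaced by $k^2\big(1 - 2\sech^2(kx)\big)$.
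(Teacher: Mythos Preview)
Your proof is correct and follows the same approach as the paper: reduce to a one-dimensional P\"oschl--Teller operator via the logarithmic substitution, then repeat the Yang--Mills argument. The only cosmetic difference is that the paper uses $\wt g(x) := g(\eee^{x/k})$, $\wt\cZ(x) := \cZ(\eee^{x/k})$ to land directly on $-\tfrac{\vd^2}{\vd x^2} + (1 - 2\sech^2)$, whereas you use $r = \eee^x$ and then rescale by $y = kx$; the two are equivalent.
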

\begin{proof}
A change of variable $\wt g(x) := g(\eee^{x/k})$, $\wt \cZ(x) := \cZ(\eee^{x/k})$
reduces the problem to the study of the quadratic form associated with the classical operator $-\frac{\vd^2}{\vd x^2} + (1-2\sech^2)$
and it suffices to repeat the proof of Lemma~\ref{lem:loc-coer-ym}.
\end{proof}
Lemma~\ref{lem:bulles-coer-ym} applies verbatim to the case under consideration.
\subsection{Bootstrap}
We use the operators $A(\lambda)$ and $A_0(\lambda)$ from Subsection~\ref{ssec:bootstrap-ym}.
We define $I(t)$, $J(t)$ and $H(t)$ by the same formulas as in Subsection~\ref{ssec:bootstrap-ym}.
\begin{lemma}
  \label{lem:op-A-wmap}
  The operators $A(\lambda)$ and $A_0(\lambda)$ have the following properties:
  \begin{itemize}
    \item for all $\lambda > 0$ and $h_1, h_2 \in X^1$ there holds
      \begin{equation}
        \label{eq:A-by-parts-wmap}
        \begin{aligned}
        &\big|\big\la A(\lambda)h_1, \frac{1}{r^2}\big(f(h_1 + h_2) - f(h_1) - f'(h_1)h_2\big)\big\ra \\
        &+\big\la A(\lambda)h_2, \frac{1}{r^2}\big(f(h_1+h_2) - f(h_1) +k^2 h_2\big)\big\ra\big|
        \leq \frac{c_0}{\lambda}\cdot\|h_2\|_\cH^2,
      \end{aligned}
      \end{equation}
      with a constant $c_0$ arbitrarily small,
    \item for all $h \in X^1$ there holds
      \begin{equation}
        \label{eq:A-pohozaev-wmap}
        \big\la A_0(\lambda)h, \big(\partial_r^2 + \frac 1r\partial_r - \frac{k^2}{r^2}\big)h\big\ra \leq \frac{c_0}{\lambda}\|h\|_{\cH}^2 - \frac{2\pi}{\lambda}\int_0^{R\lambda}\big((\partial_r h)^2 + \frac{k^2}{r^2}h^2\big) \udr,
      \end{equation}
    \item assuming \eqref{eq:lambda-bound0}, for any $c_0 > 0$ there holds
    \begin{gather}
      \label{eq:L0-A0-wmap}
      \|\Lambda_0 \Lambda W_\uln{\lambda(t)} - A_0(\lambda(t))\Lambda W_{\lambda(t)}\|_{L^2} \leq c_0, \\
      \label{eq:L-A-wmap}
      \|\dot \varphi(t) + b(t)\cdot A(\lambda(t))\varphi(t)\|_{L^\infty} \leq c_0|t|^{-1}, \\
      \label{eq:approx-potential-wmap}
      \begin{aligned}
        \Big|\int_0^{+\infty}\frac 12 \big(q''\big(\frac{r}{\lambda}\big) &+ \frac{\lambda}{r}q'\big(\frac{r}{\lambda}\big)\big)\frac{1}{r^2}\big(f(\varphi + g) - f(\varphi)+k^2g\big)g \udr \\
        &- \int_0^{+\infty} \frac{1}{r^2}\big(f'(W_\lambda)+k^2\big)g^2 \udr\Big| \leq c_0C_0^2 |t|^{-\frac{2k+2}{k-2}},
    \end{aligned}
    \end{gather}
    provided that the constant $R$ in the definition of $q(r)$ is chosen large enough.
  \end{itemize} \qed
\end{lemma}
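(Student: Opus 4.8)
The plan is to read Lemma~\ref{lem:op-A-wmap} as the $k$-equivariant wave map transcription of Lemma~\ref{lem:op-A-ym}, reusing those proofs and pointing out only the changes forced by the transcendental nonlinearity $f(u) = -\tfrac{k^2}{2}\sin(2u)$ (so that $f'(u) = -k^2\cos(2u)$ and $F(u) = -\tfrac{k^2}{2}\sin^2 u$, whose linearization at $0$ is $-k^2 u$ --- this is the origin of the coefficient $k^2$ in the correction terms) and by the power-law, rather than exponential, parameter rate.

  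For \eqref{eq:A-by-parts-wmap} I would reduce to $\lambda = 1$ and $h_1, h_2 \in C_0^\infty((0,+\infty))$ and carry out the same integration by parts as in the proof of \eqref{eq:A-by-parts-ym}. That computation uses only $\partial_r F(u) = f(u)\,\partial_r u$ and $\partial_r(h_2 f(h_1)) = h_2\,\partial_r f(h_1) + f(h_1)\,\partial_r h_2$, which hold for the $\sin$ nonlinearity as well, and it recasts the left-hand side as $\int_0^{+\infty} r(q'/r)'\cdot\tfrac1{r^2}\left(F(h_1+h_2) - F(h_1) - f(h_1)h_2 + \tfrac{k^2}{2}h_2^2\right)\udr$. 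Writing $\wt F(u) := F(u) + \tfrac{k^2}{2}u^2$, the quantity in parentheses equals $\wt F(h_1+h_2) - \wt F(h_1) - \wt F'(h_1)h_2$, and since $\wt F''(u) = f'(u) + k^2 = 2k^2\sin^2 u$ is globally bounded it is pointwise $\le k^2 h_2^2$ --- a cleaner estimate than the polynomial one needed for Yang--Mills. Combining this with property \ref{enum:multip-petit-ym} of Lemma~\ref{lem:fun-a-ym} (which makes $|r(q'/r)'|$ as small as we wish) and with $\int_0^{+\infty}\tfrac1{r^2}h_2^2\udr \lesssim \|h_2\|_\cH^2$, and then rescaling, gives \eqref{eq:A-by-parts-wmap}.

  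For \eqref{eq:A-pohozaev-wmap} I would split the operator $\partial_r^2 + \tfrac1r\partial_r - \tfrac{k^2}{r^2}$ and apply identity \eqref{eq:aux-pohozaev} with $N = 2$ to the $\partial_r^2 + \tfrac1r\partial_r$ part together with $-k^2$ times the lower bound \eqref{eq:A0-lin-h-ym} for $\la A_0(\lambda)h, \tfrac1{r^2}h\ra$; this is the proof of \eqref{eq:A-pohozaev-ym} with $k^2$ in place of $4$, the boundary coefficients matching because $k^2\cdot\tfrac{2\pi}{\lambda}\int_0^{R\lambda}\tfrac1{r^2}h^2\udr = \tfrac{2\pi}{\lambda}\int_0^{R\lambda}\tfrac{k^2}{r^2}h^2\udr$. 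The three remaining estimates I would obtain by rerunning the proofs of \eqref{eq:L0-A0-ym}, \eqref{eq:L-A-ym} and \eqref{eq:approx-potential-ym}: \eqref{eq:L0-A0-wmap} because the two sides agree on $\{|x|\le R\lambda\}$ and, after rescaling, the $L^2$-norm of each over the complement is $\lesssim R^{-(k-1)}$, hence $\le c_0$ for $R$ large (the standing assumption $k\ge 3$ ensures $L^2$-integrability); \eqref{eq:approx-potential-wmap} because $f'(W_\lambda) + k^2$ vanishes at infinity, which is what permits replacing $\tfrac12(q'' + \tfrac{\lambda}{r}q')$ --- equal to $\tfrac1\lambda$ near the origin --- by $1$ up to an error vanishing as $R\to\infty$; and \eqref{eq:L-A-wmap} exactly as \eqref{eq:L-A-ym}, the only difference being $b(t)/\lambda(t)\sim|t|^{-1}$ here instead of $\sim 1$, which is precisely the extra factor $|t|^{-1}$ in the bound.

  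I do not foresee a real obstacle, since the lemma is essentially a transcription of Lemma~\ref{lem:op-A-ym}. The points needing attention are purely a matter of bookkeeping --- keeping the coefficient $k^2$ consistent through the Pohozaev and integration-by-parts identities, and matching the powers of $|t|$, in particular recognizing that $|t|^{-\frac{2k+2}{k-2}}$ in \eqref{eq:approx-potential-wmap} is the square of the bootstrap bound $|t|^{-\frac{k+1}{k-2}}$ on $\|\bs g(t)\|_\cE$ from \eqref{eq:assumption-g-wmap}. The one genuinely new input relative to the Yang--Mills case is the uniform boundedness of $\wt F''$, which in fact makes the present case slightly easier.
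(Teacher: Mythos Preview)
Your proposal is correct and follows exactly the approach the paper intends: the paper itself skips the proof entirely, stating only that it ``is almost identical to the proof of Lemma~\ref{lem:op-A-ym},'' and your fill-in of the details is faithful to that template. Your observation that $\wt F'' = 2k^2\sin^2 u$ is globally bounded is a nice touch that makes the Taylor remainder estimate for \eqref{eq:A-by-parts-wmap} cleaner than in the polynomial Yang--Mills case, but this is a minor simplification rather than a different route.
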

The proof is almost identical to the proof of Lemma~\ref{lem:op-A-ym} and we will skip it.

\begin{lemma}
  \label{lem:bootstrap-wmap}
  Let $c_1 > 0$. If $C_0$ is sufficiently large, then there exists a function $q(x)$ and $T_0 < 0$ with the following property.
  If $T_1 < T_0$ and \eqref{eq:lambda-bound0-wmap}, \eqref{eq:assumption-g-wmap} hold for $t \in [T, T_1]$, then for $t \in [T, T_1]$ there holds
  \begin{equation}
    \label{eq:boot-dt-wmap}
    H'(t) \leq c_1 \cdot C_0^2 |t|^{-\frac{3k}{k-2}}.
  \end{equation}
\end{lemma}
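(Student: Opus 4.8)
The plan is to repeat, essentially line by line, the proof of Lemma~\ref{lem:bootstrap}, making the modifications already indicated for the Yang--Mills equation in Subsection~\ref{ssec:energy-ym}; the one substantive change is that the exponential rate $\eee^{-3\kappa|t|}$ is systematically replaced by the power-law rate $|t|^{-\frac{3k}{k-2}}$. Throughout I call a term \emph{negligible} if it is bounded by $c\cdot C_0^2\cdot|t|^{-\frac{3k}{k-2}}$ with $c$ as small as we please, the parameters being fixed in the order $q(r)$, then $C_0$ (possibly depending on $q$), then $|T_0|$. The relevant bookkeeping of scales is: $b(t)\sim|t|^{-\frac{k}{k-2}}$, $\lambda(t)\sim|t|^{-\frac{2}{k-2}}$ so $b(t)/\lambda(t)\sim|t|^{-1}$, while $\|\bs g(t)\|_\cE\lesssim C_0|t|^{-\frac{k+1}{k-2}}$ by \eqref{eq:assumption-g-wmap} and $|\lambda'(t)-b(t)|+|\mu'(t)|\lesssim C_0|t|^{-\frac{k+1}{k-2}}$ by \eqref{eq:mod-wmap}; the identity that makes everything fit is $\tfrac{2k+2}{k-2}+1=\tfrac{3k}{k-2}$.

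First I would compute $I'(t)$ exactly as in Step~1 of the proof of Lemma~\ref{lem:bootstrap}: conservation of energy, the definitions of $\bs\psi$ and $\bs g$, and self-adjointness of $\vD^2E(\bs\varphi)$ express $I'(t)$ as the sum of a term in $(-\partial_r^2-\tfrac1r\partial_r-\tfrac1{r^2}f'(\varphi))\psi$ paired with $g$ (negligible after enlarging $C_0$, by \eqref{eq:psi-lin-wmap} and \eqref{eq:assumption-g-wmap}), the term $-\la\dot\psi,\dot g\ra$ (which, by \eqref{eq:psi1-wmap} and then \eqref{eq:L0-A0-wmap}, equals $-b(\lambda'-b)\la A_0(\lambda)\Lambda W_\uln\lambda,\dot g\ra$ up to negligible errors), and a nonlinear term (which, by \eqref{eq:L-A-wmap} and the integration-by-parts identity \eqref{eq:A-by-parts-wmap} with $h_1=\varphi$, $h_2=g$, equals $-b\la A(\lambda)g,\tfrac1{r^2}(f(\varphi+g)-f(\varphi)+k^2g)\ra$ up to negligible errors). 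This is the analogue of \eqref{eq:dtI-ym}; as dictated by Remark~\ref{rem:hidden-lin}, the linear part $k^2g$ must be carried along throughout. Next I would compute $(bJ)'(t)$ as in Step~2: the terms $b'\int\dot g\,A_0(\lambda)g$ and $b\lambda'\int\dot g\,\partial_\lambda A_0(\lambda)g$ are negligible by the boundedness of $\{A_0(\lambda)\}$ and $\{\lambda\partial_\lambda A_0(\lambda)\}$ in $\scrL(\cH;L^2)$; one has $\int\dot g\,A_0(\lambda)\dot g\,\udr=0$ by integration by parts and $b\int\dot g\,A_0(\lambda)\psi\,\udr=-b(\lambda'-b)\la A_0(\lambda)\Lambda W_\uln\lambda,\dot g\ra$ up to negligible errors by \eqref{eq:psi0-wmap}; the term $b\int\dot\psi\cdot A_0(\lambda)g\,\udr$ is negligible by \eqref{eq:psi1-wmap}; and for the last piece I would write $A_0(\lambda)=A(\lambda)+\big(\tfrac1{2\lambda}q''(\tfrac r\lambda)+\tfrac1{2r}q'(\tfrac r\lambda)\big)$ and apply \eqref{eq:A-pohozaev-wmap} together with \eqref{eq:approx-potential-wmap} to reach the analogue of \eqref{eq:dtJ-ym}, in which the only surviving contributions beyond negligible ones are $b(\lambda'-b)\la A_0(\lambda)\Lambda W_\uln\lambda,\dot g\ra$, the same $A(\lambda)$-nonlinear bracket that appears (with opposite sign) in $I'(t)$, and the localized quantity $\tfrac b\lambda\big({-}2\pi\int_0^{R\lambda}((\partial_r g)^2+\tfrac{k^2}{r^2}g^2)\udr+\int_0^{+\infty}\tfrac1{r^2}(f'(W_\lambda)+k^2)g^2\udr\big)$.

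Adding $I'(t)$ and $(bJ)'(t)$, the two terms $b(\lambda'-b)\la A_0(\lambda)\Lambda W_\uln\lambda,\dot g\ra$ cancel, as do the two terms $b\la A(\lambda)g,\tfrac1{r^2}(f(\varphi+g)-f(\varphi)+k^2g)\ra$, leaving
\begin{equation*}
  H'(t)\leq \tfrac b\lambda\Big({-}2\pi\int_0^{R\lambda}\big((\partial_r g)^2+\tfrac{k^2}{r^2}g^2\big)\udr+\int_0^{+\infty}\tfrac1{r^2}\big(f'(W_\lambda)+k^2\big)g^2\udr\Big)+cC_0^2|t|^{-\frac{3k}{k-2}}.
\end{equation*}
The key observation is that $\tfrac1{r^2}(f'(W_\lambda)+k^2)=\tfrac{k^2}{r^2}\cdot\tfrac8{((r/\lambda)^k+(r/\lambda)^{-k})^2}$ is precisely the localized potential of Lemma~\ref{lem:bulles-coer-wmap} at scale $\lambda$; rescaling \eqref{eq:coer-lin-coer-2-wmap} to this scale with $r_1=R$ and using the orthogonality $\la\tfrac1\lambda\cZ_\uln\lambda,g\ra=0$ from \eqref{eq:g-orth-wmap} (here, unlike in the NLW case, there is no linear-instability direction $\cY$ to handle) gives $-2\pi\int_0^{R\lambda}((\partial_r g)^2+\tfrac{k^2}{r^2}g^2)\udr+\int_0^{+\infty}\tfrac1{r^2}(f'(W_\lambda)+k^2)g^2\udr\lesssim c\|g\|_\cH^2\lesssim cC_0^2|t|^{-\frac{2k+2}{k-2}}$ with $c$ as small as we wish once $R$ is large. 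Multiplying by $b/\lambda\sim|t|^{-1}$ and using $\tfrac{2k+2}{k-2}+1=\tfrac{3k}{k-2}$ yields $H'(t)\leq cC_0^2|t|^{-\frac{3k}{k-2}}$; choosing $c<c_1$ (thereby fixing $R$, then $C_0$, then $|T_0|$) gives \eqref{eq:boot-dt-wmap}.

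The work is almost entirely bookkeeping, and that is where I expect the only real friction: one must check that every error produced by Lemma~\ref{lem:psi-wmap}, Lemma~\ref{lem:op-A-wmap} and \eqref{eq:mod-wmap} is genuinely $o(|t|^{-\frac{3k}{k-2}})$, or at worst $O(C_0|t|^{-\frac{3k}{k-2}})$ and hence absorbed by enlarging $C_0$, which amounts to tracking the exponents $\tfrac{k+1}{k-2}$, $\tfrac{2k-1}{k-2}$, $\tfrac{k}{k-2}$, $\tfrac2{k-2}$ through each product. The algebraic skeleton of the argument — the cancellation of the $b(\lambda'-b)\la A_0(\lambda)\Lambda W_\uln\lambda,\dot g\ra$ terms and the emergence of the localized potential under $b/\lambda$ — is identical to the NLW and Yang--Mills cases and introduces no new ideas.
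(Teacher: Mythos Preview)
Your proposal is correct and follows essentially the same approach as the paper's own proof: compute $I'(t)$ and $(bJ)'(t)$ as in Lemma~\ref{lem:bootstrap}, replace terms using Lemmas~\ref{lem:psi-wmap} and~\ref{lem:op-A-wmap}, observe the same cancellations, and conclude via the localized coercivity \eqref{eq:coer-lin-coer-2-wmap} together with the orthogonality \eqref{eq:g-orth-wmap}. Your exposition is in fact more detailed than the paper's, which simply records the formula for $I'(t)$, the formula for $(bJ)'(t)$, adds them, and cites \eqref{eq:A-pohozaev-wmap}, \eqref{eq:approx-potential-wmap} and \eqref{eq:coer-lin-coer-2-wmap}; the exponent bookkeeping you spell out (in particular $\tfrac{2k+2}{k-2}+1=\tfrac{3k}{k-2}$) and the remark that there is no $\cY$-direction to control are left implicit in the paper.
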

\begin{proof}
  We follow the lines of the proof of Lemma~\ref{lem:bootstrap}. We have
  $$
  I'(t) = \big\la \big(\partial_r^2 + \frac 1r \partial_r +\frac{1}{r^2}f'(\varphi)\big)\psi, g\big\ra - \la \dot \psi, g\ra - \big\la \dot \varphi, \frac{1}{r^2}(f(\varphi + g) - f(\varphi) - f'(\varphi)g)\big\ra.
  $$
The first term is $\lesssim C_0|t|^{-\frac{3k}{k-2}}$, hence negligible (by enlarging $C_0$ if necessary).
  Inequality \eqref{eq:psi1-wmap} implies that the second term can be replaced by $-\frac{b}{\lambda}(\lambda'-b)\la \Lambda_0 \Lambda W_\uln\lambda, \dot g\ra$,
  which in turn can be replaced by $-b(\lambda'-b)\la A_0(\lambda) \Lambda W_\uln\lambda, \dot g\ra$, thanks to \eqref{eq:L0-A0-wmap}.
  From \eqref{eq:L-A-wmap} we infer that the third term can be replaced by $b\cdot \big\la A(\lambda)\varphi, \frac{1}{r^2}(f(\varphi + g) - f(\varphi) - f'(\varphi)g)\big\ra$
  (indeed, $\big\|\frac{1}{r^2}(f(\varphi + g) - f(\varphi) - f'(\varphi)g)\big\|_{L^1} \lesssim \int_0^{+\infty}\frac{1}{r^2}g^2 \udr \lesssim \|g\|_\cH^2$).
  Using formula \eqref{eq:A-by-parts-wmap} with $h_1 = \varphi$ and $h_2 = g$ we obtain
  \begin{equation}
    \label{eq:dtI-wmap}
    I'(t) \simeq -b(\lambda'-b)\cdot \la A_0(\lambda)\Lambda W_\uln\lambda, \dot g\ra - b\cdot\big\la A(\lambda)g, \frac{1}{r^2}(f(\varphi + g) - f(\varphi) + k^2 g)\big\ra.
  \end{equation}

  As in the proof of Lemma~\ref{lem:bootstrap}, we obtain
  \begin{equation}
    \begin{aligned}
    (bJ)'(t) &\simeq b(\lambda' - b)\cdot \la A_0(\lambda) \Lambda W_\uln\lambda, \dot g\ra \\
    &+ b\int\big(\big(\partial_r^2 +\frac 1r\partial_r - \frac{k^2}{r^2}\big)g + \frac{1}{r^2}\big(f(\varphi + g) - f(\varphi) + k^2 g\big)\big)\cdot A_0(\lambda)g \udr,
\end{aligned}
\end{equation}
hence
\begin{equation}
  \begin{aligned}
  H'(t) &\simeq b\int(\big(\partial_r^2 +\frac 1r\partial_r - \frac{k^2}{r^2}\big)g\big)\cdot A_0(\lambda)g \udr \\ 
  &- \frac{b}{\lambda}\int_0^{+\infty}\frac 12 \big(q''\big(\frac{r}{\lambda}\big) + \frac{\lambda}{r}q'\big(\frac{r}{\lambda}\big)\big)\frac{1}{r^2}\big(f(\varphi + g) - f(\varphi)+k^2g\big)g \udr.
\end{aligned}
\end{equation}
and the conclusion follows from \eqref{eq:A-pohozaev-wmap}, \eqref{eq:approx-potential-wmap} and \eqref{eq:coer-lin-coer-2-wmap}.
\end{proof}

\begin{proof}[Proof of Proposition~\ref{prop:bootstrap-wmap}]
  We first show \eqref{eq:bootstrap-mu-wmap}. From \eqref{eq:mod-wmap} and \eqref{eq:assumption-g-wmap} we obtain
  \begin{equation}
    \label{eq:bootstrap-l-1-wmap}
    |\mu(t) - 1| = |\mu(t) - \mu(T)| \lesssim \int_{-\infty}^t C_0|t|^{-\frac{k+1}{k-2}}\ud t \lesssim C_0|t|^{-\frac{3}{k-2}}.
  \end{equation}

  Again from \eqref{eq:mod-wmap} and \eqref{eq:assumption-g-wmap} we have $|\lambda'(t) - b(t)| \lesssim C_0|t|^{-\frac{k+1}{k-2}}$.
  Multiplying by $b'(t) = \frac k2\cdot\big(\frac{2\kappa}{k-2}\big)^k\cdot \frac{\lambda(t)^{k-1}}{\mu(t)^k} \sim |t|^{-\frac{2k-2}{k-2}}$,
  cf. \eqref{eq:b-def-wmap} and \eqref{eq:lambda-bound0-wmap}, we obtain
  $\big|\dd t\big(b(t)^2 - \big(\frac{2\kappa}{k-2}\big)^k\frac{\lambda(t)^k}{\mu(t)^k}\big)\big| \lesssim C_0|t|^{-\frac{3k-1}{k-2}}$.
  Since $b(T) = \big(\frac{2\kappa}{k-2}\big)^\frac k2 \cdot\lambda(T)^\frac k2$ and $\mu(T) = 1$, 
  this yields $\big|b(t)^2 - \big(\frac{2\kappa}{k-2}\big)^k\frac{\lambda(t)^k}{\mu(t)^k}\big| \lesssim C_0|t|^{-\frac{2k+1}{k-2}}$.
  But $b(t) + \big(\frac{2\kappa}{k-2}\big)^\frac k2\frac{\lambda(t)^\frac k2}{\mu(t)^\frac k2} \sim |t|^\frac{k}{k-2}$, see \eqref{eq:lambda-bound0} and \eqref{eq:b-bound0}, hence
  \begin{equation}
    \label{eq:b-lambda-wmap}
    \big|b(t) - \big(\frac{2\kappa}{k-2}\big)^\frac k2\frac{\lambda(t)^\frac k2}{\mu(t)^\frac k2}\big| \lesssim C_0|t|^{-\frac{k+1}{k-2}}.
  \end{equation}
  Bound \eqref{eq:bootstrap-l-1-wmap} implies that $\big|\frac{\lambda(t)^\frac k2}{\mu(t)^\frac k2}-\lambda(t)^\frac k2\big| \ll |t|^{-\frac{k+1}{k-2}}$, thus \eqref{eq:b-lambda-wmap} yields
  $|\lambda'(t) - \big(\frac{2\kappa}{k-2}\big)^\frac k2 \lambda(t)^\frac k2| \lesssim C_0|t|^{-\frac{k+1}{k-2}}$, which is \eqref{eq:bootstrap-eq-l-wmap}.


  We turn to the proof of \eqref{eq:bootstrap-g-wmap}.
  From \eqref{eq:taille-correct-wmap} the initial data at $t = T$ satisfy
  $\|\bs g(T)\|_\cE \lesssim |T|^{-\frac{2k}{k-2}} \ll |T|^{-\frac{k+1}{k-2}}$, thus $H(T) \lesssim |T|^{-\frac{2k+2}{k-2}}$.
  If $C_0$ is large enough, then integrating \eqref{eq:boot-dt-wmap} we get $H(t) \leq c\cdot C_0^2|t|^{-\frac{2k+2}{k-2}}$, with a small constant $c$.
  Increasing $C_0$ if necessary and using the coercivity of $H$, we obtain \eqref{eq:bootstrap-g-wmap}.
\end{proof}

\appendix

\section{Cauchy theory}
\label{sec:cauchy}
\subsection{Persistence of regularity}
\label{ssec:persistence}

\begin{proposition}
  \label{prop:persistence}
  Let $\bs u: (T_-, T_+) \to \cE$ be the solution of \eqref{eq:nlw0} with the initial condition $\bs u(t_0) = \bs u_0$.
  If $\bs u_0 \in X^1 \times H^1$, then $\bs u \in C((T_-, T_+); X^1\times H^1) \cap C^1((T_-, T_+); \cE)$. \qed
\end{proposition}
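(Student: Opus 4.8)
The plan is to prove everything locally in time: fix a compact interval $I=[a,b]\subset(T_-,T_+)$ with $t_0\in I$ and show $\bs u\in C(I;X^1\times H^1)\cap C^1(I;\cE)$. The one genuinely nonelementary input, which I would import from the Cauchy theory of \eqref{eq:nlw0} in dimension $6$ (Bulut--Czubak--Li--Pavlovi\'c--Zhang \cite{BCLPZ13}), is that an energy solution existing on the compact interval $I$ has finite scaling-critical Strichartz norm on $I$; concretely, in dimension $6$ one may take $S(I):=L^{7/2}_{t,x}(I\times\bR^6)$ together with the companion norms of the solution space, and then $\|u\|_{S(I)}<\infty$. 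Consequently $I$ splits into finitely many subintervals $J_1,\dots,J_K$, with $K$ depending only on $\|u\|_{S(I)}$, on each of which $\|u\|_{S(J_\ell)}\le\delta$ for a small absolute constant $\delta$.

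The second step is to propagate the extra derivative. Differentiating \eqref{eq:nlw0} in space and in time, $v:=\grad u$ and $w:=\partial_t u$ satisfy the \emph{linear} wave equations $\partial_t^2 v-\Delta v=f'(u)v$ and $\partial_t^2 w-\Delta w=f'(u)w$ with $f'(u)=2|u|$, whose data at $t_0$ are $(\grad u_0,\grad\dot u_0)\in\dot H^1\times L^2$ and $(\dot u_0,\Delta u_0+f(u_0))\in\dot H^1\times L^2$ respectively; the latter lies in $\dot H^1\times L^2$ because $u_0\in\dot H^2$ and, by interpolation with $u_0\in\dot H^1$, $u_0\in\dot H^{3/2}\hookrightarrow L^4$, so $f(u_0)=|u_0|u_0\in L^2$. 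On each $J_\ell$ I would run the inhomogeneous Strichartz estimate in the energy-space Strichartz norm $S^1$ with associated forcing norm $N^1$: $\|v\|_{S^1(J_\ell)}\lesssim\|(v,\partial_t v)(t_\ell)\|_{\dot H^1\times L^2}+\|f'(u)v\|_{N^1(J_\ell)}$, and likewise for $w$; since $f'$ is linearly bounded in $u$, H\"older gives $\|f'(u)v\|_{N^1(J_\ell)}\lesssim\|u\|_{S(J_\ell)}\|v\|_{S^1(J_\ell)}\le\delta\|v\|_{S^1(J_\ell)}$. For $\delta$ small this is an a priori estimate that closes by the usual continuity argument (the left-hand side is finite near $t_0$ because the data are $\dot H^1\times L^2$; the same Strichartz contraction also furnishes local well-posedness in the higher-regularity class, so $I^*:=\{t:v\ \text{has finite}\ S^1\ \text{norm between}\ t_0\ \text{and}\ t\}$ is nonempty, open, and closed in $I$). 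Hence $v,w\in S^1(I)$; in particular $\grad u,\partial_t u\in C(I;\dot H^1)$, which together with $u\in C(I;\dot H^1)$ and $\partial_t u\in C(I;L^2)$ yields $\bs u\in C(I;X^1\times H^1)$, uniqueness in $\cE$ \cite{BCLPZ13} ensuring that this is the original solution and not a new one.

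For the $C^1$-in-time statement I would just read off the equation $\partial_t\bs u=J\circ\vD E(\bs u)=(\partial_t u,\ \Delta u+f(u))$: given $\bs u\in C(I;X^1\times H^1)$ one has $\partial_t u\in C(I;H^1)\hookrightarrow C(I;\dot H^1)$, $\Delta u\in C(I;L^2)$, and $f(u)=|u|u\in C(I;L^2)$ because $u\in C(I;\dot H^{3/2})\hookrightarrow C(I;L^4)$ and $\zeta\mapsto|\zeta|\zeta$ is locally Lipschitz from $L^4$ to $L^2$; therefore $\partial_t\bs u\in C(I;\cE)$, i.e. $\bs u\in C^1(I;\cE)$.

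The main obstacle is Step 1 and the structural consequence that the whole argument must be run through Strichartz spaces: in dimension $6$ the critical embedding is only $\dot H^1\hookrightarrow L^3$, so $f'(u)v=2|u|v\in L^{3/2}\not\subset L^2$ and a naive energy estimate for $v=\grad u$ fails to close. What makes the scheme work is precisely the finiteness of $\|u\|_{S(I)}$ on compact time intervals, which in dimension $N\ge6$ is itself a delicate part of the Cauchy theory and is the item cited from \cite{BCLPZ13}; once it is granted, Steps 2 and 3 are routine.
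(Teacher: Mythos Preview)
Your argument is correct and is precisely the standard persistence-of-regularity scheme for energy-critical wave equations: use finiteness of the critical Strichartz norm on compact subintervals of $(T_-,T_+)$ to split into pieces where the potential $f'(u)$ is perturbative, then run linear Strichartz estimates on $\grad u$ and $\partial_t u$. The computations you give (the $\dot H^{3/2}\hookrightarrow L^4$ embedding to place $f(u_0)\in L^2$, the H\"older estimate $\|f'(u)v\|_{N^1}\lesssim\|u\|_{S}\|v\|_{S^1}$, and reading $C^1(I;\cE)$ off the equation) are all sound.

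As for comparison: the paper does not prove this proposition at all. It marks it with a \qed and immediately below says ``The proof is classical, see \cite[Chapter 5]{Cazenave03} for more general results in the case of the nonlinear Schr\"odinger equation.'' So there is nothing to compare your approach to; you have supplied exactly the kind of argument the paper is gesturing at. Your remark that in dimension $N\ge 6$ the finiteness of $\|u\|_{S(I)}$ is the one nontrivial input, and must be taken from the Cauchy theory of \cite{BCLPZ13}, is apt and is indeed why the paper cites that reference for local well-posedness in the first place.
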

  The proof is classical, see \cite[Chapter 5]{Cazenave03} for more general results in the case of the nonlinear Schr\"odinger equation.
  Analogous results hold in the case of equations \eqref{eq:ym0} and \eqref{eq:wmap0}.

  We recall also the following fact from the Cauchy theory of \eqref{eq:nlw} in the energy space. For $I \subset \bR$
  we denote $S(I) = L^\frac{2(N+1)}{N-2}\big(I; L^\frac{2(N+1)}{N-2}\big)$ the Strichartz norm on the time interval $I$.
  For $t \in \bR$ we denote $S(t)$ the linear wave propagator (this traditional collision of notation should not lead to misunderstanding).
  \begin{proposition}
    \label{prop:small-global}
    There exists $\eta > 0$ such that if $\|\bs u_0\|_\cE \leq \eta$, then the solution of \eqref{eq:nlw0} with the initial condition $\bs u(t_0) = \bs u_0$
    is global and
    \begin{equation}
      \sup_{t \in \bR}\|\bs u(t)\|_\cE \lesssim \inf_{t \in \bR}\|\bs u(t)\|_\cE.
    \end{equation}
    Moreover, $\|u(t)\|_{S(\bR)} < +\infty$ and there exists $\bs v_+ \in \cE$ such that
    \begin{equation}
      \lim_{t \to +\infty} \|\bs u(t) - S(t)\bs v_+\|_\cE = 0
    \end{equation}
    (analogously for $t \to -\infty$).
  \end{proposition}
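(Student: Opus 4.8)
The plan is to run the classical Strichartz-based contraction argument for the energy-critical wave equation, so the first thing I would record are the two analytic inputs, both standard in dimension $6$ (see Ginibre--Soffer--Velo \cite{GSV92}, Kenig--Merle \cite{KeMe08}, Bulut--Czubak--Li--Pavlovi\'c--Zhang \cite{BCLPZ13}). The linear input is the inhomogeneous Strichartz estimate: if $\bs u$ solves $(\partial_t^2 - \Delta)u = F$ with data $\bs u_0 \in \cE$ at time $t_0$, then
\begin{equation*}
  \sup_{t \in I}\|\bs u(t)\|_\cE + \|u\|_{S(I)} \leq C\big(\|\bs u_0\|_\cE + \|F\|_{N(I)}\big)
\end{equation*}
for a suitable dual Strichartz norm $N(I)$ (which in practice also involves a half-derivative and an auxiliary space-time norm $W(I)$). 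The nonlinear input is
\begin{equation*}
  \|f(u) - f(v)\|_{N(I)} \lesssim \big(\|u\|_{S(I)} + \|v\|_{S(I)}\big)\|u - v\|_{S(I)},
\end{equation*}
which comes from the pointwise bound $|f(u) - f(v)| \lesssim (|u| + |v|)|u - v|$, fractional calculus estimates (Leibniz/chain rule), and H\"older's inequality, using that in dimension $6$ the exponent defining $S$ and the one defining $N$ are dual under the energy-critical scaling.

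Next I would set up the fixed point. Writing $\Phi(u)(t) := S(t - t_0)\bs u_0 + \int_{t_0}^t S(t-s)(0, f(u(s)))\ud s$, the estimates above show that for $\eta$ small $\Phi$ maps the ball $\{u : \|u\|_{S(\bR)} \leq 2C\eta\}$ into itself and is a contraction there in the metric induced by $\|\cdot\|_{S(\bR)}$; indeed $\|\Phi(u)\|_{S(\bR)} \leq C\|\bs u_0\|_\cE + C'(2C\eta)^2 \leq 2C\eta$ and $\|\Phi(u) - \Phi(v)\|_{S(\bR)} \leq C'(4C\eta)\|u-v\|_{S(\bR)} \leq \tfrac12\|u-v\|_{S(\bR)}$. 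The resulting fixed point is a global solution with $\|u\|_{S(\bR)} < +\infty$, and by uniqueness it coincides with the solution of \eqref{eq:nlw0} given by the local Cauchy theory; the Strichartz bound also gives $\sup_t\|\bs u(t)\|_\cE \lesssim \eta$. To get $\sup_t\|\bs u(t)\|_\cE \lesssim \inf_t\|\bs u(t)\|_\cE$ I would rerun the two estimates from an arbitrary base time $t_1$: since $\|u\|_{S(\bR)}$ is already known to be small, one obtains $\|u\|_{S(\bR)} \leq C\|\bs u(t_1)\|_\cE + \tfrac12\|u\|_{S(\bR)}$, hence $\|u\|_{S(\bR)} \lesssim \|\bs u(t_1)\|_\cE$, and then $\sup_t\|\bs u(t)\|_\cE \leq C\|\bs u(t_1)\|_\cE + C\|u\|_{S(\bR)}^2 \lesssim \|\bs u(t_1)\|_\cE$ (absorbing the quadratic term using the a priori smallness). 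Taking the infimum over $t_1$ yields the claim.

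For the scattering statement I would apply the backward linear flow to the Duhamel formula, obtaining $S(-t)\bs u(t) = \bs u_0 + \int_{t_0}^t S(-s)(0, f(u(s)))\ud s$, and observe that the integral converges in $\cE$ as $t \to +\infty$: by the linear and nonlinear estimates $\|\int_{T_1}^{T_2} S(-s)(0, f(u(s)))\ud s\|_\cE \lesssim \|f(u)\|_{N([T_1, T_2])} \lesssim \|u\|_{S([T_1, T_2])}^2 \to 0$ as $T_1, T_2 \to +\infty$, since $\|u\|_{S(\bR)} < +\infty$. Defining $\bs v_+$ as the limit of $S(-t)\bs u(t)$ gives $\|\bs u(t) - S(t)\bs v_+\|_\cE = \|S(-t)\bs u(t) - \bs v_+\|_\cE \to 0$, and the case $t \to -\infty$ is identical.

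The step I expect to be the only genuinely delicate one is the choice of the auxiliary (dual) Strichartz space $N(I)$ together with the verification that the nonlinear estimate closes: in dimension $6$ the nonlinearity $f(u) = |u|u$ is exactly quadratic but merely $C^{1,1}$, so one cannot differentiate $f$ twice and must instead run a one-derivative fractional Leibniz/chain-rule argument in the mixed space-time norms — this is precisely the computation carried out in the references above, and the same scheme (with the $\cH$-norm replacing $\dot H^1$) applies to \eqref{eq:ym0} and \eqref{eq:wmap0}.
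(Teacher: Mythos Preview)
Your sketch is correct and follows the standard Strichartz contraction argument; the paper does not give its own proof of this proposition but simply recalls it as a known fact from the Cauchy theory (citing \cite{GSV92}, \cite{KeMe08}, \cite{BCLPZ13}), so there is nothing to compare against.
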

\subsection{Profile decomposition and consequences}
\label{ssec:profile}
For details about the nonlinear profile decomposition for the critical wave equation we refer to \cite{BaGe99} (the defocusing case), \cite{DKM1} (dimension $N = 3$) and
\cite{Rod14} (any dimension).
For the reader's convenience we recall the following result \cite[Proposition 2.3]{Rod14}.
\begin{proposition}
  \label{prop:profile}
  Let $\bs u_{0, n}$ be a bounded sequence in $\cE$ admitting a profile decomposition with profiles $\bs U^j\lin$ and parameters $\lambda_{j, n}$, $t_{j, n}$.
  Let $\bs U^j$ be the corresponding nonlinear profiles and let $\theta_n$ be a sequence of positive numbers. Assume that
  \begin{equation}
    \forall j\geq1, n\geq 1,\ \frac{\theta_n - t_{j, n}}{\lambda_{j, n}} < T_+(\bs U^j)\quad\text{and}\quad \limsup_{n\to+\infty}\|U^j\|_{S\big(\frac{-t_{j, n}}{\lambda_{j, n}}, \frac{\theta_n - t_{j, n}}{\lambda_{j, n}}\big)} < +\infty.
  \end{equation}
  Let $\bs u_n$ be the solution of \eqref{eq:nlw0} with the initial data $\bs u_n(0) = \bs u_{0, n}$.
  Then for $n$ sufficiently large $\bs u_n$ is defined on $[0, \theta_n]$ and
  \begin{equation}
    \label{eq:nonlin-profile}
    \bs u_n(t) = \sum_{j = 1}^{J}\bs U^j\Big(\frac{t-t_{j,n}}{\lambda_{j,n}}\Big)_{\lambda_{j,n}} + \bs w_n^J(t) + \bs r_n^J(t),\qquad \text{for all }J\in \bN\text{ and }t\in [0, \theta_n],
  \end{equation}
  with $\lim_{J\to+\infty}\limsup_{n\to+\infty}\sup_{t\in[0, \theta_n]}\|\bs r_n^J\|_\cE = 0$.
  An analogous statement holds for $\theta \leq 0$.
\end{proposition}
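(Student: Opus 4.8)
The plan is to run the classical Bahouri--Gérard argument together with the long-time perturbation theory for \eqref{eq:nlw0}. Write the linear profile decomposition as $\bs u_{0,n} = \sum_{j=1}^{J}\bs U^j\lin\big(\tfrac{-t_{j,n}}{\lambda_{j,n}}\big)_{\lambda_{j,n}} + \bs w_n^J(0)$, with the usual pseudo-orthogonality of the parameters $(\lambda_{j,n},t_{j,n})$, the $\ell^2$ almost-orthogonality of the free energies $\|\bs U^j\lin(0)\|_\cE$, and $\limsup_{n}\|w_n^J(\cdot)\|_{S(\bR)}\to 0$ as $J\to+\infty$. I would take as candidate for $\bs u_n$ on $[0,\theta_n]$ the \emph{approximate solution}
\[
  \bs v_n^J(t) := \sum_{j=1}^{J}\bs U^j\Big(\frac{t-t_{j,n}}{\lambda_{j,n}}\Big)_{\lambda_{j,n}} + \bs w_n^J(t),\qquad \bs w_n^J(t):=S(t)\bs w_n^J(0),
\]
and note that $\|\bs v_n^J(0)-\bs u_n(0)\|_\cE = o_n(1)$ by the defining property of the nonlinear profiles. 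Everything then reduces to (i) a uniform-in-$n$ Strichartz bound for $\bs v_n^J$ on $[0,\theta_n]$ and (ii) smallness of the equation error of $\bs v_n^J$, after which a standard perturbation lemma closes the proof.

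For (i) I would split the profiles into $j\le J_0$ and $j>J_0$. Choosing $J_0$ large, the free energies $\|\bs U^j\lin(0)\|_\cE$ for $j>J_0$ are small and $\ell^2$-summable uniformly in $n$, so Proposition~\ref{prop:small-global} makes each corresponding nonlinear profile global with $\|U^j\|_{S(\bR)}^2\lesssim\|\bs U^j\lin(0)\|_\cE^2$, hence $\sum_{j>J_0}\|U^j\|_{S(\bR)}^2$ is small; the $j\le J_0$ profiles are controlled directly by the hypothesis $\limsup_n\|U^j\|_{S(-t_{j,n}/\lambda_{j,n},\,(\theta_n-t_{j,n})/\lambda_{j,n})}<+\infty$. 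The pseudo-orthogonality of the parameters then promotes these bounds on the individual summands to a bound on the whole sum (the cross terms in the $S$-norm are $o_n(1)$), and with $\|w_n^J(\cdot)\|_{S(\bR)}$ small one gets $\limsup_n\|v_n^J\|_{S([0,\theta_n])}\lesssim 1$, uniformly in $J$.

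For (ii) I would estimate $\bs e_n^J:=\partial_t\bs v_n^J-J\circ\vD E(\bs v_n^J)$ in the dual Strichartz norm on $[0,\theta_n]$. Since each $\bs U^j(\cdot)_{\lambda_{j,n}}$ solves \eqref{eq:nlw0} and $\bs w_n^J(t)$ solves the free equation, $\bs e_n^J$ is (schematically) $f\big(\sum_j U^j(\cdot)_{\lambda_{j,n}}+w_n^J\big)-\sum_j f\big(U^j(\cdot)_{\lambda_{j,n}}\big)$. The dominant contribution consists of the interactions of distinct profiles: a product $U^i(\cdot)_{\lambda_{i,n}}\,U^j(\cdot)_{\lambda_{j,n}}$ with $i\ne j$ is $o_n(1)$ in the relevant Lebesgue space because $\tfrac{\lambda_{i,n}}{\lambda_{j,n}}+\tfrac{\lambda_{j,n}}{\lambda_{i,n}}+\tfrac{|t_{i,n}-t_{j,n}|}{\lambda_{i,n}}\to+\infty$; the terms containing $w_n^J$ are small because $\|w_n^J(\cdot)\|_{S(\bR)}$ is small; and the tail $j>J_0$ is small by step (i). This orthogonal-interaction bookkeeping, carried out on the possibly long and $n$-dependent interval $[0,\theta_n]$, is where the real work lies and is the main obstacle.

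Finally, with $\bs e_n^J$ small in the dual Strichartz norm, $\|\bs v_n^J\|_{S([0,\theta_n])}$ uniformly bounded, and $\|\bs u_n(0)-\bs v_n^J(0)\|_\cE=o_n(1)$, the long-time perturbation lemma for \eqref{eq:nlw0} (a standard consequence of the Strichartz estimates and Proposition~\ref{prop:small-global}) yields, for $n$ large, that $\bs u_n$ is defined on $[0,\theta_n]$ and $\sup_{t\in[0,\theta_n]}\|\bs u_n(t)-\bs v_n^J(t)\|_\cE=o_n(1)+o_J(1)$. Setting $\bs r_n^J(t):=\bs u_n(t)-\bs v_n^J(t)$ gives \eqref{eq:nonlin-profile} with $\lim_{J\to+\infty}\limsup_{n\to+\infty}\sup_{t\in[0,\theta_n]}\|\bs r_n^J\|_\cE=0$, and the case $\theta\le 0$ follows by time reversal.
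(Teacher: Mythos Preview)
Your outline is the standard Bahouri--G\'erard argument combined with long-time perturbation theory, and it is correct in spirit. Note, however, that the paper does not prove this proposition at all: it is simply recalled from \cite[Proposition~2.3]{Rod14} (see the sentence immediately preceding the statement), so there is no ``paper's own proof'' to compare against. Your sketch is essentially the proof one finds in the cited reference and in the earlier works \cite{BaGe99,DKM1}.
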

For a corresponding result for the Yang-Mills equation and the equivariant wave maps, see \cite{JiKe15p}.
\begin{corollary}
  \label{cor:quitte-compact}
  There exists a constant $\eta > 0$ such that the following holds.
  Let $\bs u: [t_0, T_+) \to \cE$ be a maximal solution of \eqref{eq:nlw0} with $T_+ < +\infty$.
  Then for any compact set $K \subset \cE$ there exists $\tau < T_+$ such that $\dist(\bs u(t), K) > \eta$ for $t \in [\tau, T_+)$.
\end{corollary}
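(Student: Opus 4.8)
The plan is to argue by contradiction, combining the nonlinear profile decomposition (Proposition~\ref{prop:profile}) with small-data scattering (Proposition~\ref{prop:small-global}). Let $\eta_0>0$ be the threshold of Proposition~\ref{prop:small-global}: any solution of \eqref{eq:nlw0} whose energy norm is $\leq\eta_0$ at some time is global and scattering, so a solution which is \emph{not} global and scattering satisfies $\|\bs u(t)\|_\cE>\eta_0$ at every time of its existence. I claim the statement holds with $\eta:=\eta_0/2$.

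Suppose not. Then there are a compact set $K\subset\cE$ and a sequence $t_n\uparrow T_+$ with $\dist(\bs u(t_n),K)\leq\eta$. Since $K$ is compact, $M:=\sup_{\bs v\in K}\|\bs v\|_\cE<\infty$, so $\|\bs u(t_n)\|_\cE\leq M+\eta$ and $(\bs u(t_n))_n$ is bounded in $\cE$. After passing to a subsequence it admits a profile decomposition with linear profiles $\bs U^j\lin$, parameters $(\lambda_{j,n},t_{j,n})$, and nonlinear profiles $\bs U^j$. Put $\theta_n:=T_+-t_n>0$, so $\theta_n\to0$, and let $\bs u_n$ solve \eqref{eq:nlw0} with $\bs u_n(0)=\bs u(t_n)$; by uniqueness $\bs u_n(s)=\bs u(t_n+s)$, hence $\bs u_n$ is \emph{not} defined on all of $[0,\theta_n]$, for otherwise $\bs u$ would extend past $T_+$. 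Therefore the hypotheses of Proposition~\ref{prop:profile} with this $\theta_n$ fail for some index; after a further subsequence, fix $j_0$ with either $\frac{\theta_n-t_{j_0,n}}{\lambda_{j_0,n}}\geq T_+(\bs U^{j_0})$ for all $n$, or $\|U^{j_0}\|_{S(-t_{j_0,n}/\lambda_{j_0,n},\,(\theta_n-t_{j_0,n})/\lambda_{j_0,n})}$ unbounded.

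In either case $\bs U^{j_0}$ is not global and scattering, so $\|\bs U^{j_0}(s)\|_\cE>\eta_0$ throughout its lifespan. Let $\bs\Phi_n:=\bs U^{j_0}\lin\big({-}\tfrac{t_{j_0,n}}{\lambda_{j_0,n}}\big)_{\lambda_{j_0,n}}$ be the $j_0$-th piece of $\bs u(t_n)$. Since the $\cE$-norm of a free wave is conserved and scale invariant, $\|\bs\Phi_n\|_\cE=\|\bs U^{j_0}\lin(0)\|_\cE$; and because $\bs U^{j_0}$ agrees with $\bs U^{j_0}\lin$ either at the limiting time of $-t_{j_0,n}/\lambda_{j_0,n}$ (when it converges) or asymptotically as $s\to\pm\infty$, one gets $\|\bs U^{j_0}\lin(0)\|_\cE\geq\eta_0$. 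A short analysis of the parameters shows moreover that $\bs\Phi_n\wto0$ in $\cE$: this is clear if $\lambda_{j_0,n}\to0$ or $\lambda_{j_0,n}\to+\infty$ or $|t_{j_0,n}/\lambda_{j_0,n}|\to+\infty$, and the remaining case $\lambda_{j_0,n}\to\lambda^*\in(0,\infty)$ with $-t_{j_0,n}/\lambda_{j_0,n}\to s_0\in\bR$ is impossible, since then $\frac{\theta_n-t_{j_0,n}}{\lambda_{j_0,n}}\to s_0<T_+(\bs U^{j_0})$ and the Strichartz norm above converges to a finite value, contradicting the choice of $j_0$.

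To conclude, write $\bs u(t_n)=\bs\Phi_n+\bs R_n$. Pseudo-orthogonality of the profile decomposition gives $\langle\bs\Phi_n,\bs R_n\rangle_\cE\to0$, and $\bs\Phi_n\wto0$ gives $\sup_{\bs v\in K}|\langle\bs\Phi_n,\bs v\rangle_\cE|\to0$ (weak convergence is uniform on compact sets, $(\bs\Phi_n)$ being bounded). Expanding the square,
\[
  \dist(\bs u(t_n),K)^2 \;\geq\; \|\bs\Phi_n\|_\cE^2 + 2\langle\bs\Phi_n,\bs R_n\rangle_\cE - 2\sup_{\bs v\in K}|\langle\bs\Phi_n,\bs v\rangle_\cE| \;\geq\; \eta_0^2-o(1),
\]
so $\dist(\bs u(t_n),K)\geq\eta_0-o(1)>\eta$ for $n$ large, a contradiction. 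The routine ingredients are the existence of the profile decomposition and the Pythagorean expansion; the step needing the most care is the third paragraph, where one must check that a failing profile contributes a piece $\bs\Phi_n$ that simultaneously carries energy $\geq\eta_0$ and converges weakly to $0$ — this is exactly where the hypotheses of Proposition~\ref{prop:profile}, the threshold of Proposition~\ref{prop:small-global}, and the finiteness of $T_+$ (through $\theta_n\to0$) are used together.
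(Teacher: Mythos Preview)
Your argument is correct, but it takes a more circuitous route than the paper's. The paper exploits the compactness of $K$ at the \emph{outset}: writing $\bs u(t_n)=\bs k_n+\bs b_n$ with $\bs k_n\in K$ and $\|\bs b_n\|_\cE\le\eta$, one passes to $\bs k_n\to\bs k_0$ strongly and profile-decomposes $\bs b_n$; since $\|\bs b_n\|_\cE\le\eta$, every profile of $\bs b_n$ is small, so the profile decomposition of $\bs u(t_n)$ consists of a single nontrivial profile $\bs U^1=\bs k_0+\bs b_0$ with parameters $(1,0)$, together with profiles that are all global scattering. The hypotheses of Proposition~\ref{prop:profile} are then immediate (for $j=1$ because $\theta_n\to0<T_+(\bs U^1)$, for $j\ge2$ by small data), and the contradiction is obtained in one line. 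Your approach instead decomposes $\bs u(t_n)$ directly, isolates a ``failing'' profile $j_0$, shows it must carry energy $\ge\eta_0$ with pseudo-orthogonal parameters, and only then brings in the compactness of $K$ through the weak-convergence/Pythagoras step at the end. Both are valid; the paper's is shorter because the splitting identifies the profiles rather than arguing about them.

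Two places in your write-up could be tightened. First, the sentence ``after a further subsequence, fix $j_0$ with either $\frac{\theta_n-t_{j_0,n}}{\lambda_{j_0,n}}\ge T_+(\bs U^{j_0})$ for all $n$, or \dots'' deserves a word of justification: since only finitely many profiles can be large (Pythagorean expansion), only finitely many $j$ can ever violate the hypotheses, and since the conclusion of Proposition~\ref{prop:profile} fails for \emph{every} tail of the sequence, one of these finitely many $j$ must violate the hypotheses on every tail---hence along a subsequence. Second, when you rule out the bounded-parameter case for $j_0$, what you are really using is that at most one profile can have parameters with $\lambda_{j_0,n}\to\lambda^*\in(0,\infty)$ and $t_{j_0,n}/\lambda_{j_0,n}\to s_0$ (by pairwise pseudo-orthogonality); if that single profile satisfied the hypotheses of Proposition~\ref{prop:profile} on a subsequence while all others were global scattering, the conclusion would hold on that subsequence, which is impossible. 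So your $j_0$ can indeed be chosen with $\bs\Phi_n\wto0$. With these clarifications the proof is complete.
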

\begin{proof}
  Suppose for the sake of contradiction that there exists a sequence $T_n \to T_+$ such that for $\bs u_{0,n} := \bs u(T_n)$ there holds $\dist(\bs u_{0,n}, K) \leq \eta$,
  hence $\bs u_{0,n} = \bs k_n + \bs b_n$ with $\bs k_n \in K$ and $\|\bs b_n\|_\cE \leq \eta$. By taking a subsequence we can assume that $\bs k_n \to \bs k_0 \in K$
  and that the sequence $\bs b_n$ admits a profile decomposition, in particular $\bs b_n \wto \bs b_0 \in \cE$. This implies that $\bs u_{0,n}$
  admits a profile decomposition with the first profile $\bs U^1 = \bs k_0 + \bs b_0$ (with parameters $\lambda_{1, n} = 1$ and $t_{1, n} = 0$)
  and all the other profiles small in the energy norm. Proposition~\ref{prop:profile} leads to a contradiction.
\end{proof}
\begin{lemma}
  \label{lem:small-to-zero}
  Let $\bs U$ be a solution of \eqref{eq:nlw} such that $\|\bs U(t)\|_\cE$ is small (for any, hence for all $t$). Let $t_n$, $\lambda_n$ be a sequence
  of parameters such that
  \begin{equation}
    \label{eq:appendix-param-orth}
    |\log(\lambda_n)| + \frac{|t_n|}{\lambda_n} \to +\infty.
  \end{equation}
  Then for all $t \in \bR$ there holds
  \begin{equation}
    \label{eq:appendix-weak}
    \bs U\Big(\frac{t-t_n}{\lambda_n}\Big)_{\lambda_n} \wto 0 \qquad \text{in }\cE.
  \end{equation}
\end{lemma}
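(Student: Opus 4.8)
The plan is to prove the weak convergence by testing against a dense family of functions and then distinguishing a small number of regimes, according to the behaviour of $\lambda_n$ and of $s_n := (t - t_n)/\lambda_n$. The essential input, besides routine rescaling bookkeeping, is Proposition~\ref{prop:small-global}: since $\|\bs U(t)\|_\cE$ is uniformly small, $\bs U$ is global, satisfies $\sup_t\|\bs U(t)\|_\cE < +\infty$, and scatters to free solutions $S(t)\bs v_\pm$ as $t \to \pm\infty$. In particular $\bs U(s_n)_{\lambda_n}$ is bounded in $\cE$, so it suffices to show $\la \bs U(s_n)_{\lambda_n}, \bs\phi\ra \to 0$ for $\bs\phi$ in the dense set $C_0^\infty\times C_0^\infty \subset \cE^*$. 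Passing to a subsequence we may assume $\lambda_n \to \lambda_\infty \in [0, +\infty]$ and that either $s_n \to s_\infty \in \bR$ or $|s_n| \to +\infty$; in the latter case, after a further subsequence and by symmetry, $s_n \to +\infty$. The hypothesis $|\log\lambda_n| + |t_n|/\lambda_n \to +\infty$ precisely rules out the configuration ``$\lambda_\infty \in (0,+\infty)$ and $(s_n)$ bounded'', which is exactly the configuration in which the conclusion would fail (there $\bs U(s_n)_{\lambda_n}$ converges strongly to a generically nonzero limit).

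Suppose first $s_n \to s_\infty$ finite. Then $\bs U(s_n) \to \bs U(s_\infty)$ strongly in $\cE$ by continuous dependence on the data, hence $\la \bs U(s_n)_{\lambda_n}, \bs\phi\ra = \la \bs U(s_\infty)_{\lambda_n}, \bs\phi\ra + o(1)$. Since we are not in the excluded case, $\lambda_n \to 0$ or $\lambda_n \to +\infty$; then $\bs U(s_\infty)_{\lambda_n} \wto 0$ in $\cE$ — a fixed element, rescaled to a length scale tending to $0$ or to $+\infty$, converges weakly to zero, by separation of the relevant Fourier supports — and pairing against the fixed $\bs\phi$ gives the claim.

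Suppose now $s_n \to +\infty$. By scattering $\bs U(s_n) = S(s_n)\bs v_+ + o_\cE(1)$, so, $\bs v \mapsto \bs v_\lambda$ being an isometry of $\cE$, $\bs U(s_n)_{\lambda_n} = (S(s_n)\bs v_+)_{\lambda_n} + o_\cE(1)$. Using the scaling covariance of the linear flow, $(S(s)\bs v)_\lambda = S(\lambda s)\big(\bs v_\lambda\big)$, together with $\lambda_n s_n = t - t_n$, this becomes $\bs U(s_n)_{\lambda_n} = S(t - t_n)\big((\bs v_+)_{\lambda_n}\big) + o_\cE(1)$; testing against $\bs\phi$ and moving the propagator onto the test function, it remains to prove $\big\la (\bs v_+)_{\lambda_n}, S(t_n - t)\bs\phi\big\ra \to 0$. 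Approximate $\bs v_+$ in $\cE$ by Schwartz data with Fourier support away from the origin; then, up to a uniformly controlled $\cE$-error, $(\bs v_+)_{\lambda_n}$ is rapidly concentrated in the ball $\{|x| \lesssim \lambda_n\}$, whereas $S(t_n - t)\bs\phi$ is rapidly concentrated in an annulus of radius $|t - t_n|$ and width $O(1)$, with amplitude $\lesssim |t - t_n|^{-5/2}$. Since $|t - t_n| = \lambda_n |s_n|$, either $|t - t_n| \to +\infty$ — then $|t - t_n| \gg \lambda_n$, the two concentration regions are asymptotically disjoint, and the pairing tends to $0$ — or $|t - t_n|$ stays bounded along a sub-subsequence, which forces $\lambda_n \to 0$, so that $(\bs v_+)_{\lambda_n} \wto 0$ while $S(t_n - t)\bs\phi$ converges strongly, again giving $0$.

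The routine parts are the H\"older and rescaling computations and the density arguments. The real obstacle is the doubly degenerate regime, $\lambda_n \to 0$ or $+\infty$ together with $|s_n| \to +\infty$: there every estimate built only from H\"older's inequality and the conservation laws is scale-invariant and therefore cannot decay, so one genuinely has to use finite propagation speed and the dispersive spreading of the free evolution of a smooth compactly supported test function. Passing to Schwartz data with Fourier support off the origin and then exploiting the asymptotic disjointness of supports is the crux of the argument.
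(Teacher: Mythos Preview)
Your proof is correct and shares the same skeleton as the paper's: reduce to subsequences, split into $s_n \to s_\infty$ finite versus $|s_n| \to +\infty$, and use scattering in the latter case. The difference is in how you finish the case $|s_n|\to+\infty$. After writing $(S(s_n)\bs v_+)_{\lambda_n} = S(t-t_n)\big((\bs v_+)_{\lambda_n}\big)$ you transfer the propagator to the test function and exploit physical-space concentration: the rescaled data sit in a ball of radius $\sim\lambda_n$, the free evolution of a compactly supported test function concentrates on the light cone at radius $|t-t_n| = \lambda_n|s_n| \gg \lambda_n$, and the two regions separate. The paper instead stays on the Fourier side: since $S(\cdot)$ is a Fourier multiplier it does not move Fourier supports, so after rescaling the Fourier support of $(S(s_n)\bs v_+)_{\lambda_n}$ lives at frequency scale $\lambda_n^{-1}$; when $\lambda_n\to 0$ or $+\infty$ this alone forces weak convergence to $0$, and when $\lambda_n\to\lambda_0\in(0,+\infty)$ one simply invokes $S(t)\bs v_+\wto 0$ as $t\to+\infty$.

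Both arguments are valid. The paper's Fourier route is shorter and avoids the mild bookkeeping your approach needs (the precise form of the adjoint of $S(t)$ for the $\cE$-pairing, and the fact that in even dimension $N=6$ the strong Huygens principle fails so ``concentration on the annulus'' has to be read as rapid decay away from the light cone, which is true by stationary phase for Schwartz data). Your approach has the advantage of being more geometric and of handling the sub-case $\lambda_n\to\lambda_0\in(0,+\infty)$ within the same computation rather than by a separate appeal to dispersion.
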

\begin{proof}
  It is sufficient to prove \eqref{eq:appendix-weak} for a subsequence of any subsequence, thus we can assume that $\frac{t - t_n}{\lambda_n} \to t_0 \in [-\infty, +\infty]$.

  Suppose first that $t_0 \in (-\infty, +\infty)$, hence $\bs U\big(\frac{t-t_n}{\lambda_n}\big) \to \bs U(t_0)$.
  Extracting again a subsequence we can assume that $\lambda_n \to \lambda_0 \in [0, +\infty]$.
  Then \eqref{eq:appendix-param-orth} implies that $\lambda_0 = 0$ or $\lambda_0 = +\infty$, and in both cases we get \eqref{eq:appendix-weak}.

  Suppose that $t_0 = +\infty$ (the case $t_0 = -\infty$ is the same). Let $\bs V_+ \in \cE$ be such that $\lim_{t \to +\infty}\|\bs U(t) - S(t)\bs V_+\|_\cE = 0$.
  Again, we can assume that $\lambda_n \to \lambda_0 \in [0, +\infty]$. It is well known that $S(t) \bs V_+ \wto 0$ as $t \to +\infty$,
  which settles the case $\lambda_0 \in (0, +\infty)$.
  In the case $\lambda_0 = 0$ or $\lambda_0 = +\infty$, notice that $S\big(\frac{t-t_n}{\lambda_n}\big)$ is a Fourier multiplier,
  hence the Fourier support of $\big(S\big(\frac{t-t_n}{\lambda_n}\big)\bs V_+\big)_{\lambda_n}$ is concentrated in an annulus of characteristic size $\lambda_n^{-1}$,
  which implies the weak convergence to $0$ on the Fourier side.
\end{proof}
\begin{corollary}
  \label{cor:faible}
  There exists a constant $\eta > 0$ such that the following holds.
  Let $K \subset \cE$ be a compact set and let $\bs u_n: [T_1, T_2] \to \cE$ be a sequence of solutions of \eqref{eq:nlw0}
  such that
  \begin{equation}
    \label{eq:proche-K}
    \dist(\bs u_n(t), K) \leq \eta,\qquad \text{for all }n\in \bN\text{ and }t \in [T_1, T_2].
  \end{equation}
  Suppose that $\bs u_n(T_1) \wto \bs u_0 \in \cE$. Then the solution $\bs u(t)$ of \eqref{eq:nlw0} with the initial condition $\bs u(T_1) = \bs u_0$
  is defined for $t \in [T_1, T_2]$ and
  \begin{equation}
    \label{eq:weak}
    \bs u_n(t) \wto \bs u(t),\qquad \text{for all }t \in [T_1, T_2].
  \end{equation}
\end{corollary}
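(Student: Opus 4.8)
The plan is to use a standard weak-continuity scheme built on the nonlinear profile decomposition of the sequence $\bs u_n(T_1)$, together with Corollary~\ref{cor:quitte-compact} to rule out blow-up of the limit solution. We may assume $T_1 = 0$, and since $K$ is bounded we set $M := \sup_n\sup_{t\in[0,T_2]}\|\bs u_n(t)\|_\cE < \infty$. Because the solution $\bs u$ with data $\bs u_0$ is uniquely determined, it suffices to show that every subsequence of $(\bs u_n)$ has a further subsequence along which $\bs u_n(t)\wto\bs u(t)$ for all $t\in[0,T_2]$; so I would freely pass to subsequences and, in particular, assume that $\bs u_n(0)$ admits a profile decomposition with linear profiles $\bs U^j\lin$, nonlinear profiles $\bs U^j$ and parameters $(\lambda_{j,n},t_{j,n})$.

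First I would identify the weak limit. By pseudo-orthogonality of the parameters there is at most one profile whose parameters do not escape; after relabeling, take it to be $\bs U^1$ with $\lambda_{1,n}\equiv1$, $t_{1,n}\equiv0$. All other terms $\bs U^j\lin\big(\tfrac{-t_{j,n}}{\lambda_{j,n}}\big)_{\lambda_{j,n}}$ ($j\ge2$) and the remainder converge weakly to $0$ by the standard properties of the decomposition (scale concentration and linear dispersion; cf. the mechanism of Lemma~\ref{lem:small-to-zero}), so $\bs U^1\lin(0)=\bs u_0$ and the first nonlinear profile is exactly $\bs u$. (If no non-escaping profile occurs, then $\bs u_0=0$, $\bs u\equiv0$, and the rest of the argument proceeds with every profile escaping.)

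The main point is to prove $T_+(\bs u)>T_2$. Suppose not. For any $\tau<T_+(\bs u)$ (hence $\tau\le T_2$), $\bs u$ is defined on $[0,\tau]$ with $\|u\|_{S([0,\tau])}<\infty$, and by Proposition~\ref{prop:small-global} the same holds for all but finitely many profiles; the finitely many large-energy escaping profiles satisfy the hypotheses of Proposition~\ref{prop:profile} on the relevant rescaled intervals, for otherwise $\bs u_n$ itself would fail to be defined on $[0,\tau]$ (via the approximation statement built into Proposition~\ref{prop:profile}), contrary to hypothesis. Applying Proposition~\ref{prop:profile} with $\theta_n\equiv\tau$ and letting $J\to\infty$ gives $\bs u_n(t)\wto\bs u(t)$ for all $t\in[0,\tau]$: the $j=1$ term equals $\bs u(t)$, the terms $j\ge2$ converge weakly to $0$ (scale concentration, or dispersion of a scattering solution), the free remainder $\bs w_n^J(t)$ converges weakly to $0$ for fixed $J$ (weak continuity of the linear flow applied to $\bs w^J_{0,n}\wto0$), and $\sup_{n,t}\|\bs r_n^J(t)\|_\cE\to0$ as $J\to\infty$. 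Using weak lower semicontinuity of $\bs v\mapsto\dist(\bs v,K)$ — if $\bs v_n\wto\bs v$ with $\bs v_n=\bs k_n+\bs b_n$, $\bs k_n\in K$, $\|\bs b_n\|_\cE\le\eta$, extract $\bs k_n\to\bs k\in K$, so $\bs b_n\wto\bs v-\bs k$ and $\|\bs v-\bs k\|_\cE\le\liminf\|\bs b_n\|_\cE\le\eta$ — one gets $\dist(\bs u(t),K)\le\eta$ for all $t\in[0,T_+(\bs u))$. Choosing $\eta$ no larger than the constant in Corollary~\ref{cor:quitte-compact}, this contradicts that corollary, since $T_+(\bs u)\le T_2<+\infty$. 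Hence $T_+(\bs u)>T_2$.

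Finally, knowing $T_+(\bs u)>T_2$, I would apply Proposition~\ref{prop:profile} once more with $\theta_n\equiv T_2$ — the hypotheses for all profiles now hold on $[0,T_2]$ by the same reasoning — and repeat the limiting argument to conclude $\bs u_n(t)\wto\bs u(t)$ for every $t\in[0,T_2]$. I expect the main obstacle to be the bookkeeping for the non-small profiles: one must make sure their rescaled evolutions neither blow up nor lose Strichartz control within the time window reached by $[0,T_2]$ after rescaling, which is precisely where the assumption that each $\bs u_n$ is defined on all of $[0,T_2]$ enters; the conceptual heart is the non-ejection step, combining weak lower semicontinuity of the distance to $K$ with Corollary~\ref{cor:quitte-compact}.
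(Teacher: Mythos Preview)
Your overall architecture matches the paper's: pass to a subsequence admitting a profile decomposition, identify the first profile with $\bs u$, and deduce weak convergence from Proposition~\ref{prop:profile} together with Lemma~\ref{lem:small-to-zero}. However, you miss the one-line observation that is the actual content of the paper's proof and that makes the argument clean.

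The paper writes $\bs u_n(T_1)=\bs k_n+\bs b_n$ with $\bs k_n\in K$ and $\|\bs b_n\|_\cE\le\eta$; compactness of $K$ gives $\bs k_n\to\bs k_0$ strongly, so the profile decomposition of $\bs u_n(T_1)$ has first profile $\bs u_0$ (with $\lambda_{1,n}=1$, $t_{1,n}=0$) and \emph{all remaining profiles come from the decomposition of $\bs b_n$, hence have energy $\le\eta$}. Thus for $j\ge2$ every nonlinear profile $\bs U^j$ is small, global, and scatters by Proposition~\ref{prop:small-global}, and the hypotheses of Proposition~\ref{prop:profile} are automatically satisfied for these profiles on any time interval.

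Because you do not make this observation, you are forced into the step ``the finitely many large-energy escaping profiles satisfy the hypotheses of Proposition~\ref{prop:profile} \ldots, for otherwise $\bs u_n$ itself would fail to be defined on $[0,\tau]$''. This is the contrapositive direction of the nonlinear profile decomposition, which is \emph{not} what Proposition~\ref{prop:profile} states; the proposition only goes from control of the profiles to control of $\bs u_n$. The converse is true but requires a separate argument (typically a perturbation/stability lemma showing that uncontrolled profiles force uncontrolled Strichartz norm for $\bs u_n$), and it is not available among the tools the paper provides. With the smallness observation above, this entire detour disappears: there simply are no large escaping profiles to worry about. Your non-ejection step via weak lower semicontinuity and Corollary~\ref{cor:quitte-compact} is fine and is indeed how one checks $T_+(\bs u)>T_2$, which the paper leaves implicit.
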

\begin{proof}
  It suffices to prove \eqref{eq:weak} for a subsequence of any subsequence. Hence, we may assume that $\bs u_{0,n} := \bs u_n(T_1)$ admits a profile decomposition.
  As in the proof of Corollary~\ref{cor:quitte-compact}, we show that all the profiles except for $\bs U^1 = \bs u_0$ are small in the energy norm.
  Moreover, for $j > 1$ the sequence of parameters $(\lambda_{j,n}, t_{j,n})$ is pseudo-orthogonal to $(\lambda_{1,n}, t_{1,n}) = (1, 0)$, which means precisely that
  $(\lambda_n, t_n) = (\lambda_{j,n}, t_{j,n})$ verifies \eqref{eq:appendix-param-orth}.
  Proposition~\ref{prop:profile} implies \eqref{eq:weak}.
\end{proof}
\begin{remark}
  Corollaries~\ref{cor:quitte-compact} and \ref{cor:faible} hold for the Yang-Mills and wave map equations, with the same proofs.
\end{remark}
\begin{remark}
  An important point of both results is that $\eta$ is independent of $K$.
  Corollary~\ref{cor:quitte-compact} states that a blow-up cannot happen at a small distance from a compact set.
  Corollary~\ref{cor:faible} establishes sequential weak continuity of the flow in a neighbourhood of any compact set.
  Without this additional condition weak continuity is expected to fail, a counterexample being provided by type~II blow-up solutions.
\end{remark}
\begin{remark}
  Corollaries~\ref{cor:quitte-compact} and \ref{cor:faible} are crucial ingredients of the arguments in Subsection~\ref{ssec:limit}.
Using the nonlinear profile decomposition of Bahouri and G\'erard \cite{BaGe99} is nowadays a well-established method of attacking this type of questions in critical spaces.
Note that \cite{BaGe99} gave the first proof of the sequential weak continuity of the flow for the defocusing energy-critical wave equation.
\end{remark}

%
%

\bibliographystyle{plain}
\bibliography{deux-bulles}

\end{document}